\documentclass[smallextended]{svjour3}       
\usepackage{enumerate}
\usepackage[OT1]{fontenc}
\usepackage[usenames]{color}
\usepackage[colorlinks,
linkcolor=red,
anchorcolor=blue,
citecolor=blue
]{hyperref}
\usepackage{mathrsfs}
\usepackage[square,numbers]{natbib}
\usepackage{hyperref}
\usepackage[protrusion=true, expansion=true]{microtype}
\usepackage{float}
\usepackage{subfigure}
\usepackage{amsfonts,amsmath,amssymb,url,xspace}
\usepackage{tikz}

\usepackage{verbatim}
\usetikzlibrary{arrows,shapes}
\usepackage{mathtools}
\usepackage{authblk}
\usepackage[bottom]{footmisc}
\usepackage{enumitem}
\usepackage{arydshln,leftidx,mathtools}
\usepackage{lscape}
\usepackage{multirow}

\usepackage{algorithm}
\usepackage{algorithmicx,algpseudocode}

\usepackage{chngcntr}
\counterwithout{equation}{section}

\allowdisplaybreaks[1]


\newcommand{\black}{\color{black}}

\newcommand{\mR}{\mathbb{R}}

\newcommand{\mE}{\mathbb{E}}
\newcommand{\mP}{\mathcal{P}}

\newcommand{\mK}{\mathcal{K}}
\newcommand{\mL}{\mathcal{L}}

\newcommand{\mX}{\mathcal{X}}
\newcommand{\mF}{\mathcal{F}}
\newcommand{\mA}{\mathcal{A}}
\newcommand{\mB}{\mathcal{B}}

\def\P{{\mathcal P}}

\def\T{{\mathcal T}}
\def\0{{\boldsymbol 0}}

\def\N{{\mathcal N}}

\def\bv{{\boldsymbol{v}}}
\def\by{{\boldsymbol{y}}}
\def\bz{{\boldsymbol{z}}}
\def\b1{{\boldsymbol{1}}}
\def\bx{{\boldsymbol{x}}}
\def\bu{{\boldsymbol{u}}}

\def\bz{{\boldsymbol{z}}}
\def\blambda{{\boldsymbol{\lambda}}}
\def\tlambda{{\boldsymbol{\lambda}}^\star}

\def\tx{{\boldsymbol{x}}^\star}

\def\tDelta{{\tilde{\Delta}}}
\def\hDelta{{\hat{\Delta}}}

\def\barg{{\bar{g}}}
\def\barf{{\bar{f}}}
\def\barH{{\bar{H}}}
\def\bnabla{{\bar{\nabla}}}
\def\barM{{\bar{M}}}
\def\barT{{\bar{T}}}
\def\barDelta{{\bar{\Delta}}}
\def\tDelta{{\tilde{\Delta}}}
\def\barmu{{\bar{\mu}}}
\def\Mid{{\bigg\vert}}
\def\bartau{{\bar{\tau}}}
\def\barepsilon{{\bar{\epsilon}}}

\def\baralpha{{\bar{\alpha}}}
\def\barL{{\bar{\mL}}}

\def\barK{{\bar{K}}}
\def\tmu{{\tilde{\mu}}}
\def\tdelta{{\tilde{\delta}}}

\mathtoolsset{showonlyrefs=true}

\usepackage{xargs}
\usepackage[colorinlistoftodos,prependcaption,textsize=tiny]{todonotes}
\newcommandx{\unsure}[2][1=]{\todo[linecolor=red,backgroundcolor=red!25,bordercolor=red,#1]{#2}}
\newcommandx{\change}[2][1=]{\todo[linecolor=blue,backgroundcolor=blue!25,bordercolor=blue,#1]{#2}}
\newcommandx{\info}[2][1=]{\todo[linecolor=OliveGreen,backgroundcolor=OliveGreen!25,bordercolor=OliveGreen,#1]{#2}}
\newcommandx{\improvement}[2][1=]{\todo[linecolor=Plum,backgroundcolor=Plum!25,bordercolor=Plum,#1]{#2}}

\allowdisplaybreaks[1]

\newcommand{\rbr}[1]{\left(#1\right)}
\newcommand{\sbr}[1]{\left[#1\right]}
\newcommand{\cbr}[1]{\left\{#1\right\}}
\newcommand{\nbr}[1]{\left\|#1\right\|}
\newcommand{\abr}[1]{\left|#1\right|}

\ifx\QED\undefined
\def\QED{~\rule[-1pt]{5pt}{5pt}\par\medskip}
\fi

\smartqed  
\usepackage{graphicx}
%
%
%
%
%
\begin{document}

\title{An Adaptive Stochastic Sequential Quadratic Programming with Differentiable Exact Augmented Lagrangians}

\titlerunning{An Adaptive Stochastic SQP with Differentiable Exact Augmented Lagrangians}

\author{Sen Na \and Mihai Anitescu \and Mladen Kolar}

\authorrunning{Na et al.} 

\institute{Sen Na \at
Department of Statistics, University of California, Berkeley\\
International Computer Science Institute\\
\email{senna@berkeley.edu}
\and
Mihai Anitescu \at
Mathematics and Computer Science Division, Argonne National Laboratory\\
\email{anitescu@mcs.anl.gov}
\and
Mladen Kolar \at
Booth School of Business, University of Chicago\\
\email{mladen.kolar@chicagobooth.edu}
}

\date{Received: date / Accepted: date}

\maketitle

\begin{abstract}

We consider solving nonlinear optimization problems with stochastic objective and deterministic equality constraints. We assume for the objective that its evaluation, gradient, and Hessian are inaccessible, while one can~compute their stochastic estimates by, for example, subsampling. We propose a stochastic algorithm based on sequential quadratic programming (SQP) that uses a differentiable exact augmented Lagrangian as the merit function. To motivate our algorithm design, we first revisit and simplify an old SQP method~\citep{Lucidi1990Recursive} developed for solving deterministic problems, which serves as the skeleton of~our stochastic algorithm. Based on the simplified deterministic algorithm, we then propose a non-adaptive SQP for dealing with stochastic objective, where the gradient and Hessian are replaced by stochastic estimates but the stepsizes are deterministic and prespecified. Finally, we incorporate a recent stochastic line search procedure \citep{Paquette2020Stochastic} into the non-adaptive stochastic SQP to adaptively~select the random stepsizes, which leads to an adaptive~stochastic SQP. The~global ``almost sure" convergence for both non-adaptive and adaptive SQP methods is established. Numerical experiments on nonlinear problems in CUTEst test set demonstrate the superiority of the adaptive algorithm.

\keywords{Stochastic optimization \and Sequential quadratic programming \and Augmented Lagrangian \and Exact penalty}
\end{abstract}

\section{Introduction}\label{sec:1}

We consider the nonlinear equality-constrained stochastic optimization problem
\begin{equation}\label{pro:1}
\begin{aligned}
\min_{\bx \in \mR^{d}}\;\; & f(\bx) = \mE[f(\bx; \xi)],\\
\text{s.t. } & c(\bx) = \0,
\end{aligned}
\end{equation}
where $f: \mR^d\rightarrow \mR$ is the objective, $c:\mR^d\rightarrow\mR^m$ are the constraints, and $\xi\sim \P$ is a random variable following the distribution $\P$. In stochastic optimization, the function $f(\bx)$, gradient $\nabla f(\bx)$, and Hessian $\nabla^2 f(\bx)$ either cannot be~evaluated due to the expectation over $\xi$, or are too expensive to compute, for example,~in big data applications in machine learning. Instead, one can generate either a single sample $\xi\sim \P$ and let $\barf(\bx) = f(\bx; \xi)$, $\bnabla f(\bx) = \nabla f(\bx; \xi)$, $\bnabla^2f(\bx) = \nabla^2f(\bx;\xi)$; or a mini-batch $\xi_{i_1}, \ldots \xi_{i_B} \stackrel{i.i.d}{\sim}\P$~and let $\barf(\bx) = \frac{1}{B}\sum_{j=1}^{B}f(\bx; \xi_{i_j})$, $\bnabla f(\bx) = \frac{1}{B}\sum_{j=1}^{B}\nabla f(\bx; \xi_{i_j})$, $\bnabla^2f(\bx) = \frac{1}{B}\sum_{j=1}^{B}\nabla^2 f(\bx; \xi_{i_j})$ be the~corresponding stochastic estimates at $\bx$.

Problem \eqref{pro:1} prominently appears in machine learning where the objective takes the form
\begin{equation*}
f(\bx) = \int f(\bx; \by, \bz)\; d\P(\by, \bz)
\end{equation*}
with $(\by, \bz)$ representing random input-output pairs with the joint distribution $\P(\by, \bz)$. The loss function $f$ is parameterized by $\bx$, and our goal is~to~obtain the optimal parameter $\bx^\star$ by minimizing the loss function under nonlinear constraints. In practice, the distribution $\P(\by, \bz)$ is unknown and only $n$ data points $\{\xi_i = (\by_i, \bz_i)\}_{i=1}^n$ from $\P(\by, \bz)$ are available. In that case, $\P(\by, \bz)$ is approximated by the empirical distribution and the objective can be written as
\begin{equation}\label{pro:finite}
f(\bx) = \frac{1}{n}\sum_{i=1}^{n}f(\bx; \xi_i) = \mE\sbr{f(\bx; \xi)},
\end{equation}
where $\xi\sim \text{Uniform}\rbr{\cbr{\xi_i}_{i=1}^n}$ follows uniform distribution over $n$ discrete data points. The finite-sum loss function above is called the empirical risk. For~example, recent studies on constrained deep neural networks \cite{Nandwani2019Primal, Ravi2019Explicitly} are in the form of \eqref{pro:finite}.

Problem \eqref{pro:1} also appears widely in statistics as a way to estimate model parameters that best fit the data. The constraint set encodes prior information or physical constraints on the parameters. In applications of maximum likelihood estimation (MLE) under constraints, $f(\bx, \xi)$ is a negative log-likelihood function evaluated at one data point. For example, \cite{Nagaraj1991Estimation} studied least square problems under nonlinear equality constraints; \cite{Dupacova1988Asymptotic} studied constrained MLE; and \cite{Shapiro2000asymptotics} studied constrained $M$-estimation. These references established consistency and asymptotic normality for the optimizer $\bx^\star$ of \eqref{pro:1}. See \cite{Prekopa1973Contributions, Wets1983Stochastic} for more statistical background on the problem.

While there is a rich history on statistical properties of constrained estimators, numerical procedures for solving \eqref{pro:1} are less well developed. When constraints are characterized by an abstract set, projected stochastic first- and second-order methods have been developed \citep{Nemirovski2009Robust, Bertsekas1982Projected}. However, these projection-based methods are not applicable for \eqref{pro:1}, since the projection to the null~space of nonlinear equations may not be easily computed. To our knowledge, only recently \cite{Berahas2021Sequential} proposed the very first practical algorithm for solving \eqref{pro:1}, which is a stochastic sequential quadratic programming (StoSQP) with $\ell_1$ penalized merit function. The method of \cite{Berahas2021Sequential} possesses the following three main components:
\begin{enumerate}[label=(\alph*),topsep=0pt]	\setlength\itemsep{0.0em}

\item \cite{Berahas2021Sequential} designed a novel stepsize selection scheme, where the stepsize $\bar{\alpha}_k$ is set by projecting a quantity to an interval. The quantity and the interval boundaries depend on the magnitude of the search direction, the Lipschitz constant of the merit function, and a prespecified deterministic sequence $\{\beta_k\}_k$. Such a stepsize selection scheme is designed to introduce certain adaptivity into the algorithm without involving a line search. The sequence $\{\beta_k\}_k$, which is either constant or decaying with~a~proper~rate,~determines~the iteration convergence behavior, however, still heavily affects the selection~of~$\baralpha_k$.~As shown numerically in \cite{Berahas2021Sequential}, the line search procedure is still preferable for most of problems.

\item The penalty parameter of the merit function in each iteration is adaptively selected in \cite{Berahas2021Sequential}, which can be viewed as a random walk. To~establish~convergence, it is important to show two properties for the underlying random~walk. (i) It stabilizes after a number of iterations, so that one can accumulate the~descent of the merit function in each iteration. (ii) The stabilized penalty value is less (or greater depending on the definition) than a \textit{deterministic} threshold, so that SQP iterates indeed converge to a KKT point instead of a stationary point of the merit function. \cite{Berahas2021Sequential} showed (i) under a boundedness condition (cf. Proposition 3.18), which is a standard condition even in deterministic setting (cf. \cite[Chapter 4]{Bertsekas1982Constrained}). However, (ii) is more subtle for StoSQP, where the stabilized value itself is random and different in each run. To this end, \cite{Berahas2021Sequential} showed (ii) under additional assumptions on the noise distribution (e.g Gaussian). See Proposition 3.16 and Example 3.17 therein.

\item \cite{Berahas2021Sequential} employed an $\ell_1$ penalized merit function. As noticed for deterministic problems, differentiable merit functions are alternative choices that may enjoy some local benefits, such as overcoming the Maratos effect \citep{Maratos1978Exact}.~Although local analysis of StoSQP is not a goal of the paper, and it~is~unclear whether the Maratos effect is a serious issue on stochastic problems (given the randomness of objective estimation), it still stimulates interest in designing a StoSQP with an alternative, differentiable merit function.

\end{enumerate}

\vskip2pt
\noindent This paper provides different resolutions on the above three components to~derive an adaptive StoSQP. For (a), we study a different setup compared to~\cite{Berahas2021Sequential}. Under the setup, we are allowed to generate mini batches to have a more~precise objective approximation, which further allows~us to adaptively select the stepsize via stochastic line~search. To this end, we generalize the stochastic~line search procedure in \cite{Paquette2020Stochastic} to equality-constrained problems under constraint qualifications. The generalization is non-trivial since, due to the constraints,~we have to use the merit function with varying stochastic penalty parameters~in~the Armijo condition, while \cite{Paquette2020Stochastic} used the fixed objective function. With the help of the line search, we do not have to prespecify a sequence to control the stepsize. For (c), we employ a differentiable exact augmented Lagrangian merit~function. Comparing with a standard design of using the $\ell_1$ merit function in the line search, our experiments provide some evidence for supporting the usage~of~the~augmented Lagrangian. Although our experiments only investigate the global~convergence so that the improvement of the augmented Lagrangian is not so significant,~and we do not attempt to claim benefits of the augmented Lagrangian over the $\ell_1$ merit function under the present global analysis, our paper indeed serves as~the first step towards understanding the local convergence of differentiable merit functions for solving constrained stochastic problems. Finally, for (b)(i), we consider a similar setup to \cite{Berahas2021Sequential}, and show that the penalty parameter is stabilized when the estimation errors are bounded in each iteration (cf. Assumption \ref{ass:A:2}). However, for (b)(ii), we adopt a different approach. We modify the SQP scheme when specifying the penalty parameter. In particular, we accept the penalty parameter only if a more stringent condition is satisfied, \textit{where the feasibility error is bounded by the gradient magnitude of the augmented Lagrangian}. By such modification, we ensure the convergence to a KKT point without relying on the noise distribution. With all above differences, we establish the global convergence of the proposed StoSQP by showing that $\liminf_{k\rightarrow \infty}\|\nabla\mL(\bx_k, \blambda_k)\| = 0$~\textit{almost surely}, where $\mL(\bx, \blambda) = f(\bx)+\blambda^Tc(\bx)$ is the Lagrange function with $\blambda\in \mR^m$ being the dual variables associated with the constraints; and $(\bx_k, \blambda_k)$ is the $k$-th primal-dual iterate. This type of result is also different from \cite{Berahas2021Sequential}, which shows $\liminf_{k\rightarrow \infty}\mE[\|\nabla\mL(\bx_k, \blambda_k)\|] =0$.

\subsection{Literature review}

Problem \eqref{pro:1} is closely related to two classes of problems: constrained deterministic optimization and unconstrained stochastic optimization, both of which have been extensively investigated. We review some related literature.

There exist numerous methods for solving constrained deterministic problems, including exact penalty methods, augmented Lagrangian methods, and~sequential quadratic programming (SQP) methods \citep{Nocedal2006Numerical}. This paper utilizes SQP schemes to solve stochastic problems. SQP schemes apply Newton's method on KKT equations of \eqref{pro:1}. Within a SQP scheme, an exact penalty function is used in the line search to monitor the iteration progress towards a KKT point.~One of the advantages of SQP is that Newton's system preserves the structure of the objective, which contrasts with exact penalty method and augmented Lagrangian method. The search direction of the latter two methods is derived by minimizing a~penalized objective, and the penalty term may affect the~problem structure. See \cite{Gill2005SNOPT, Gill2011Sequential} and references therein for a brief~review~of~SQP.

The present paper contributes to the existing literature on SQP by studying its global convergence for solving stochastic optimization problems. \cite{Berahas2021Sequential} designed a StoSQP using the exact penalty function $f(\bx) + \mu\|c(\bx)\|_1$. We instead use $\mL(\bx, \blambda) + \mu/2\cdot \|\nabla_{\blambda}\mL(\bx, \blambda)\|_2^2 + \nu/2\cdot \|\nabla^Tc(\bx)\nabla_{\bx}\mL(\bx, \blambda)\|_2^2$ with $\mu, \nu>0$ being the penalty parameters. This function is defined on the primal-dual~pair $(\bx, \blambda)$ and called exact augmented Lagrangian \citep{Pillo1994Exact}. The exactness of such~an augmented Lagrangian has been studied in \cite{Pillo1979New,	Pillo1980method}. \cite{Lucidi1990Recursive} employed it in a~SQP framework for deterministic problems, which we will revisit and modify in Section \ref{sec:2}. The modified algorithm serves as the skeleton of our StoSQP designed in Sections \ref{sec:3} and \ref{sec:4}.

Without constraints, numerous first- and second-order methods have also been reported for solving stochastic problems. We refer to \cite{Bottou2018Optimization, Berahas2020investigation} for a recent review. One practical concern for stochastic methods is the adaptivity of the stepsize. A growing body of literature has generalized the natural line search for gradient descent to stochastic line search by dynamically controlling the sample size for computing the gradient estimates \citep{Friedlander2012Hybrid, Byrd2012Sample, Krejic2013Line, De2017Automated, Bollapragada2018Adaptive}. However, most of works study convex problems, do not provide convergence guarantees for the line search, and use sample size selection strategies that depend on unknown quantities such as the true gradient. Practical stochastic stepsize selection methods with convergence guarantees have been developed only recently. \cite{Bandeira2014Convergence, Chen2017Stochastic, Gratton2017Complexity, Curtis2019Stochastic, Blanchet2019Convergence, Curtis2020fully} studied stochastic trust-region methods, where stepsizes are absorbed in local quadratic models. \cite{Cartis2017Global} established the expected iteration complexity of stochastic line search for nonconvex objectives, while required function evaluations to be computed exactly. \cite{Serafino2020LSOS} allowed for stochastic evaluations, but only proved convergence for strongly convex objectives. Finally, \cite{Paquette2020Stochastic} generalized the analysis in \cite{Cartis2017Global} and \cite{Serafino2020LSOS} by applying the martingale framework in \cite{Blanchet2019Convergence}, and obtained convergence guarantees for stochastic line search with both convex and nonconvex objectives.

Our paper generalizes \cite{Paquette2020Stochastic} to equality-constrained problems under constraint qualifications. As discussed earlier, the generalization is not immediate since the merit function has varying stochastic penalty parameters. In addition, in unconstrained problems, if one observes a sufficient decrease of the objective in each iteration, then one can claim that the iterates converge to a stationary point (or a minimizer if the objective is convex). Differently, in constrained problems, if one observes a sufficient decrease of the merit function in each iteration, then one can \textit{only} claim that the iterates converge to a stationary point of the merit function. Only if the penalty parameter of the exact penalty merit function is less than a critical threshold can one claim the iterates indeed converge to a KKT point. In stochastic optimization, the problem is even more challenging: one has to show that the stabilized penalty parameter, a random quantity, is less than the deterministic threshold to ensure convergence. \cite[Section 3.2.2]{Berahas2021Sequential} discussed this subtlety and resolved it for some noise distributions (e.g. Gaussian), while, instead, we achieve convergence to~a~KKT~point~by~modifying a SQP scheme and imposing a more stringent condition when selecting the penalty parameter. Our analysis does not rely on the symmetry of the noise~distributions (cf. \cite[Example 3.17]{Berahas2021Sequential}).

\vskip5pt
\noindent\textbf{Structure of the paper:} We revisit and simplify an old deterministic SQP scheme \cite{Lucidi1990Recursive} in Section \ref{sec:2}.~Based on the simplified SQP scheme, a non-adaptive StoSQP is designed in Section \ref{sec:3}, and an adaptive StoSQP that utilizes a~stochastic line search is designed in Section \ref{sec:4}. Numerical experiments and conclusions are presented in Sections \ref{sec:5} and \ref{sec:6}. Due to space limit, some technical proofs are deferred to the appendix.

\vskip5pt

\noindent\textbf{Notation:} We use $\|\cdot\|$ to denote the $\ell_2$ norm for vectors and the spectral norm for matrices. $I$ denotes the identity matrix and $\0$ denotes the zero matrix or vector, whose dimensions are clear from the context. For scalars $a$ and $b$, $a\vee b = \max(a, b)$ and $a\wedge b = \min(a, b)$. For random variables $\xi_1$, $\xi_2$, we denote $\mE_{\xi_1}[g(\xi_1, \xi_2)] = \mE[g(\xi_1, \xi_2)\mid \xi_2]$ to be the conditional expectation, that is, the expectation taken over randomness in $\xi_1$ only.

\section{SQP with Differentiable Exact Augmented Lagrangian}\label{sec:2}

In this section, we assume $f$ is deterministic. We revisit and simplify the~SQP scheme proposed by \cite{Lucidi1990Recursive} to facilitate the later design of StoSQP. Let $\mL(\bx, \blambda) = f(\bx) + \blambda^Tc(\bx)$ be the Lagrangian function of \eqref{pro:1}. {We aim at finding} a KKT point $(\tx, \tlambda)$ that satisfies
\begin{equation}\label{equ:KKT}
\begin{pmatrix}
\nabla_{\bx}\mL(\tx, \tlambda)\\
\nabla_{\blambda}\mL(\tx, \tlambda)
\end{pmatrix} = \begin{pmatrix}
\nabla f(\tx) + G^T(\tx)\tlambda\\
c(\tx)
\end{pmatrix} = \begin{pmatrix}
\0\\
\0
\end{pmatrix},
\end{equation}
where $G(\bx) = \nabla^T c(\bx) = (\nabla c_1(\bx), \ldots, \nabla c_m(\bx))^T \in \mR^{m\times d}$ is the Jacobian~matrix. The differentiable exact augmented Lagrangian considered in this~paper takes the form
\begin{equation}\label{equ:augmented:L}
\mL_{\mu, \nu}(\bx, \blambda) = \mL(\bx, \blambda) + \frac{\mu}{2}\|c(\bx)\|^2 + \frac{\nu}{2}\|G(\bx)\nabla_{\bx}\mL(\bx, \blambda)\|^2.
\end{equation}
The first penalty characterizes the feasibility error, which is a quadratic penalty on $\nabla_{\blambda}\mL(\bx, \blambda)$. The second penalty characterizes the optimality error, where $\nabla_{\bx}\mL(\bx, \blambda)$ is transformed by the Jacobian $G(\bx)$. We notice that, even without transformation $G(\bx)$, the function \eqref{equ:augmented:L} is still an exact augmented Lagrangian, provided $\mu$ is sufficiently large and $\nu$ is sufficiently small \citep[Proposition 4.15]{Bertsekas1982Constrained}. On the other hand, $G(\bx)$ can be replaced by other suitable weight matrices that satisfy certain conditions. See \cite{Pillo1979New} for other choices and their comparisons in experiments. Our analysis can be easily extended to other forms of augmented Lagrangian. We use $G(\bx)$ for transformation as it is simpler than~other~choices, especially when computing the gradient $\nabla\mL_{\mu, \nu}$; and, unlike without transformation, $\nu>0$ in our analysis can be any positive parameter and need not be adapted. The gradient of $\mL_{\mu, \nu}(\bx, \blambda)$ is
\begin{align}\label{equ:derivative:AL}
\begin{pmatrix}
\nabla_{\bx}\mL_{\mu, \nu}(\bx, \blambda)\\
\nabla_{\blambda}\mL_{\mu, \nu}(\bx, \blambda)
\end{pmatrix} = \begin{pmatrix}
I + \nu M(\bx, \blambda)G(\bx) & \mu G^T(\bx)\\
\nu G(\bx)G^T(\bx)G(\bx) & I
\end{pmatrix}\begin{pmatrix}
\nabla_{\bx}\mL(\bx, \blambda)\\
\nabla_{\blambda}\mL(\bx, \blambda)
\end{pmatrix}
\end{align}
\vskip-2pt
\noindent where
\begin{equation*}
\begin{aligned}
M(\bx, \blambda) = & \nabla_{\bx}(G(\bx)\nabla_{\bx}\mL(\bx, \blambda)) = \nabla_{\bx}^2\mL(\bx, \blambda)G^T(\bx) + T(\bx, \blambda) \in \mR^{d\times m},\\
T(\bx,\blambda) =& \rbr{\nabla^2c_1(\bx)\nabla_{\bx}\mL(\bx, \blambda), \ldots, \nabla^2c_m(\bx)\nabla_{\bx}\mL(\bx, \blambda)}.
\end{aligned}
\end{equation*}

We now present a SQP scheme. Given a pair $(\bx_k, \blambda_k)$ at the $k$-th iteration, we denote $f_k = f(\bx_k)$, $\nabla f_k = \nabla f(\bx_k)$ (similar for $G_k, M_k,\nabla_{\bx}\mL_k$ etc.) to~be the quantities evaluated at the $k$-th iterate. We then compute the search direction $(\Delta\bx_k, \Delta\blambda_k)$ by solving
\begin{equation}\label{equ:Newton:AL}
\begin{aligned}
\begin{pmatrix}
B_k & G_k^T\\
G_k & \0
\end{pmatrix} \begin{pmatrix}
\Delta\bx_k\\
\hat{\Delta}\blambda_k
\end{pmatrix}& =-\begin{pmatrix}
\nabla_{\bx}\mL_k\\
c_k
\end{pmatrix},\\
G_kG_k^T\Delta\blambda_k & = - (G_k\nabla_{\bx}\mL_k + M^T_k\Delta\bx_k),
\end{aligned}
\end{equation}
where $B_k$ is an approximation of the Lagrangian Hessian $\nabla_{\bx}^2\mL_k$. The next iterate is then
\begin{equation}\label{equ:update}
\begin{pmatrix}
\bx_{k+1}\\
\blambda_{k+1}
\end{pmatrix} = \begin{pmatrix}
\bx_k\\
\blambda_k
\end{pmatrix} + \alpha_k\begin{pmatrix}
\Delta\bx_k\\
\Delta\blambda_k
\end{pmatrix},
\end{equation}
where the stepsize $\alpha_k$ is chosen to make the Armijo condition hold
\begin{equation}\label{equ:Armijo}
\mL_{\mu, \nu}^{k+1} \leq \mL_{\mu, \nu}^k + \alpha_k\beta\begin{pmatrix}
\nabla_{\bx}\mL_{\mu, \nu}^k\\
\nabla_{\blambda}\mL_{\mu, \nu}^k
\end{pmatrix}^T \begin{pmatrix}
\Delta\bx_k\\
\Delta\blambda_k
\end{pmatrix}
\end{equation}
for a prespecified $\beta\in(0, 1)$. 

\begin{remark}
We should mention that the dual search direction in this scheme is $\Delta\blambda_k$, not $\hDelta\blambda_k$ like in most of SQP schemes. This difference is driven by the penalty term $\|G(\bx)\nabla_{\bx}\mL(\bx, \blambda)\|^2$ in \eqref{equ:augmented:L}. In fact, if $B_k \approx \nabla_{\bx}^2\mL_k$ and $(\bx_k,\blambda_k)$ is close to a KKT point, $\Delta\blambda_k\approx\hDelta\blambda_k$ by noting that $M_k\approx \nabla_{\bx}^2\mL_kG_k^T$. However, for an iterate that is far from a KKT point or $B_k \not\approx \nabla_{\bx}^2\mL_k$ (which is the case in this paper), $\Delta\blambda_k$ and $\hDelta\blambda_k$ are significantly different and such~a~modification~is essential. For a SQP scheme that uses the merit function $\mL_{\mu, \nu}$, we desire a dual direction that, together with $\Delta\bx_k$, is a descent direction of $\mL_{\mu, \nu}^k$ for any~$\nu>0$, as long as $\mu>0$ is sufficiently large. A sufficient (may not be~necessary)~condition for such a dual direction, proposed by \cite[(8)]{Lucidi1990Recursive}, is
\begin{equation*}
\lim\limits_{\alpha\rightarrow 0}G_kG_k^T\Delta\blambda_k(\alpha) = -(G_k\nabla_{\bx}\mL_k + M^T_k\Delta\bx_k),
\end{equation*}	
where $\Delta\blambda_k(\alpha)$ can depend on the stepsize $\alpha$ selected by the line search.~This paper removes the dependence on $\alpha$ and directly solves $\Delta\blambda_k$ from the above equation. We note, however, that $\hDelta\blambda_k$ does not satisfy this condition. For~a~general penalty term $\|W(\bx)\nabla_{\bx}\mL(\bx, \blambda)\|^2$ with a differentiable weight matrix~$W(\bx)\in \mR^{m\times d}$, we let $S(\bx, \blambda) = \nabla_{\bx}\rbr{W(\bx)\nabla_{\bx}\mL(\bx, \blambda)}\in \mR^{d\times m}$, and the above condition can be adapted as
\begin{equation*}
\lim\limits_{\alpha\rightarrow 0} W_kG_k^T\Delta\blambda_k(\alpha) = - (W_k\nabla_{\bx}\mL_k + S_k^T\Delta\bx_k).
\end{equation*}
By the exact penalty analysis (cf. \cite[Proposition 3]{Pillo1979New}), $W(\bx)G^T(\bx)$ is assumed to be invertible.

\end{remark}

The downside of the above scheme is that the penalty parameter $\mu$ is fixed. To prove convergence, $\mu$ needs to be large enough so that the square matrix in \eqref{equ:derivative:AL} is invertible. Then $\|\nabla\mL_{\mu, \nu}^k\|\rightarrow 0$ implies $\|\nabla\mL_k\|\rightarrow 0$. The threshold depends on multiple unknown quantities of the problem and is hard to tune manually. Thus, we propose an adaptive SQP method in Algorithm~\ref{alg:SQP} to resolve this issue. The adaptivity is achieved by the While loop in Line 4. In the While loop, we iteratively check if the inner product of the search direction $(\Delta\bx_k, \Delta\blambda_k)$ and the gradient $\nabla \mL_{\mu_k, \nu}^k$ is upper bounded by $-\delta_k\rbr{\|\Delta\bx_k\|^2 +  \|G_k\nabla_{\bx}\mL_k\|^2}$. If not, we increase $\mu_k$ and decrease $\delta_k$ until the condition is satisfied. This parameter selection scheme is also used in our later design of StoSQP, although there we require an additional condition to be satisfied for $\mu_k$.

\begin{algorithm}[!tp]
\caption{{\black SQP with Differentiable Exact Augmented Lagrangian}}	\label{alg:SQP}
\begin{algorithmic}[1]
\State \textbf{Input:} initial iterate $(\bx_0, \blambda_0)$, scalars $\nu, \mu_0, \delta_0>0$, $\rho>1$, $\beta\in(0, 1)$;
\For{$k = 0,1,2\ldots$}
\State Compute $(\Delta\bx_k, \Delta\blambda_k)$ by \eqref{equ:Newton:AL};
\While{$\begin{pmatrix}
\nabla_{\bx}\mL_{\mu_k, \nu}^k\\
\nabla_{\blambda}\mL_{\mu_k, \nu}^k
\end{pmatrix}^T \begin{pmatrix}
\Delta\bx_k\\
\Delta\blambda_k
\end{pmatrix} > -\delta_k \nbr{\begin{pmatrix}
\Delta \bx_k\\
G_k\nabla_{\bx}\mL_k
\end{pmatrix}}^2$}
\State Let $\mu_k = \rho \mu_k$, $\delta_k = \delta_k/\rho$;
\EndWhile
\State Select $\alpha_k$ to satisfy the Armijo condition in \eqref{equ:Armijo} with parameters $\mu_k$, $\nu$, and $\beta$;
\State Update the iterate by \eqref{equ:update};
\State Let $\mu_{k+1} = \mu_k$, $\delta_{k+1} = \delta_k$;
\EndFor
	\end{algorithmic}
\end{algorithm}

The global convergence of Algorithm \ref{alg:SQP} can be established in a similar way~to \cite{Lucidi1990Recursive}. We do not go over the analysis in this paper, but mainly focus on analyzing a stochastic version of it. However, we should mention that Algorithm \ref{alg:SQP} adopts two simplifications from \cite{Lucidi1990Recursive} as listed below.
\begin{enumerate}[label=(\alph*),topsep=0pt]	\setlength\itemsep{0.0em}
\item \cite{Lucidi1990Recursive} applied finite differences to approximate $M_k = \nabla_{\bx}\rbr{G_k\nabla_{\bx}\mL_k}$ in \eqref{equ:Newton:AL}, while we assume that second-order matrices are computable to simplify the presentation. This allows us to compute the Lagrangian Hessian $\nabla_{\bx}^2\mL$, the constraint Hessian $\nabla^2c_i$, $\forall i$, and hence $M_k$.

\item \cite{Lucidi1990Recursive} used a more complex While loop to select $\mu_k$. That update of $\mu_k$ depends on the iterate (cf. \cite[(13)]{Lucidi1990Recursive}). In the end, the complex While loop condition implies that the inner product of the search direction and~the~gradient $\nabla\mL_{\mu_k, \nu}^k$ is bounded by $-\delta(\|\Delta\bx_k\|^2 + \|G_k\nabla_{\bx}\mL_k\|^2)$ for some $\delta>0$ (cf. \cite[Proposition 4.2]{Lucidi1990Recursive}), which is what the authors finally used in the analysis.~We simplify their While loop by directly enforcing the final condition, and increasing $\mu_k$ by a fixed factor $\rho>1$ each time (Line 5 in Algorithm \ref{alg:SQP}). 

\end{enumerate}

Algorithm \ref{alg:SQP} is a framework that shows how to design an adaptive SQP using differentiable exact augmented Lagrangian as the merit function. Using the constructed SQP as a skeleton, we design a non-adaptive and adaptive StoSQP scheme in Sections \ref{sec:3} and \ref{sec:4}, respectively.

\section{Warm-up: A Non-Adaptive Stochastic SQP}\label{sec:3}

Before designing an adaptive StoSQP in Section \ref{sec:4}, we design~a non-adaptive StoSQP in this section. It is a straightforward counterpart of Algorithm \ref{alg:SQP},~except that the deterministic gradient and Hessian of $f$ are replaced by their stochastic estimates, and the line search is replaced by a prespecified sequence of stepsizes $\{\alpha_k\}_k$. Since no line search is used, the merit function \eqref{equ:augmented:L} is not involved in the algorithm presentation, and is only used to analyze the convergence.

Suppose $f$ is stochastic, and its function value, gradient, and Hessian are~not accessible. Instead, we have access to their estimates $\barf$, $\barg$, and $\barH$. In what follows, we use $\bar{(\cdot)}$ to denote random quantities, except for the iterate $(\bx_k, \blambda_k)$. Let $\xi^k_g, \xi^k_H$ be two independent realizations of the random variable $\xi$, and denote
\begin{align}\label{def:MT}
\barg_k & =  \nabla f(\bx_k; \xi^k_g), \hskip0.4cm \bnabla_{\bx}\mL_k =  \bnabla_{\bx}\mL(\bx_k, \blambda_k; \xi^k_g) = \barg_k + G^T_k\blambda_k, \nonumber\\
\barH_k & = \nabla^2f(\bx_k; \xi^k_H), \hskip0.15cm \bnabla_{\bx}^2\mL_k =  \bnabla_{\bx}^2\mL(\bx_k, \blambda_k; \xi^k_H) = \barH_k + \sum_{j=1}^m(\blambda_k)_j\nabla^2c_j(\bx_k), \nonumber\\
\barT_k & = \rbr{
\nabla^2c_1(\bx_k)\bnabla_{\bx}\mL(\bx_k, \blambda_k; \xi^k_H), \cdots , \nabla^2c_m(\bx_k)\bnabla_{\bx}\mL(\bx_k, \blambda_k; \xi^k_H)}, \\
\barM_k & = \bnabla_{\bx}^2\mL_kG^T_k + \barT_k. \nonumber
\end{align}
In particular, $\xi_g^k$ is used for estimating the gradient $\bnabla_{\bx}\mL_k$, while $\xi_H^k$ is used~for estimating the second-order derivatives, including $\bnabla_{\bx}^2\mL_k$ and $\barM_k$. Since $\xi^k_g$ and $\xi^k_H$ are independent, $\barM_k$ and $\bnabla_{\bx}\mL_k$ are also independent. The subscript $k$ for both stochastic and deterministic quantities corresponds to the evaluation at the $k$-th iterate $(\bx_k, \blambda_k)$.

\begin{algorithm}[!tp]
\caption{A Non-Adaptive StoSQP}	\label{alg:NSto:SQP}
\begin{algorithmic}[1]
\State \textbf{Input:} initial iterate $(\bx_0, \blambda_0)$, stepsizes $\{\alpha_k\}_k$;
\For{$k = 0,1,2\ldots$}
\State Generate independent realizations $\xi_g^k$ and $\xi_H^k$, and compute $\bnabla_{\bx}\mL_k$ and $\barM_k$ as in \eqref{def:MT};
\State Compute $(\barDelta\bx_k, \barDelta\blambda_k)$ by \eqref{equ:ran:Newton};
\State Update the iterate by \eqref{equ:ran:update};
\EndFor
\end{algorithmic}
\end{algorithm}

Given the iterate $(\bx_k, \blambda_k)$, the non-adaptive StoSQP generates $\xi^k = (\xi_g^k, \xi_H^k)$ and computes $\bnabla_{\bx}\mL_k$ and $\barM_k$ as in \eqref{def:MT}. Then, the stochastic search direction $(\barDelta\bx_k, \barDelta\blambda_k)$ is given similarly to \eqref{equ:Newton:AL} by
\begin{equation}\label{equ:ran:Newton}
\begin{aligned}
\begin{pmatrix}
B_k & G^T_k\\
G_k & \0
\end{pmatrix}\begin{pmatrix}
\barDelta \bx_k\\
\tDelta\blambda_k
\end{pmatrix} & = -\begin{pmatrix}
\bnabla_{\bx}\mL_k\\
c_k
\end{pmatrix},\\
G_kG_k^T\barDelta\blambda_k & = -(G_k\bnabla_{\bx}\mL_k + \barM^T_k\barDelta\bx_k).
\end{aligned}
\end{equation}
Same as \cite{Berahas2021Sequential}, we suppose $B_k$ is deterministic given $(\bx_k, \blambda_k)$. In practice, $B_k = I$ is sufficient for global convergence. Then, for a prespecified stepsize $\alpha_k$, we let
\begin{equation}\label{equ:ran:update}
\begin{pmatrix}
\bx_{k+1}\\
\blambda_{k+1}
\end{pmatrix} = \begin{pmatrix}
\bx_k\\
\blambda_k
\end{pmatrix} + \alpha_k\begin{pmatrix}
\barDelta\bx_k\\
\barDelta\blambda_k
\end{pmatrix}.
\end{equation}
The procedure is summarized in Algorithm \ref{alg:NSto:SQP}.

\vskip 2pt

\noindent \textbf{Relationship to StoSQP in \cite{Berahas2021Sequential}:} The non-adaptive StoSQP is applicable~under the \textit{fully} stochastic setup as in \cite{Berahas2021Sequential}, where the derivatives of $f$ are estimated with constant variance. This setup contrasts with the line search setup in Section \ref{sec:4}, where we will require a more precise model approximation. \cite{Berahas2021Sequential} has~a smaller computational cost\footnote{The implicit cost for deriving Lipschitz constants sequence in \cite{Berahas2021Sequential} is not counted here, which, however, requires non-negligible evaluations for the objective and constraints.}: in each iteration, it generates one sample, and evaluates one gradient for the objective, and one function value and one Jacobian for~the~constraints. In contrast, Algorithm \ref{alg:NSto:SQP} generates two independent samples, and evaluates two gradients and one Hessian for the objective, one function value, one Jacobian, and one Hessian for the constraints. Thus, Algorithm \ref{alg:NSto:SQP} is at least twice as expensive~as \cite{Berahas2021Sequential}. The additional cost arises from the usage of~the augmented Lagrangian merit function; we note that Newton's system \eqref{equ:ran:Newton} contains~second-order~information that is not involved in \cite{Berahas2021Sequential}. Requiring the Hessian information is a drawback of using the augmented Lagrangian if only global convergence is of interest. The independence between estimating the gradient and Hessian is not ideal, but common in the literature~\citep{Bollapragada2018Exact}.

The basic design of Algorithm \ref{alg:NSto:SQP} is not as practical as \cite{Berahas2021Sequential}, since the stepsize sequence $\{\alpha_k\}_k$ is prespecified without any adjustment based on the iterate. \cite{Berahas2021Sequential} designed a novel scheme for selecting the stepsize, which introduces certain adaptivity into the StoSQP algorithm without relying on the line search. However, that scheme still requires another prespecified sequence $\{\beta_k\}_k$ to control the stepsize. As revealed in experiments in Section \ref{sec:5}, Algorithm \ref{alg:NSto:SQP} with a simple setup of $\alpha_k$ may outperform \cite{Berahas2021Sequential} in some cases, especially when~$\alpha_k, \beta_k$~are~decaying. This observation suggests that the prespecified sequence in both algorithms highly affects the performance.

This paper addresses the adaptivity under a different, more restrictive setup. We suppose that the estimation variance can be diminished by generating more samples; and this setup allows us to adopt (natural) stochastic line search~to select the stepsize. Before designing an adaptive StoSQP, we establish the~global ``almost sure" convergence of Algorithm \ref{alg:NSto:SQP}. The ``almost~sure" type of convergence guarantee is consistent with our later analysis of adaptive StoSQP, and differs from \cite{Berahas2021Sequential}, which established the convergence in expectation.

\subsection{Convergence Analysis}

We show the convergence of Algorithm \ref{alg:NSto:SQP} for two different stepsize sequences: a constant sequence and a decaying sequence. We need the following assumptions.

\begin{assumption}\label{ass:ran:1}
The iterates $(\bx_k, \blambda_k)$ and trial points $(\bx_k+\alpha_k\barDelta\bx_k, \blambda_k+\alpha_k\barDelta\blambda_k)$ are contained in a convex compact set $\mX\times \Lambda$. The functions $f$ and $c$ are thrice continuously differentiable over $\mX$; and the Jacobian $G(\bx) = \nabla^Tc(\bx)$ has full row rank over $\mX$. The sequence $\cbr{B_k}_k$ satisfies $\bx^TB_k\bx \geq \gamma_{RH} \|\bx\|^2$~for any $\bx\in \{\bx: G_k\bx = \0, \bx \neq \0\}$, and $\|B_k\|\leq \kappa_B$ for constants $0<\gamma_{RH}\leq 1\leq\kappa_B$.

\end{assumption}

\begin{assumption}\label{ass:ran:2}
We assume $\nabla f(\bx_k; \xi)$ and $\nabla^2 f(\bx_k; \xi)$ are unbiased estimators of $\nabla f_k$ and $\nabla^2f_k$ with bounded variance. That is, for a constant $\psi>0$,
\begin{equation}\label{Bound:var}
\begin{aligned}
\mE_{\xi}[\nabla f(\bx_k; \xi)] = &\nabla f_k, \quad \;\;\;\;\;\mE_{\xi}[\|\nabla f(\bx_k; \xi) - \nabla f_k\|^2] \leq \psi,\\
\mE_{\xi}[\nabla^2 f(\bx_k; \xi)] =& \nabla^2f_k, \quad
\mE_{\xi}[\|\nabla^2 f(\bx_k; \xi) - \nabla^2 f_k\|^2] \leq \psi.
\end{aligned}
\end{equation}
\end{assumption}

Assumption \ref{ass:ran:1} is standard in SQP literature \citep{Bertsekas1982Constrained, Nocedal2006Numerical}. In particular,~the convex compactness condition, as assumed, for example, in \cite[Theorem 18.3]{Nocedal2006Numerical} and \cite[Proposition 4.15]{Bertsekas1982Constrained}, ensures that all continuous functions such as $\mL_{\mu, \nu}$, $f$, $c$~are bounded over $\mX\times\Lambda$ (or over $\mX$); and a linear combination of two iterates~still lies in the set. We note that \cite{Berahas2021Sequential} assumed a convex open set, while the boundedness of $f$, $c$ was assumed additionally. We prefer to assume a~compact set, which is more common for studying exact augmented Lagrangian \cite{Pillo1980method}, \cite[Chapter 4.3]{Bertsekas1982Constrained}.

We also assume $f, c$ are thrice continuously differentiable. This is standard in the literature on augmented Lagrangian (cf. \cite[Proposition 4.16]{Bertsekas1982Constrained} and \cite[Section 3]{Pillo1980method}), since the third derivatives are needed for the existence of the~Hessian $\nabla^2\mL_{\mu, \nu}$. This condition ensures that the second-order Taylor expansion of $\mL_{\mu, \nu}$ is valid and $\nabla^2\mL_{\mu, \nu}$ is continuous and hence bounded in $\mX\times\Lambda$. It can be replaced by assuming Lipschitz continuity on $\nabla\mL_{\mu, \nu}$, equivalently, on $\nabla^2f$ and $\nabla^2c$, which is stronger than \cite{Berahas2021Sequential} where Lipschitz continuity on $\nabla f$ and $\nabla c$ was assumed. Fortunately, the third derivatives are required only for the analysis, and never computed in the implementation.

We also assume that the Jacobian $G(\bx)$ has full row rank, and $B_k$ is positive definite in the null space of $G_k$. These conditions are same as \cite{Berahas2021Sequential} and other SQP literature \cite{Lucidi1990Recursive}, and ensure that matrices in \eqref{equ:ran:Newton} are invertible (cf. \cite[Lemma 16.1]{Nocedal2006Numerical}). Thus, Algorithm \ref{alg:NSto:SQP} is well-posed. The range of positive constants $\gamma_{RH}$, $\kappa_B$ (and also the range of other positive constants defined later) is not crucial to the analysis; all results hold by replacing $\gamma_{RH}$ by $\gamma_{RH}\wedge 1$ and $\kappa_B$ by $\kappa_B\vee 1$ if we enforce $\gamma_{RH}, \kappa_B>0$ only.

Assumption \ref{ass:ran:2} assumes that the gradient and Hessian estimators have~bounded variance, which is similar to the fully stochastic setup in \cite{Berahas2021Sequential}, although, \cite{Berahas2021Sequential} only~required a gradient estimator. We need a Hessian estimator since \eqref{equ:ran:Newton} contains the second-order information.

An immediate implication of Assumption \ref{ass:ran:1} is that there exist multiple~positive constants $0<\kappa_{1,G}\leq 1\leq \kappa_{2,G}\wedge\kappa_{\nabla_{\bx}\mL}\wedge\kappa_{\nabla_{\bx}^2c}\wedge\kappa_\nu\wedge\kappa_{\mL_{\mu, \nu}}\wedge\kappa_M$ such~that
\begin{align}\label{Prop:1}
&\kappa_{1,G} I \preceq G_kG_k^T \preceq \kappa_{2,G} I, \quad \|\nabla_{\bx}\mL_k\|\leq \kappa_{\nabla_{\bx}\mL}, \quad (\sum_{i=1}^m\|\nabla^2c_i(\bx_k)\|^2)^{\frac{1}{2}}\leq \kappa_{\nabla_{\bx}^2c}, \nonumber\\
&\sup_{\mX\times\Lambda}\nbr{\nabla^2\mL_{\mu, \nu}} \leq \kappa_{\mL_{\mu, \nu}} \coloneqq \mu \cdot \sup_{\mX}\nbr{\nabla\rbr{ G^Tc}}+ \kappa_\nu, \quad\quad \|M_k\| \leq\kappa_M.
\end{align}
Here, $\kappa_{\mL_{\mu, \nu}}$ is a uniform bound of the Hessian $\nabla^2\mL_{\mu, \nu}$ over $\mX\times \Lambda$.

The following lemma characterizes the bias and variance of the stochastic search direction $(\barDelta\bx_k, \barDelta\blambda_k)$.

\begin{lemma}\label{lem:6}
Under Assumptions \ref{ass:ran:1} and \ref{ass:ran:2}, there exists a constant $\Upsilon_0>0$ such~that
\begin{align*}
&\mE_{\xi_g^k,\xi_H^k}\sbr{\begin{pmatrix}
\barDelta\bx_k\\
\barDelta\blambda_k
\end{pmatrix} } = \begin{pmatrix}
\Delta\bx_k\\
\Delta\blambda_k
\end{pmatrix},\\ &\mE_{\xi_g^k,\xi_H^k}\sbr{\nbr{\begin{pmatrix}
\barDelta\bx_k\\
\barDelta\blambda_k
\end{pmatrix} }^2} \leq \Upsilon_0\rbr{\nbr{\begin{pmatrix}
\Delta\bx_k\\
\Delta\blambda_k	
\end{pmatrix} }^2 + \psi}.
\end{align*}
\end{lemma}

\begin{proof}
See Appendix \ref{pf:lem:6}.\qed
\end{proof}

Using Lemma \ref{lem:6}, we are able to show the global convergence of Algorithm~\ref{alg:NSto:SQP}. We streamline the analysis procedure. For any penalty parameters $\mu, \nu>0$, we apply Taylor's expansion and have
\begin{equation*}
\mL_{\mu, \nu}^{k+1} \leq \mL_{\mu, \nu}^k + \alpha_k\left(\begin{smallmatrix}
\nabla_{\bx}\mL_{\mu, \nu}^k\\
\nabla_{\blambda}\mL_{\mu, \nu}^k
\end{smallmatrix}\right)^T\left(\begin{smallmatrix}
\barDelta\bx_k\\
\barDelta\blambda_k
\end{smallmatrix}\right) + \frac{\kappa_{\mL_{\mu, \nu}}\alpha_k^2}{2}\nbr{\left(\begin{smallmatrix}
\barDelta\bx_k\\
\barDelta\blambda_k
\end{smallmatrix}\right) }^2.
\end{equation*}
Taking conditional expectation over randomness in $\xi_g^k, \xi_H^k$ and applying Lemma~\ref{lem:6},
\begin{equation}\label{equ:taylor}
\mE_{\xi^k_g,\xi^k_H}[\mL_{\mu, \nu}^{k+1}] \leq \mL_{\mu, \nu}^k + \alpha_k\left(\begin{smallmatrix}
\nabla_{\bx}\mL_{\mu, \nu}^k\\
\nabla_{\blambda}\mL_{\mu, \nu}^k
\end{smallmatrix}\right)^T\left(\begin{smallmatrix}
\Delta\bx_k\\
\Delta\blambda_k
\end{smallmatrix}\right) + \frac{\Upsilon_0\kappa_{\mL_{\mu, \nu}}\alpha_k^2}{2}\big\{\nbr{\left(\begin{smallmatrix}
\Delta\bx_k\\
\Delta\blambda_k
\end{smallmatrix}\right) }^2+\psi\big\}.
\end{equation}
In principle, the middle term on the right hand side provides a sufficient descent, while the last term leads to a higher-order error if the stepsize is small~enough (since it depends on the stepsize quadratically). In particular,~by~direct~calculation (as rigorously proved in Appendix~\ref{pf:thm:4}),    
\begin{equation}\label{equ:inner}
\left(\begin{smallmatrix}
\nabla_{\bx}\mL_{\mu, \nu}^k\\
\nabla_{\blambda}\mL_{\mu, \nu}^k
\end{smallmatrix}\right)^T\left(\begin{smallmatrix}
\Delta\bx_k\\
\Delta\blambda_k
\end{smallmatrix}\right)=(\Delta\bx_k)^T\nabla_{\bx}\mL_k - \mu\|c_k\|^2 + c_k^T\Delta\blambda_k - \nu\|G_k\nabla_{\bx}\mL_k\|^2.
\end{equation}	
Then, by \cite[Proposition 4.2]{Lucidi1990Recursive}, there exist a threshold $\tilde{\mu}>0$ and a constant $\tilde{\delta}>0$, such that if $\mu \geq \tilde{\mu}$,
\begin{equation*}
\left(\begin{smallmatrix}
\nabla_{\bx}\mL_{\mu, \nu}^k\\
\nabla_{\blambda}\mL_{\mu, \nu}^k
\end{smallmatrix}\right)^T\left(\begin{smallmatrix}
\Delta\bx_k\\
\Delta\blambda_k
\end{smallmatrix}\right) \leq -\tilde{\delta}\nbr{\left(\begin{smallmatrix}
\Delta\bx_k\\
G_k\nabla_{\bx}\mL_k
\end{smallmatrix} \right)}^2.
\end{equation*}
Further, by the system \eqref{equ:Newton:AL}, we can show (as rigorously proved in Appendix~\ref{pf:thm:4})
\begin{equation*}
\nbr{\left(\begin{smallmatrix}
	\Delta\bx_k\\
	\Delta\blambda_k
	\end{smallmatrix}\right) }^2 \leq \frac{3\kappa_M^2}{\kappa_{1,G}^2}\nbr{\left(\begin{smallmatrix}
	\Delta\bx_k\\
	G_k\nabla_{\bx}\mL_k
	\end{smallmatrix} \right)}^2.
\end{equation*}
Plugging the above two displays into \eqref{equ:taylor}, we further obtain
\begin{align}\label{NN:7}
&\mE_{\xi^k_g,\xi^k_H}[\mL_{\mu, \nu}^{k+1}] 
\leq \mL_{\mu, \nu}^k - \alpha_k\big\{\tilde{\delta} - \frac{3\Upsilon_0\kappa_{\mL_{\mu, \nu}}\kappa_M^2}{2\kappa_{1,G}^2}\alpha_k\big\}\nbr{\left(\begin{smallmatrix}
\Delta\bx_k\\
G_k\nabla_{\bx}\mL_k
\end{smallmatrix} \right)}^2 + \frac{\Upsilon_0\kappa_{\mL_{\mu, \nu}}\psi}{2}\alpha_k^2 \nonumber\\
& \leq \mL_{\mu, \nu}^k - \frac{\alpha_k\tilde{\delta}}{2}\nbr{\left(\begin{smallmatrix}
\Delta\bx_k\\
G_k\nabla_{\bx}\mL_k
\end{smallmatrix} \right)}^2 + \frac{\Upsilon_0\kappa_{\mL_{\mu, \nu}}\psi}{2}\alpha_k^2.\quad\; \text{(if $\alpha_k \leq \frac{\tilde{\delta}\kappa_{1,G}^2}{3\Upsilon_0\kappa_{\mL_{\mu, \nu}}\kappa_M^2}$)}
\end{align}
With this recursion, we apply a supermartingale argument \cite[Theorem 4.2.12]{Durrett2019Probability} and have the following result. A similar supermartingale technique has been used in \cite{Wang2017Stochastic, Curtis2020fully} for analyzing different stochastic algorithms.

\begin{theorem}[Global convergence of Algorithm \ref{alg:NSto:SQP}]\label{thm:4}
Consider Algorithm~\ref{alg:NSto:SQP} under Assumptions \ref{ass:ran:1} and \ref{ass:ran:2}. There exist positive constants $\tilde{\mu}, \tilde{\delta}>0$, such that if $\mu \geq \tilde{\mu}$ and $\alpha_k \leq \frac{\tilde{\delta}\kappa_{1,G}^2}{3\Upsilon_0\kappa_{\mL_{\mu, \nu}}\kappa_M^2}$, $\forall k\geq 0$, where $\Upsilon_0>0$ is from Lemma \ref{lem:6}, then we have the following two cases. 

\noindent(a) If $\alpha_k = \alpha $ is a constant sequence, then
\begin{equation*}
\frac{1}{K+1}\sum_{k=0}^{K}\mE[\|\nabla\mL_k\|^2] \leq \frac{2(\kappa_B^2+\kappa_{2,G})}{\tilde{\delta} \kappa_{1,G}}\rbr{\frac{\mL_{\mu, \nu}^0 - \min_{\mX\times\Lambda}\mL_{\mu, \nu}}{(K+1)\alpha} + \frac{\Upsilon_0\kappa_{\mL_{\mu, \nu}}\psi}{2}\alpha}
\end{equation*}
\noindent(b) If $\alpha_k$ is a decaying sequence with $\sum_{k=0}^{\infty}\alpha_k = \infty$, and $\sum_{k=0}^{\infty}\alpha_k^2<\infty$, then
\begin{equation*}
\lim\limits_{K\rightarrow\infty}\frac{1}{\sum_{k=0}^{K}\alpha_k}\sum_{k=0}^{K}\alpha_k\|\nabla\mL_k\|^2 = 0 \quad \text{ almost surely.} 
\end{equation*}	
Furthermore, $\liminf_{k\rightarrow \infty}\|\nabla\mL_k\| = 0$ almost surely.
	
\end{theorem}

\begin{proof}
See Appendix \ref{pf:thm:4}. \qed
\end{proof}

Theorem \ref{thm:4}(b) differs from \cite[Corollary 3.14(b)]{Berahas2021Sequential}, where the authors~showed the ``liminf" convergence of the expected KKT residual $\mE[\|\nabla\mL_k\|]$. Our ``almost sure" convergence result suggests that, in \textbf{each} run of the algorithm, the~KKT residual sequence $\{\|\nabla\mL_k\|\}_k$ contains a convergent subsequence. Differently,~the convergence in expectation suggests that, the average of the~KKT residual sequence $\{\|\nabla\mL_k\|\}_k$ across \textbf{multiple} runs contains a convergent subsequence. Our ``almost sure" convergence is achieved by using a supermartingale~argument \cite[Theorem 4.2.12]{Durrett2019Probability}, which states that \textit{a positive supermartingale converges almost surely}. The convergence in expectation can also be established for Algorithm \ref{alg:NSto:SQP}, by analogy to the analysis in \cite{Berahas2021Sequential}.

As commented earlier, the design of Algorithm \ref{alg:NSto:SQP} is impractical due to~the lack of adaptivity of the stepsize: a whole stepsize sequence has to be specified~without any adjustment based on the iterate, with an unknown stepsize upper~bound~being tuned manually. \cite{Berahas2021Sequential} introduced a novel stepsize selection scheme to resolve this limitation and introduce certain adaptivity into the method. However, that scheme requires another prespecified sequence $\{\beta_k\}_k$ to control~the stepsize, which still highly affects the performance as revealed in our~experiments. This paper resolves the limitation from a different angle in~Section~\ref{sec:4}.~In particular, we incorporate a (stochastic) line search step into SQP, to~make the resulting algorithm more adaptive than Algorithm \ref{alg:NSto:SQP} and \cite{Berahas2021Sequential}; although such enhancement requires a more precise model approximation.

\section{An Adaptive Stochastic SQP}\label{sec:4}

We now develop an adaptive StoSQP scheme. We embed a stochastic~line~search procedure into our SQP framework. Unlike the scheme in Section \ref{sec:3},~the stepsize in this section is stochastic.

The stochastic line search procedure replaces inaccessible quantities in \eqref{equ:Armijo}, $\mL_{\mu, \nu}$ and $\nabla\mL_{\mu, \nu}$, by their stochastic estimates, $\barL_{\mu, \nu}$ and $\bnabla\mL_{\mu, \nu}$. This results~in three main challenges. First, the stochastic merit function $\barL_{\mu, \nu}$ is a random function. Its decrease in each iteration may not accumulate, as $\barL_{\mu, \nu}$ depends~on the particular realization of $\xi$ that varies with $k$. Second, a bad estimate of $\mL_{\mu, \nu}$ or $\nabla\mL_{\mu, \nu}$ may cause $\alpha_k$ to become arbitrarily small. As a result, $\alpha_k$ does~not have a uniform lower bound, which, however, is critical for global analysis in deterministic setting. Finally, even if the Armijo condition \eqref{equ:Armijo} is satisfied for a stochastic function, the objective in expectation may be arbitrarily large and the value of $\mL_{\mu, \nu}$ may actually increase. \cite{Paquette2020Stochastic} resolved these technical challenges for unconstrained problems by combining approaches of \cite{Blanchet2019Convergence} and~\cite{Cartis2017Global}.~In~principle, one requires a sufficiently accurate model in the line search step to ensure the validity of the selected random~stepsize.~In~other~words,~the~line~search~adaptivity is achieved by drawing more~samples in each iteration. While this might~not~seem ideal, it is standard in the design of other adaptive algorithms \citep{Friedlander2012Hybrid, Byrd2012Sample, Krejic2013Line, De2017Automated, Bollapragada2018Adaptive}.

We generalize the line search scheme of \cite{Paquette2020Stochastic} to equality-constrained problems under constraint qualifications. There are two additional challenges. First,~we have to adaptively select the penalty parameter in the algorithm. Without constraints, the Armijo condition uses the objective function, and there are~no~extra parameters. With constraints, the search direction is a descent direction~of $\mL_{\mu, \nu}$ only if $\mu$ is large enough. In stochastic case, the selected $\mu$ is a random quantity that induces a random walk. We need to ensure that the random~walk enjoys a similar property to that in deterministic case, that is, $\mu$ stabilizes~after a number of iterations. Second, for unconstrained~problems,~if~the~Armijo~condition~is satisfied for $f$ in each iteration, then the iterates converge to a stationary point of $f$, that is $\|\nabla f_k\|\rightarrow 0$. However, we utilize a merit function in the line search for constrained problems. The stationary point of $\mL_{\mu, \nu}$ is not necessarily a KKT point of the original problem \eqref{pro:1}, unless $\mu$ is greater than an \textit{unknown} deterministic threshold such that the square matrix in \eqref{equ:derivative:AL} is invertible. In stochastic case, the stabilized value of $\mu$ is random~and varies in~each~run. Thus, it~is difficult to enforce the stabilized $\mu$ to be always above the deterministic threshold.

To resolve the first challenge, we adopt a similar While loop to Algorithm~\ref{alg:SQP}, and iteratively increase $\mu$ until the projection of the search direction on the gradient of the merit function yields a sufficient descent on the merit function. To resolve the second challenge, we further adjust the While loop condition~to impose a different, but more stringent condition on $\mu$. Our adjustment is~inspired by the following critical observation. 

\begin{proposition}\label{prop:1}
For any point $(\bx, \blambda)$ and penalty parameters $(\mu, \nu)$ with $\nu \neq 0$, if $G(\bx)G^T(\bx)$ is non-singular, then $\|c(\bx)\| = \|\nabla\mL_{\mu, \nu}(\bx, \blambda)\| = 0$ implies $\|\nabla\mL(\bx, \blambda)\|=0$.

\end{proposition}
\begin{proof}
We suppress the evaluation point for simplicity. It suffices to show that $\|\nabla_{\bx}\mL\| = 0$. By the definition of $\nabla_{\blambda}\mL_{\mu, \nu}$ in \eqref{equ:derivative:AL}, since $\nabla_{\blambda}\mL_{\mu, \nu} = \0$ and $c(\bx) = \0$, we know that
\begin{equation}
\nu GG^TG\nabla_{\bx}\mL = \0,
\end{equation}
which implies $G\nabla_{\bx}\mL = \0$. Combining this result with $\nabla_{\bx}\mL_{\mu, \nu} = \0$, we obtain $\nabla_{\bx}\mL = \0$, and complete the proof. \qed
\end{proof}

By Proposition \ref{prop:1}, we notice that any stationary point of the merit function that is feasible is a KKT point of \eqref{pro:1}. The significance of Proposition \ref{prop:1} is that there is no requirement on the penalty parameter $\mu$. Thus, instead~of~investigating if $\mu$ is above the threshold or not like \cite[Proposition 3.16, Example~3.17]{Berahas2021Sequential}, we can be more selective to which stationary point we converge. In particular, we can converge to a stationary point that is also~feasible. We can achieve~this goal by enforcing $\|c_k\|\leq \|\nabla\mL_{\mu, \nu}^k\|$, so that the convergence of $\|\nabla\mL_{\mu, \nu}^k\|$ implies the convergence of $\|c_k\|$. In stochastic case, we instead enforce $\|c_k\|\leq \|\bnabla\mL_{\mu, \nu}^k\|$, where $\bnabla\mL_{\mu, \nu}^k$ is an estimate of $\nabla\mL_{\mu, \nu}^k$. As shown in Lemma \ref{lem:9}, this condition is achievable for sufficiently large $\mu$. We note that the condition $\|c_k\|\leq \|\nabla\mL_{\mu, \nu}^k\|$ has never been enforced in deterministic SQP,~which converges globally even without enforcing it (cf. \cite[Theorem~4.1]{Lucidi1990Recursive}).~For StoSQP, \cite{Berahas2021Sequential} also did not impose additional conditions on the feasibility error $\|c_k\|$. However, we realize that having this additional and more stringent condition in the penalty parameter selection in StoSQP is helpful (at least for our study), in the sense that the condition imposed on the noise distribution in \cite[Proposition 3.16]{Berahas2021Sequential}, in order to converge to a KKT point based on the merit function, is not required in our analysis.

\subsection{The StoSQP scheme}

We generalize the notation of $\xi_g^k$ to let it denote a set of independent realizations of $\xi$, and $|\xi_g^k|$ is its size. We recall that $\nu>0$ is any fixed penalty parameter. Given the $k$-th iterate $(\bx_k, \blambda_k, \baralpha_k, \barepsilon_k)$, the algorithm proceeds in four steps.

\vskip 2pt
\noindent\textbf{Step 1: Estimate the derivatives.} We generate samples $\xi_g^k$ and let
\begin{equation}\label{def:g}
\barg_k = \frac{1}{|\xi_g^k|}\sum_{\zeta\in \xi_g^k}\nabla f(\bx_k; \zeta), \quad \barH_k = \frac{1}{|\xi_g^k|}\sum_{\zeta\in \xi_g^k}\nabla^2 f(\bx_k; \zeta).
\end{equation}
We compute $\bnabla_{\bx}\mL_k$, $\bnabla_{\bx}^2\mL_k$, $\barM_k$ and $\barT_k$ as in \eqref{def:MT}. Different from the estimation~in Section \ref{sec:3}, we do not require $\barg_k$ and $\barH_k$ to be independent. For some constants $\kappa_{grad}>0$, $p_{grad}\in(0, 1)$ to be chosen later, we define the event
\begin{align}\label{event:A_k}
\mA_k = &\bigg\{\nbr{\begin{pmatrix}
\barg_k - \nabla f_k + \nu \rbr{\barM_k G_k\bnabla_{\bx}\mL_k -  M_kG_k\nabla_{\bx}\mL_k}\\
\nu G_kG_k^TG_k\rbr{\barg_k - \nabla f_k}
\end{pmatrix}} \nonumber\\
& \hskip1.5cm \leq  \kappa_{grad}\cdot\baralpha_k\nbr{\begin{pmatrix}
\bnabla_{\bx}\mL_k + \nu\barM_k G_k\bnabla_{\bx}\mL_k + G_k^Tc_k\\
\nu G_kG_k^TG_k\bnabla_{\bx}\mL_k
\end{pmatrix}}  \bigg\},
\end{align}
for a monotonically increasing sequence $|\xi_g^k|$ chosen so that
\begin{equation}\label{equ:ran:cond:1}
P(\mA_k^c \mid \bx_k, \blambda_k) \leq p_{grad}.
\end{equation}
The event $\mA_k$ contains all good estimates of $\nabla\mL_{\mu, \nu}^k$ such that $\|\bnabla\mL_{\mu, \nu}^k - \nabla\mL_{\mu, \nu}^k\|$ is small. We note that the difference $\bnabla\mL_{\mu, \nu}^k - \nabla\mL_{\mu, \nu}^k$ is independent~of $\mu$, so that we can generate samples to estimate the gradient before selecting $\mu$. We will show in Lemma \ref{lem:cond:1} that \eqref{equ:ran:cond:1} can be satisfied for sufficiently large $|\xi_g^k|$.

\vskip 2pt
\noindent\textbf{Step 2: Select the penalty parameter.}
Given stochastic estimates $\bnabla_{\bx}\mL_k$ and $\barM_k$, we generate $B_k$ and compute $(\barDelta \bx_k, \barDelta\blambda_k)$ by \eqref{equ:ran:Newton}. Then, we select $\barmu_k$ such that
\begin{equation}\label{equ:ran:cond:2}
\left(\begin{smallmatrix}
\bnabla_{\bx}\mL_{\barmu_k, \nu}^k\\
\bnabla_{\blambda}\mL_{\barmu_k, \nu}^k
\end{smallmatrix}\right)^T\left(\begin{smallmatrix}
\barDelta\bx_k\\
\barDelta\blambda_k
\end{smallmatrix}\right) \leq - \frac{ \gamma_{RH}\wedge\nu}{2}\nbr{\left(\begin{smallmatrix}
\barDelta\bx_k\\
G_k\bnabla_{\bx}\mL_k
\end{smallmatrix}\right)}^2 \; \text{\ and\ }\; \nbr{c_k}\leq \|\bnabla\mL_{\barmu_k, \nu}^k\|,
\end{equation}
where $\gamma_{RH}\in(0,1]$ is the lower bound of $B_k$ in the null space of $G_k$ (cf.~Assumption \ref{ass:ran:1}), which is an input of our algorithm, and $\bnabla\mL_{\barmu_k, \nu}^k$ can be expressed~as
\begin{align}\label{N:13}
\begin{pmatrix}
\bnabla_{\bx}\mL_{\barmu_k, \nu}^k\\
\bnabla_{\blambda}\mL_{\barmu_k, \nu}^k
\end{pmatrix} = \begin{pmatrix}
\rbr{I+\nu\barM_kG_k}\bnabla_{\bx}\mL_k + \barmu_kG_k^Tc_k\\
c_k + \nu G_kG_k^TG_k\bnabla_{\bx}\mL_k
\end{pmatrix}.
\end{align}
The first condition in \eqref{equ:ran:cond:2} is similar to Line 4 in Algorithm \ref{alg:SQP}, except that we do not update $\delta_k$ but fix it to be $(\gamma_{RH}\wedge \nu)/2$ for simplicity. The second condition is our adjustment introduced at the beginning of this section, which is not~required in Algorithm \ref{alg:SQP} but is critical in our StoSQP. This condition bounds the feasibility error by the magnitude of the gradient of the augmented Lagrangian, so that we could converge to a feasible stationary point as the gradient vanishes, which is a KKT point as implied by Proposition \ref{prop:1}. We will show in Lemma \ref{lem:9} that both conditions in \eqref{equ:ran:cond:2} can be satisfied for sufficiently large $\barmu_k$. Note that the other penalty parameter $\nu >0$ is an input of our algorithm and need not to be updated with iteration.

\vskip 2pt
\noindent\textbf{Step 3: Estimate the merit function.} Given the selected $\barmu_k$ from Step 2, we estimate the merit function that is used in the line search step. Recall that $\baralpha_k$ is from the $(k-1)$-th step. We let $\bx_{s_k} = \bx_k + \baralpha_k\barDelta \bx_k$ (similar for $\blambda_{s_k}$) be the test point, and let $c_{s_k} = c(\bx_{s_k})$, $G_{s_k} = G(\bx_{s_k})$. We then generate a set of independent realizations $\xi_f^k$ and let
\begin{align*}
\barf_k & = \frac{1}{|\xi_f^k|}\sum_{\zeta\in \xi_f^k}f(\bx_k; \zeta),  \quad\quad\quad  \barf_{s_k} =  \frac{1}{|\xi_f^k|}\sum_{\zeta\in \xi_f^k}f(\bx_{s_k}; \zeta), \\
\bnabla  f_k & = \frac{1}{|\xi_f^k|}\sum_{\zeta\in \xi_f^k}\nabla f(\bx_k; \zeta), \quad \; \bnabla  f_{s_k} = \frac{1}{|\xi_f^k|}\sum_{\zeta\in \xi_f^k}\nabla f(\bx_{s_k}; \zeta).
\end{align*}
Note that we distinguish $\bnabla f_k$ from $\barg_k$ in \eqref{def:g}, although they are both estimates of $\nabla f_k$. This simplifies our analysis as the randomness in each step~is~independent from other steps. Then, we let
\begin{equation}\label{N:12}
\begin{aligned}
\barL_{\barmu_k, \nu}^k & = \barf_k + c_k^T\blambda_k + \frac{\barmu_k}{2}\|c_k\|^2 + \frac{\nu}{2}\|G_k(\bnabla f_k + G^T_k\blambda_k) \|^2,\\
\barL_{\barmu_k, \nu}^{s_k} & =  \barf_{s_k} + c_{s_k}^T\blambda_{s_k}+\frac{\barmu_k}{2}\|c_{s_k}\|^2 + \frac{\nu}{2}\|G_{s_k}(\bnabla f_{s_k} + G_{s_k}^T\blambda_{s_k}) \|^2
\end{aligned}
\end{equation}
be the estimates of $\mL_{\barmu_k, \nu}^k$ and $\mL_{\barmu_k, \nu}^{s_k}$. For some constants $\kappa_f>0, p_f\in(0,1)$ to be chosen later, we define the event
\begin{equation}\label{event:B_k}
\mB_k = \bigg\{ \abr{\barL_{\barmu_k, \nu}^k - \mL_{\barmu_k, \nu}^k} \vee \abr{\barL_{\barmu_k, \nu}^{s_k} - \mL_{\barmu_k, \nu}^{s_k}} \leq  -\kappa_f\baralpha_k^2\begin{pmatrix}
\bnabla_{\bx}\mL_{\barmu_k, \nu}^k\\
\bnabla_{\blambda}\mL_{\barmu_k, \nu}^k
\end{pmatrix}^T\begin{pmatrix}
\barDelta\bx_k\\
\barDelta\blambda_k
\end{pmatrix} \bigg\},
\end{equation}
for $|\xi_f^k|$ large enough so that
\begin{equation}\label{equ:ran:cond:3}
P(\mB_k^c \mid \bx_k, \blambda_k, \barDelta\bx_k, \barDelta\blambda_k)\leq p_f
\end{equation}
and
\begin{equation}\label{equ:ran:cond:4}
\mE_{\xi_f^k}[|\barL_{\barmu_k, \nu}^k - \mL_{\barmu_k, \nu}^k|^2] \vee \mE_{\xi_f^k}[|\barL_{\barmu_k, \nu}^{s_k} - \mL_{\barmu_k, \nu}^{s_k}|^2] \leq \barepsilon_k^2.
\end{equation}
Here $\barepsilon_k$ is updated with the iteration. Again, we will show in Lemma \ref{lem:cond:2} that \eqref{equ:ran:cond:3} and \eqref{equ:ran:cond:4} can be satisfied for sufficiently large $|\xi_f^k|$. Different from $\xi_g^k$, we do not require $|\xi_f^k|$ to be an increasing sequence.

\vskip 2pt
\noindent\textbf{Step 4: Perform the line search step.}
Using the estimates defined above, we update the iterate based on whether the Armijo condition is satisfied.

\noindent(a) If the Armijo condition holds:
\begin{equation}\label{equ:ran:Armijo}
\barL_{\barmu_k, \nu}^{s_k} \leq \barL_{\barmu_k, \nu}^k +
\baralpha_k\beta\begin{pmatrix}
\bnabla_{\bx}\mL_{\barmu_k, \nu}^k\\
\bnabla_{\blambda}\mL_{\barmu_k, \nu}^k
\end{pmatrix}^T\begin{pmatrix}
\barDelta\bx_k\\
\barDelta\blambda_k
\end{pmatrix} ,
\end{equation}
then we let $\bx_{k+1} = \bx_{s_k}$, $\blambda_{k+1} = \blambda_{s_k}$ and increase the stepsize by $\baralpha_{k+1} = \rho\baralpha_k \wedge \alpha_{max}$, with $\rho>1$. Moreover, if we observe a sufficient decrease, that is,
\begin{equation}\label{equ:sufficient:dec}
-\baralpha_k\beta\begin{pmatrix}
\bnabla_{\bx}\mL_{\barmu_k, \nu}^k\\
\bnabla_{\blambda}\mL_{\barmu_k, \nu}^k
\end{pmatrix}^T\begin{pmatrix}
\barDelta\bx_k\\
\barDelta\blambda_k
\end{pmatrix} \geq \barepsilon_k,
\end{equation}
we then increase $\barepsilon_k$ as $\barepsilon_{k+1} = \rho\barepsilon_k$, otherwise $\barepsilon_{k+1} = \barepsilon_k/\rho$.

\noindent(b) If the Armijo condition \eqref{equ:ran:Armijo} does not hold, we do not update the current iterate and let $\bx_{k+1} = \bx_k$, $\blambda_{k+1} = \blambda_k$, and decrease the stepsize $\baralpha_k$ by $\baralpha_{k+1} = \baralpha_k/\rho$ and $\barepsilon_k$ by $\barepsilon_{k+1} = \barepsilon_k/\rho$.

The four steps are summarized in Algorithm \ref{alg:ASto:SQP}. Before delving into the algorithm, we provide few remarks.

\begin{algorithm}[!tp]
\caption{An Adaptive StoSQP with Exact Augmented Lagrangian}\label{alg:ASto:SQP}
\begin{algorithmic}[1]
\State \textbf{Input:} initial iterate $(\bx_0, \blambda_0)$, parameters $\gamma_{RH}\in(0, 1]$, $\baralpha_0 = \alpha_{max}>0$, $\nu$, $\barmu_0$, $\barepsilon_0$, $\kappa_{grad}>0$, $\rho>1$, $p_{grad}, p_f, \beta\in(0, 1)$, $\kappa_f\in(0, \beta/4\alpha_{max}]$;
		
\For{$k = 0,1,2\ldots$}
\State Generate $\xi_g^k$ and compute $\bnabla_{\bx}\mL_k, \barM_k$, such that $|\xi_g^k|\geq |\xi_g^{k-1}|+1$ ($|\xi_g^{-1}| = 0$) and
\begin{equation*}
P(\mA_k^c \mid \bx_k, \blambda_k) \leq p_{grad};
\end{equation*}
		
\State Generate $B_k$ such that $\bx^TB_k\bx \geq \gamma_{RH}\|\bx\|^2$, for any $\bx \in \{\bx: G_k\bx = \0, \bx\neq \0\}$, and compute $(\barDelta\bx_k, \barDelta\blambda_k)$ by solving \eqref{equ:ran:Newton};
		
\While{$\begin{pmatrix}
	\bnabla_{\bx}\mL_{\barmu_k, \nu}^k\\
	\bnabla_{\blambda}\mL_{\barmu_k, \nu}^k
	\end{pmatrix}^T\begin{pmatrix}
	\barDelta\bx_k\\
	\barDelta \blambda_k
	\end{pmatrix}>-\frac{\gamma_{RH}\wedge\nu}{2}\nbr{\begin{pmatrix}
		\barDelta\bx_k\\
		G_k\bnabla_{\bx}\mL_k
		\end{pmatrix} }^2$ OR $\|c_k\|>\|\bnabla\mL_{\barmu_k, \nu}^k\|$}
\State Let $\barmu_k = \rho\barmu_k$;
\EndWhile
		
\State Generate $\xi_f^k$ and compute $\barL_{\barmu_k, \nu}^k$ and $\barL_{\barmu_k, \nu}^{s_k}$, such that
\begin{align*}
P(\mB_k^c \mid \bx_k, \blambda_k, \barDelta\bx_k, \barDelta\blambda_k)\leq & p_f,\\
\mE_{\xi_f^k}[|\barL_{\barmu_k, \nu}^k - \mL_{\barmu_k, \nu}^k|^2] \vee \mE_{\xi_f^k}[|\barL_{\barmu_k, \nu}^{s_k} - \mL_{\barmu_k, \nu}^{s_k}|^2] \leq & \barepsilon_k^2;
\end{align*}
		
\If{$\barL_{\barmu_k, \nu}^{s_k} \leq \barL_{\barmu_k, \nu}^k + \baralpha_k \beta \begin{pmatrix}
	\bnabla_{\bx}\mL_{\barmu_k, \nu}^k\\
	\bnabla_{\blambda}\mL_{\barmu_k, \nu}^k
	\end{pmatrix}^T\begin{pmatrix}
	\barDelta\bx_k\\
	\barDelta\blambda_k
	\end{pmatrix}$} \Comment{Successful step}
\State Let $\begin{pmatrix}
\bx_{k+1}\\
\blambda_{k+1}
\end{pmatrix} = \begin{pmatrix}
\bx_k\\
\blambda_k
\end{pmatrix} + \baralpha_k\begin{pmatrix}
\barDelta\bx_k\\
\barDelta\blambda_k
\end{pmatrix}$;
\State Set $\baralpha_{k+1} = \alpha_{max} \wedge \rho\baralpha_k$;
\If{$- \baralpha_k\beta \begin{pmatrix}
	\bnabla_{\bx}\mL_{\barmu_k, \nu}^k\\
	\bnabla_{\blambda}\mL_{\barmu_k, \nu}^k
	\end{pmatrix}^T\begin{pmatrix}
	\barDelta\bx_k\\
	\barDelta\blambda_k
	\end{pmatrix} \geq \barepsilon_k$} \Comment{Reliable step}
\State Let $\barepsilon_{k+1} = \rho\barepsilon_k$;
\Else \Comment{Unreliable step}
\State Let $\barepsilon_{k+1} = \barepsilon_k/\rho$;
\EndIf
\Else \Comment{Unsuccessful step}
\State Let $\begin{pmatrix}
\bx_{k+1}\\
\blambda_{k+1}
\end{pmatrix} = \begin{pmatrix}
\bx_k\\
\blambda_k
\end{pmatrix}, \baralpha_{k+1} = \baralpha_k/\rho, \barepsilon_{k+1} = \barepsilon_k/\rho$;
\EndIf
\State $\barmu_{k+1} = \barmu_k$;
\EndFor
\end{algorithmic}
\end{algorithm}

\begin{remark}

Our condition \eqref{equ:ran:cond:4} simplifies the condition of \cite[(2.3)]{Paquette2020Stochastic}, which~requires the variance of the merit function estimates to be bounded by $\max\{\barepsilon_k^2, \T_k\}$ for a term $\T_k$ that depends on the gradient $\nabla f_k$. As explained in \cite[Remark 1]{Paquette2020Stochastic}, $\T_k$ is unknown in stochastic optimization. Adding the term $\T_k$ is just to~make algorithm more flexible, in case one has external knowledge of $\nabla f_k$. Our paper does not assume the access to $\nabla f_k$; hence we remove $\T_k$ to make the condition \eqref{equ:ran:cond:4} checkable in practice.

\end{remark}

\begin{remark}
Compared to \cite{Berahas2021Sequential} which generates a single sample in each iteration, and Algorithm \ref{alg:NSto:SQP} which generates two samples, the line search StoSQP~in~Algorithm \ref{alg:ASto:SQP} requires much more samples. Seeing from conditions \eqref{equ:ran:cond:1}, \eqref{equ:ran:cond:3}, and~\eqref{equ:ran:cond:4}, we require a more precise model estimation to make the selected stochastic stepsize informative. The benefit of generating more samples is that the line search scheme is more adaptive than \cite{Berahas2021Sequential} and Algorithm \ref{alg:NSto:SQP}, in the sense that~no prespecified sequence, which highly affects the performance and determines the convergence behavior, is required. As introduced earlier, achieving  adaptivity at the cost of sample complexity is common in the~literature~\citep{Friedlander2012Hybrid, Byrd2012Sample, Krejic2013Line, De2017Automated, Bollapragada2018Adaptive}. As part of the line search, Algorithm \ref{alg:ASto:SQP} has to evaluate the merit function for checking the Armijo condition, which is not required for Algorithm \ref{alg:NSto:SQP}, and also for \cite{Berahas2021Sequential} if the Lipschitz constants of objective and constraints are given.

\end{remark}

We also comment on the randomness of the iteration. Let $\mF_0\subseteq \mF_1\subseteq\mF_2\ldots$ be a filtration of $\sigma$-algebras where $\mF_k$ is generated by $\{\xi_f^j, \xi_g^j\}_{j=0}^k$. Let $\mF_{k-0.5}$~be the $\sigma$-algebra generated by $\{\xi_f^j, \xi_g^j\}_{j=0}^{k-1} \cup \xi_g^k$. We have $\mF_{k-1} \subseteq\mF_{k-0.5}\subseteq\mF_k$. Finally, we let $\mF_{-1} = \sigma(\bx_0, \blambda_0)$. By $\barmu_k$ we denote the quantity obtained after the While loop in Line 5 of Algorithm \ref{alg:ASto:SQP}. By the construction, it is easy to see
\begin{equation*}
\sigma(\bx_k, \blambda_k)\cup \sigma(\baralpha_k)\cup \sigma(\barepsilon_k) \subseteq \mF_{k-1}, \quad \sigma(\barDelta\bx_k, \barDelta\blambda_k) \cup \sigma(\barmu_k) \subseteq \mF_{k-0.5}, \quad \forall k\geq 0.
\end{equation*}
In particular, the filtration $\mF_{k-1}$ contains all the randomness before the $k$-th iteration in Line 3. In the $k$-th iteration, we first generate $\xi_g^k$ and obtain $\mF_{k-0.5}$. Then, we compute the search direction $(\barDelta\bx_k, \barDelta\blambda_k)$ and $\barmu_k$. Finally, we generate $\xi_f^k$ and obtain $\mF_k$, allowing us to obtain $(\bx_{k+1}, \blambda_{k+1})$.

In the line search step, which starts from Line 9 in Algorithm~\ref{alg:ASto:SQP}, $\barepsilon_k$~characterizes the reliability of the decrease we observe on the stochastic merit function. In particular, each iteration is divided into a successful step (Armijo condition is satisfied) or an unsuccessful step (Armijo condition is not satisfied). If the step is successful, we further divide it into a reliable step or an unreliable step. When the amount of decrease on the stochastic merit function is greater than $\barepsilon_k$, we classify it as a reliable step, because it reduces the deterministic merit function with high probability as well. We then increase $\barepsilon_k$ and require a less accurate model in the next iteration (see \eqref{equ:ran:cond:4}). When the observed decrease is less than $\barepsilon_k$, we decrease $\barepsilon_k$, as we are not confident that the deterministic merit function is decreased. Thus, we require a more accurate model in the next iteration.

\subsection{Well-posedness of Algorithm \ref{alg:ASto:SQP}}\label{sec:4.2}

We study the well-posedness of Algorithm \ref{alg:ASto:SQP} and show that each of its step can be performed in finite time. We first lay out the assumption.

\begin{assumption}\label{ass:A:1}

We assume that Assumption \ref{ass:ran:1} holds for the iterates generated by Algorithm~\ref{alg:ASto:SQP}. In particular, the iterates $(\bx_k, \blambda_k)$ and trial points $(\bx_{s_k}, \blambda_{s_k})$ are contained in a convex compact set $\mX \times \Lambda$. The functions $f$ and $c$ are thrice continuously differentiable over $\mX$; and the Jacobian $G(\bx) = \nabla^Tc(\bx)$ has full row rank over $\bx\in\mX$. The sequence $\{B_k\}_k$ satisfies $\bx^TB_k\bx \geq \gamma_{RH} \|\bx\|^2$ for any~$\bx\in \{\bx: G_k\bx = \0, \bx \neq \0\}$, and $\|B_k\|\leq \kappa_B$ for constants $0<\gamma_{RH}\leq 1\leq \kappa_B$. Furthermore, we assume $\|\nabla\mL_k\|\wedge  \|(\barDelta\bx_k, \barDelta\blambda_k)\|>0$.

\end{assumption}

Note that $\|\nabla\mL_k\|\wedge \|(\barDelta\bx_k, \barDelta\blambda_k)\|>0$ is imposed only for analytical reasons. It allows us to generate an infinite sequence of iterates $\{(\bx_k, \blambda_k)\}_k$. A similar condition can be found in (2.2) in \cite{Bollapragada2018Adaptive}. We point out that a practical algorithm should stop whenever $ \|\bnabla \mL_k\| \wedge \|(\barDelta\bx_k, \barDelta\blambda_k)\|\leq \tau$ for a tolerance~$\tau$. By Assumption~\ref{ass:A:1}, bounds in \eqref{Prop:1} hold immediately. We also strengthen the~bounded variance condition in Assumption \ref{ass:ran:2} to the following boundedness condition.

\begin{assumption}\label{ass:A:2}
We assume that, for any $\xi \sim \mP$ and $\bx\in \mX$, $|f(\bx; \xi) - f(\bx)| \leq \Omega_0$, $\|\nabla f(\bx;\xi) - \nabla f(\bx)\|\leq \Omega_1$, and $\|\nabla^2 f(\bx;\xi) - \nabla^2 f(\bx)\|\leq \Omega_2$ for constants $\Omega_0, \Omega_1, \Omega_2 >0$.
\end{assumption}

We require Assumption \ref{ass:A:2} when applying (matrix) Bernstein concentration inequality \citep[Theorem 6.1.1]{Tropp2015Introduction}, which is used for characterizing the batch sizes required to ensure conditions, \eqref{equ:ran:cond:1}, \eqref{equ:ran:cond:3}, and \eqref{equ:ran:cond:4}, on the model precision to~hold. The same concentration result holds if random errors have~sub-exponential tail. See \citep[Theorems 6.1 and 6.2]{Tropp2011User} for definitions and details. Either condition on the error is stronger than the bounded variance condition, which does not imply an exponential tail in general. On the other hand, Assumption \ref{ass:A:2} is widely used in subsampling analysis \citep{Tripuraneni2018Stochastic, RoostaKhorasani2018Sub}; and reasonably holds if iterates are bounded (as assumed in Assumption \ref{ass:A:1}) and we target a finite-sum problem. Assuming that the gradient error is bounded is also required for ensuring the penalty parameter to stabilize in \cite[Proposition 3.18]{Berahas2021Sequential}, and we obtain similar guarantee in Lemma~\ref{lem:9}.

We are now ready for analyzing Algorithm \ref{alg:ASto:SQP}. To show that Algorithm \ref{alg:ASto:SQP} is well-posed, it suffices to show that the condition \eqref{equ:ran:cond:1} in Line 3, the condition \eqref{equ:ran:cond:2} in Line 5, and the conditions \eqref{equ:ran:cond:3} and \eqref{equ:ran:cond:4} in Line 8 can be satisfied. We study them in the next three lemmas.

\begin{lemma}\label{lem:cond:1}
Under Assumptions \ref{ass:A:1} and \ref{ass:A:2}, the condition \eqref{equ:ran:cond:1} can be satisfied~using Algorithm \ref{alg:sample} with a large enough constant $C_{grad}$. Moreover, Algorithm \ref{alg:sample} terminates in finite time.
\end{lemma}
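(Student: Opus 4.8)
The plan is to bound the error quantity inside the event $\mA_k$ by a concentration inequality for bounded, mean-zero sample averages, and then to match the resulting tail against the stopping criterion \eqref{cond:11}. Write $e_g := \barg_k - \nabla f_k$ and $e_H := \barH_k - \nabla^2 f_k$; by Assumption \ref{ass:A:1} each is an average of $|\xi_g^k|$ i.i.d.\ mean-zero terms bounded in norm by $M_1$ and $M_2$ (mean-zero since $f(\bx)=\mE[f(\bx;\xi)]$). The norm on the left of the inequality in \eqref{event:A_k} — call it $E_k$; note it equals $\|\bnabla\mL_{\mu, \nu}^k-\nabla\mL_{\mu, \nu}^k\|$ and so is independent of $\mu$ — decomposes, as in the chain of bounds leading to \eqref{def:bound:M}, into $e_g$, a term $(\barM_k-M_k)G_k\nabla_{\bx}\mL^k$, a term $\barM_kG_ke_g$, and a term $\nu G_kG_k^TG_ke_g$; using $\|\barM_k-M_k\|\le C_M(\|e_g\|+\|e_H\|)$ together with the uniform bounds in \eqref{Prop:1} (keeping $\|\barM_k\|\le\kappa_M$ intact rather than expanding it), one gets $E_k\le C(\|e_g\|+\|e_H\|)$ for a constant $C$ depending only on $\nu$ and the constants of \eqref{Prop:1}. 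Vector and matrix Bernstein (or Hoeffding) inequalities for bounded summands then give $P(E_k>t\mid\mF_{k-1})\le 8d\exp\!\big(-c\,|\xi_g^k|\,t^2/C^2\big)$ for a constant $c$ depending on $M_1,M_2$, the factor $8d$ chosen to match \eqref{cond:11}.

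Next I would establish \eqref{equ:ran:cond:1}. Condition on $\mF_{k-1}$ and follow the while loop in Algorithm \ref{alg:sample}: upon termination \eqref{cond:11} fails, so $|\xi_g^k|\cdot\big(\kappa_{grad}^2\baralpha_k^2\|v_k\|^2\wedge 1\big)\ge C_{grad}\log(8d/p_{grad})=:c_0$, where $v_k$ is the vector in \eqref{cond:11}, which is exactly the one on the right of the inequality in \eqref{event:A_k}. Split on the realized value of $\kappa_{grad}\baralpha_k\|v_k\|$: if it is $\ge 1$ then $\mA_k\supseteq\{E_k\le\kappa_{grad}\}$ and $|\xi_g^k|\kappa_{grad}^2\ge c_0$; if it is $<1$ then $\mA_k=\{E_k\le\kappa_{grad}\baralpha_k\|v_k\|\}$ and $|\xi_g^k|\kappa_{grad}^2\baralpha_k^2\|v_k\|^2\ge c_0$. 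In either case, plugging the appropriate $t$ into the concentration bound produces an exponent $\ge c\,c_0/C^2$, hence $P(\mA_k^c\mid\mF_{k-1})\le 8d\,(8d/p_{grad})^{-c\,C_{grad}/C^2}\le p_{grad}$ once $C_{grad}$ is a sufficiently large multiple of $C^2/c$. The delicate point — and the main obstacle — is that $E_k$, the target magnitude $\|v_k\|$, and the sample size $|\xi_g^k|$ all depend on the same draws while $|\xi_g^k|$ is chosen adaptively; this is handled by applying the tail bound to each trial of the while loop, using independence of successive batches and the fact that a trial can survive (i.e.\ \eqref{cond:11} can hold) at a large sample size only when $\|v_k\|$ is atypically small, an event of rapidly decaying probability by the same concentration bound, so the union over trials remains summable.

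For finite termination, observe that $|\xi_g^k|$ grows geometrically along the while loop, while $v_k$, a fixed continuous function of the sample averages $\barg_k,\barH_k$, converges almost surely (by Borel--Cantelli, using the exponential tails above) to its deterministic counterpart $v_k^{\mathrm{det}}$. Moreover $v_k^{\mathrm{det}}\ne 0$ whenever $\|\nabla\mL^k\|>0$, which holds by Assumption \ref{ass:A:1}: if $v_k^{\mathrm{det}}=0$ then $G_kG_k^TG_k\nabla_{\bx}\mL^k=0$, so $G_k\nabla_{\bx}\mL^k=0$ by LICQ, whence its first block forces $\nabla_{\bx}\mL^k+G_k^Tc_k=0$; multiplying by $G_k$ gives $c_k=0$ and then $\nabla_{\bx}\mL^k=0$, i.e.\ $\nabla\mL^k=0$, a contradiction. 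Hence $\|v_k\|\ge\|v_k^{\mathrm{det}}\|/2>0$ eventually, the threshold in \eqref{cond:11} is eventually bounded above, and the geometrically growing $|\xi_g^k|$ surpasses it after finitely many trials almost surely. The $\wedge 1$ truncation in \eqref{cond:11} and the boundedness of the per-sample errors are precisely what make the matching of exponents in the previous paragraph go through uniformly.
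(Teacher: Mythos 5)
Your proposal follows essentially the same route as the paper's proof: the same decomposition of the error in the event $\mA_k$ into $\|\barg_k-\nabla f_k\|$ and $\|\barH_k-\nabla^2 f_k\|$ contributions controlled via \eqref{Prop:1}, the same vector/matrix Bernstein bounds producing the sample-size condition \eqref{cond:11} with the factor $8d$, and the same non-degeneracy argument (if the deterministic counterpart of the denominator vanished, then $G_k\nabla_{\bx}\mL^k=\0$, hence $c_k=\0$ and $\nabla\mL^k=\0$, contradicting Assumption \ref{ass:A:1}) combined with the geometric growth of $|\xi_g^k|$ to get finite termination. Your explicit case split on the $\wedge 1$ truncation and your Borel--Cantelli phrasing of the termination step are only minor variants of the paper's argument, which instead shows that once $|\xi_g^k|$ exceeds a deterministic threshold each While-loop trial terminates with probability at least $0.99$; the content and level of rigor (including the treatment of the data-dependent threshold) match the paper's own proof.
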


\begin{algorithm}[!tp]
	\caption{Sample size selection}
	\label{alg:sample}
	\begin{algorithmic}[1]
		\State \hskip-1.5pt\textbf{Input:} initial $|\xi_g^k|$, known variables $\baralpha_k, c_k, G_k, \kappa_{grad}, p_{grad}, d, \nu$, scalars $\rho>1$, $C_{grad}>0$;
		\While{True}
		\State Generate $|\xi_g^k|$ samples to compute $\bnabla_{\bx}\mL_k$ and $\barM_k$ as in \eqref{def:MT};
		\If{\begin{equation}\label{cond:11}
			|\xi_g^k| < \frac{C_{grad}\log\rbr{\frac{4d}{p_{grad}} }}{\kappa_{grad}^2\cdot \baralpha_k^2\nbr{\begin{pmatrix}
					\bnabla_{\bx}\mL_k + \nu\barM_k G_k\bnabla_{\bx}\mL_k + G_k^Tc_k\\
					\nu G_kG_k^TG_k\bnabla_{\bx}\mL_k
					\end{pmatrix}}^2\wedge 1}
			\end{equation}}
		\State $|\xi_g^k| = \rho|\xi_g^k|$;
		\Else
		\State \textbf{Break};
		\EndIf
		\EndWhile
	\end{algorithmic}
\end{algorithm}

\begin{proof}

Let $P_{\xi_g^k}(\cdot) = P(\cdot \mid \bx_k, \blambda_k)$ be the conditional probability over randomness in $\xi_g^k$. We have
\begin{align*}
&\nbr{\begin{pmatrix}
\barg_k - \nabla f_k + \nu\rbr{\barM_kG_k\bnabla_{\bx}\mL_k - M_kG_k\nabla_{\bx}\mL_k}\\
\nu G_kG_k^TG_k\rbr{\barg_k - \nabla f_k}
\end{pmatrix}}\\
& \leq \nbr{\begin{pmatrix}
\barg_k - \nabla f_k  \\
\nu G_kG_k^TG_k\rbr{\barg_k - \nabla f_k}
\end{pmatrix}} + \nu\nbr{\barM_kG_k\bnabla_{\bx}\mL_k - M_kG_k\nabla_{\bx}\mL_k}\\
& \stackrel{\mathclap{\eqref{Prop:1}}}{\leq} (1+\nu\kappa_{2,G}^{3/2})\nbr{\barg_k - \nabla f_k} + \nu\kappa_{2,G}^{1/2}\big(\|\barM_k - M_k\|\|\barg_k - \nabla f_k\| + \kappa_{\nabla_{\bx}\mL}\|\barM_k - M_k\| \\
&\quad \quad + \kappa_M\|\barg_k - \nabla f_k\|\big)\\
& =  (1 + \nu\kappa_{2,G}^{3/2} + \nu\kappa_{2,G}^{1/2}\kappa_M)\|\barg_k - \nabla f_k\| + \nu\kappa_{2,G}^{1/2}\rbr{\|\barg_k - \nabla f_k\|+ \kappa_{\nabla_{\bx}\mL} } \|\barM_k - M_k\|.
\end{align*}
Moreover, we have
\begin{align}\label{bound:M}
\|\barM_k - M_k\| & \stackrel{\eqref{def:MT}}{\leq}  \sqrt{\kappa_{2,G}}\|\barH_k - \nabla^2f_k\| + \|\barT_k - T_k\| \nonumber\\
& \stackrel{\eqref{def:MT}}{\leq} \sqrt{\kappa_{2,G}}\|\barH_k - \nabla^2f_k\| + \{\|\barg_k - \nabla f_k\|^2\sum_{i=1}^{m}\|\nabla^2c_i(\bx_k)\|^2\}^{1/2} \nonumber\\
& \stackrel{\eqref{Prop:1}}{\leq}\sqrt{\kappa_{2,G}}\|\barH_k - \nabla^2f_k\| + \kappa_{\nabla_{\bx}^2c}\cdot\|\barg_k - \nabla f_k\|.
\end{align}
Combining the above two displays, we obtain
\begin{multline}\label{N:18}
\bigg\|\begin{pmatrix}
\barg_k - \nabla f_k + \nu\rbr{\barM_kG_k\bnabla_{\bx}\mL_k - M_kG_k\nabla_{\bx}\mL_k}\\
\nu G_kG_k^TG_k\rbr{\barg_k - \nabla f_k}
\end{pmatrix}\bigg\|\\
\leq \big\{1+ \nu\kappa_{2,G}^{3/2} + \nu\kappa_{2,G}^{1/2}\kappa_M + \nu\kappa_{2,G}^{1/2}\kappa_{\nabla_{\bx}^2c}\rbr{\|\barg_k - \nabla f_k\| + \kappa_{\nabla_{\bx}\mL}} \big\}\|\barg_k - \nabla f_k\|\\
+ \nu\kappa_{2,G} \rbr{\|\barg_k - \nabla f_k\| + \kappa_{\nabla_{\bx}\mL}}\|\barH_k - \nabla^2f_k\|.
\end{multline}
Let us denote 
\begin{align*}
t_k = \kappa_{grad}\cdot\baralpha_k\nbr{\begin{pmatrix}
\bnabla_{\bx}\mL_k + \nu\barM_k G_k\bnabla_{\bx}\mL_k + G_k^Tc_k\\
\nu G_kG_k^TG_k\bnabla_{\bx}\mL_k
\end{pmatrix}},
\end{align*}
then the above display implies that $\mA_k$ happens by requiring
\begin{align*}
\|\barg_k - \nabla f_k\| & \leq  \frac{t_k\wedge 1 }{2(1+\nu\kappa_{2,G}^{3/2} + \nu\kappa_{2,G}^{1/2}\kappa_M + 2\nu\kappa_{2,G}^{1/2}\kappa_{\nabla_{\bx}\mL}\kappa_{\nabla_{\bx}^2c}) }  \eqqcolon t_1^k, \\
\|\barH_k - \nabla^2 f_k\| & \leq  \frac{t_k }{4\nu\kappa_{2,G}\kappa_{\nabla_{\bx}\mL}} \eqqcolon t_2^k.
\end{align*}
By Assumption \ref{ass:A:2}, we apply Bernstein inequality \cite[Theorem 6.1.1]{Tropp2015Introduction} and have
\begin{align*}
P_{\xi_g^k}\rbr{\|\barg_k - \nabla f_k\| \leq t_1^k} \geq 1 - \frac{p_{grad}}{2},\quad \quad &\text{if }\quad  |\xi_g^k| \geq \frac{4\Omega_1^2}{(t_1^k)^2}\log\rbr{\frac{4d}{p_{grad}}},\\
P_{\xi_g^k}\rbr{\|\barH_k - \nabla^2 f_k\| \leq t_2^k} \geq 1 - \frac{p_{grad}}{2},\quad \quad &\text{if }\quad  |\xi_g^k| \geq \frac{4\Omega_2^2}{(t_2^k)^2}\log\rbr{\frac{4d}{p_{grad}}}.
\end{align*}
Plugging the definitions of $t_1^k$ and $t_2^k$ and using the fact that $\kappa_{2,G}\wedge \kappa_{\nabla_{\bx}^2c} \geq 1$ (cf. \eqref{Prop:1}), we know $P_{\xi_g^k}(\mA_k^c ) \leq p_{grad}$ provided
\begin{equation}\label{cond:1}
|\xi_g^k| \geq \frac{C_{grad}\log\rbr{\frac{4d}{p_{grad}} }}{t_k^2\wedge 1}
\end{equation}
with $C_{grad} = 16\max\{\Omega_1, \Omega_2\}^2\{1+\nu\kappa_{2,G}^{3/2} + \nu\kappa_{2,G}^{1/2}\kappa_M + 2\nu\kappa_{2,G}\kappa_{\nabla_{\bx}\mL}\kappa_{\nabla_{\bx}^2c}\}^2$.~The above condition \eqref{cond:1} is consistent with \eqref{cond:11} in Algorithm \ref{alg:sample}.

To complete the proof, we have to show that \eqref{cond:1} can be satisfied by~Algorithm \ref{alg:sample} and Algorithm \ref{alg:sample} terminates in finite time. This is not immediate, as the right hand side depends on $\xi_g^k$ as well. Intuitively, as $|\xi_g^k|$ increases, the right hand side term will approach a fixed quantity with \textit{nonzero} denominator, so that \eqref{cond:1} can be satisfied finally. Let us define
\begin{equation*}
t_k^{\star} = \kappa_{grad}\cdot\baralpha_k\nbr{\begin{pmatrix}
\nabla_{\bx}\mL_k + \nu M_k G_k\nabla_{\bx}\mL_k + G_k^Tc_k\\
\nu G_kG_k^TG_k\nabla_{\bx}\mL_k
\end{pmatrix}}.
\end{equation*}
By Assumption \ref{ass:A:1}, we have $t_k^{\star}>0$. Otherwise, $\|\nu G_kG_k^TG_k\nabla_{\bx}\mL_k\| = 0$ and thus $\|G_k\nabla_{\bx}\mL_k\| = 0$. Then $\0 = G_k(\nabla_{\bx}\mL_k + \nu M_k G_k\nabla_{\bx}\mL_k + G_k^Tc_k) = G_kG_k^Tc_k$ and thus $c_k = \0$. Finally, $\nabla_{\bx}\mL_k = \0$ and $\nabla\mL_k = \0$, which contradicts $\|\nabla\mL_k\|>0$ in Assumption \ref{ass:A:1}. Moreover, we have
\begin{equation}\label{NN:D:2}
\abr{t_k - t_k^{\star}} \leq \kappa_{grad}\cdot\baralpha_k\nbr{\begin{pmatrix}
	\barg_k - \nabla f_k + \nu\rbr{\barM_kG_k\bnabla_{\bx}\mL_k - M_kG_k\nabla_{\bx}\mL_k}\\
	\nu G_kG_k^TG_k\rbr{\barg_k - \nabla f_k}
	\end{pmatrix}}.
\end{equation}
Similar to the sample complexity shown in \eqref{cond:1}, we let the right hand side of \eqref{NN:D:2} be bounded by $t_k^{\star}/2$, that is
\begin{multline*}
\nbr{\begin{pmatrix}
\barg_k - \nabla f_k + \nu\rbr{\barM_kG_k\bnabla_{\bx}\mL^k - M_kG_k\nabla_{\bx}\mL^k}\\
\nu G_kG_k^TG_k\rbr{\barg_k - \nabla f_k}
\end{pmatrix}}\\ \leq \frac{1}{2}\nbr{\begin{pmatrix}
\nabla_{\bx}\mL_k + \nu M_k G_k\nabla_{\bx}\mL_k + G_k^Tc_k\\
\nu G_kG_k^TG_k\nabla_{\bx}\mL_k.
\end{pmatrix}} = \frac{t_k^\star}{2\kappa_{grad}\baralpha_k}.
\end{multline*}
Replacing $t_k$ in the denominator of \eqref{cond:1} by the right hand side term of the above display, we know that
\begin{align}\label{pequ:Event}
P_{\xi_g^k}(|t_k - t_k^{\star}|  \leq t_k^{\star}/2)\geq 0.99
\end{align}
provided
\begin{equation}\label{N:11}
|\xi_g^k|\geq   \frac{4\kappa_{grad}^2\baralpha_k^2C_{grad}\log\rbr{\frac{4d}{0.01}}}{(t_k^{\star})^2 \wedge 4\kappa_{grad}^2\baralpha_k^2}.
\end{equation}
Under the event \eqref{pequ:Event}, we have $t_k \geq t_k^\star/2 >0$, and hence the condition \eqref{cond:1} is implied by
\begin{equation*}
|\xi_g^k| \geq \frac{4C_{grad}\log\rbr{\frac{4d}{p_{grad}}}}{(t_k^\star)^2\wedge 4}.
\end{equation*}
Finally, combining the above display with \eqref{N:11}, we know that \eqref{cond:1} holds with a nonzero denominator with probability at least 0.99 as long as
\begin{equation}\label{equ:N:1}
|\xi_g^k| \geq \frac{4C_{grad}(\kappa_{grad}^2\baralpha_k^2\vee 1) \log \rbr{\frac{4d}{p_{grad}\wedge 0.01}}}{(t_k^{\star})^2 \wedge 4\kappa_{grad}^2\baralpha_k^2\wedge 4}.
\end{equation}
Note that the right hand side term is a deterministic quantity conditional on $(\bx_k, \blambda_k)$. Since Algorithm \ref{alg:sample} increases $|\xi_g^k|$ by a factor of $\rho$ in each iteration, the above requirement will finally be satisfied. After $|\xi_g^k|$ exceeds the right hand side threshold, \eqref{cond:1} (i.e. \eqref{cond:11} in Algorithm \ref{alg:sample}) is satisfied with probability at least 0.99 in each While loop iteration. Thus, the While loop will stop in finite time with probability one. This completes the proof. \qed
\end{proof}

As shown in \eqref{cond:11}, the sample complexity $|\xi_g^k|$ depends on a tuning parameter $C_{grad}$. This dependence seems unavoidable and appears in different forms in \cite{Bollapragada2018Adaptive, Paquette2020Stochastic}. The expression of $C_{grad}$ is provided after \eqref{cond:1}. Fortunately, noting~that the denominator in \eqref{cond:11} is bounded by $\|\bnabla\mL_k\|^2$ which, as the estimate of~$\|\nabla\mL_k\|^2$, should be close to zero when $k$ is large, the effect of the tuning parameter $C_{grad}$ is negligible. The sample complexity is proportional to the reciprocal of the square of the gradient, which is also standard in the literature \citep{Paquette2020Stochastic}.

In addition, we note that the right-hand side term of \eqref{cond:11} is computed by the generated samples $\xi_g^k$. As we showed in \eqref{equ:N:1}, there is a deterministic (conditional on $\mF_{k-1}$) threshold on $|\xi_g^k|$ such that, if $|\xi_g^k|$ is above the threshold, then the inequality \eqref{cond:11} on $|\xi_g^k|$ does not hold with high probability.  Thus, the~While~loop in Algorithm \ref{alg:sample} always terminates in finite time (with probability one).~In the implementation, if we encounter a rare situation where the denominator~in \eqref{cond:11} is zero, we can just keep increasing $|\xi_g^k|$ by Line 5. As we proved (cf. statement before \eqref{NN:D:2}), the denominator is finally nonzero since
\begin{equation*}
\left(\begin{smallmatrix}
\bnabla_{\bx}\mL_k + \nu\barM_k G_k\bnabla_{\bx}\mL_k + G_k^Tc_k\\
\nu G_kG_k^TG_k\bnabla_{\bx}\mL_k
\end{smallmatrix}\right) \stackrel{|\xi_g^k|\rightarrow\infty}{\longrightarrow}\left(\begin{smallmatrix}
\nabla_{\bx}\mL_k + \nu M_k G_k\nabla_{\bx}\mL_k + G_k^Tc_k\\
\nu G_kG_k^TG_k\nabla_{\bx}\mL_k
\end{smallmatrix}\right)\neq \0.
\end{equation*}

The following result shows that the conditions \eqref{equ:ran:cond:3} and \eqref{equ:ran:cond:4} can be satisfied. The analysis follows the same structure as the one in Lemma \ref{lem:cond:1}, and applies Bernstein inequality \cite[Theorem 6.1.1]{Tropp2015Introduction}. We defer the proof to the appendix.

\begin{lemma}\label{lem:cond:2}
Under Assumptions \ref{ass:A:1} and \ref{ass:A:2}, the conditions \eqref{equ:ran:cond:3} and \eqref{equ:ran:cond:4} are satisfied if
\begin{equation}\label{cond:2}
|\xi_f^k|  \geq \frac{C_f\log\rbr{\frac{8d}{p_f}}}{\cbr{\kappa_f\baralpha_k^2\begin{pmatrix}
\bnabla_{\bx}\mL_{\barmu_k, \nu}^k\\
\bnabla_{\blambda}\mL_{\barmu_k, \nu}^k
\end{pmatrix}^T\begin{pmatrix}
\barDelta\bx_k\\
\barDelta\blambda_k
\end{pmatrix}}^2\wedge \barepsilon_k^2\wedge 1}
\end{equation}
for a large enough constant $C_f$.
\end{lemma}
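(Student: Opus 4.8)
The plan is to bound the two quantities $\mE_{\xi_f^k}[|\barL_{\barmu_k,\nu}^k - \mL_{\barmu_k,\nu}^k|^2]$ and $P(\mB_k^c\mid \mathcal{F}_{k-0.5})$ separately, using a standard vector Bernstein/Hoeffding concentration argument on the sample average $\barf_k - f_k$ (and likewise at $\bx^{s_k}$), together with the bounded-noise hypothesis $|f(\bx;\xi) - f(\bx)|\le M_0$ and $\|\nabla f(\bx;\xi) - \nabla f(\bx)\|\le M_1$ from Assumption~\ref{ass:A:1}. First I would write out the difference $\barL_{\barmu_k,\nu}^k - \mL_{\barmu_k,\nu}^k$ explicitly from \eqref{N:12} and the definition \eqref{equ:augmented:L}: it equals $(\barf_k - f_k) + \frac{\nu}{2}\bigl(\|G_k(\bnabla f_k + G_k^T\blambda^k)\|^2 - \|G_k(\nabla f_k + G_k^T\blambda^k)\|^2\bigr)$, i.e. a linear term in $\barf_k - f_k$ plus a term that, after expanding the difference of squares, is Lipschitz in $\bnabla f_k - \nabla f_k$ with a constant controlled by $\kappa_{2,G}$, $\kappa_{\nabla_\bx\mL}$ (via \eqref{Prop:1}) and the diameter of $\mX\times\Lambda$. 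Note $\barmu_k$ does not enter this difference at all, which is the key structural fact (already emphasized after \eqref{event:A_k}); this is what makes the bound uniform over the random $\barmu_k$.

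Next, since $\barf_k - f_k = \frac{1}{|\xi_f^k|}\sum_{\zeta}(f(\bx^k;\zeta) - f(\bx^k))$ is an average of $|\xi_f^k|$ i.i.d.\ bounded zero-mean terms, and similarly $\bnabla f_k - \nabla f_k$ is an average of bounded zero-mean vectors, I would invoke scalar Hoeffding (for $\barf_k - f_k$) and a vector Bernstein/matrix-Hoeffding inequality (for $\bnabla f_k - \nabla f_k$), getting that with probability at least $1 - p_f/2$ one has $|\barf_k - f_k|\le C M_0\sqrt{\log(4/p_f)/|\xi_f^k|}$ and $\|\bnabla f_k - \nabla f_k\|\le C M_1\sqrt{d\log(4/p_f)/|\xi_f^k|}$, and the same at $\bx^{s_k}$. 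Combining these via the Lipschitz decomposition above yields $|\barL_{\barmu_k,\nu}^k - \mL_{\barmu_k,\nu}^k|\le C'\sqrt{\log(4/p_f)/|\xi_f^k|}$ on the intersection of these four events, whose probability is at least $1 - p_f$ by a union bound. For \eqref{equ:ran:cond:4} I would instead take expectations directly: $\mE_{\xi_f^k}[|\barf_k - f_k|^2] \le M_0^2/|\xi_f^k|$ by independence, and similarly $\mE[\|\bnabla f_k - \nabla f_k\|^2]\le M_1^2/|\xi_f^k|$, so $\mE_{\xi_f^k}[|\barL_{\barmu_k,\nu}^k - \mL_{\barmu_k,\nu}^k|^2] \le C''/|\xi_f^k|$. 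Requiring $|\xi_f^k|$ to exceed the right-hand side of \eqref{cond:2} with $C_f$ absorbing all the constants $M_0, M_1, \nu, \kappa_{2,G}, \kappa_{\nabla_\bx\mL}, d$ and the set diameter then forces both the high-probability bound and the mean-square bound to be at most the target $-\kappa_f\baralpha_k^2(\bnabla_\bx\mL_{\barmu_k,\nu}^k, \bnabla_\blambda\mL_{\barmu_k,\nu}^k)^T(\barDelta\bx^k,\barDelta\blambda^k)$ respectively $\barepsilon_k^2$; the $\wedge 1$ inside the denominator of \eqref{cond:2} handles the degenerate case where that inner product is small, and the $\wedge\barepsilon_k^2$ makes \eqref{equ:ran:cond:4} hold simultaneously.

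One technical point to be careful about: the events $\mB_k$ and $\mathcal{F}_{k-0.5}$-measurability. The quantities $\barmu_k$, $\barDelta\bx^k$, $\barDelta\blambda^k$, $\bx^{s_k}$, $c_k$, $G_k$ are all $\mathcal{F}_{k-0.5}$-measurable, and $\xi_f^k$ is drawn independently afterward, so conditioning on $(\bx^k,\blambda^k,\barDelta\bx^k,\barDelta\blambda^k)$ (equivalently on $\mathcal{F}_{k-0.5}$) all the "constants" in the concentration bounds are indeed fixed and the i.i.d.\ structure of $\xi_f^k$ is intact; the right-hand side of \eqref{cond:2} is thus a legitimate ($\mathcal{F}_{k-0.5}$-measurable) sample-size prescription, exactly as needed for a well-posed algorithm. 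I expect the main obstacle to be bookkeeping the Lipschitz constant of the $\frac{\nu}{2}\|G\nabla_\bx\mL\|^2$ term uniformly over $\mX\times\Lambda$ — one must verify it depends only on the fixed problem bounds in \eqref{Prop:1} and not on $\barmu_k$ or the iterate-dependent direction — but this is routine given Assumption~\ref{ass:A:1} and the compactness of $\mX\times\Lambda$; no genuinely new idea beyond standard concentration is required, which is presumably why the lemma is stated with the constant $C_f$ left implicit.
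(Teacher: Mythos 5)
Your proposal is correct and follows essentially the same route as the paper's proof: the same decomposition of $\barL_{\barmu_k,\nu}^k-\mL_{\barmu_k,\nu}^k$ into a function-error term plus a gradient-error term with coefficients from \eqref{Prop:1} (the $\barmu_k\|c_k\|^2/2$ term cancelling), Bernstein-type concentration with a union bound for \eqref{equ:ran:cond:3}, and a direct variance computation for \eqref{equ:ran:cond:4}, with all constants absorbed into $C_f$. The only cosmetic differences are the union-bound bookkeeping (four sub-events at level $p_f/2$ give failure probability $2p_f$, so you should allot $p_f/4$ to each) and the $\sqrt{d}$ in your gradient concentration, which the paper avoids via a dimension-free vector Bernstein inequality; both are harmlessly absorbed by the unspecified constant $C_f$.
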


\begin{proof}
See Appendix \ref{pf:lem:cond:2}. \qed
\end{proof}

Unlike Lemma \ref{lem:cond:1}, the denominator in \eqref{cond:2} is computable given $\mF_{k-0.5}$. As a result, there is no need to apply a While loop to find $\xi_f^k$ iteratively, as we did in Algorithm \ref{alg:sample} for $\xi_g^k$. The tuning parameter $C_f$ is similar to $C_{grad}$ in Lemma \ref{lem:cond:1} and has an ignorable effect on performance due to the small magnitude of the denominator. The expression of $C_f$ is provided after \eqref{equ:N:2}. Again, we should emphasize that the denominator in \eqref{cond:2} is nonzero. Otherwise, by \eqref{equ:ran:cond:2} we know $\barDelta\bx_k = \0$ and $G_k\bnabla_{\bx}\mL_k = \0$. Then, by \eqref{equ:ran:Newton} we know $\barDelta\blambda_k = \0$, which contradicts $\|(\barDelta\bx_k, \barDelta\blambda_k)\|>0$ in Assumption \ref{ass:A:1}.

The last result of this subsection shows that the condition \eqref{equ:ran:cond:2} on $\barmu_k$ can be satisfied, so that the While loop in Line 5 of Algorithm \ref{alg:ASto:SQP} will stop in finite time. Furthermore, like in deterministic SQP in Algorithm~\ref{alg:SQP}, $\barmu_k$ will stabilize~after a number of iterations.

\begin{lemma}\label{lem:9}
Under Assumptions \ref{ass:A:1} and \ref{ass:A:2}, the condition \eqref{equ:ran:cond:2} can be satisfied by the While loop in Line 5 of Algorithm \ref{alg:ASto:SQP}. Furthermore, there exists a deterministic constant $\tmu>0$ such that $\barmu_k = \barmu_{\barK} \leq \tmu$, $\forall k\geq \barK$ for some $\barK <\infty$.
\end{lemma}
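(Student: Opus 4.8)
The plan is to mirror the deterministic argument behind Theorem \ref{thm:1}, adapting it to the stochastic quantities. First I would establish that the While loop in Line 5 terminates for a fixed iteration $k$: using the analogue of Lemma \ref{lem:1} applied to the stochastic Newton system \eqref{equ:ran:Newton}, one writes
\[
\begin{pmatrix}
\bnabla_{\bx}\mL_{\barmu_k, \nu}^k\\
\bnabla_{\blambda}\mL_{\barmu_k, \nu}^k
\end{pmatrix}^T\begin{pmatrix}
\barDelta\bx^k\\
\barDelta\blambda^k
\end{pmatrix}
= -(\barDelta\bx^k)^TB_k\barDelta\bx^k + c_k^T(\barDelta\blambda^k+\tDelta\blambda^k) - \barmu_k\|c_k\|^2 - \nu\|G_k\bnabla_{\bx}\mL^k\|^2 .
\]
Since $B_k$ is uniformly positive definite on the kernel of $G_k$ with lower bound $\gamma_{RH}$ (Assumption \ref{ass:A:1}) and $\barDelta\bx^k$ decomposes into a tangential and a normal component, the term $-(\barDelta\bx^k)^TB_k\barDelta\bx^k$ is bounded above by $-\gamma_{RH}\|\barDelta\bx^k\|^2 + (\text{const})\|c_k\|^2$ (the normal component is $O(\|c_k\|)$ via the full-rank Jacobian bounds in \eqref{Prop:1}), and $c_k^T(\barDelta\blambda^k+\tDelta\blambda^k)$ is bounded by $(\text{const})(\|c_k\|\|\barDelta\bx^k\| + \|c_k\|\|G_k\bnabla_{\bx}\mL^k\| + \|c_k\|^2)$ using the second equation of \eqref{equ:ran:Newton} and the bounds $\|\barM_k\|\le\kappa_M$, $\kappa_{1,G}I\preceq G_kG_k^T\preceq\kappa_{2,G}I$. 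Hence, absorbing cross terms by Young's inequality, for $\barmu_k$ large enough the right-hand side is $\le -\tfrac{\gamma_{RH}}{2}(\|\barDelta\bx^k\|^2 + \|G_k\bnabla_{\bx}\mL^k\|^2)$, which is exactly the first condition in \eqref{equ:ran:cond:2}. Crucially, because the offending cross terms are controlled by constants depending only on $\kappa_M,\kappa_B,\kappa_{1,G},\kappa_{2,G},\gamma_{RH},\nu$ — all uniform over $k$ and over the randomness by Assumption \ref{ass:A:1} and the remark that $\|\barM_k\|\le\kappa_M$ deterministically — there is a \emph{deterministic} threshold $\mu_1$ (independent of $k$ and of $\xi$) past which the first condition holds.

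Next I would handle the second condition $\|c_k\|\le\|\bnabla\mL_{\barmu_k,\nu}^k\|$. Here the key is that $\barmu_k$ enters $\bnabla_{\bx}\mL_{\barmu_k,\nu}^k$ through the term $\barmu_k G_k^Tc_k$. If $\|c_k\|>0$ (which follows from $\|\nabla\mL^k\|>0$ together with the contradiction argument already spelled out after Lemma \ref{lem:cond:2} — actually one needs the case $c_k=\0$ separately, where the condition $\|c_k\|\le\|\bnabla\mL_{\barmu_k,\nu}^k\|$ holds trivially unless $\bnabla\mL_{\barmu_k,\nu}^k=\0$, and that would force $\barDelta\bx^k=\0$, $\barDelta\blambda^k=\0$, contradicting Assumption \ref{ass:A:1}), then $\|\bnabla_{\bx}\mL_{\barmu_k,\nu}^k\| \ge \barmu_k\|G_k^Tc_k\| - \|(I+\nu\barM_kG_k)\bnabla_{\bx}\mL^k\| \ge \barmu_k\sqrt{\kappa_{1,G}}\|c_k\| - C$ for a constant $C$ uniform over $k$ and $\xi$ (using $\|G_k^Tc_k\|^2 = c_k^TG_kG_k^Tc_k\ge\kappa_{1,G}\|c_k\|^2$ and the uniform bounds on $\barM_k$, $G_k$, $\bnabla_{\bx}\mL^k$ — note $\|\bnabla_{\bx}\mL^k\|$ is bounded because $\barg_k$ differs from $\nabla f_k$ by at most $M_1$). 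Since $\|c_k\|$ itself is bounded above (compactness of $\mX$), for $\barmu_k$ exceeding a second deterministic threshold $\mu_2$ the quantity $\barmu_k\sqrt{\kappa_{1,G}}\|c_k\| - C \ge \|c_k\|$, establishing the second condition. Taking $\barmu := \max(\mu_1,\mu_2)$ and noting the While loop multiplies $\barmu_k$ by $\rho>1$ each pass, the loop at iteration $k$ terminates once $\barmu_k$ first exceeds $\barmu$, hence $\barmu_k \le \rho\,\barmu =: \tmu$ for every $k$.

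For the stabilization claim, I would observe that $\{\barmu_k\}_k$ is nondecreasing (Line 6 only multiplies by $\rho$, and Line $\barmu_{k+1}=\barmu_k$ at the end carries it forward — it is never decreased), takes values in the discrete set $\{\barmu_0\rho^j : j\in\mathbb{N}\}$, and is bounded above by $\tmu$. A nondecreasing integer-indexed sequence on a discrete set with a uniform upper bound must be eventually constant; therefore there exist $\barK<\infty$ and a value $\barmu_{\barK}\le\tmu$ with $\barmu_k=\barmu_{\barK}$ for all $k\ge\barK$. The one subtlety is that $\barK$ and $\barmu_{\barK}$ are random (they depend on the sample path), whereas $\tmu$ is deterministic; this is exactly the statement of the lemma, and it is the reason the algorithm avoids the difficulty (b) flagged in the introduction — the deterministic bound $\tmu$ is all that the subsequent convergence analysis needs, because the second condition in \eqref{equ:ran:cond:2} already ties $\|c_k\|$ to $\|\bnabla\mL_{\barmu_k,\nu}^k\|$ for \emph{any} value of $\barmu_k$.

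The main obstacle I anticipate is making the thresholds $\mu_1$ and $\mu_2$ genuinely deterministic and uniform: one must verify that every constant appearing in the bounds on the cross terms and on $\|\bnabla_{\bx}\mL^k\|$, $\|\barM_k\|$, $\|\barDelta\bx^k\|$ depends only on the problem data $(\gamma_{RH},\kappa_B,\kappa_{1,G},\kappa_{2,G},\kappa_M,\nu)$ and the sampling bounds $(M_0,M_1,M_2)$ from Assumption \ref{ass:A:1}, and not on $\baralpha_k$, $\barepsilon_k$, or the particular realization $\xi_g^k$. The decomposition of $\barDelta\bx^k$ into range-space and null-space parts of $G_k$, and the resulting bound $\|\barDelta\bx^k\| \le C_1\|c_k\| + C_2\|\bnabla_{\bx}\mL^k\|$, is the workhorse here and should be extracted cleanly (it is the stochastic analogue of the estimates in the proof of Theorem \ref{thm:1}).
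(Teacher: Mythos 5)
Your treatment of the first condition in \eqref{equ:ran:cond:2} follows the paper's route (the Lemma \ref{lem:1} identity applied to \eqref{equ:ran:Newton}, the range/null-space split of $\barDelta\bx^k$, Young's inequality, and the deterministic bound $\|\barM_k\|\leq\kappa_M$), and your stabilization argument from a uniform threshold is also the paper's. The gap is in the second condition. You lower-bound $\|\bnabla_{\bx}\mL_{\barmu_k,\nu}^k\|$ by $\barmu_k\sqrt{\kappa_{1,G}}\|c_k\|-C$, where $C$ is an absolute constant bounding $\|(I+\nu\barM_kG_k)\bnabla_{\bx}\mL^k\|$, and then claim a deterministic $\mu_2$ makes $\barmu_k\sqrt{\kappa_{1,G}}\|c_k\|-C\geq\|c_k\|$. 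That inequality is equivalent to $(\barmu_k\sqrt{\kappa_{1,G}}-1)\|c_k\|\geq C$, which fails whenever $\|c_k\|$ is positive but small; the upper bound on $\|c_k\|$ from compactness is the wrong direction, and there is no positive lower bound on $\|c_k\|$ (indeed $\|c_k\|\to 0$ is exactly what one expects as the iterates approach feasibility). Along your route the threshold needed at iteration $k$ scales like $C/\|c_k\|$, so it is not uniform in $k$, and the stabilization of $\barmu_k$ --- the heart of the lemma --- does not follow.

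The paper avoids this by never introducing an additive constant: it multiplies $\bnabla_{\blambda}\mL_{\barmu_k,\nu}^k = c_k + \nu G_kG_k^TG_k\bnabla_{\bx}\mL^k$ by $(\tfrac{1}{\nu}I+G_k\barM_k)(G_kG_k^T)^{-1}$ and compares with $G_k\bnabla_{\bx}\mL_{\barmu_k,\nu}^k$, so that the noise term $(I+\nu G_k\barM_k)G_k\bnabla_{\bx}\mL^k$ cancels exactly, leaving $\bigl(\barmu_kG_kG_k^T - (\tfrac{1}{\nu}I+G_k\barM_k)(G_kG_k^T)^{-1}\bigr)c_k$ equal to a fixed linear combination of $\bnabla_{\bx}\mL_{\barmu_k,\nu}^k$ and $\bnabla_{\blambda}\mL_{\barmu_k,\nu}^k$. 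Taking norms gives $(\barmu_k\kappa_{1,G}-c_1)\|c_k\|\leq c_2\,\|\bnabla\mL_{\barmu_k,\nu}^k\|$ with $c_1,c_2$ depending only on $\nu,\kappa_M,\kappa_{1,G},\kappa_{2,G}$, i.e.\ a bound homogeneous in $\|c_k\|$ versus the gradient norm, so the single deterministic requirement $\barmu_k\geq(c_1+c_2)/\kappa_{1,G}$ works for all $k$ and all realizations. You need this cancellation (or an equivalent one) to make your $\mu_2$ deterministic; with it, the rest of your argument (termination of the While loop, monotonicity, and eventual constancy of $\barmu_k$ below a deterministic $\tmu$) goes through as you describe.
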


\begin{proof}

It suffices to show that \eqref{equ:ran:cond:2} is satisfied if $\barmu_k$ is sufficiently large, and the threshold has a deterministic upper bound that is independent of $k$. By analogy to \eqref{equ:inner}, we have from Newton's system \eqref{equ:ran:Newton} that
\begin{align*}
\begin{pmatrix}
\bnabla_{\bx}\mL_{\barmu_k, \nu}^k\\
\bnabla_{\blambda}\mL_{\barmu_k, \nu}^k
\end{pmatrix}^T&\begin{pmatrix}
\barDelta \bx_k\\
\barDelta \blambda_k
\end{pmatrix} \\
& \stackrel{\mathclap{\eqref{equ:inner}}}{=} (\barDelta\bx_k)^T\bnabla_{\bx}\mL_k + c_k^T\barDelta\blambda_k- \barmu_k\|c_k\|^2 -\nu\|G_k\bnabla_{\bx}\mL_k\|^2\\
& \stackrel{\mathclap{\eqref{equ:ran:Newton}}}{=} -(\barDelta\bx_k)^TB_k\barDelta \bx_k + c_k^T(\barDelta \blambda_k + \tDelta \blambda_k) - \barmu_k\|c_k\|^2 - \nu\|G_k\bnabla_{\bx}\mL_k\|^2.
\end{align*}
Let $\barDelta\bx_k = \barDelta \bu_k + \barDelta\bv_k$ with $\barDelta\bu_k \in \text{Image}(G_k^T)$ and $G_k\barDelta\bv_k = \0$, then
\begin{align*}
&\begin{pmatrix}
\bnabla_{\bx}\mL_{\barmu_k, \nu}^k\\
\bnabla_{\blambda}\mL_{\barmu_k, \nu}^k
\end{pmatrix}^T\begin{pmatrix}
\barDelta \bx_k\\
\barDelta \blambda_k
\end{pmatrix}\\
& =  -(\barDelta\bv_k)^TB_k\barDelta \bv_k - 2(\barDelta\bv_k)^TB_k\barDelta \bu_k - (\barDelta\bu_k)^TB_k\barDelta \bu_k + c_k^T(\barDelta \blambda_k + \tDelta \blambda_k) \\
&\quad - \barmu_k\|c_k\|^2 - \nu\|G_k\bnabla_{\bx}\mL_k\|^2\\
&\leq -\gamma_{RH}\nbr{\barDelta \bv_k}^2 + 2\|\barDelta\bv_k\|\|B_k\barDelta\bu_k\| -  (\barDelta\bu_k)^TB_k\barDelta \bu_k + c_k^T(\barDelta \blambda_k + \tDelta \blambda_k)\\
&\quad - \barmu_k\|c_k\|^2 - \nu\|G_k\bnabla_{\bx}\mL_k\|^2.
\end{align*}
Using the fact that $2\nbr{\barDelta\bv_k}\nbr{B_k\barDelta\bu_k} \leq \gamma_{RH}\|\barDelta\bv_k\|^2/4 + 4\|B_k\barDelta\bu_k\|^2/\gamma_{RH}$, and $\|\barDelta\bx_k\|^2 = \|\barDelta\bv_k\|^2 + \|\barDelta\bu_k\|^2$, we further have
\begin{align*}
&\begin{pmatrix}
\bnabla_{\bx}\mL_{\barmu_k, \nu}^k\\
\bnabla_{\blambda}\mL_{\barmu_k, \nu}^k
\end{pmatrix}^T\begin{pmatrix}
\barDelta \bx_k\\
\barDelta \blambda_k
\end{pmatrix}\\
& \leq -\frac{3\gamma_{RH}}{4}\|\barDelta \bv_k\|^2 + (\barDelta\bu_k)^T\rbr{\frac{4B_k^2}{\gamma_{RH}} - B_k}\barDelta \bu_k + c_k^T(\barDelta \blambda_k + \tDelta \blambda_k)\\
&\quad  - \barmu_k\|c_k\|^2 - \nu\|G_k\bnabla_{\bx}\mL_k\|^2\\
& = -\frac{3\gamma_{RH}}{4}\|\barDelta\bx_k\|^2 + (\barDelta\bu_k)^T\rbr{\frac{4B_k^2}{\gamma_{RH}}+ \frac{3\gamma_{RH}}{4}I - B_k}\barDelta \bu_k + c_k^T(\barDelta \blambda_k + \tDelta\blambda_k)\\
&\quad - \barmu_k\|c_k\|^2 - \nu\|G_k\bnabla_{\bx}\mL_k\|^2\\
& \leq -\frac{\gamma_{RH}\wedge \nu}{2}\nbr{\begin{pmatrix}
\barDelta\bx_k\\
G_k\bnabla_{\bx}\mL_k
\end{pmatrix}}^2 + D_k,
\end{align*}
where
\begin{multline*}
D_k =(\barDelta\bu_k)^T\rbr{\frac{4B_k^2}{\gamma_{RH}} + \frac{3\gamma_{RH}}{4}I - B_k}\barDelta \bu_k + c_k^T(\barDelta \blambda_k + \tDelta\blambda_k) \\ - \frac{\gamma_{RH}}{4}\|\barDelta\bx_k\|^2 
- \frac{\nu}{2}\|G_k\bnabla_{\bx}\mL_k\|^2 - \barmu_k\|c_k\|^2.
\end{multline*}
We now bound $\|\barDelta\bu_k\|$, $\|\barDelta\blambda_k\|$, and $\|\tDelta\blambda_k\|$. By \eqref{bound:M} and Assumption \ref{ass:A:2}, we~have
\begin{multline}\label{equ:bound:M}
\|\barM_k\| \leq \|M_k\| + \|\barM_k - M_k\|   \stackrel{\mathclap{\eqref{bound:M}}}{\leq} \|M_k\|+\sqrt{\kappa_{2,G}}\|\barH_k - \nabla^2f_k\| + \kappa_{\nabla_{\bx}^2c}\|\barg_k - \nabla f_k\| \\
\stackrel{\mathclap{\eqref{Prop:1}}}{\leq} \kappa_{M} + \Omega_2\sqrt{\kappa_{2,G}} + \Omega_1\kappa_{\nabla_{\bx}^2c} \eqqcolon \kappa_{\barM}.
\end{multline}
Since $\barDelta\bu_k \in \text{Image}(G_k^T)$ and $G_k\barDelta\bv_k = \0$, using $G_k\barDelta\bx_k = -c_k$ leads to~$\barDelta\bu_k = -G_k^T(G_kG_k^T)^{-1}c_k$. Thus, $\|\barDelta\bu_k\| \leq \|c_k\|/\sqrt{\kappa_{1,G}}$. Furthermore,
\begin{align*}
\|\barDelta\blambda_k\| & \stackrel{\eqref{equ:ran:Newton}}{=}\| (G_kG_k^T)^{-1}(G_k\bnabla_{\bx}\mL_k + \barM_k^T\barDelta\bx_k) \| \leq \frac{1}{\kappa_{1,G}}(\|G_k\bnabla_{\bx}\mL_k\| + \kappa_{\barM}\|\barDelta\bx_k\|),\\
\|\tDelta\blambda_k\| & \stackrel{\eqref{equ:ran:Newton}}{=}\| (G_kG_k^T)^{-1}(G_k\bnabla_{\bx}\mL_k + G_kB_k\barDelta\bx_k) \| \\
&\hskip5cm \leq \frac{1}{\kappa_{1,G}}(\|G_k\bnabla_{\bx}\mL_k\| + \sqrt{\kappa_{2,G}}\kappa_B\|\barDelta\bx_k\|).
\end{align*}
Using the above results and Assumption \ref{ass:A:1}, we can bound $D_k$ as
\begin{align*}
D_k & \leq  \cbr{\frac{1}{\kappa_{1,G}}\rbr{\frac{4\kappa_B^2}{\gamma_{RH}}+ \frac{3\gamma_{RH}}{4}+\kappa_B }-\barmu_k }\|c_k\|^2 + \frac{2}{\kappa_{1,G}}\|c_k\|\|G_k\bnabla_{\bx}\mL_k\| \\
&\quad + \frac{\kappa_{\barM} + \sqrt{\kappa_{2,G}}\kappa_B }{\kappa_{1,G}}\|c_k\|\|\barDelta\bx_k\| - \frac{\gamma_{RH}}{4}\|\barDelta\bx_k\|^2 -  \frac{\nu}{2}\|G_k\bnabla_{\bx}\mL_k\|^2\\
& \leq  \cbr{\frac{1}{\kappa_{1,G}}\rbr{\frac{4\kappa_B^2}{\gamma_{RH}}+ \frac{3\gamma_{RH}}{4}+\kappa_B } + \frac{2}{\nu\kappa_{1,G}^2} + \frac{(\kappa_{\barM} + \sqrt{\kappa_{2,G}}\kappa_B)^2}{\gamma_{RH}\kappa_{1,G}^2}-\barmu_k }\|c_k\|^2\\
& \leq \rbr{\frac{6\kappa_B^2}{\gamma_{RH} \kappa_{1,G}} + \frac{2+(\kappa_{\barM} + \sqrt{\kappa_{2,G}}\kappa_B)^2}{\kappa_{1,G}^2(\gamma_{RH}\wedge\nu)} -\barmu_k}\|c_k\|^2\\
& \leq \rbr{\frac{7(\kappa_{\barM} + \sqrt{\kappa_{2,G}}\kappa_B)^2}{\kappa_{1,G}^2(\gamma_{RH}\wedge\nu)} -\barmu_k}\|c_k\|^2,
\end{align*}
where the third inequality uses $\kappa_B\geq \gamma_{RH}$, and the last inequality uses $\kappa_M\wedge \kappa_{2,G}\wedge\kappa_B\geq 1$. Thus, as long as
\begin{equation}\label{pequ:mu:1}
\barmu_k \geq \frac{7(\kappa_{\barM} + \sqrt{\kappa_{2,G}}\kappa_B)^2}{\kappa_{1,G}^2(\gamma_{RH}\wedge\nu)} \eqqcolon \tmu_1 / \rho,
\end{equation}
we know $D_k\leq 0$ and hence the first condition in \eqref{equ:ran:cond:2} holds.

We now check the second condition. We have
\begin{align*}
\rbr{\frac{1}{\nu}I + G_k\barM_k}&(G_kG_k^T)^{-1}\bnabla_{\blambda}\mL_{\barmu_k, \nu}^k \\
& \stackrel{\eqref{N:13}}{=} \rbr{\frac{1}{\nu}I + G_k\barM_k}(G_kG_k^T)^{-1}c_k + \rbr{I + \nu G_k\barM_k}G_k\bnabla_{\bx}\mL_k\\
& \stackrel{\eqref{N:13}}{=} \rbr{\frac{1}{\nu}I + G_k\barM_k}(G_kG_k^T)^{-1}c_k + G_k\bnabla_{\bx}\mL_{\barmu_k, \nu}^k - \barmu_k G_kG_k^Tc_k,
\end{align*}
which implies
\begin{multline*}
\rbr{\barmu_k G_kG_k^T - \rbr{\frac{1}{\nu}I + G_k\barM_k}(G_kG_k^T)^{-1} }c_k \\= G_k\bnabla_{\bx}\mL_{\barmu_k, \nu}^k - \rbr{\frac{1}{\nu}I + G_k\barM_k}(G_kG_k^T)^{-1}\bnabla_{\blambda}\mL_{\barmu_k, \nu}^k
\end{multline*}
and, by \eqref{Prop:1} and \eqref{equ:bound:M},
\begin{equation*}
\rbr{\barmu_k\kappa_{1,G}-\frac{1 + \nu\kappa_{\barM}\sqrt{\kappa_{2,G}}}{\kappa_{1,G}\nu}  }\|c_k\|
\leq  \frac{1 + \nu\sqrt{\kappa_{2,G}}(\kappa_{M} + \kappa_{1,G}) }{\kappa_{1,G}\nu} \nbr{\begin{pmatrix}
	\bnabla_{\bx}\mL_{\barmu_k, \nu}^k\\
	\bnabla_{\blambda}\mL_{\barmu_k, \nu}^k.
	\end{pmatrix}}.
\end{equation*}
Therefore, as long as
\begin{equation}\label{pequ:mu:2}
\barmu_k \geq \frac{2 + 2\nu\sqrt{\kappa_{2,G}}(\kappa_{M} + \kappa_{1,G}) }{\kappa_{1,G}^2\nu} \eqqcolon \tmu_2/\rho,
\end{equation}
the second condition in \eqref{equ:ran:cond:2} holds. Combining \eqref{pequ:mu:1} and \eqref{pequ:mu:2} together, we see that if $\barmu_k \geq (\tmu_1\vee \tmu_2)/\rho$, then \eqref{equ:ran:cond:2} holds. Moreover, we see that the threshold is independent of $k$. As Algorithm~\ref{alg:ASto:SQP} increases $\barmu_k$ by a factor of $\rho$ in each while loop, there must exist $\barK<\infty$ such that $\barmu_k= \barmu_{\barK}$ for all $k\geq \barK$, and $\barmu_{\barK} \leq \tmu\coloneqq \tmu_1\vee \tmu_2$. This completes the proof. \qed
\end{proof}

Lemma~\ref{lem:9} suggests that, for each run of Algorithm~\ref{alg:ASto:SQP}, the merit function is invariant after certain number of iterations. This property is critical for establishing global convergence. Since we always let $k\rightarrow \infty$, we do not have to study the iteration before $\barK$. We note that the threshold~$\barK$ is random,~which~might be different for each run. However, it must exist and be finite. A similar result is shown in \cite{Berahas2021Sequential}. Finally, we note that $\barmu_{\barK}$ has a deterministic upper bound $\tmu$.

\subsection{Convergence analysis}

Our convergence analysis is based on the potential function
\begin{equation*}
\Phi_{\barmu_{\barK}, \nu, \omega}^k = \omega\mL_{\barmu_{\barK}, \nu}^k + \frac{(1-\omega)}{2}\barepsilon_k + \frac{(1-\omega)}{2}\baralpha_k\|\nabla\mL_{\barmu_{\barK}, \nu}^k\|^2,
\end{equation*}
where $\omega \in (0, 1)$ is a deterministic parameter specified later. This function is a linear combination of $\mL_{\barmu_{\barK}, \nu}^k$, $\barepsilon_k$, and $\baralpha_k\|\nabla\mL_{\barmu_{\barK}, \nu}^k\|^2$. Throughout the analysis, we condition on $\mF_{\barK}$ and only study the iterates after $\barK$ iterations, where $\barK$ is the iteration index determined in Lemma \ref{lem:9}, after which $\mu_k$ stabilizes. 

We begin by presenting a proposition that relates different critical quantities. We rely on these relations in later analysis. The results are shown by straightforward calculations and the proofs are deferred to Appendix \ref{pf:N:14}.

\begin{proposition}\label{prop:2}

Under Assumptions \ref{ass:A:1} and \ref{ass:A:2}, there exist constants $\{\Upsilon_i\}_{i=1}^4$ that are independent of parameters $(\alpha_{max}, \beta, \kappa_{grad},p_{grad},\kappa_f,p_f)$ such that
\begin{equation}\label{N:14}
\begin{aligned}
\nbr{\begin{pmatrix}
\bnabla_{\bx}\mL_k + \nu\barM_k G_k\bnabla_{\bx}\mL_k + G_k^Tc_k\\
\nu G_kG_k^TG_k\bnabla_{\bx}\mL_k
\end{pmatrix}} \leq \Upsilon_1\nbr{\begin{pmatrix}
\barDelta\bx_k\\
G_k\bnabla_{\bx}\mL_k
\end{pmatrix}},\\
\Upsilon_2\nbr{\begin{pmatrix}
\barDelta\bx_k\\
G_k\bnabla_{\bx}\mL_k
\end{pmatrix}} \leq \nbr{\begin{pmatrix}
\barDelta\bx_k\\
\barDelta\blambda_k
\end{pmatrix}} \leq \Upsilon_3\nbr{\begin{pmatrix}
\barDelta\bx_k\\
G_k\bnabla_{\bx}\mL_k
\end{pmatrix}},\\
\nbr{\begin{pmatrix}
\bnabla_{\bx}\mL_{\barmu_{\barK}, \nu}^k\\
\bnabla_{\blambda}\mL_{\barmu_{\barK}, \nu}^k
\end{pmatrix}} \leq \Upsilon_4\nbr{\begin{pmatrix}
\barDelta\bx_k\\
G_k\bnabla_{\bx}\mL_k
\end{pmatrix}}.
\end{aligned}
\end{equation}

\end{proposition}

\begin{proof}
See Appendix \ref{pf:N:14}. \qed
\end{proof}

We emphasize that constants $\{\Upsilon_i\}_{i=1}^4$ (whose expressions are provided in the appendix) are independent of probability parameters $(p_{grad},p_f)$. In Theorem \ref{thm:5}, we impose a condition on $p_f, p_{grad}$. Due to independence, the condition is not an implicit function of $p_f, p_{grad}$.

We have the following two lemmas, which connect the stochastic line search with deterministic line search.

\begin{lemma}\label{lem:8}
For $k\geq \barK$, we suppose the event $\mA_k\cap\mB_k$ happens, where $\mA_k$ is defined in \eqref{event:A_k} and $\mB_k$ is defined in \eqref{event:B_k}. If
\begin{equation*}
\baralpha_k\leq \frac{(1-\beta)(\gamma_{RH}\wedge\nu)}{2(\kappa_{\mL_{\tmu, \nu}} \Upsilon_3^2 + \kappa_{grad}\Upsilon_1\Upsilon_3 + \kappa_f\gamma_{RH}) },
\end{equation*}
where $\Upsilon_1, \Upsilon_3$ are given by Proposition \ref{prop:2} and $\kappa_{\mL_{\tmu, \nu}}$ is defined in \eqref{Prop:1} with $\tmu$ given by Lemma \ref{lem:9}, then the $k$-th step is a successful step (i.e. the Armijo condition is satisfied).	
\end{lemma}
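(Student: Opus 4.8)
## Proof Proposal for Lemma \ref{lem:8}

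The plan is to show that when $\baralpha_k$ is small enough and the good events $\mA_k \cap \mB_k$ occur, the true Armijo condition \eqref{equ:Armijo} (in its stochastic incarnation \eqref{equ:ran:Armijo}) must be satisfied at the test point. The starting point is a second-order Taylor expansion of the \emph{deterministic} merit function $\mL_{\barmu_{\barK}, \nu}$ along the direction $(\barDelta\bx^k, \barDelta\blambda^k)$ from $(\bx^k, \blambda^k)$ to $(\bx^{s_k}, \blambda^{s_k})$. Since $\barmu_{\barK} \leq \tmu$ by Lemma \ref{lem:9}, the Hessian bound $\kappa_{\mL_{\tmu,\nu}}$ from \eqref{Prop:1} controls the quadratic remainder, giving
\begin{equation*}
\mL_{\barmu_{\barK},\nu}^{s_k} \leq \mL_{\barmu_{\barK},\nu}^k + \baralpha_k \begin{pmatrix}\nabla_{\bx}\mL_{\barmu_{\barK},\nu}^k\\ \nabla_{\blambda}\mL_{\barmu_{\barK},\nu}^k\end{pmatrix}^T\begin{pmatrix}\barDelta\bx^k\\ \barDelta\blambda^k\end{pmatrix} + \frac{\kappa_{\mL_{\tmu,\nu}}\baralpha_k^2}{2}\nbr{\begin{pmatrix}\barDelta\bx^k\\ \barDelta\blambda^k\end{pmatrix}}^2.
\end{equation*}

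Next I would pass from deterministic to stochastic quantities on both sides. On the function-value side, the event $\mB_k$ in \eqref{event:B_k} lets me replace $\mL_{\barmu_{\barK},\nu}^{s_k}$ and $\mL_{\barmu_{\barK},\nu}^k$ by $\barL_{\barmu_{\barK},\nu}^{s_k}$ and $\barL_{\barmu_{\barK},\nu}^k$ at a cost of $2\kappa_f\baralpha_k^2 |(\bnabla\mL_{\barmu_{\barK},\nu}^k)^T(\barDelta\bx^k,\barDelta\blambda^k)|$; since the first inequality in \eqref{equ:ran:cond:2} makes this inner product negative, that cost term is $-2\kappa_f\baralpha_k^2(\bnabla\mL_{\barmu_{\barK},\nu}^k)^T(\barDelta\bx^k,\barDelta\blambda^k)$. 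On the gradient side, I need to compare the true directional derivative $(\nabla\mL_{\barmu_{\barK},\nu}^k)^T(\barDelta\bx^k,\barDelta\blambda^k)$ with the stochastic one $(\bnabla\mL_{\barmu_{\barK},\nu}^k)^T(\barDelta\bx^k,\barDelta\blambda^k)$. The difference is $(\nabla\mL_{\barmu_{\barK},\nu}^k - \bnabla\mL_{\barmu_{\barK},\nu}^k)^T(\barDelta\bx^k,\barDelta\blambda^k)$; the key observation already flagged in the text is that $\nabla\mL_{\barmu,\nu}^k - \bnabla\mL_{\barmu,\nu}^k$ is independent of $\mu$, so the event $\mA_k$ in \eqref{event:A_k} bounds its norm by $\kappa_{grad}\baralpha_k$ times the norm of the stochastic gradient-type vector, which by the first line of \eqref{N:14} is at most $\kappa_{grad}\baralpha_k\Upsilon_1\|(\barDelta\bx^k, G_k\bnabla_{\bx}\mL^k)\|$. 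Combining via Cauchy--Schwarz and the upper bound $\|(\barDelta\bx^k,\barDelta\blambda^k)\| \leq \Upsilon_3\|(\barDelta\bx^k,G_k\bnabla_{\bx}\mL^k)\|$ from \eqref{N:14}, this error is at most $\kappa_{grad}\Upsilon_1\Upsilon_3\baralpha_k\|(\barDelta\bx^k,G_k\bnabla_{\bx}\mL^k)\|^2$.

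Assembling the pieces, and again using $\|(\barDelta\bx^k,\barDelta\blambda^k)\|^2 \leq \Upsilon_3^2\|(\barDelta\bx^k,G_k\bnabla_{\bx}\mL^k)\|^2$ for the Hessian remainder, I obtain an inequality of the form
\begin{equation*}
\barL_{\barmu_{\barK},\nu}^{s_k} \leq \barL_{\barmu_{\barK},\nu}^k + \baralpha_k(\bnabla\mL_{\barmu_{\barK},\nu}^k)^T(\barDelta\bx^k,\barDelta\blambda^k) + \baralpha_k^2\rbr{\tfrac{\kappa_{\mL_{\tmu,\nu}}\Upsilon_3^2}{2} + \kappa_{grad}\Upsilon_1\Upsilon_3 + \kappa_f\,(\text{sign factor})}\nbr{\begin{pmatrix}\barDelta\bx^k\\ G_k\bnabla_{\bx}\mL^k\end{pmatrix}}^2.
\end{equation*}
To close, I invoke the first condition in \eqref{equ:ran:cond:2}, namely $(\bnabla\mL_{\barmu_{\barK},\nu}^k)^T(\barDelta\bx^k,\barDelta\blambda^k) \leq -\tfrac{\gamma_{RH}}{2}\|(\barDelta\bx^k,G_k\bnabla_{\bx}\mL^k)\|^2$, to convert the leftover $(1-\beta)$ fraction of the directional-derivative term into a negative multiple of $\|(\barDelta\bx^k,G_k\bnabla_{\bx}\mL^k)\|^2$ of size $\tfrac{(1-\beta)\gamma_{RH}}{2}\baralpha_k$. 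The stated stepsize bound $\baralpha_k \leq \tfrac{(1-\beta)\gamma_{RH}}{2(\kappa_{\mL_{\tmu,\nu}}\Upsilon_3^2 + \kappa_{grad}\Upsilon_1\Upsilon_3 + \kappa_f\gamma_{RH})}$ is exactly what makes the quadratic-in-$\baralpha_k$ positive term no larger than this negative linear term, so the residual is $\leq \barL_{\barmu_{\barK},\nu}^k + \baralpha_k\beta(\bnabla\mL_{\barmu_{\barK},\nu}^k)^T(\barDelta\bx^k,\barDelta\blambda^k)$, which is precisely \eqref{equ:ran:Armijo}; hence the step is successful.

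The main obstacle I anticipate is bookkeeping the sign of the inner product $(\bnabla\mL_{\barmu_{\barK},\nu}^k)^T(\barDelta\bx^k,\barDelta\blambda^k)$ consistently: it is negative under \eqref{equ:ran:cond:2}, which is what lets the $\mB_k$-slack term $\kappa_f\baralpha_k^2(\cdot)$ be absorbed without derailing the direction of the inequality, and it must be handled carefully when converting between $\|(\barDelta\bx^k,\barDelta\blambda^k)\|$, $\|(\barDelta\bx^k,G_k\bnabla_{\bx}\mL^k)\|$, and the directional derivative itself using \eqref{N:14} and \eqref{equ:ran:cond:2}. A secondary subtlety is ensuring that the Hessian bound one uses is $\kappa_{\mL_{\tmu,\nu}}$ (with the deterministic upper bound $\tmu$ from Lemma \ref{lem:9}) rather than a bound depending on the random $\barmu_{\barK}$; this is legitimate because $\barmu_{\barK} \leq \tmu$ and $\kappa_{\mL_{\mu,\nu}}$ is monotone increasing in $\mu$ by \eqref{Prop:1}.
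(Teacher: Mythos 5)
You follow essentially the same route as the paper's proof: a Taylor expansion of $\mL_{\barmu_{\barK},\nu}$ with the Hessian bound $\kappa_{\mL_{\tmu,\nu}}$ (legitimate since $\barmu_{\barK}\leq\tmu$), the event $\mA_k$ combined with \eqref{N:14} to bound the gradient-estimation error by $\kappa_{grad}\Upsilon_1\Upsilon_3\baralpha_k^2\|(\barDelta\bx^k,G_k\bnabla_{\bx}\mL^k)\|^2$, the event $\mB_k$ to pass between $\mL$ and $\barL$ at cost $\kappa_f\baralpha_k^2$ times the (negative) stochastic directional derivative, and the first condition in \eqref{equ:ran:cond:2} plus the stepsize bound to close; this is correct in substance and matches the paper's argument. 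One bookkeeping caution about your ``sign factor'': the first inequality in \eqref{equ:ran:cond:2} upper-bounds $\|(\barDelta\bx^k,G_k\bnabla_{\bx}\mL^k)\|^2$ by a multiple of the negative stochastic directional derivative, not the reverse, so the $\mB_k$-cost (which is already a positive multiple of that negative directional derivative) cannot be converted into a $\kappa_f\gamma_{RH}\baralpha_k^2\|(\barDelta\bx^k,G_k\bnabla_{\bx}\mL^k)\|^2$ term by that route. The paper sidesteps this by converting the Hessian and gradient-error terms into directional-derivative units and checking $1-\tfrac{2(\kappa_{\mL_{\tmu,\nu}}\Upsilon_3^2+\kappa_{grad}\Upsilon_1\Upsilon_3+\kappa_f\gamma_{RH})}{\gamma_{RH}}\baralpha_k\geq\beta$; equivalently, in your setup, keep the $\kappa_f$ terms as multiples of the directional derivative, absorb them into the $(1-\beta)$ slack first, and only then apply \eqref{equ:ran:cond:2} to the remaining quadratic terms — the stated threshold still suffices.
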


\begin{proof}
Our proof resembles \cite[Lemma 4.2]{Paquette2020Stochastic}, but targets a different merit~function, and relies on the properties in Proposition \ref{prop:2}. On the events of $\mA_k$ and~$\mB_k$, the merit function and its gradient are precisely estimated. Since $\barmu_{\barK} \leq \tmu$ by Lemma \ref{lem:9}, $\kappa_{\mL_{\barmu_{\barK}, \nu}}\leq \kappa_{\mL_{\tmu, \nu}}$ by \eqref{Prop:1}. We apply the Taylor expansion and get
\begin{align}\label{N:16}
&\mL_{\barmu_{\barK}, \nu}^{s_k} \leq  \mL_{\barmu_{\barK}, \nu}^k + \baralpha_k\begin{pmatrix}
\nabla_{\bx}\mL_{\barmu_{\barK}, \nu}^k\\
\nabla_{\blambda}\mL_{\barmu_{\barK}, \nu}^k
\end{pmatrix}^T\begin{pmatrix}
\barDelta\bx_k\\
\barDelta\blambda_k
\end{pmatrix} + \frac{\kappa_{\mL_{\tmu, \nu}}\baralpha_k^2}{2}\nbr{\begin{pmatrix}
\barDelta\bx_k\\
\barDelta \blambda_k
\end{pmatrix}}^2 \nonumber\\
& =  \mL_{\barmu_{\barK}, \nu}^k + \baralpha_k\begin{pmatrix}
\nabla_{\bx}\mL_{\barmu_{\barK}, \nu}^k - \bnabla_{\bx}\mL_{\barmu_{\barK}, \nu}^k\\
\nabla_{\blambda}\mL_{\barmu_{\barK}, \nu}^k- \bnabla_{\blambda}\mL_{\barmu_{\barK}, \nu}^k
\end{pmatrix}^T\begin{pmatrix}
\barDelta\bx_k\\
\barDelta\blambda_k
\end{pmatrix} + \baralpha_k\begin{pmatrix}
\bnabla_{\bx}\mL_{\barmu_{\barK}, \nu}^k\\
\bnabla_{\blambda}\mL_{\barmu_{\barK}, \nu}^k
\end{pmatrix}^T\begin{pmatrix}
\barDelta\bx_k\\
\barDelta\blambda_k
\end{pmatrix}  \nonumber\\
& \quad + \frac{\kappa_{\mL_{\tmu, \nu}}\baralpha_k^2}{2}\nbr{\begin{pmatrix}
\barDelta\bx_k\\
\barDelta \blambda_k
\end{pmatrix}}^2 \nonumber\\
& \stackrel{\eqref{event:A_k}}{\leq}  \mL_{\barmu_{\barK}, \nu}^k  + \baralpha_k\begin{pmatrix}
\bnabla_{\bx}\mL_{\barmu_{\barK}, \nu}^k\\
\bnabla_{\blambda}\mL_{\barmu_{\barK}, \nu}^k
\end{pmatrix}^T\begin{pmatrix}
\barDelta\bx_k\\
\barDelta\blambda_k
\end{pmatrix}  + \frac{\kappa_{\mL_{\tmu, \nu}}\baralpha_k^2}{2}\nbr{\begin{pmatrix}
\barDelta\bx_k\\
\barDelta \blambda_k
\end{pmatrix}}^2 \nonumber\\
&\quad + \baralpha_k^2\kappa_{grad}\nbr{\begin{pmatrix}
\bnabla_{\bx}\mL_k + \nu\barM_kG_k\bnabla_{\bx}\mL_k + G_k^Tc_k\\
\nu G_kG_k^TG_k\bnabla_{\bx}\mL_k
\end{pmatrix}}\nbr{\begin{pmatrix}
\barDelta\bx_k\\
\barDelta\blambda_k
\end{pmatrix}} \nonumber\\
& \stackrel{\eqref{N:14}}{\leq}  \mL_{\barmu_{\barK}, \nu}^k  + \baralpha_k\begin{pmatrix}
\bnabla_{\bx}\mL_{\barmu_{\barK}, \nu}^k\\
\bnabla_{\blambda}\mL_{\barmu_{\barK}, \nu}^k
\end{pmatrix}^T\begin{pmatrix}
\barDelta\bx_k\\
\barDelta\blambda_k
\end{pmatrix}  + (\kappa_{\mL_{\tmu, \nu}} \Upsilon_3^2 + \kappa_{grad}\Upsilon_1\Upsilon_3)\baralpha_k^2\nbr{\begin{pmatrix}
\barDelta\bx_k\\
G_k\bnabla_{\bx}\mL_k
\end{pmatrix}}^2 \nonumber\\
& \hskip-3pt \stackrel{\eqref{equ:ran:cond:2}}{\leq}   \mL_{\barmu_{\barK}, \nu}^k + \baralpha_k \cbr{1 - \frac{2(\kappa_{\mL_{\tmu, \nu}} \Upsilon_3^2 + \kappa_{grad}\Upsilon_1\Upsilon_3)\baralpha_k}{\gamma_{RH}\wedge\nu}}\begin{pmatrix}
\bnabla_{\bx}\mL_{\barmu_{\barK}, \nu}^k\\
\bnabla_{\blambda}\mL_{\barmu_{\barK}, \nu}^k
\end{pmatrix}^T\begin{pmatrix}
\barDelta\bx_k\\
\barDelta\blambda_k
\end{pmatrix}.
\end{align}
Moreover, by event $\mB_k$ we have
\begin{align*}
&\barL_{\barmu_{\barK}, \nu}^{s_k} \stackrel{\eqref{event:B_k}}{\leq}  \mL_{\barmu_{\barK}, \nu}^{s_k} - \kappa_f\baralpha_k^2\begin{pmatrix}
\bnabla_{\bx}\mL_{\barmu_{\barK}, \nu}^k\\
\bnabla_{\blambda}\mL_{\barmu_{\barK}, \nu}^k
\end{pmatrix}^T\begin{pmatrix}
\barDelta\bx_k\\
\barDelta\blambda_k
\end{pmatrix}\\
& \stackrel{\eqref{N:16}}{\leq} \mL_{\barmu_{\barK}, \nu}^k + \baralpha_k \cbr{1 - \frac{2(\kappa_{\mL_{\tmu, \nu}} \Upsilon_3^2 + \kappa_{grad}\Upsilon_1\Upsilon_3) + \kappa_f\gamma_{RH}}{\gamma_{RH}\wedge\nu}\baralpha_k}\begin{pmatrix}
\bnabla_{\bx}\mL_{\barmu_{\barK}, \nu}^k\\
\bnabla_{\blambda}\mL_{\barmu_{\barK}, \nu}^k
\end{pmatrix}^T\begin{pmatrix}
\barDelta\bx_k\\
\barDelta\blambda_k
\end{pmatrix}\\
& \stackrel{\eqref{event:B_k}}{\leq}  \barL_{\barmu_{\barK}, \nu}^k + \baralpha_k \cbr{1 - \frac{2(\kappa_{\mL_{\tmu, \nu}} \Upsilon_3^2 + \kappa_{grad}\Upsilon_1\Upsilon_3 + \kappa_f\gamma_{RH}) }{\gamma_{RH}\wedge\nu}\baralpha_k}\begin{pmatrix}
\bnabla_{\bx}\mL_{\barmu_{\barK}, \nu}^k\\
\bnabla_{\blambda}\mL_{\barmu_{\barK}, \nu}^k
\end{pmatrix}^T\begin{pmatrix}
\barDelta\bx_k\\
\barDelta\blambda_k
\end{pmatrix}.
\end{align*}
Thus, if
\begin{equation*}
1 - \frac{2(\kappa_{\mL_{\tmu, \nu}} \Upsilon_3^2 + \kappa_{grad}\Upsilon_1\Upsilon_3 + \kappa_f\gamma_{RH}) }{\gamma_{RH}\wedge\nu}\baralpha_k \geq \beta \Longleftrightarrow \baralpha_k\leq \frac{(1-\beta)(\gamma_{RH}\wedge\nu)}{2(\kappa_{\mL_{\tmu, \nu}} \Upsilon_3^2 + \kappa_{grad}\Upsilon_1\Upsilon_3 + \kappa_f\gamma_{RH})},
\end{equation*}
we have
\begin{equation*}
\barL_{\barmu_{\barK}, \nu}^{s_k} \leq \barL_{\barmu_{\barK}, \nu}^k + \baralpha_k\beta\begin{pmatrix}
\bnabla_{\bx}\mL_{\barmu_{\barK}, \nu}^k\\
\bnabla_{\blambda}\mL_{\barmu_{\barK}, \nu}^k
\end{pmatrix}^T\begin{pmatrix}
\barDelta\bx_k\\
\barDelta\blambda_k
\end{pmatrix},
\end{equation*}
which completes the proof. \qed
\end{proof}

\begin{lemma}\label{lem:A:2}
For $k\geq K$, we suppose $\mB_k$ happens. If the $k$-th step is a successful step, then
\begin{equation*}
\mL_{\barmu_{\barK}, \nu}^{s_k} \leq \mL_{\barmu_{\barK}, \nu}^k + \frac{\baralpha_k\beta}{2}\begin{pmatrix}
\bnabla_{\bx}\mL_{\barmu_{\barK}, \nu}^k\\
\bnabla_{\blambda}\mL_{\barmu_{\barK}, \nu}^k
\end{pmatrix}^T\begin{pmatrix}
\barDelta\bx_k\\
\barDelta\blambda_k
\end{pmatrix}.
\end{equation*}
\end{lemma}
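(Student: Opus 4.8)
The statement of Lemma~\ref{lem:A:2} says: on the event $\mB_k$, if step $k$ is successful then the \emph{deterministic} merit function decreases by at least half the Armijo quantity. The idea is to unpack the definition of a successful step, which is the \emph{stochastic} Armijo inequality \eqref{equ:ran:Armijo}, and then convert it into the deterministic inequality by absorbing the two function-estimation errors $|\barL_{\barmu_k,\nu}^k - \mL_{\barmu_k,\nu}^k|$ and $|\barL_{\barmu_k,\nu}^{s_k} - \mL_{\barmu_k,\nu}^{s_k}|$ using the bound that defines $\mB_k$ in \eqref{event:B_k}. Note $\barmu_k = \barmu_{\barK}$ for $k\geq \barK\ (\geq K)$ by Lemma~\ref{lem:9}, so the subscripts match.

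\textbf{Key steps.} First I would write the chain
\[
\mL_{\barmu_{\barK},\nu}^{s_k} \;\leq\; \barL_{\barmu_{\barK},\nu}^{s_k} + \bigl|\barL_{\barmu_{\barK},\nu}^{s_k} - \mL_{\barmu_{\barK},\nu}^{s_k}\bigr|,
\]
then apply the stochastic Armijo condition \eqref{equ:ran:Armijo} to bound $\barL_{\barmu_{\barK},\nu}^{s_k}$ by $\barL_{\barmu_{\barK},\nu}^k + \baralpha_k\beta\, (\bnabla\mL_{\barmu_{\barK},\nu}^k)^T(\barDelta\bx^k,\barDelta\blambda^k)$, and then bound $\barL_{\barmu_{\barK},\nu}^k \leq \mL_{\barmu_{\barK},\nu}^k + |\barL_{\barmu_{\barK},\nu}^k - \mL_{\barmu_{\barK},\nu}^k|$. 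This yields
\[
\mL_{\barmu_{\barK},\nu}^{s_k} \leq \mL_{\barmu_{\barK},\nu}^k + \baralpha_k\beta\begin{pmatrix}\bnabla_{\bx}\mL_{\barmu_{\barK},\nu}^k\\ \bnabla_{\blambda}\mL_{\barmu_{\barK},\nu}^k\end{pmatrix}^T\!\!\begin{pmatrix}\barDelta\bx^k\\ \barDelta\blambda^k\end{pmatrix} + \bigl|\barL_{\barmu_{\barK},\nu}^{s_k} - \mL_{\barmu_{\barK},\nu}^{s_k}\bigr| + \bigl|\barL_{\barmu_{\barK},\nu}^k - \mL_{\barmu_{\barK},\nu}^k\bigr|.
\]
Next, on $\mB_k$ each of the two error terms is at most $-\kappa_f\baralpha_k^2 (\bnabla\mL_{\barmu_{\barK},\nu}^k)^T(\barDelta\bx^k,\barDelta\blambda^k)$ (note this quantity is nonnegative since the projection is $\leq 0$ by \eqref{equ:ran:cond:2}), so the last two terms together contribute at most $-2\kappa_f\baralpha_k^2(\bnabla\mL_{\barmu_{\barK},\nu}^k)^T(\barDelta\bx^k,\barDelta\blambda^k)$. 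Substituting and factoring out $\baralpha_k(\bnabla\mL_{\barmu_{\barK},\nu}^k)^T(\barDelta\bx^k,\barDelta\blambda^k)$ gives a coefficient $\beta - 2\kappa_f\baralpha_k$. Finally, I would invoke the parameter restriction from the Input of Algorithm~\ref{alg:ASto:SQP}, namely $\kappa_f \in (0, \beta/4\alpha_{max})$ together with $\baralpha_k \leq \alpha_{max}$, which gives $2\kappa_f\baralpha_k < \beta/2$, hence $\beta - 2\kappa_f\baralpha_k > \beta/2$; since the multiplied factor is $\leq 0$, increasing the coefficient from $\beta$ to $\beta/2$ only weakens the bound in the correct direction, yielding the claimed inequality with $\baralpha_k\beta/2$.

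\textbf{Main obstacle.} There is no deep obstacle here; the lemma is essentially bookkeeping with the triangle inequality plus the defining inequality of $\mB_k$. The one point requiring care is sign tracking: the projection $(\bnabla\mL_{\barmu_{\barK},\nu}^k)^T(\barDelta\bx^k,\barDelta\blambda^k)$ is negative (guaranteed by \eqref{equ:ran:cond:2}, which holds after the While loop), so the bound in the definition of $\mB_k$ has a negative right-hand side read as an absolute value, and one must be sure that replacing $\beta$ by $\beta/2$ on a negative multiplier is a valid relaxation. I would also note explicitly that $\barmu_k = \barmu_{\barK}$ for $k\geq\barK$ so that all the $\barmu_{\barK}$ subscripts in the statement are consistent with the $\barmu_k$ appearing in \eqref{equ:ran:Armijo} and \eqref{event:B_k}. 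Beyond that the proof is a short computation and I would defer it to the appendix as the authors do.
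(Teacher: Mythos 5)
Your proposal is correct and follows essentially the same route as the paper's proof: chain the $\mB_k$ bound on $|\barL_{\barmu_{\barK},\nu}^{s_k}-\mL_{\barmu_{\barK},\nu}^{s_k}|$ and $|\barL_{\barmu_{\barK},\nu}^{k}-\mL_{\barmu_{\barK},\nu}^{k}|$ with the stochastic Armijo inequality, factor out the coefficient $\beta-2\kappa_f\baralpha_k$, and use $\baralpha_k\le\alpha_{max}$ together with $\kappa_f<\beta/4\alpha_{max}$ and the negativity of the stochastic projection (from \eqref{equ:ran:cond:2}) to relax to $\beta/2$. The only difference is cosmetic ordering of the triangle-inequality steps, and your sign-tracking remarks match the argument in Appendix \ref{pf:lem:A:2}.
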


\begin{proof}
Since $\mB_k$ happens and the $k$-th step is successful, we have
\begin{align*}
\mL_{\barmu_{\barK}, \nu}^{s_k} & \stackrel{\eqref{event:B_k}}{\leq}  \barL_{\barmu_{\barK}, \nu}^{s_k} -\kappa_f\baralpha_k^2\begin{pmatrix}
\bnabla_{\bx}\mL_{\barmu_{\barK}, \nu}^k\\
\bnabla_{\blambda}\mL_{\barmu_{\barK}, \nu}^k
\end{pmatrix}^T\begin{pmatrix}
\barDelta\bx_k\\
\barDelta\blambda_k
\end{pmatrix} \\
& \stackrel{\eqref{equ:ran:Armijo}}{\leq}  \barL_{\barmu_K, \nu}^k + \baralpha_k\beta\begin{pmatrix}
\bnabla_{\bx}\mL_{\barmu_{\barK}, \nu}^k\\
\bnabla_{\blambda}\mL_{\barmu_{\barK}, \nu}^k
\end{pmatrix}^T\begin{pmatrix}
\barDelta\bx_k\\
\barDelta\blambda_k
\end{pmatrix}  -\kappa_f\baralpha_k^2\begin{pmatrix}
\bnabla_{\bx}\mL_{\barmu_K, \nu}^k\\
\bnabla_{\blambda}\mL_{\barmu_K, \nu}^k
\end{pmatrix}^T\begin{pmatrix}
\barDelta\bx_k\\
\barDelta\blambda_k
\end{pmatrix} \\
& \stackrel{\eqref{event:B_k}}{\leq}  \mL_{\barmu_{\barK}, \nu}^k + \baralpha_k\beta\begin{pmatrix}
\bnabla_{\bx}\mL_{\barmu_{\barK}, \nu}^k\\
\bnabla_{\blambda}\mL_{\barmu_{\barK}, \nu}^k
\end{pmatrix}^T\begin{pmatrix}
\barDelta\bx_k\\
\barDelta\blambda_k
\end{pmatrix} \rbr{1 - \frac{2\kappa_f\baralpha_k}{\beta}}\\
& \leq  \mL_{\barmu_{\barK}, \nu}^k + \baralpha_k\beta\begin{pmatrix}
\bnabla_{\bx}\mL_{\barmu_{\barK}, \nu}^k\\
\bnabla_{\blambda}\mL_{\barmu_{\barK}, \nu}^k
\end{pmatrix}^T\begin{pmatrix}
\barDelta\bx_k\\
\barDelta\blambda_k
\end{pmatrix} \rbr{1 - \frac{2\kappa_f\alpha_{max}}{\beta}},
\end{align*}
where the last inequality is due to $\baralpha_k\leq \alpha_{max}$ from Algorithm \ref{alg:ASto:SQP}. Using the input condition on $\kappa_f$ in Line 1 of Algorithm \ref{alg:ASto:SQP} completes the proof. \qed
\end{proof}

Using the above two lemmas, we establish the one-step error recursion for $\Phi_{\barmu_{\barK}, \nu, \omega}^k$. Our analytical structure follows \cite[Theorem 4.6]{Paquette2020Stochastic} but generalizes it to a different merit function for studying equality-constrained problems. We consider three cases of approximation of Algorithm \ref{alg:ASto:SQP}: $\mA_k\cap \mB_k$, $\mA_k^c\cap \mB_k$, and $\mB_k^c$. We study them in the next three~lemmas.

\begin{lemma}\label{lem:4}
For $k \geq \barK$, we suppose the event $\mA_k\cap\mB_k$ happens. Let
\begin{equation}\label{equ:cond:omega:1}
\frac{1-\omega}{\omega} \leq \frac{\beta(\gamma_{RH}\wedge\nu)}{32\rho \cbr{\kappa_{\mL_{\tmu, \nu}}\alpha_{max}\Upsilon_3 \vee \rbr{\kappa_{grad}\alpha_{max}\Upsilon_1 + \Upsilon_4} }^2}\wedge \frac{1}{4(\rho-1)}
\end{equation}
where $\Upsilon_1, \Upsilon_3, \Upsilon_4$ are given by Proposition \ref{prop:2} and $\kappa_{\mL_{\tmu, \nu}}$ is defined in \eqref{Prop:1} with $\tmu$ given by Lemma \ref{lem:9}. Then
\begin{equation*}
\Phi_{\barmu_{\barK}, \nu, \omega}^{k+1} - \Phi_{\barmu_{\barK}, \nu, \omega}^k = -\frac{1}{2}(1-\omega)\rbr{1-\frac{1}{\rho}}\rbr{\barepsilon_k + \baralpha_k\nbr{\begin{pmatrix}
\nabla_{\bx}\mL_{\barmu_{\barK}, \nu}^k\\
\nabla_{\blambda}\mL_{\barmu_{\barK}, \nu}^k
\end{pmatrix}}^2}.
\end{equation*}	
\end{lemma}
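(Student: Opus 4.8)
The plan is to compute $\Phi_{\barmu_{\barK}, \nu, \omega}^{k+1} - \Phi_{\barmu_{\barK}, \nu, \omega}^k$ term by term, using the fact that on the event $\mA_k \cap \mB_k$ the step is guaranteed to be successful whenever $\baralpha_k$ is below the threshold of Lemma \ref{lem:8}, and otherwise we still control the change directly. First I would split into the two cases of the line search outcome. If the step is \emph{successful}, then $\baralpha_{k+1} = \alpha_{max}\wedge\rho\baralpha_k$ and the iterates move, so by Lemma \ref{lem:A:2} (which applies since $\mB_k$ holds) we get the deterministic decrease $\mL_{\barmu_{\barK}, \nu}^{s_k} \leq \mL_{\barmu_{\barK}, \nu}^k + \frac{\baralpha_k\beta}{2}(\bnabla\mL_{\barmu_{\barK}, \nu}^k)^T(\barDelta\bx^k, \barDelta\blambda^k)$. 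Using condition \eqref{equ:ran:cond:2}, the right-hand inner product is $\leq -\frac{\gamma_{RH}}{2}\|(\barDelta\bx^k, G_k\bnabla_{\bx}\mL^k)\|^2 < 0$, so $\mL_{\barmu_{\barK}, \nu}$ strictly decreases. For the $\baralpha_k$-weighted gradient term, I expand $\baralpha_{k+1}\|\nabla\mL_{\barmu_{\barK},\nu}^{k+1}\|^2 - \baralpha_k\|\nabla\mL_{\barmu_{\barK},\nu}^k\|^2$: the stepsize grows by at most a factor $\rho$, and the change in $\|\nabla\mL_{\barmu_{\barK},\nu}\|^2$ across a step of size $\baralpha_k$ is $O(\baralpha_k)$ times $\|(\barDelta\bx^k, G_k\bnabla_{\bx}\mL^k)\|\cdot\|\nabla\mL_{\barmu_{\barK},\nu}^k\|$ by smoothness (bounding the Hessian $\nabla^2\mL_{\barmu_{\barK},\nu}$ by $\kappa_{\mL_{\tmu,\nu}}$ via \eqref{Prop:1} and $\barmu_{\barK}\leq\tmu$) plus an $O(\baralpha_k^2)$ term. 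I then bound $\|\nabla\mL_{\barmu_{\barK},\nu}^k\|$ in terms of the "estimate" direction norm by splitting $\nabla\mL_{\barmu_{\barK},\nu}^k = \bnabla\mL_{\barmu_{\barK},\nu}^k + (\nabla - \bnabla)$ and applying $\mA_k$ together with \eqref{N:14}, which yields $\|\nabla\mL_{\barmu_{\barK},\nu}^k\| \leq (\kappa_{grad}\alpha_{max}\Upsilon_1 + \Upsilon_4)\|(\barDelta\bx^k, G_k\bnabla_{\bx}\mL^k)\|$. The parameter change is $\barepsilon_{k+1} = \rho\barepsilon_k$ on a reliable step and $\barepsilon_k/\rho$ otherwise; in the reliable case the sufficient-decrease inequality \eqref{equ:sufficient:dec} lets me absorb the extra $\barepsilon_k$ growth into the guaranteed $\mL_{\barmu_{\barK},\nu}$ decrease.

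Assembling these, the $\omega$-weighted $\mL$ decrease must dominate (i) the $(1-\omega)$-weighted growth of $\barepsilon_k$ in a reliable step, and (ii) the $(1-\omega)$-weighted possible increase of $\baralpha_k\|\nabla\mL_{\barmu_{\barK},\nu}^k\|^2$. Since the guaranteed $\mL$-decrease is of order $\baralpha_k\beta\gamma_{RH}\|(\barDelta\bx^k, G_k\bnabla_{\bx}\mL^k)\|^2$ and the offending terms are of order $\rho$ times squared constants times $\baralpha_k\|(\barDelta\bx^k, G_k\bnabla_{\bx}\mL^k)\|^2$ (for the gradient term) or tied to $\barepsilon_k$ via \eqref{equ:sufficient:dec} (for the $\barepsilon$ term), the condition \eqref{equ:cond:omega:1} on $\omega/(1-\omega)$ is exactly what makes the $\omega$-part win by a definite margin. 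The leftover is then precisely $-\frac12(1-\omega)(1-1/\rho)(\barepsilon_k + \baralpha_k\|\nabla\mL_{\barmu_{\barK},\nu}^k\|^2)$: the $(1-1/\rho)\barepsilon_k$ piece comes from the unreliable/unsuccessful contraction of $\barepsilon_k$ being the dominant effect once the reliable case is handled, and the $(1-1/\rho)\baralpha_k\|\nabla\mL_{\barmu_{\barK},\nu}\|^2$ piece comes from a similar accounting of the stepsize term. For the \emph{unsuccessful} case: Lemma \ref{lem:8} guarantees that when $\mA_k\cap\mB_k$ holds an unsuccessful step can occur only if $\baralpha_k$ exceeds the Lemma \ref{lem:8} threshold; then $\bx^{k+1}=\bx^k$, $\baralpha_{k+1}=\baralpha_k/\rho$, $\barepsilon_{k+1}=\barepsilon_k/\rho$, so $\mL$ is unchanged and both remaining terms contract by exactly $1/\rho$, giving the claimed identity directly with no slack needed — this case is actually the cleanest and pins down the precise constant $\frac12(1-\omega)(1-1/\rho)$.

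The main obstacle I anticipate is the bookkeeping in the successful-reliable sub-case: one must show that the genuine decrease in $\mL_{\barmu_{\barK},\nu}$ (from Lemma \ref{lem:A:2}) is large enough to simultaneously (a) cancel the $(1-\omega)\barepsilon_k$ growth, using \eqref{equ:sufficient:dec} to relate $\barepsilon_k$ to $\baralpha_k\beta|(\bnabla\mL_{\barmu_{\barK},\nu}^k)^T(\barDelta\bx^k,\barDelta\blambda^k)|$, (b) cancel the $(1-\omega)$-weighted increase in the $\baralpha_k\|\nabla\mL\|^2$ term, \emph{and} (c) leave behind exactly the stated negative quantity and nothing sign-indefinite. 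Getting the constants to line up requires carefully using $\beta\in(0,1)$, $\kappa_f\in(0,\beta/4\alpha_{max})$, the chain of norm equivalences \eqref{N:14}, and the two branches of the $\barepsilon$-update; the role of condition \eqref{equ:cond:omega:1} is to provide enough multiplicative room that all three demands hold with the \emph{uniform} coefficient $\frac12(1-1/\rho)$. I would organize the write-up as: (1) unsuccessful case (immediate), (2) successful-unreliable case, (3) successful-reliable case, each reduced to a single scalar inequality that \eqref{equ:cond:omega:1} resolves.
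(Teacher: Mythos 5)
Your plan follows essentially the same route as the paper's proof: the same case split (successful-reliable, successful-unreliable, unsuccessful), the same ingredients — Lemma \ref{lem:A:2}, condition \eqref{equ:ran:cond:2}, the bound $\|\nabla\mL_{\barmu_{\barK},\nu}^k\|\leq(\kappa_{grad}\alpha_{max}\Upsilon_1+\Upsilon_4)\,\|(\barDelta\bx^k,G_k\bnabla_{\bx}\mL^k)\|$ obtained from $\mA_k$ and \eqref{N:14}, the Lipschitz/Taylor control of $\baralpha_{k+1}\|\nabla\mL_{\barmu_{\barK},\nu}^{k+1}\|^2$ via $\kappa_{\mL_{\tmu,\nu}}$ and $\Upsilon_3$ with the factor $\rho$ from the stepsize update, the reliable-step use of \eqref{equ:sufficient:dec} to absorb the $\barepsilon_k$ growth, and \eqref{equ:cond:omega:1} to let the $\omega$-weighted decrease dominate — with the unsuccessful case computed exactly. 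The proposal is correct and handles the statement the same way the paper does, namely the displayed right-hand side is attained as an equality only in the unsuccessful case and as an upper bound in the two successful cases (your aside invoking Lemma \ref{lem:8} is harmless but not needed).
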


\begin{proof}

For each iteration, we have the following three types of steps.

\noindent\textbf{Case 1: reliable step.} We have
\begin{align}\label{N:17}
&\nbr{\begin{pmatrix}
\nabla_{\bx}\mL_{\barmu_{\barK}, \nu}^k\\
\nabla_{\blambda}\mL_{\barmu_{\barK}, \nu}^k
\end{pmatrix}} \leq \nbr{\begin{pmatrix}
\bnabla_{\bx}\mL_{\barmu_{\barK}, \nu}^k - \nabla_{\bx}\mL_{\barmu_{\barK}, \nu}^k\\
\bnabla_{\blambda}\mL_{\barmu_{\barK}, \nu}^k - \nabla_{\blambda}\mL_{\barmu_{\barK}, \nu}^k
\end{pmatrix}} + \nbr{\begin{pmatrix}
\bnabla_{\bx}\mL_{\barmu_{\barK}, \nu}^k\\
\bnabla_{\blambda}\mL_{\barmu_{\barK}, \nu}^k
\end{pmatrix}} \nonumber\\
& \hskip1cm \stackrel{\mathclap{\eqref{event:A_k}}}{\leq}  \kappa_{grad}\baralpha_k\nbr{\begin{pmatrix}
	\bnabla_{\bx}\mL_k + \nu\barM_k G_k\bnabla_{\bx}\mL_k + G_k^Tc_k\\
	\nu G_kG_k^TG_k\bnabla_{\bx}\mL_k
	\end{pmatrix}} + \nbr{\begin{pmatrix}
	\bnabla_{\bx}\mL_{\barmu_{\barK}, \nu}^k\\
	\bnabla_{\blambda}\mL_{\barmu_{\barK}, \nu}^k
	\end{pmatrix}} \nonumber\\
&\hskip1cm \stackrel{\mathclap{\eqref{N:14}}}{\leq}   \rbr{\kappa_{grad}\alpha_{max}\Upsilon_1 + \Upsilon_4} \nbr{\begin{pmatrix}
	\barDelta\bx_k\\
	G_k\bnabla_{\bx}\mL_k
	\end{pmatrix}}.
\end{align}
By Lemma \ref{lem:A:2} and the reliability of the step, we further have
\begin{align}\label{pequ:8}
&\mL_{\barmu_{\barK}, \nu}^{k+1} -\mL_{\barmu_{\barK}, \nu}^k \leq  \frac{\baralpha_k\beta}{2}\begin{pmatrix}
\bnabla_{\bx}\mL_{\barmu_{\barK}, \nu}^k\\
\bnabla_{\blambda}\mL_{\barmu_{\barK}, \nu}^k
\end{pmatrix}^T\begin{pmatrix}
\barDelta\bx_k\\
\barDelta\blambda_k
\end{pmatrix} \stackrel{\eqref{equ:sufficient:dec}}{\leq} \frac{\baralpha_k\beta}{4}\begin{pmatrix}
\bnabla_{\bx}\mL_{\barmu_{\barK}, \nu}^k\\
\bnabla_{\blambda}\mL_{\barmu_{\barK}, \nu}^k
\end{pmatrix}^T\begin{pmatrix}
\barDelta\bx_k\\
\barDelta\blambda_k
\end{pmatrix}  - \frac{\barepsilon_k}{4} \nonumber\\
& \stackrel{\eqref{equ:ran:cond:2}}{\leq}  -\frac{\baralpha_k\beta(\gamma_{RH}\wedge\nu)}{8}\nbr{\begin{pmatrix}
\barDelta\bx_k\\
G_k\bnabla_{\bx}\mL_k
\end{pmatrix}}^2 - \frac{\barepsilon_k}{4} \stackrel{\eqref{N:17}}{\leq} -\frac{\baralpha_k\beta(\gamma_{RH}\wedge\nu)}{16}\nbr{\begin{pmatrix}
\barDelta\bx_k\\
G_k\bnabla_{\bx}\mL_k
\end{pmatrix}}^2 \nonumber\\
& \quad\quad - \frac{\baralpha_k\beta(\gamma_{RH}\wedge\nu)}{16\rbr{\kappa_{grad}\alpha_{max}\Upsilon_1 + \Upsilon_4}^2 }\nbr{\begin{pmatrix}
\nabla_{\bx}\mL_{\barmu_{\barK}, \nu}^k\\
\nabla_{\blambda}\mL_{\barmu_{\barK}, \nu}^k
\end{pmatrix}}^2 - \frac{\barepsilon_k}{4}.
\end{align}
Moreover, by Line 13 of Algorithm \ref{alg:ASto:SQP}, $\barepsilon_{k+1} - \barepsilon_k = (\rho-1)\barepsilon_k$ and, by Taylor expansion,
\begin{align}\label{pequ:9}
\baralpha_{k+1}\nbr{\begin{pmatrix}
	\nabla_{\bx}\mL_{\barmu_{\barK}, \nu}^{k+1}\\
	\nabla_{\blambda}\mL_{\barmu_{\barK}, \nu}^{k+1}
	\end{pmatrix}}^2& -  \baralpha_k\nbr{\begin{pmatrix}
	\nabla_{\bx}\mL_{\barmu_{\barK}, \nu}^k\\
	\nabla_{\blambda}\mL_{\barmu_{\barK}, \nu}^k
	\end{pmatrix}}^2 \nonumber\\
&\hskip-2.5cm \leq  2\baralpha_{k+1}\rbr{ \nbr{\begin{pmatrix}
		\nabla_{\bx}\mL_{\barmu_{\barK}, \nu}^k\\
		\nabla_{\blambda}\mL_{\barmu_{\barK}, \nu}^k
		\end{pmatrix}}^2 + \kappa_{\mL_{\tmu, \nu}}^2\baralpha_k^2\nbr{\begin{pmatrix}
		\barDelta\bx_k\\
		\barDelta\blambda_k
		\end{pmatrix}}^2 } \nonumber\\
&\hskip-2.5cm \leq  2\rho\baralpha_k\rbr{ \nbr{\begin{pmatrix}
		\nabla_{\bx}\mL_{\barmu_{\barK}, \nu}^k\\
		\nabla_{\blambda}\mL_{\barmu_{\barK}, \nu}^k
		\end{pmatrix}}^2 + \kappa_{\mL_{\tmu, \nu}}^2\alpha_{max}^2\Upsilon_3^2\nbr{\begin{pmatrix}
		\barDelta\bx_k\\
		G_k\bnabla_{\bx}\mL_k
		\end{pmatrix}}^2 }.
\end{align}
The last inequality uses $\baralpha_{k+1} \leq \rho\baralpha_k$, $\baralpha_k\leq \alpha_{max}$, and \eqref{N:14}. Combining~the~above three displays,
\begin{multline}\label{pequ:11}
\Phi_{\barmu_{\barK}, \nu, \omega}^{k+1} - \Phi_{\barmu_{\barK}, \nu, \omega}^k \leq -\frac{\beta(\gamma_{RH}\wedge\nu)\omega}{32}\baralpha_k\nbr{\begin{pmatrix}
\barDelta\bx_k\\
G_k\bnabla_{\bx}\mL_k
\end{pmatrix}}^2 \\
- \frac{\beta(\gamma_{RH}\wedge\nu)\omega}{32\rbr{\kappa_{grad}\alpha_{max}\Upsilon_1 + \Upsilon_4}^2 } \baralpha_k\nbr{\begin{pmatrix}
\nabla_{\bx}\mL_{\barmu_{\barK}, \nu}^k\\
\nabla_{\blambda}\mL_{\barmu_{\barK}, \nu}^k
\end{pmatrix}}^2 - \frac{\omega\barepsilon_k}{8},
\end{multline}
since 
\begin{align*}
-\frac{\omega\beta(\gamma_{RH}\wedge\nu)}{16} + (1-\omega)\rho\kappa_{\mL_{\tmu, \nu}}^2\alpha_{max}^2\Upsilon_3^2 \leq&  -\frac{\omega\beta(\gamma_{RH}\wedge\nu)}{32},\\
-\frac{\omega\beta(\gamma_{RH}\wedge\nu)}{16\rbr{\kappa_{grad}\alpha_{max}\Upsilon_1 + \Upsilon_4}^2 } + (1-\omega)\rho \leq & -\frac{\omega\beta(\gamma_{RH}\wedge\nu)}{32\rbr{\kappa_{grad}\alpha_{max}\Upsilon_1 + \Upsilon_4}^2 },\\
-\frac{\omega}{4} + \frac{1}{2}(1-\omega)(\rho-1) \leq & -\frac{\omega}{8},
\end{align*}
as implied by the condition \eqref{equ:cond:omega:1}.

\noindent\textbf{Case 2: unreliable step.} For unreliable step, we apply Lemma \ref{lem:A:2} and \eqref{pequ:8} is changed to
\begin{align*}
\mL_{\barmu_{\barK}, \nu}^{k+1} - \mL_{\barmu_{\barK}, \nu}^k & \leq  \frac{\baralpha_k\beta}{2}\begin{pmatrix}
\bnabla_{\bx}\mL_{\barmu_{\barK}, \nu}^k\\
\bnabla_{\blambda}\mL_{\barmu_{\barK}, \nu}^k
\end{pmatrix}^T\begin{pmatrix}
\barDelta\bx_k\\
\barDelta\blambda_k
\end{pmatrix} \stackrel{\eqref{equ:ran:cond:2}}{\leq} -\frac{\baralpha_k\beta(\gamma_{RH}\wedge\nu)}{4}\nbr{\begin{pmatrix}
\barDelta\bx_k\\
G_k\bnabla_{\bx}\mL_k
\end{pmatrix}}^2 \nonumber\\
& \hskip-2.3cm\stackrel{\eqref{N:17}}{\leq}  -\frac{\baralpha_k\beta(\gamma_{RH}\wedge\nu)}{8}\nbr{\begin{pmatrix}
	\barDelta\bx_k\\
	G_k\bnabla_{\bx}\mL_k
	\end{pmatrix}}^2 - \frac{\baralpha_k\beta(\gamma_{RH}\wedge\nu)}{8\rbr{\kappa_{grad}\alpha_{max}\Upsilon_1 + \Upsilon_4}^2 }\nbr{\begin{pmatrix}
	\nabla_{\bx}\mL_{\barmu_{\barK}, \nu}^k\\
	\nabla_{\blambda}\mL_{\barmu_{\barK}, \nu}^k
	\end{pmatrix}}^2.
\end{align*}
By Line 15 of Algorithm \ref{alg:ASto:SQP}, $\barepsilon_{k+1} - \barepsilon_k = -(1-1/\rho)\barepsilon_k$, while \eqref{pequ:9} is still the same. Thus, under the condition \eqref{equ:cond:omega:1}, we obtain
\begin{multline}\label{pequ:12}
\Phi_{\barmu_{\barK}, \nu, \omega}^{k+1} - \Phi_{\barmu_{\barK}, \nu, \omega}^k
\leq -\frac{\beta(\gamma_{RH}\wedge\nu)\omega}{32}\baralpha_k\nbr{\begin{pmatrix}
	\barDelta\bx_k\\
	G_k\bnabla_{\bx}\mL_k
	\end{pmatrix}}^2 \\
-\frac{\beta(\gamma_{RH}\wedge\nu)\omega}{32\rbr{\kappa_{grad}\alpha_{max}\Upsilon_1 + \Upsilon_4}^2 } \baralpha_k\nbr{\begin{pmatrix}
	\nabla_{\bx}\mL_{\barmu_{\barK}, \nu}^k\\
	\nabla_{\blambda}\mL_{\barmu_{\barK}, \nu}^k
	\end{pmatrix}}^2 - \frac{1}{2}(1-\omega)\rbr{1-\frac{1}{\rho}}\barepsilon_k.
\end{multline}
\noindent\textbf{Case 3: unsuccessful step.} Noting that $(\bx_{k+1}, \blambda_{k+1}) = (\bx_k, \blambda_k)$,
\begin{multline}\label{pequ:13}
\Phi_{\barmu_{\barK}, \nu, \omega}^{k+1} - \Phi_{\barmu_{\barK}, \nu, \omega}^k \\
= -\frac{1}{2}(1-\omega)\rbr{1-\frac{1}{\rho}}\barepsilon_k - \frac{1}{2}(1-\omega)\rbr{1 - \frac{1}{\rho}} \baralpha_k\nbr{\begin{pmatrix}
	\nabla_{\bx}\mL_{\barmu_{\barK}, \nu}^k\\
	\nabla_{\blambda}\mL_{\barmu_{\barK}, \nu}^k
	\end{pmatrix}}^2.
\end{multline}
Noting that under the condition \eqref{equ:cond:omega:1}, we have
\begin{gather*}
-\frac{\beta(\gamma_{RH}\wedge\nu)\omega}{32\rbr{\kappa_{grad}\alpha_{max}\Upsilon_1 + \Upsilon_4}^2 } \leq - \frac{1}{2}(1-\omega)\rbr{1 - \frac{1}{\rho}},\\
-\frac{\omega}{8} \leq -\frac{1}{2}(1-\omega)\rbr{1-\frac{1}{\rho}}.
\end{gather*}
Therefore, \eqref{pequ:13} holds for all three cases, which completes the proof. \qed
\end{proof}

The proofs of Lemmas \ref{lem:5} and \ref{lem:7} are similar to Lemma \ref{lem:4}, hence deferred to the appendix.

\begin{lemma}\label{lem:5}
For $k\geq \barK$, we suppose the event $\mA_k^c\cap\mB_k$ happens. Suppose $\omega$ satisfies \eqref{equ:cond:omega:1}. Then
\begin{equation*}
\Phi_{\barmu_{\barK}, \nu, \omega}^{k+1} - \Phi_{\barmu_{\barK}, \nu, \omega}^k \leq \rho(1-\omega)\baralpha_k\nbr{\begin{pmatrix}
	\nabla_{\bx}\mL_{\barmu_{\barK}, \nu}^k\\
	\nabla_{\blambda}\mL_{\barmu_{\barK}, \nu}^k
	\end{pmatrix}}^2.
\end{equation*}	
\end{lemma}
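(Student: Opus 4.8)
The plan is to expand the one-step change of $\Phi_{\barmu_{\barK}, \nu, \omega}$ into its three pieces and bound each, handling unsuccessful and successful steps separately. Write
\begin{align*}
\Phi_{\barmu_{\barK}, \nu, \omega}^{k+1} - \Phi_{\barmu_{\barK}, \nu, \omega}^k = {} & \omega\rbr{\mL_{\barmu_{\barK}, \nu}^{k+1} - \mL_{\barmu_{\barK}, \nu}^k} + \frac{1-\omega}{2}\rbr{\barepsilon_{k+1} - \barepsilon_k} \\
& + \frac{1-\omega}{2}\rbr{\baralpha_{k+1}\|\nabla\mL_{\barmu_{\barK}, \nu}^{k+1}\|^2 - \baralpha_k\|\nabla\mL_{\barmu_{\barK}, \nu}^k\|^2}.
\end{align*}
If the $k$-th step is unsuccessful, the iterate does not move, so the first difference vanishes, while $\baralpha_{k+1} = \baralpha_k/\rho$ and $\barepsilon_{k+1} = \barepsilon_k/\rho$ make the other two strictly negative; hence the left-hand side is $\le 0 \le \rho(1-\omega)\baralpha_k\|\nabla\mL_{\barmu_{\barK}, \nu}^k\|^2$ and there is nothing to prove. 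So assume the step is successful.

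First I would control the merit-function term. Since $k\ge\barK$ we have $\barmu_k = \barmu_{\barK}$ by Lemma~\ref{lem:9}; because $\mB_k$ holds and the step is successful, Lemma~\ref{lem:A:2} combined with the first inequality of the While-loop exit condition~\eqref{equ:ran:cond:2} yields $\mL_{\barmu_{\barK}, \nu}^{k+1} - \mL_{\barmu_{\barK}, \nu}^k \le -\tfrac{\beta\gamma_{RH}}{4}\baralpha_k\nbr{\begin{pmatrix}\barDelta\bx^k\\ G_k\bnabla_{\bx}\mL^k\end{pmatrix}}^2$, the one genuinely negative quantity available. Next, the stepsize--gradient term: using $\baralpha_{k+1}\le\rho\baralpha_k$, the Lipschitz continuity of $\nabla\mL_{\barmu_{\barK}, \nu}$ with constant $\kappa_{\mL_{\tmu, \nu}}$ (legitimate because $\barmu_{\barK}\le\tmu$ by Lemma~\ref{lem:9}), the fact that the displacement has length $\baralpha_k\nbr{(\barDelta\bx^k, \barDelta\blambda^k)}$, Young's inequality, $\baralpha_k\le\alpha_{max}$ and the third bound in \eqref{N:14}, I would get $\baralpha_{k+1}\|\nabla\mL_{\barmu_{\barK}, \nu}^{k+1}\|^2 - \baralpha_k\|\nabla\mL_{\barmu_{\barK}, \nu}^k\|^2 \le (2\rho-1)\baralpha_k\|\nabla\mL_{\barmu_{\barK}, \nu}^k\|^2 + 2\rho\rbr{\kappa_{\mL_{\tmu, \nu}}\alpha_{max}\Upsilon_3}^2\baralpha_k\nbr{\begin{pmatrix}\barDelta\bx^k\\ G_k\bnabla_{\bx}\mL^k\end{pmatrix}}^2$. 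Finally, the $\barepsilon$ term: on an unreliable successful step $\barepsilon_{k+1} = \barepsilon_k/\rho$ and the contribution is negative; on a reliable successful step $\barepsilon_{k+1} = \rho\barepsilon_k$, and the sufficient-decrease condition~\eqref{equ:sufficient:dec} together with Cauchy--Schwarz and \eqref{N:14} bounds $\barepsilon_k$ itself by a constant multiple of $\baralpha_k\nbr{(\barDelta\bx^k, G_k\bnabla_{\bx}\mL^k)}^2$.

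Adding the three bounds, the coefficient of $\baralpha_k\|\nabla\mL_{\barmu_{\barK}, \nu}^k\|^2$ is $\tfrac{1-\omega}{2}(2\rho-1) \le \rho(1-\omega)$, which already matches the right-hand side of the claim; everything left over is the nonpositive term $-\tfrac{\omega\beta\gamma_{RH}}{4}\baralpha_k\nbr{(\barDelta\bx^k, G_k\bnabla_{\bx}\mL^k)}^2$ plus positive terms of the identical form $\baralpha_k\nbr{(\barDelta\bx^k, G_k\bnabla_{\bx}\mL^k)}^2$, with constants assembled from $\rho$, $\Upsilon_1,\Upsilon_3,\Upsilon_4$, $\kappa_{grad}$, $\kappa_{\mL_{\tmu, \nu}}$, $\alpha_{max}$, $\beta$, $\gamma_{RH}$. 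Requiring the merit decrease to dominate those is exactly the lower bound~\eqref{equ:cond:omega:1} on $\omega/(1-\omega)$ (its squared piece pays for the gradient-growth error, its $4(\rho-1)$ piece for the $\barepsilon$-increase), after which these terms cancel to something $\le 0$ and the stated inequality follows.

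The main obstacle is the stepsize--gradient term on $\mA_k^c$: since the stochastic direction may be a poor proxy for $\nabla\mL_{\barmu_{\barK}, \nu}^k$, we cannot invoke any descent of the true merit function nor estimate $\|\nabla\mL_{\barmu_{\barK}, \nu}^{k+1}\|$ through the model — the only handle is the deterministic displacement length $\baralpha_k\nbr{(\barDelta\bx^k, \barDelta\blambda^k)}$. The care lies in routing this growth term, and the reliable-step increase of $\barepsilon_k$, through the norm equivalences \eqref{N:14} so that both become constant multiples of $\nbr{(\barDelta\bx^k, G_k\bnabla_{\bx}\mL^k)}^2$, the single quantity that the guaranteed merit decrease in \eqref{equ:ran:cond:2} can absorb; verifying that the accumulated constants do not exceed what \eqref{equ:cond:omega:1} supplies is then routine bookkeeping.
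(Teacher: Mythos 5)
Your proposal is correct and follows essentially the same route as the paper's proof: split into unsuccessful and successful (reliable/unreliable) steps, use Lemma \ref{lem:A:2} together with \eqref{equ:ran:cond:2} for the merit-function decrease, use the Taylor bound with $\baralpha_{k+1}\le\rho\baralpha_k$ and \eqref{N:14} for the $\baralpha_k\|\nabla\mL_{\barmu_{\barK},\nu}^k\|^2$ term (whose unabsorbable part is exactly the $\rho(1-\omega)\baralpha_k\|\nabla\mL_{\barmu_{\barK},\nu}^k\|^2$ on the right-hand side, since on $\mA_k^c$ the true gradient cannot be tied to the direction), and invoke \eqref{equ:cond:omega:1} to absorb the remaining direction-norm terms into the merit decrease. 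The only minor deviation is the reliable-step $\barepsilon_k$-increase: the paper extracts $-\barepsilon_k/4$ from the Armijo decrease via \eqref{equ:sufficient:dec} and cancels it using the $4(\rho-1)$ part of \eqref{equ:cond:omega:1}, whereas you convert $\barepsilon_k$ into a multiple of $\baralpha_k\|(\barDelta\bx^k, G_k\bnabla_{\bx}\mL^k)\|^2$ via Cauchy--Schwarz and \eqref{N:14}; both variants close under the stated constants.
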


\begin{proof}
See Appendix \ref{pf:lem:5}. \qed
\end{proof}

\begin{lemma}\label{lem:7}
For $k\geq \barK$, we suppose the event $\mB_k^c$ happens. Suppose $\omega$ satisfies \eqref{equ:cond:omega:1}. Then
\begin{multline*}
\Phi_{\barmu_{\barK}, \nu, \omega}^{k+1} - \Phi_{\barmu_{\barK}, \nu, \omega}^k
\leq  \rho(1-\omega)\baralpha_k\nbr{\begin{pmatrix}
	\nabla_{\bx}\mL_{\barmu_{\barK}, \nu}^k\\
	\nabla_{\blambda}\mL_{\barmu_{\barK}, \nu}^k
	\end{pmatrix}}^2 \\+ \omega( |\barL_{\barmu_{\barK}, \nu}^{s_k} - \mL_{\barmu_{\barK}, \nu}^{s_k}| + |\barL_{\barmu_{\barK}, \nu}^k - \mL_{\barmu_{\barK}, \nu}^k|).
\end{multline*}	
\end{lemma}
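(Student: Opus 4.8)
The plan is to expand the one-step difference of $\Phi_{\barmu_{\barK},\nu,\omega}$ into its three additive pieces,
\begin{align*}
\Phi_{\barmu_{\barK}, \nu, \omega}^{k+1} - \Phi_{\barmu_{\barK}, \nu, \omega}^k
& = \omega\rbr{\mL_{\barmu_{\barK}, \nu}^{k+1} - \mL_{\barmu_{\barK}, \nu}^k} + \frac{1-\omega}{2}\rbr{\barepsilon_{k+1} - \barepsilon_k} \\
& \quad + \frac{1-\omega}{2}\rbr{\baralpha_{k+1}\nbr{\nabla\mL_{\barmu_{\barK}, \nu}^{k+1}}^2 - \baralpha_k\nbr{\nabla\mL_{\barmu_{\barK}, \nu}^k}^2},
\end{align*}
and to bound each piece after splitting on whether the $k$-th step is successful (i.e.\ the Armijo condition \eqref{equ:ran:Armijo} holds) or not. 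I will use throughout that for $k\ge\barK$ we have $\barmu_k=\barmu_{\barK}\le\tmu$ by Lemma~\ref{lem:9}, so that the $\barmu_k$ produced by the While loop satisfies \eqref{equ:ran:cond:2}, that $\baralpha_k\le\alpha_{max}$, and that $\nabla\mL_{\barmu_{\barK},\nu}$ is $\kappa_{\mL_{\tmu,\nu}}$-Lipschitz on the convex compact set $\mX\times\Lambda$ (the Hessian bound $\kappa_{\mL_{\mu,\nu}}$ in \eqref{Prop:1} being nondecreasing in $\mu$). The distinctive feature of $\mB_k^c$ is that it gives \emph{no} control whatsoever on the merit errors $\abr{\barL_{\barmu_{\barK},\nu}^{k}-\mL_{\barmu_{\barK},\nu}^{k}}$ and $\abr{\barL_{\barmu_{\barK},\nu}^{s_k}-\mL_{\barmu_{\barK},\nu}^{s_k}}$, so these must survive into the final estimate; the event $\mA_k$ plays no role here.

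If the step is unsuccessful, then $(\bx^{k+1},\blambda^{k+1})=(\bx^k,\blambda^k)$, $\baralpha_{k+1}=\baralpha_k/\rho$, $\barepsilon_{k+1}=\barepsilon_k/\rho$, so all three pieces are $\le0$ and the claim is immediate. Assume the step is successful. For the first piece, the Armijo condition \eqref{equ:ran:Armijo} and the triangle inequality give $\mL_{\barmu_{\barK},\nu}^{s_k}-\mL_{\barmu_{\barK},\nu}^{k}\le\baralpha_k\beta\,(\bnabla_{\bx}\mL_{\barmu_{\barK},\nu}^k,\bnabla_{\blambda}\mL_{\barmu_{\barK},\nu}^k)^T(\barDelta\bx^k,\barDelta\blambda^k)+\abr{\barL_{\barmu_{\barK},\nu}^{k}-\mL_{\barmu_{\barK},\nu}^{k}}+\abr{\barL_{\barmu_{\barK},\nu}^{s_k}-\mL_{\barmu_{\barK},\nu}^{s_k}}$. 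For the second piece, the worst case is a reliable step with $\barepsilon_{k+1}=\rho\barepsilon_k$; then \eqref{equ:sufficient:dec} bounds $(\rho-1)\barepsilon_k$ by $-(\rho-1)\baralpha_k\beta\,(\bnabla_{\bx}\mL_{\barmu_{\barK},\nu}^k,\bnabla_{\blambda}\mL_{\barmu_{\barK},\nu}^k)^T(\barDelta\bx^k,\barDelta\blambda^k)$, and combining it with the first piece and using $\omega/(1-\omega)\ge4(\rho-1)$ from \eqref{equ:cond:omega:1}, the pooled directional term is at most $\tfrac{\omega}{2}\baralpha_k\beta\,(\bnabla_{\bx}\mL_{\barmu_{\barK},\nu}^k,\bnabla_{\blambda}\mL_{\barmu_{\barK},\nu}^k)^T(\barDelta\bx^k,\barDelta\blambda^k)$, which by the first inequality of \eqref{equ:ran:cond:2} is at most $-\tfrac{\omega\beta\gamma_{RH}}{4}\baralpha_k\nbr{(\barDelta\bx^k,G_k\bnabla_{\bx}\mL^k)}^2$ (an unreliable successful step is handled identically, only more favourably since then the $\barepsilon$ piece is already $\le0$).

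For the third piece, the Lipschitz bound and $\baralpha_k\le\alpha_{max}$ give $\nbr{\nabla\mL_{\barmu_{\barK},\nu}^{k+1}}^2\le2\nbr{\nabla\mL_{\barmu_{\barK},\nu}^{k}}^2+2\kappa_{\mL_{\tmu,\nu}}^2\alpha_{max}^2\nbr{(\barDelta\bx^k,\barDelta\blambda^k)}^2$; combined with $\baralpha_{k+1}\le\rho\baralpha_k$, the third piece is at most $\rho(1-\omega)\baralpha_k\nbr{\nabla\mL_{\barmu_{\barK},\nu}^{k}}^2+\rho(1-\omega)\kappa_{\mL_{\tmu,\nu}}^2\alpha_{max}^2\baralpha_k\nbr{(\barDelta\bx^k,\barDelta\blambda^k)}^2$, and the middle relation of \eqref{N:14}, $\nbr{(\barDelta\bx^k,\barDelta\blambda^k)}\le\Upsilon_3\nbr{(\barDelta\bx^k,G_k\bnabla_{\bx}\mL^k)}$, rewrites the last term on the $\nbr{(\barDelta\bx^k,G_k\bnabla_{\bx}\mL^k)}^2$ scale. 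Adding the three pieces, the $\nbr{(\barDelta\bx^k,G_k\bnabla_{\bx}\mL^k)}^2$ terms combine to $\big(-\tfrac{\omega\beta\gamma_{RH}}{4}+\rho(1-\omega)\kappa_{\mL_{\tmu,\nu}}^2\alpha_{max}^2\Upsilon_3^2\big)\baralpha_k\nbr{(\barDelta\bx^k,G_k\bnabla_{\bx}\mL^k)}^2$, which is $\le0$ precisely because $\omega/(1-\omega)\ge32\rho(\kappa_{\mL_{\tmu,\nu}}\alpha_{max}\Upsilon_3)^2/(\beta\gamma_{RH})$ as required by \eqref{equ:cond:omega:1}; what remains is $\rho(1-\omega)\baralpha_k\nbr{\nabla\mL_{\barmu_{\barK},\nu}^k}^2$ plus the two merit-error terms carried over from the first piece, i.e.\ the claimed inequality.

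The main obstacle is the balancing act just described: the only negative quantity available, $-c\,\baralpha_k\nbr{(\barDelta\bx^k,G_k\bnabla_{\bx}\mL^k)}^2$, coming from the Armijo-implied decrease of the \emph{true} merit function via \eqref{equ:ran:cond:2}, has to be simultaneously large enough to swallow the expansion $\barepsilon_k\mapsto\rho\barepsilon_k$ in the $\barepsilon$ piece and the Lipschitz perturbation $\kappa_{\mL_{\tmu,\nu}}^2\alpha_{max}^2\baralpha_k\nbr{(\barDelta\bx^k,\barDelta\blambda^k)}^2$ in the $\baralpha\|\nabla\mL\|^2$ piece, and this is exactly what pins down the lower bound on $\omega/(1-\omega)$ in \eqref{equ:cond:omega:1}. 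A secondary subtlety is that the third piece contains the \emph{deterministic} gradient $\nabla\mL_{\barmu_{\barK},\nu}^{k+1}$ at the updated iterate, which on $\mB_k^c$ can only be tied back to the current iterate through Lipschitz continuity and the uniform bound $\barmu_{\barK}\le\tmu$, not through any accuracy property of the stochastic model.
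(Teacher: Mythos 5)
Your proof is correct and follows essentially the same route as the paper's: the same split into unsuccessful, successful-reliable, and successful-unreliable steps, with the stochastic Armijo condition plus the triangle inequality on the merit-function errors, \eqref{equ:sufficient:dec} to absorb the $\rho\barepsilon_k$ expansion, \eqref{equ:ran:cond:2} for the directional decrease, the Lipschitz/Taylor bound with \eqref{N:14} for the $\baralpha\|\nabla\mL_{\barmu_{\barK},\nu}\|^2$ piece, and \eqref{equ:cond:omega:1} to make the quadratic terms nonpositive. The only cosmetic difference is that the paper keeps extra negative terms in its intermediate bounds before relaxing to the stated inequality, whereas you drop them immediately.
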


\begin{proof}
See Appendix \ref{pf:lem:7}. \qed
\end{proof}

The above three lemmas show how $\Phi_{\barmu_{\barK}, \nu, \omega}^k$ changes in each iteration. We observe from Lemmas \ref{lem:5} and \ref{lem:7} that if either the function or the gradient are imprecisely estimated, then there is no guarantee that $\Phi_{\barmu_{\barK}, \nu, \omega}^k$ will decrease. The following theorem shows that the increase of $\Phi_{\barmu_{\barK}, \nu, \omega}^k$ can be controlled, when the probabilities $p_f, p_{grad}$ of bad events are small enough. In particular, in expectation, $\Phi_{\barmu_{\barK}, \nu, \omega}^k$ always decreases. Our analysis is conditional on $\mF_{\barK}$, so that $(\bx_{\barK+1}, \blambda_{\barK+1})$~and $\barmu_{\barK}$ are fixed. Our proof resembles \cite[Theorem 4.6]{Paquette2020Stochastic}, but the conditions~on $p_{grad}, p_f$ are different. To simplify notation, we let $\Phi_{\barmu_{\barK}}^k$ denote $\Phi_{\barmu_{\barK}, \nu, \omega}^k$. Recall that $\omega$ is not a parameter of the algorithm.

\begin{theorem}[One-step error recursion]\label{thm:5}
For $k>\barK$, suppose $\omega$ satisfies \eqref{equ:cond:omega:1} and $p_f, p_{grad}$ satisfy
\begin{equation}\label{cond:pp}
\frac{p_{grad} + \sqrt{p_f}}{(1-p_{grad})(1-p_f)} \leq \frac{\rho-1}{8\rho}\rbr{\frac{1}{\rho} \wedge \frac{1-\omega}{\omega}}.
\end{equation}
Then,
\begin{multline*}
\mE[\Phi_{\barmu_{\barK}}^{k+1} - \Phi_{\barmu_{\barK}}^k \mid \mF_{k-1}] \\
\leq  -\frac{1}{4}(1-p_{grad})(1-p_f)(1-\omega)\rbr{1-\frac{1}{\rho}}\rbr{\barepsilon_k + \baralpha_k\nbr{\begin{pmatrix}
		\nabla_{\bx}\mL_{\barmu_{\barK}, \nu}^k\\
		\nabla_{\blambda}\mL_{\barmu_{\barK}, \nu}^k
		\end{pmatrix}}^2}.
\end{multline*}
\end{theorem}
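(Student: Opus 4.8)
The plan is to condition on $\mF_{k-1}$ and decompose the expectation of $\Phi_{\barmu_{\barK}}^{k+1} - \Phi_{\barmu_{\barK}}^k$ according to the three disjoint events $\mA_k\cap\mB_k$, $\mA_k^c\cap\mB_k$, and $\mB_k^c$, which is exactly the case split handled by Lemmas \ref{lem:4}, \ref{lem:5}, and \ref{lem:7}. On the good event $\mA_k\cap\mB_k$ we get a strict per-step decrease of $\tfrac12(1-\omega)(1-1/\rho)\big(\barepsilon_k + \baralpha_k\|\nabla\mL_{\barmu_{\barK},\nu}^k\|^2\big)$; on the two bad events we only have upper bounds that are (up to a multiplicative constant $\rho$) of the form $(1-\omega)\baralpha_k\|\nabla\mL_{\barmu_{\barK},\nu}^k\|^2$, plus, in the $\mB_k^c$ case, the extra term $\omega\big(|\barL_{\barmu_{\barK},\nu}^{s_k} - \mL_{\barmu_{\barK},\nu}^{s_k}| + |\barL_{\barmu_{\barK},\nu}^k - \mL_{\barmu_{\barK},\nu}^k|\big)$. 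So the proof reduces to showing that the (small) probabilities $p_{grad}$, $p_f$ of the bad events make the negative contribution from $\mA_k\cap\mB_k$ dominate the positive contributions from the other two.

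First I would bound $P(\mA_k^c\cap\mB_k\mid\mF_{k-1})$ and $P(\mB_k^c\mid\mF_{k-1})$. The subtlety is that $\mA_k$ depends only on $\xi_g^k$ (it is $\mF_{k-0.5}$-measurable given $\mF_{k-1}$) while $\mB_k$ depends on $\xi_f^k$; since $\xi_g^k$ and $\xi_f^k$ are generated independently and the conditional guarantees \eqref{equ:ran:cond:1} and \eqref{equ:ran:cond:3} hold, I would use the tower property to get $P(\mA_k^c\cap\mB_k\mid\mF_{k-1}) \le P(\mA_k^c\mid\mF_{k-1}) \le p_{grad}$ and $P(\mB_k^c\mid\mF_{k-1}) \le p_f$ (taking expectation of the $\mF_{k-0.5}$-conditional bound $P(\mB_k^c\mid\mF_{k-0.5})\le p_f$). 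On $\mA_k\cap\mB_k$ the decrease term is deterministic given $\mF_{k-1}$ (the right-hand side of Lemma \ref{lem:4} involves only $\barepsilon_k$, $\baralpha_k$, and $\nabla\mL_{\barmu_{\barK},\nu}^k$, all $\mF_{k-1}$-measurable), so its conditional expectation is just $P(\mA_k\cap\mB_k\mid\mF_{k-1})$ times that quantity, and $P(\mA_k\cap\mB_k\mid\mF_{k-1})\ge 1 - p_{grad} - p_f \ge (1-p_{grad})(1-p_f)$.

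The one genuinely nontrivial estimate is controlling the extra term $\omega\,\mE[\,|\barL_{\barmu_{\barK},\nu}^{s_k} - \mL_{\barmu_{\barK},\nu}^{s_k}| + |\barL_{\barmu_{\barK},\nu}^k - \mL_{\barmu_{\barK},\nu}^k| \,;\, \mB_k^c \mid \mF_{k-1}]$. Here I would apply Cauchy--Schwarz: this is at most $\omega\sqrt{P(\mB_k^c\mid\mF_{k-1})}$ times $\sqrt{\mE[(\barL^{s_k}-\mL^{s_k})^2] + \mE[(\barL^k-\mL^k)^2]}^{\,}$, and by the second-moment control \eqref{equ:ran:cond:4} (which bounds each mean square by $\barepsilon_k^2$) the latter is $\lesssim \barepsilon_k$, while $\sqrt{P(\mB_k^c\mid\mF_{k-1})}\le\sqrt{p_f}$. (One has to be slightly careful that \eqref{equ:ran:cond:4} controls the unconditional-on-$\mB_k^c$ second moment, but conditioning on the event of probability $\le p_f$ only inflates things, which is why $\sqrt{p_f}$ rather than $p_f$ appears — this is precisely why condition \eqref{cond:pp} is stated in terms of $p_{grad}+\sqrt{p_f}$.) Similarly, on $\mA_k^c\cap\mB_k$ and on $\mB_k^c$ the term $\rho(1-\omega)\baralpha_k\|\nabla\mL_{\barmu_{\barK},\nu}^k\|^2$ contributes at most $\rho(1-\omega)\baralpha_k\|\nabla\mL_{\barmu_{\barK},\nu}^k\|^2\cdot(p_{grad}+p_f)$ (or just $p_f$) in expectation, again since $\baralpha_k$ and $\nabla\mL_{\barmu_{\barK},\nu}^k$ are $\mF_{k-1}$-measurable.

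Finally I would collect terms: the conditional expectation is $\le -\,(1-p_{grad})(1-p_f)\cdot\tfrac12(1-\omega)(1-1/\rho)\big(\barepsilon_k+\baralpha_k\|\nabla\mL^k_{\barmu_{\barK},\nu}\|^2\big) + \big(\text{positive terms}\big)$, where the positive terms are bounded by a constant multiple of $\big(p_{grad}+\sqrt{p_f}\big)\cdot\big(\barepsilon_k + \baralpha_k\|\nabla\mL^k_{\barmu_{\barK},\nu}\|^2\big)$ after using $\baralpha_k\le\alpha_{max}$ and matching the $\omega/(1-\omega)$ balance; condition \eqref{cond:pp} is exactly what is needed to make these positive terms absorb half of the negative term, leaving the stated bound with the factor $\tfrac14$ instead of $\tfrac12$. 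I expect the bookkeeping around the $\mB_k^c$ term — in particular getting the $\sqrt{p_f}$ (via Cauchy--Schwarz on a non-negative random variable restricted to a small-probability event) rather than $p_f$, and threading the constants so that \eqref{cond:pp} suffices — to be the main obstacle; the rest is a routine combination of the three lemmas and the measurability observations.
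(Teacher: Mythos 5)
Your plan is essentially the paper's proof: split $\mE[\Phi_{\barmu_{\barK}}^{k+1}-\Phi_{\barmu_{\barK}}^k\mid\mF_{k-1}]$ over the three events $\mA_k\cap\mB_k$, $\mA_k^c\cap\mB_k$, $\mB_k^c$, invoke Lemmas \ref{lem:4}, \ref{lem:5}, \ref{lem:7}, bound the bad-event probabilities by $p_{grad}$ and $p_f$, handle the $\mB_k^c$ noise term by Cauchy--Schwarz/H\"older together with \eqref{equ:ran:cond:4} to get $2\omega\sqrt{p_f}\,\barepsilon_k$, and absorb the positive terms using \eqref{cond:pp} to pass from the factor $\tfrac12$ to $\tfrac14$; all of that matches the paper. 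The one incorrect step is your justification of the good-event coefficient: you write $P(\mA_k\cap\mB_k\mid\mF_{k-1})\geq 1-p_{grad}-p_f\geq(1-p_{grad})(1-p_f)$, but the second inequality goes the wrong way, since $(1-p_{grad})(1-p_f)=1-p_{grad}-p_f+p_{grad}p_f\geq 1-p_{grad}-p_f$; the union bound alone does not deliver the stated coefficient. The paper instead gets the needed bound exactly by nested conditioning: $\mE[\b1_{\mA_k\cap\mB_k}\mid\mF_{k-1}]=\mE\bigl[\b1_{\mA_k}\,\mE[\b1_{\mB_k}\mid\mF_{k-0.5}]\mid\mF_{k-1}\bigr]\geq(1-p_f)\,P(\mA_k\mid\mF_{k-1})\geq(1-p_{grad})(1-p_f)$, using that \eqref{equ:ran:cond:3} holds conditionally on $\mF_{k-0.5}$ while \eqref{equ:ran:cond:1} holds conditionally on $\mF_{k-1}$ --- the same tower-property device you already use for the bad events. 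With that replacement your argument goes through as written.
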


\begin{proof}

By \eqref{equ:ran:cond:1} and \eqref{equ:ran:cond:3}, we have
\begin{equation}\label{NN:D:21}
P\rbr{\mA_k \mid \mF_{k-1}} \geq 1-p_{grad} \quad \text{ and } \quad P\rbr{\mB_k \mid \mF_{k-0.5}} \geq 1-p_f.
\end{equation}
We further have
\begin{align}\label{pequ:19}
\mE[\Phi_{\barmu_{\barK}}^{k+1} & - \Phi_{\barmu_{\barK}}^k \mid \mF_{k-1}] 
=  \mE[\b1_{\mA_k\cap \mB_k}(\Phi_{\barmu_{\barK}}^{k+1} - \Phi_{\barmu_{\barK}}^k) \mid \mF_{k-1}] \nonumber\\
+& \mE[\b1_{\mA_k^C\cap \mB_k}(\Phi_{\barmu_{\barK}}^{k+1} - \Phi_{\barmu_{\barK}}^k )\mid \mF_{k-1}] 
+ \mE[\b1_{\mB_k^C}(\Phi_{\barmu_{\barK}}^{k+1} - \Phi_{\barmu_{\barK}}^k) \mid \mF_{k-1}].
\end{align}
Using Lemma \ref{lem:4},
\begin{align}\label{pequ:20}
&\mE[\b1_{\mA_k\cap \mB_k}(\Phi_{\barmu_{\barK}}^{k+1} - \Phi_{\barmu_{\barK}}^k) \mid \mF_{k-1}] \nonumber\\
& \leq -\frac{1-\omega}{2}\mE\sbr{\b1_{\mA_k\cap \mB_k} \mid \mF_{k-1}}  \rbr{1-\frac{1}{\rho}}\rbr{\barepsilon_k + \baralpha_k\nbr{\begin{pmatrix}
		\nabla_{\bx}\mL_{\barmu_{\barK}, \nu}^k\\
		\nabla_{\blambda}\mL_{\barmu_{\barK}, \nu}^k
		\end{pmatrix}}^2} \nonumber\\
& = -\frac{1-\omega}{2}\mE\sbr{\b1_{\mA_k}\mE\sbr{\b1_{\mB_k} \mid \mF_{k-0.5}} \mid \mF_{k-1}}  \rbr{1-\frac{1}{\rho}}\rbr{\barepsilon_k + \baralpha_k\nbr{\begin{pmatrix}
		\nabla_{\bx}\mL_{\barmu_{\barK}, \nu}^k\\
		\nabla_{\blambda}\mL_{\barmu_{\barK}, \nu}^k
		\end{pmatrix}}^2} \nonumber\\
& \stackrel{\mathclap{\eqref{NN:D:21}}}{\leq} -\frac{1}{2}(1-p_{grad})(1-p_f)(1-\omega)\rbr{1-\frac{1}{\rho}}\rbr{\barepsilon_k + \baralpha_k\nbr{\begin{pmatrix}
		\nabla_{\bx}\mL_{\barmu_{\barK}, \nu}^k\\
		\nabla_{\blambda}\mL_{\barmu_{\barK}, \nu}^k
		\end{pmatrix}}^2}.
\end{align}
Using Lemma \ref{lem:5},
\begin{align}\label{pequ:21}
\mE[\b1_{\mA_k^C\cap \mB_k}(\Phi_{\barmu_{\barK}}^{k+1} - \Phi_{\barmu_{\barK}}^k )\mid \mF_{k-1}] & \leq  (1-\omega)\mE[\b1_{\mA_k^C\cap \mB_k} \mid \mF_{k-1}] \rho\baralpha_k\nbr{\begin{pmatrix}
	\nabla_{\bx}\mL_{\barmu_{\barK}, \nu}^k\\
	\nabla_{\blambda}\mL_{\barmu_{\barK}, \nu}^k
	\end{pmatrix}}^2 \nonumber\\
& \leq  (1-\omega)\mE[\b1_{\mA_k^C} \mid \mF_{k-1}] \rho\baralpha_k\nbr{\begin{pmatrix}
	\nabla_{\bx}\mL_{\barmu_{\barK}, \nu}^k\\
	\nabla_{\blambda}\mL_{\barmu_{\barK}, \nu}^k
	\end{pmatrix}}^2 \nonumber\\
&  \stackrel{\mathclap{\eqref{NN:D:21}}}{\leq}  p_{grad}(1-\omega)\rho\baralpha_k\nbr{\begin{pmatrix}
	\nabla_{\bx}\mL_{\barmu_{\barK}, \nu}^k\\
	\nabla_{\blambda}\mL_{\barmu_{\barK}, \nu}^k
	\end{pmatrix}}^2.
\end{align}
Using Lemma \ref{lem:7},
\begin{align}\label{pequ:22}
\mE[\b1_{\mB_k^C}&(\Phi_{\barmu_{\barK}}^{k+1} - \Phi_{\barmu_{\barK}}^k) \mid \mF_{k-1}] \nonumber\\
& \leq  p_f(1-\omega)\rho\baralpha_k\nbr{\begin{pmatrix}
	\nabla_{\bx}\mL_{\barmu_{\barK}, \nu}^k\\	\nabla_{\blambda}\mL_{\barmu_{\barK}, \nu}^k
	\end{pmatrix}}^2  + \omega\big(\mE[\b1_{\mB_k^C}|\barL_{\barmu_{\barK}, \nu}^{k} - \mL_{\barmu_{\barK}, \nu}^{k}| \mid \mF_{k-1}] \nonumber\\
& \quad + \mE[\b1_{\mB_k^C}|\barL_{\barmu_{\barK}, \nu}^{s_k} - \mL_{\barmu_{\barK}, \nu}^{s_k}| \mid \mF_{k-1}] \big) \nonumber\\
&  \stackrel{\mathclap{\eqref{equ:ran:cond:4}}}{\leq}  p_f(1-\omega)\rho\baralpha_k\nbr{\begin{pmatrix}
	\nabla_{\bx}\mL_{\barmu_{\barK}, \nu}^k\\	\nabla_{\blambda}\mL_{\barmu_{\barK}, \nu}^k
	\end{pmatrix}}^2 + 2\omega\sqrt{p_f}\barepsilon_k.
\end{align}
The last inequality also uses H\"older's inequality. Combining \eqref{pequ:19}, \eqref{pequ:20}, \eqref{pequ:21}, \eqref{pequ:22}, we obtain
\begin{align*}
\mE[\Phi_{\barmu_{\barK}}^{k+1} & - \Phi_{\barmu_{\barK}}^k \mid \mF_{k-1}]\\
& \leq  -\frac{1}{2}(1-p_{grad})(1-p_f)(1-\omega)\rbr{1-\frac{1}{\rho}}\rbr{\barepsilon_k + \baralpha_k\nbr{\begin{pmatrix}
		\nabla_{\bx}\mL_{\barmu_{\barK}, \nu}^k\\
		\nabla_{\blambda}\mL_{\barmu_{\barK}, \nu}^k
		\end{pmatrix}}^2}\\
& \quad  + (p_{grad} + p_f)(1-\omega)\rho\baralpha_k\nbr{\begin{pmatrix}
	\nabla_{\bx}\mL_{\barmu_{\barK}, \nu}^k\\
	\nabla_{\blambda}\mL_{\barmu_{\barK}, \nu}^k
	\end{pmatrix}}^2 + 2\omega\sqrt{p_f}\barepsilon_k.
\end{align*}
Under the condition \eqref{cond:pp}, we have
\begin{equation*}
(p_{grad} + p_f)(1-\omega)\rho  \vee 2\omega \sqrt{p_f}  \leq   \frac{1}{4}(1-p_{grad})(1-p_f)(1-\omega)\rbr{1-\frac{1}{\rho}}.
\end{equation*}
Combining the above two displays completes the proof.\qed
\end{proof}

Based on the one-step error recursion, the following theorem shows the convergence of $\baralpha_k\|\nabla\mL_k\|^2$.

\begin{theorem}\label{thm:6}
Suppose the conditions of Theorem \ref{thm:5} are satisfied. Then, almost surely, we have $\lim\limits_{k\rightarrow \infty}\baralpha_k \|\nabla\mL_k\|^2 = 0$.	
\end{theorem}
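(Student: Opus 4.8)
The plan is to run a Robbins--Siegmund (almost-supermartingale) argument on the sequence $\Phi_{\barmu_{\barK}}^k$ to obtain $\baralpha_k\|\nabla\mL_{\barmu_{\barK},\nu}^k\|^2\to0$ almost surely, and then to promote this to $\baralpha_k\|\nabla\mL^k\|^2\to0$ by separately controlling the feasibility residual $\|c_k\|$ and invoking the identity \eqref{equ:derivative:AL}, which links $\nabla\mL_{\barmu_{\barK},\nu}^k$ back to $\nabla\mL^k$. Throughout I condition on $\mF_{\barK}$; this is legitimate because $\barK<\infty$ a.s.\ and $\barmu_k\equiv\barmu_{\barK}\le\tmu$ for $k\ge\barK$ by Lemma~\ref{lem:9}, and a statement that holds a.s.\ under (almost) every conditioning on $\mF_{\barK}$ holds a.s.\ unconditionally.

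Step 1 (supermartingale). Since $\mL_{\barmu_{\barK},\nu}$ is continuous on the compact set $\mX\times\Lambda$ it is bounded below, so after an additive shift $\Phi_{\barmu_{\barK}}^k\ge0$; moreover $\Phi_{\barmu_{\barK}}^k$ is $\mF_{k-1}$-measurable and integrable. Theorem~\ref{thm:5} is exactly the statement $\mE[\Phi_{\barmu_{\barK}}^{k+1}\mid\mF_{k-1}]\le\Phi_{\barmu_{\barK}}^k-\zeta_k$ with the nonnegative quantity $\zeta_k$ proportional to $\barepsilon_k+\baralpha_k\|\nabla\mL_{\barmu_{\barK},\nu}^k\|^2$. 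Robbins--Siegmund then yields that $\Phi_{\barmu_{\barK}}^k$ converges a.s.\ and, what I actually need, $\sum_{k\ge\barK}\zeta_k<\infty$ a.s.; in particular $\baralpha_k\|\nabla\mL_{\barmu_{\barK},\nu}^k\|^2\to0$ (and $\barepsilon_k\to0$) a.s.

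Step 2 (estimation error, residual, and the linear algebra). By Line~3 of Algorithm~\ref{alg:ASto:SQP} the batch sizes are forced to grow, $|\xi_g^k|\ge k+1\to\infty$, and by Assumption~\ref{ass:A:1} each sample deviates from its mean by at most $M_1$ (gradient) and $M_2$ (Hessian). A concentration inequality for averages of bounded random vectors/matrices gives, for each fixed $\epsilon>0$ and a constant $\kappa_0>0$, $\mP\rbr{\|\barg_k-\nabla f_k\|\vee\|\barH_k-\nabla^2 f_k\|>\epsilon\mid\mF_{k-1}}\le 2d\,e^{-\kappa_0(k+1)\epsilon^2}$, which is summable in $k$; Borel--Cantelli therefore yields $\|\barg_k-\nabla f_k\|\to0$ and $\|\barH_k-\nabla^2 f_k\|\to0$, hence $\|\barM_k-M_k\|\to0$, a.s. Because $\bnabla\mL_{\barmu_{\barK},\nu}^k-\nabla\mL_{\barmu_{\barK},\nu}^k$ is, by \eqref{N:13}--\eqref{equ:derivative:AL}, a fixed continuous expression in $\barg_k-\nabla f_k$, $\barM_k-M_k$ and the uniformly bounded quantities $G_k,c_k,\bnabla_{\bx}\mL^k$, it follows that $\|\bnabla\mL_{\barmu_{\barK},\nu}^k-\nabla\mL_{\barmu_{\barK},\nu}^k\|\to0$ a.s., and hence (using $\baralpha_k\le\alpha_{max}$) $\baralpha_k\|\bnabla\mL_{\barmu_{\barK},\nu}^k-\nabla\mL_{\barmu_{\barK},\nu}^k\|^2\to0$ a.s. Combining this with the second inequality of \eqref{equ:ran:cond:2} (enforced for $k\ge\barK$ by the While loop in Line~5, at the stabilized value $\barmu_{\barK}$), namely $\|c_k\|\le\|\bnabla\mL_{\barmu_{\barK},\nu}^k\|$, I get $\baralpha_k\|c_k\|^2\le2\baralpha_k\|\nabla\mL_{\barmu_{\barK},\nu}^k\|^2+2\baralpha_k\|\bnabla\mL_{\barmu_{\barK},\nu}^k-\nabla\mL_{\barmu_{\barK},\nu}^k\|^2\to0$ a.s. Finally, writing out the two rows of \eqref{equ:derivative:AL} with $\mu=\barmu_{\barK}$ and using the bounds of \eqref{Prop:1} together with $\barmu_{\barK}\le\tmu$ produces a deterministic constant $C$ with $\|\nabla\mL^k\|^2\le C\rbr{\|\nabla\mL_{\barmu_{\barK},\nu}^k\|^2+\|c_k\|^2}$; multiplying by $\baralpha_k$ and using Step~1 and the bound just derived gives $\baralpha_k\|\nabla\mL^k\|^2\to0$ a.s., as claimed.

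I expect the main obstacle to be the part of Step~2 that makes the estimation error vanish \emph{almost surely}, not merely in expectation or in probability. This cannot be extracted from the $\Phi^k$-recursion — the residual $c_k$ never appears in $\Phi^k$ — so it must be obtained directly, and the only lever available is the enforced growth $|\xi_g^k|\ge k+1$, which makes Borel--Cantelli applicable through the \emph{exponential} tail of a concentration bound; the polynomial, variance-level bound $O(1/|\xi_g^k|)$ would be too weak here, since $\sum_k\baralpha_k/|\xi_g^k|$ need not converge. A secondary, purely bookkeeping obstacle is making the conditioning on the random time $\barK$ rigorous, which is already arranged by Lemma~\ref{lem:9} and the conventions of Section~\ref{sec:4}.
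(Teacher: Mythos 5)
Your proposal is correct and follows the same overall skeleton as the paper's proof — reduce the claim to $\baralpha_k\|\nabla\mL_{\barmu_{\barK},\nu}^k\|^2\to0$ via the recursion of Theorem \ref{thm:5}, control $\|c_k\|$ through the second condition in \eqref{equ:ran:cond:2}, and then pass from $\nabla\mL_{\barmu_{\barK},\nu}^k$ to $\nabla\mL^k$ by the linear algebra of \eqref{equ:derivative:AL} and \eqref{Prop:1} — but it differs in the two technical devices used. For the almost-sure part, you invoke Robbins--Siegmund to get $\sum_k\bigl(\barepsilon_k+\baralpha_k\|\nabla\mL_{\barmu_{\barK},\nu}^k\|^2\bigr)<\infty$ a.s., which is in fact a cleaner justification than the paper's route (summing conditional expectations given $\mF_{\barK}$ and then appealing to dominated convergence); both rest on the same recursion and give the same conclusion. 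For the feasibility residual, the paper exploits that $\|c_k\|$ is $\mF_{k-1}$-measurable: it takes conditional expectation of the pathwise inequality $\|c_k\|\le\|\bnabla\mL_{\barmu_{\barK},\nu}^k\|$ and bounds the expected estimation error by a deterministic $O(1/\sqrt{k})$ term using only the second-moment (variance-level) bound and $|\xi_g^k|\gtrsim k$, so no almost-sure statement about the noise is ever needed. You instead stay pathwise and prove $\|\barg_k-\nabla f_k\|\vee\|\barH_k-\nabla^2 f_k\|\to0$ a.s.\ via exponential concentration plus Borel--Cantelli; this works here precisely because Assumption \ref{ass:A:1} gives bounded noise ($M_1,M_2$) and $|\xi_g^k|\ge k+1$, but it is the heavier tool (it would not survive if only bounded variance were assumed, whereas the paper's conditional-expectation trick would). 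One caveat that applies equally to both arguments: the batch size $|\xi_g^k|$ produced by the While loop of Algorithm \ref{alg:sample} is data-dependent, so applying a fixed-sample-size concentration (or variance) bound to the accepted batch glosses over a selection effect; you are at the same level of rigor as the paper on this point, so it is not a gap specific to your proof.
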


\begin{proof}
By Lemma \ref{lem:9}, for any realization of the iteration sequence $\{(\bx_k, \blambda_k)\}_k$, with probability 1 there exists $\barK<\infty$ such that $\barmu_k$ is stabilized after $\barK$. Note that Algorithm \ref{alg:ASto:SQP} ensures that for $k> \barK$,
\begin{align*}
\|c_k\| = \mE\sbr{\|c_k\| \mid \mF_{k-1}}& \leq \mE[ \|\bnabla\mL_{\barmu_{\barK}, \nu}^k \| \mid \mF_{k-1}] \\
& \leq  \|\nabla\mL_{\barmu_{\barK}, \nu}^k\| + \mE[ \|\bnabla\mL_{\barmu_{\barK}, \nu}^k - \nabla\mL_{\barmu_{\barK}, \nu}^k\| \mid \mF_{k-1}]\\
& \stackrel{\mathclap{\eqref{N:18}}}{\leq}  \|\nabla\mL_{\barmu_{\barK}, \nu}^k\| + \Upsilon_5(\mE_{\xi^k_g}\sbr{\|\barg_k - \nabla f_k\| + \|\barH_k - \nabla^2f_k\|} ),
\end{align*}
where we let $\Upsilon_5 \coloneqq 1 + \nu\kappa_{2,G}^{3/2} + \nu\kappa_{2,G}^{1/2}+\nu\kappa_{2,G}\kappa_{\nabla_{\bx}^2c}(\Omega_1+\kappa_{\nabla_{\bx}\mL})$. Since $|\xi_g^k|\geq |\xi_g^{k-1}| +1$, we know $|\xi_g^k| \geq k$. Therefore,
\begin{equation*}
\mE_{\xi^k_g}\sbr{\|\barg_k - \nabla f_k\|}\leq (\mE_{\xi^k_g}\sbr{\|\barg_k - \nabla f_k\|^2})^{1/2} \leq \frac{\Omega_1}{\sqrt{k}}, 
\end{equation*}
and the above result holds similarly for $\mE_{\xi^k_g}[\|\barH_k - \nabla^2f_k\|]$. Combining the above two displays, we obtain
\begin{equation*}
\|c_k\| \leq \|\nabla\mL_{\barmu_{\barK}, \nu}^k\| + \frac{\Upsilon_5\max\{\Omega_1, \Omega_2\}}{\sqrt{k}}.
\end{equation*}
Moreover, by the definition of $\nabla\mL_{\barmu_{\barK}, \nu}^k$ in \eqref{equ:derivative:AL} and the bound on $\barmu_{\barK}\leq \tmu$ in Lemma~\ref{lem:9}, we have
\begin{equation*}
\|\nabla_{\bx}\mL_k\| \leq  \|\nabla_{\bx}\mL_{\barmu_{\barK}, \nu}^k\| + \nu\kappa_M\|G_k\nabla_{\bx}\mL_k\| + \tmu \sqrt{\kappa_{2,G}}\|c_k\|
\end{equation*}
and
\begin{equation*}
\|G_k\nabla_{\bx}\mL_k\| \leq \frac{1}{\nu\kappa_{1,G}}\|\nu G_kG_k^TG_k\nabla_{\bx}\mL_k\|\stackrel{\eqref{equ:derivative:AL}}{\leq} \frac{1}{\nu\kappa_{1,G}}(\|\nabla_{\blambda}\mL_{\barmu_{\barK}, \nu}^k\| + \|c_k\|).
\end{equation*}
Combining the above three displays, 
\begin{equation*}
\|\nabla\mL_k\|\leq \Upsilon_6\|\nabla\mL_{\barmu_{\barK}, \nu}^k\| + \rbr{1 + \frac{\kappa_{M}}{\kappa_{1,G}} + \tmu\sqrt{\kappa_{2,G}}}\frac{\Upsilon_5\max\{\Omega_1, \Omega_2\}}{\sqrt{k}}
\end{equation*}
where $\Upsilon_6 = 2 + 2\kappa_{M}/\kappa_{1,G} + \tmu\sqrt{\kappa_{1,G}}$. Since $\baralpha_k\leq \alpha_{max}$, we have 
\begin{equation*}
\lim\limits_{k\rightarrow \infty} \baralpha_k \|\nabla\mL_k\|^2\leq \Upsilon_6^2\lim\limits_{k\rightarrow \infty} \baralpha_k \|\nabla\mL_{\barmu_{\barK}, \nu}^k\|^2.
\end{equation*}
It hence suffices to show that for any iteration sequence $\lim\limits_{k\rightarrow \infty} \baralpha_k \|\nabla\mL_{\barmu_{\barK}, \nu}^k\|^2 = 0$. By Theorem \ref{thm:5}, we sum the error recursion for $k \geq \barK+1$, compute the conditional expectation given the first $\barK+1$ iterates, and obtain
\begin{align*}
\sum_{k = \barK+1}^{\infty}\mE[\baralpha_k\|\nabla\mL_{\barmu_{\barK}, \nu}^k\|^2 \mid \mF_{\barK}] & \leq  \Upsilon_7\sum_{k=\barK+1}^{\infty}\mE[\Phi_{\barmu_{\barK}}^k \mid \mF_{\barK}] -\mE[\Phi_{\barmu_{\barK}}^{k+1} \mid \mF_{\barK}]\\
& \leq  \Upsilon_7(\Phi_{\barmu_{\barK}}^{\barK+1} - \omega\min_{\mX\times\Lambda} \mL_{\barmu_{\barK}, \nu}(\bx, \blambda))<\infty,
\end{align*}
where $\Upsilon_7 = 4\rho/\cbr{(1-p_{grad})(1-p_f)(1-\omega)(\rho-1)}$. Thus, we have
\begin{equation*}
\lim\limits_{k\rightarrow \infty}\mE[\baralpha_k \|\nabla\mL_{\barmu_{\barK}, \nu}^k\|^2 \mid\mF_{\barK}] = 0.
\end{equation*}
Since $\baralpha_k\leq \alpha_{max}$ and $\{(\bx_k, \blambda_k)\}_k$ are in a compact set $\mX\times \Lambda$, we have $\baralpha_k \|\nabla\mL_{\barmu_{\barK}, \nu}^k\|^2 \leq \Upsilon_8$ for some constant $\Upsilon_8$ uniformly. By dominated convergence theorem \cite[Theorem 1.5.8]{Durrett2019Probability}, we exchange the order of expectation and limit and get $\mE[\lim\limits_{k\rightarrow\infty}\baralpha_k \|\nabla\mL_{\barmu_{\barK}, \nu}^k\|^2 \mid \mF_{\barK}] = 0$. Since $\baralpha_k \|\nabla\mL_{\barmu_{\barK}, \nu}^k\|^2$ is non-negative, we have $\lim\limits_{k\rightarrow\infty}\baralpha_k \|\nabla\mL_{\barmu_{\barK}, \nu}^k\|^2 = 0$ almost surely, which completes the proof. \qed
\end{proof}

Our final result establishes the ``liminf'' convergence of $\nabla\mL_k$.  A key step is to apply the lower bound on the stochastic stepsize in Lemma \ref{lem:8}. Based on this lemma, when both the gradient and function are precisely estimated, a stepsize that is smaller than a fixed threshold will always induce a successful step. If we regard $\{\baralpha_k\}_k$ as a random walk, the supremum of $\baralpha_k$ will not vanish due to a positive upward drift probability.

\begin{theorem}[Global convergence of Algorithm \ref{alg:ASto:SQP}]\label{thm:7}
Consider Algorithm \ref{alg:ASto:SQP} under Assumption \ref{ass:A:1}. Suppose $\omega$ satisfies \eqref{equ:cond:omega:1} and $p_f, p_{grad}$ satisfy \eqref{cond:pp}. Then, almost surely, we have that
\begin{equation*}
\liminf_{k\rightarrow \infty}\|\nabla\mL_k\| = 0.
\end{equation*}
	
\end{theorem}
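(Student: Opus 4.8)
The plan is to argue by contradiction combined with the already-established Theorem~\ref{thm:6}, which gives $\baralpha_k\|\nabla\mL^k\|^2\to 0$ almost surely. Suppose, on an event $E$ of positive probability, $\liminf_{k}\|\nabla\mL^k\|>0$; then on $E$ there exist $\epsilon>0$ and (random) $K_0\geq\barK$ with $\|\nabla\mL^k\|\geq\epsilon$ for all $k\geq K_0$. On this event Theorem~\ref{thm:6} forces $\baralpha_k\to 0$. Now I would translate $\baralpha_k\to0$ into a statement about the step-type structure of the algorithm: since $\baralpha_{k+1}=\rho\baralpha_k\wedge\alpha_{max}$ on successful steps and $\baralpha_{k+1}=\baralpha_k/\rho$ on unsuccessful ones, $\baralpha_k\to0$ means that eventually unsuccessful steps dominate, and in particular for every threshold there are infinitely many $k$ with $\baralpha_k$ below that threshold \emph{and} the $k$-th step unsuccessful (otherwise $\baralpha_k$ could not keep decreasing past the threshold via an unsuccessful step).

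The key tool is Lemma~\ref{lem:8}: on the event $\mA_k\cap\mB_k$, if $\baralpha_k\leq \bar\alpha_\star := \frac{(1-\beta)\gamma_{RH}}{2(\kappa_{\mL_{\tmu,\nu}}\Upsilon_3^2+\kappa_{grad}\Upsilon_1\Upsilon_3+\kappa_f\gamma_{RH})}$, then step $k$ is successful. Contrapositive: an unsuccessful step with $\baralpha_k\leq\bar\alpha_\star$ can only happen on $\mA_k^c\cup\mB_k^c$, a ``bad'' event whose conditional probability given $\mF_{k-1}$ is at most $p_{grad}+p_f$ (up to the $(1-p_{grad})(1-p_f)$ normalization, exactly as in the proof of Theorem~\ref{thm:5}). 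So the plan is: once $\baralpha_k$ has dropped below $\bar\alpha_\star$, every further decrease of $\baralpha_k$ requires a bad event. I would make this rigorous with a Borel--Cantelli / conditional-Borel--Cantelli style argument (or, cleaner, a martingale/stopping-time argument in the spirit of \cite{Blanchet2019Convergence}): define a stopping time at the first index past $K_0$ where $\baralpha_k\leq\bar\alpha_\star$, and observe that from then on $\baralpha_k$ behaves like a random walk on $\{\bar\alpha_\star\rho^{-j}\}_{j\geq0}$ reflected at $\bar\alpha_\star$, which moves down only on bad events (probability $\leq p_{grad}+p_f$) and up (or stays) otherwise. Since $p_{grad},p_f$ can be taken small enough that the upward drift is strictly positive --- this is guaranteed by \eqref{cond:pp}, which forces $p_{grad}+\sqrt{p_f}$ small --- such a random walk is transient upward: $\limsup_k\baralpha_k\geq\bar\alpha_\star$ almost surely on $E$. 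This contradicts $\baralpha_k\to0$ on $E$, so $P(E)=0$, proving $\liminf_k\|\nabla\mL^k\|=0$ a.s.

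The main obstacle is the random-walk/drift argument in the last step: one must carefully handle that the ``bad event'' probabilities are only \emph{conditional} bounds given $\mF_{k-1}$, that the increments of $\baralpha_k$ are not independent, and that $\bar\alpha_\star$ and $\Upsilon_i$, $\tmu$, $\barmu_{\barK}$ are themselves $\mF_{\barK}$-measurable random quantities (so the whole argument should be run conditionally on $\mF_{\barK}$). The clean way is to introduce an auxiliary supermartingale: roughly, track $-\log\baralpha_k$ and show $\mE[(-\log\baralpha_{k+1})-(-\log\baralpha_k)\mid\mF_{k-1}]\leq -\delta$ is \emph{false} infinitely often unless a bad event occurs, then invoke that bad events occur only finitely often along the relevant subsequence because their conditional probabilities are summable-controlled by the positive drift --- this is exactly the mechanism used in \cite{Paquette2020Stochastic} and \cite{Blanchet2019Convergence}, and I would cite and adapt their lemma rather than rebuild it. A secondary (but routine) point is to confirm that on the contradiction event the hypotheses of Lemma~\ref{lem:8} --- namely membership in $\mA_k\cap\mB_k$ and $k\geq\barK$ --- are the only extra requirements, which they are, since $\barmu_k$ has already stabilized to $\barmu_{\barK}\leq\tmu$ by Lemma~\ref{lem:9} for $k\geq\barK$.
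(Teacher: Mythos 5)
Your proposal is correct and follows essentially the same route as the paper: the paper also combines Theorem \ref{thm:6} with Lemma \ref{lem:8} and a biased-random-walk argument, constructing an explicit minorizing walk $\varphi_k \leq \log\baralpha_k$ capped at $\log\tau$ (with $\tau$ the deterministic Lemma \ref{lem:8} threshold) that moves up on $\mA_k\cap\mB_k$ and down otherwise, so that the upward drift from \eqref{cond:pp} gives $\limsup_k\baralpha_k\geq\tau$ almost surely. The only cosmetic difference is that you frame it as a contradiction while the paper proves the limsup lower bound directly; your concerns about conditioning and measurability are also slightly over-cautious, since $\tmu$ and the $\Upsilon_i$ are deterministic, making $\tau$ deterministic.
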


\begin{proof}

We define two random sequences
\begin{align*}
\phi_k & =\log\rbr{\baralpha_k},\\
\varphi_k & = \min\cbr{\log\rbr{\tau}, \b1_{\mA_{k-1}\cap\mB_{k-1}}\rbr{\log(\rho) + \varphi_{k-1}} + (1-\b1_{\mA_{k-1}\cap\mB_{k-1}})(\varphi_{k-1} - \log(\rho)) },
\end{align*}
where $\varphi_0 =\log\rbr{\baralpha_0}$. Let $\tau$ be a deterministic constant such that
\begin{align*}
\tau \leq \frac{(1-\beta)(\gamma_{RH}\wedge\nu)}{2\rbr{\kappa_{\mL_{\tmu, \nu}} \Upsilon_3^2 + \kappa_{grad}\Upsilon_1\Upsilon_3 + \kappa_f\gamma_{RH}}} \wedge \alpha_{max},
\end{align*}
and $\tau = \rho^i\alpha_{max}$ for some integer $i\leq 0$.  Therefore, by the stepsize updating rule in Algorithm \ref{alg:ASto:SQP}, we have that for any $k$, $\baralpha_k = \rho^{j_k}\tau$ for some integer $j_k$. We lower bound $\phi_k$ by $\varphi_k$ using induction.  First, we observe that $\phi_k$ and $\varphi_k$ are both $\mF_{k-1}$-measurable. Note that $\phi_0 = \varphi_0$. Suppose $\phi_k \geq \varphi_k$ and we consider the following three cases:

\noindent\textbf{(a).} If $\phi_k > \log(\tau)$, then $\phi_k \geq \log(\tau) + \log(\rho)$. Thus, $\phi_{k+1} \geq \phi_k - \log(\rho) \geq \log(\tau) \geq\varphi_{k+1}$.

\noindent\textbf{(b).} If $\phi_k \leq \log(\tau)$ and $\b1_{\mA_k\cap\mB_k} = 1$, then, by Lemma \ref{lem:8},
\begin{equation*}
\phi_{k+1} = \min\cbr{\log(\alpha_{max}), \phi_k + \log(\rho)} \geq \min\{\log(\tau), \varphi_k + \log(\rho)\} = \varphi_{k+1}.
\end{equation*}
Here the inequality is due to the definition of $\tau$ and the induction hypothesis.

\noindent\textbf{(c).} If $\phi_k\leq \log(\tau)$ and $\b1_{\mA_k\cap \mB_k} = 0$, then
\begin{equation*}
\phi_{k+1} \geq \phi_k - \log(\rho) \geq \varphi_k - \log(\rho) \geq \varphi_{k+1}.
\end{equation*}
Thus, $\phi_k\geq \varphi_k$, $\forall k$. Note that $\varphi_k$ is a random walk with a maximum and a drift upward. Thus, by analogy to Example 6.1.2 in \cite{Gallager2013Stochastic} and noting from \eqref{cond:pp} that $p_{grad}+p_f<1/2$, we know from Sections 6.2 and 6.3 (pp 290) in \cite{Gallager2013Stochastic} that $\log(\tau)$ is a positive recurrent state of the process $\varphi_k$. That is, $\varphi_k$ visits $\log(\tau)$ infinite times. Thus, ``limsup'' of $\baralpha_k$ is lower bounded by noting that 
\begin{equation*}
P\big(\limsup_{k\rightarrow \infty} \phi_k \geq \log(\tau)\big) \geq P\big(\limsup_{k\rightarrow \infty} \varphi_k \geq \log(\tau)\big) = 1.
\end{equation*}
Using Theorem \ref{thm:6}, we complete the proof. \qed
\end{proof}

We complete the global analysis of Algorithm \ref{alg:ASto:SQP}. The result in Theorem~\ref{thm:7} is consistent with Theorem \ref{thm:4}(b). However, the stepsize behavior of the two~algorithms is largely different. Algorithm \ref{alg:NSto:SQP} employs a deterministic stepsize~sequence. The condition $\sum_{k=0}^{\infty}\alpha_k^2 < \infty$ implies that $\alpha_k$ decays to zero with a certain rate. Differently, as proved in Theorem \ref{thm:7}, the stepsize $\baralpha_k$ derived~from stochastic line search has a subsequence that is lower bounded away from zero. Therefore, stochastic line search often suggests a stepsize that is larger than the one~suggested by Theorem \ref{thm:4}(b), especially for large $k$, which leads to a better performance. On the other hand, we note that employing stochastic line search requires a more stringent setup than the fully stochastic setup of Algorithm \ref{alg:NSto:SQP}. The former generates batch samples to diminish the model estimation error, while the~latter only generates two samples in~each~iteration~(\cite{Berahas2021Sequential}~only~generates~one~sample). In addition, the analysis of Algorithm \ref{alg:NSto:SQP} requires only the estimation errors to have bounded variance, while the analysis of Algorithm \ref{alg:ASto:SQP} requires the errors to be bounded. Such a boundedness condition ensures that the penalty parameter is finally stabilized.

Both results in Theorems \ref{thm:4} and \ref{thm:7} are weaker than deterministic SQP (cf. \citep[Theorem 4.1]{Lucidi1990Recursive}), where one has $\lim\limits_{k\rightarrow \infty}\|\nabla\mL_k\| = 0$. The intrinsic difference between Algorithm \ref{alg:ASto:SQP} and deterministic SQP still lies in the stepsize behavior. For deterministic SQP, a standard result is that $\alpha_k$ is lower bounded away~from~zero for \textbf{all} sufficiently large $k$, which leads to convergence of the KKT residual~sequence $\|\nabla\mL_k\|$. In contrast, due to the existence of imprecise model estimation, $\baralpha_k$ only provably has a subsequence that is lower~bounded~away from zero. Thus, we can only conclude that there exists a convergent subsequence for~$\|\nabla\mL_k\|$.

We note that \cite{Paquette2020Stochastic} established the expected iteration complexity for a~stochastic line search. Performing a similar complexity analysis for constrained problems is significantly more difficult. The main issue stems from the adaptivity in selecting the penalty parameter. In general, there is no guarantee on how long it takes for a randomly selected penalty parameter sequence to stabilize. Although we believe that a similar analysis to \cite{Paquette2020Stochastic} is applicable after the penalty parameter stabilizes, a more advanced technique and a finer analysis are required to study the early period of the random walk induced by the penalty parameter, where the parameter varies in each iteration. In other words, a deeper understanding on the random iteration threshold $\barK$ (cf. Lemma \ref{lem:9}) is desired. We leave the investigation of this problem to future work.

\section{Experiments}\label{sec:5}

We implement four StoSQP algorithms for solving constrained nonlinear~optimization problems collected in CUTEst test set \citep{Gould2014CUTEst}. We use Julia implementation of CUTEst \citep{Siqueira2020CUTEst.jl}. The four algorithms that we implement are: $\ell_1$ penalized SQP in \cite{Berahas2021Sequential}, NonAdapSQP in Algorithm \ref{alg:NSto:SQP}, AdapSQP in Algorithm \ref{alg:ASto:SQP}, and $\ell_1$ penalized AdapSQP. The $\ell_1$ penalized AdapSQP has a similar iteration scheme to AdapSQP, except that it employs the $\ell_1$ penalized merit function
\begin{equation*}
\mL_{\mu}(\bx) = f(\bx) + \mu\|c(\bx)\|_1.
\end{equation*}
In particular, the condition \eqref{event:A_k} in the first step is replaced by
\begin{equation*}
\mA_k = \cbr{\|\barg_k - \nabla f_k\| \leq\kappa_{grad}\cdot\baralpha_k \|\bnabla\mL_k\|},
\end{equation*}
where the right hand side uses the KKT residual as the counterpart of $\|\barg_k\|$ used in unconstrained problems. For the second step, the search direction is solved  by the first system in \eqref{equ:ran:Newton}. For a prespecified $\rho\in(1,2)$, the penalty~parameter $\barmu_k$ is updated by $\barmu_k = \barg_k^T\barDelta\bx_k/\{(\rho-1)\|c_k\|_1\}$, suggested by \cite[(18.33)]{Nocedal2006Numerical}, to ensure the directional derivative of $\barL_{\barmu_k}^k$ along the direction $\barDelta\bx_k$ yields a~sufficient decrease. Furthermore, the condition \eqref{event:B_k} in the third step is replaced by
\begin{equation*}
\mB_k = \cbr{|\barL_{\barmu_k}^k - \mL_{\barmu_k}^k| \vee |\barL_{\barmu_k}^{s_k} - \mL_{\barmu_k}^{s_k}|\leq -\kappa_f\alpha_k^2\rbr{\barg_k^T\barDelta\bx_k - \barmu_k\|c_k\|_1}}
\end{equation*}
where the right hand side is the directional derivative of $\barL_{\barmu_k}^k$ along $\barDelta\bx_k$. The fourth step for line search is the same as AdapSQP. We should mention that there may exist advanced modifications for SQP using the $\ell_1$ merit function. Here, we only consider the stochastic counterpart of the most basic deterministic SQP scheme. An advanced design~of the $\ell_1$ penalized AdapSQP and a rigorous analysis are left for future work.

Of more than 1000 problems in CUTEst collection, we select problems that have a non-constant objective with only equality constraints, satisfy $d<1000$, and do not report singularity on $G_kG_k^T$ during the iteration process. This results in a total of 47 problems. The implementation details of each algorithm are as follows. Our code is available at \url{https://github.com/senna1128/Constrained-Stochastic-Optimization}.
\begin{enumerate}[label=(\alph*),topsep=0pt]
\setlength\itemsep{0.0em}
\item \textbf{$\ell_1$ SQP} in \cite{Berahas2021Sequential}. We implement \cite[Algorithm 3.1]{Berahas2021Sequential} following the setup in~that paper. In particular, using their notation, we let $\bartau_{-1} = 1$, $\epsilon = 10^{-6}$, $\sigma = 0.5$, $\bar{\xi}_{-1} = 1$, and $\theta = 10$. The Lipschitz constant is estimated around the initialization. We try multiple choices for the stepsize related sequence $\{\beta_k\}_k$. For constant case, we let $\beta_k = \{0.01, 0.1, 0.5, 1\}$ and, for decaying~case, we let $\beta_k = \{1/k^{0.6}, 1/k^{0.9}\}$. Note that \cite{Berahas2021Sequential} only tried $\beta_k = 1$.
	
\item \textbf{NonAdapSQP} in Algorithm \ref{alg:NSto:SQP}. Following the same setup as above, we try $\alpha _k = \{0.01, 0.1, 0.5, 1\}$ and $\alpha_k =\{1/k^{0.6}, 1/k^{0.9}\}$.
	
\item \textbf{AdapSQP} in Algorithm \ref{alg:ASto:SQP}. We let $\nu = 0.001$, $\baralpha_0 = \alpha_{max} = 1.5$, $\barmu_0 =\barepsilon_0 =\kappa_{grad}= 1$, $\rho = 1.2$, $\beta = 0.3$, $p_{grad} = p_f = 0.1$, $\kappa_f =  \beta/(4\alpha_{max}) =  0.05$, and $C_{grad} = C_f  =\{1,5,10,50\}$. Recall that $C_{grad}$ and $C_f$ are used for selecting $|\xi_g^k|$ and $|\xi_f^k|$ using \eqref{cond:11} and \eqref{cond:2}, respectively. We try a wide range of $C_{grad}$ and $C_f$ to investigate how do these two tuning parameters affect the scheme performance. In the above setting, we let $\nu$ be small to make $\mL_{\mu, \nu}$ similar to standard augmented Lagrangian. Another motivation of using small $\nu$ is that, without $G(\bx)$ in the second penalty of \eqref{equ:augmented:L}, $\mL_{\mu, \nu}$ is an exact augmented Lagrangian only if $\nu$ is sufficiently small. We let $\alpha_{max}=1.5> 1$ since, as also shown later, that the selected stepsize may be greater than $1$ in a stochastic scheme. $\barmu_0, \barepsilon_0$ are initialized at 1 and adaptively updated with iterations. $\kappa_{grad}, \kappa_f, p_{grad}, p_f$ all affect the generated sample sizes~$|\xi_g^k|$ and $|\xi_f^k|$ via \eqref{cond:11} and \eqref{cond:2}. They play a similar role to parameters $C_{grad}, C_f$. Thus, for simplicity, we fix them and only try multiple $C_{grad}, C_f$ in experiments. As suggested by Theorem \ref{thm:5}, $p_{grad}, p_f$ should be small, so we set them to be $0.1$; $\kappa_f = \beta/(4\alpha_{max})$ is the largest theoretical value that is allowed. We let $\rho=1.2>1$ be a moderate increasing factor. A large $\rho$ may terminate the While loop in Line 5 of Algorithm \ref{alg:ASto:SQP} (as well as Algorithm \ref{alg:sample}) faster, while also resulting in a rough estimate of penalty parameter. We let $\beta = 0.3$ be a (nearly) middle value of interval $(0, 0.5)$, which is the range to have a fast local rate in deterministic case \citep{Lucidi1990Recursive} (although local behavior is not a focus of the paper and cannot be investigated by our experiments).

\item \textbf{$\ell_1$ AdapSQP}. We adopt the same setup as above, except that the parameter $\nu$ is not required.
\end{enumerate}

\vskip2pt
For all algorithms, the initialization of primal-dual variables is given by CUTEst package. Moreover, the package provides deterministic evaluations of the function, gradient, and Hessian at each iterate. Based on them, we generate our estimators. In particular, the estimator of $f_k$ is drawn from $\N(f_k, \sigma^2)$; the estimator of $\nabla f_k$ is drawn from $\N(\nabla f_k, \sigma^2(I + \b1\b1^T))$, where $\b1$ denotes the $d$-dimensional all one vector; and the $(i, j)$ and $(j, i)$ entries of the estimator of $\nabla^2 f_k$ correspond to the same draw of $\N((\nabla^2 f_k)_{i,j}, \sigma^2)$. We vary $\sigma^2$ from $\{10^{-8}, 10^{-4}, 10^{-2}, 10^{-1},1\}$. Throughout the simulation, we let $B_k = I$ and set the maximum iteration budget to be $10^5$. For each algorithm, under each setup (a combination of a noise level and stepsize/constants $C_{grad}, C_{f}$), we perform 5 independent runs. The stopping criterion is set as
\begin{equation}\label{equ:criteria}
\|\baralpha_k\cdot(\barDelta\bx_k; \barDelta\blambda_k)\| \leq 10^{-6} \quad \text{OR}\quad  \|\nabla \mL_k\| \leq 10^{-4} \quad \text{OR}\quad  k \geq 10^5.
\end{equation}
If the former two cases occur, we say that the algorithm converges, while if the last case occurs, we say that the algorithm does not converge within the prespecified iteration budget. For $\ell_1$ SQP, $\baralpha_k$ is determined by $\beta_k$ (see \cite{Berahas2021Sequential}) and we also drop $\barDelta\blambda_k$ in the first term. We comment that the first criterion in \eqref{equ:criteria} measures the distance between two successive iterates, which depends on the stochastic search direction and stepsize, and thus is computable without knowing true quantities in contrast to the second criterion. We add it to stop algorithms whenever we (tend to) have limited progress for later iterations. It is triggered if $\|(\barDelta\bx_k; \barDelta\blambda_k)\|$ is small or/and $\baralpha_k$ is small. For the first case, by Newton's system we know $\|\bnabla\mL_k\|$ is small and we should naturally stop. For the second case, we note that the stepsize in NonAdapSQP is either constant or a decaying sequence; thus, it is also natural to stop as the stepsize, if it is small, can only get smaller and smaller. For $\ell_1$ SQP in \cite{Berahas2021Sequential}, although $\baralpha_k$ is not monotonically decreasing, it is upper and lower controlled by the sequence $\beta_k$, which is either constant or decaying (cf. Lemma 3.6). Thus, $\baralpha_k$ \textit{roughly} has a similar trend as $\beta_k$, and cannot increase significantly if it is already very small. So we stop~as well. For AdapSQP and $\ell_1$ AdapSQP, by Lemma \ref{lem:8} we know that $\baralpha_k$ cannot be small if the merit function and its gradients are precisely estimated, which has a high probability in each iteration. Further, whenever $\baralpha_k$ decreases, a more precise model will be generated for next round (Line 18 of Algorithm~\ref{alg:ASto:SQP}). Thus, it is not very likely that $\baralpha_k$ decreases to a very small value due to a series of bad estimates, although there is indeed a positive probability for such a bad scenario. Thus, we make the first criterion more restrictive than the second in~\eqref{equ:criteria} to prevent extreme scenarios.

\vskip 4pt
\noindent\textbf{Convergence behavior.} We now evaluate convergence behavior of each~algorithm. For each run of each setup of each algorithm on each problem, we~derive the KKT residual of the last iterate if the algorithm converges, and then average residuals over all convergent runs (across $5$ runs) as the evaluation~result. We find in the implementation that, for two line search algorithms~AdapSQP~and~$\ell_1$ AdapSQP, if they converge for a particular run, they always converge~for~all $5$ runs. However, for NonAdapSQP and $\ell_1$ SQP, occasionally it occurs that some runs (across $5$ runs) do not converge while some runs do. This is likely because the performance of these two algorithms on some problems tends to be more random, due to limited samples that are generated. In this scenario, instead of viewing the problem as a divergent problem, we prefer to average over convergent runs to improve the performance of NonAdapSQP and $\ell_1$ SQP.

For NonAdapSQP and $\ell_1$ SQP, we have six cases for both: four constant stepsizes and two decay stepsizes. For AdapSQP and $\ell_1$ AdapSQP, we have~four cases for both, corresponding to $C_{grad} = C_f = \{1, 5, 10, 50\}$. For each case, we have five different noise levels of $\sigma^2$. We draw residual boxplots for all cases in Figures \ref{fig:1} and \ref{fig:2}. Each box reflects the residual range of convergent problems. Each plot in Figure \ref{fig:1} corresponds to a stepsize setup of NonAdapSQP and $\ell_1$ SQP. The results of AdapSQP and $\ell_1$ AdapSQP in Figure \ref{fig:1} do not change (as their stepsize is selected by stochastic line search), and correspond to the case where $C_{grad} = C_f = 1$. If the box of NonAdapSQP is missing in the~plot, which often happens for constant stepsize with large variance, it~suggests~that NonAdapSQP does not converge for all problems (within the iteration budget). Each plot in Figure \ref{fig:2} corresponds to a setup of constants~$C_{grad}, C_f$~of AdapSQP and $\ell_1$~AdapSQP.

\begin{figure}[!htp]
	\centering     
	\subfigure[$\alpha_k, \beta_k = 0.01$]{\label{KConst1}\includegraphics[width=58mm]{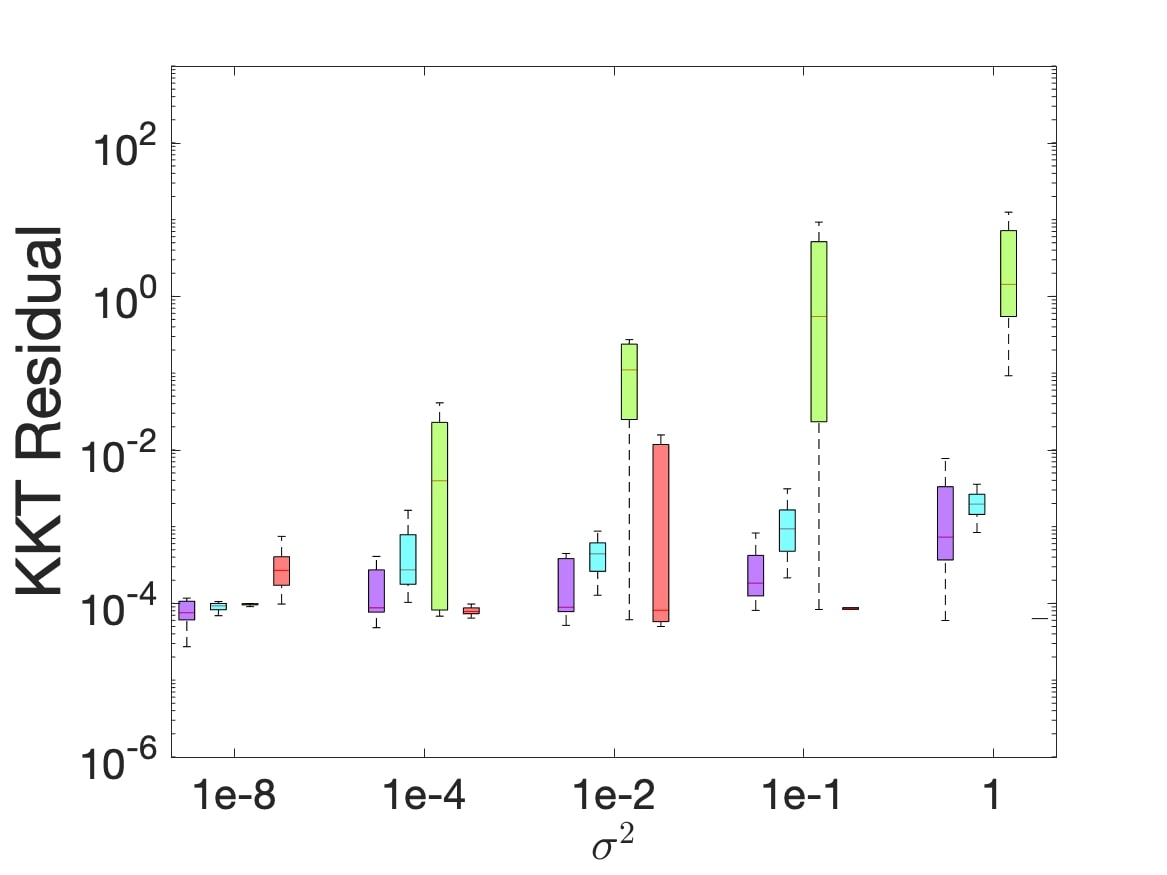}}
	\subfigure[$\alpha_k, \beta_k = 0.1$]{\label{KConst2}\includegraphics[width=58mm]{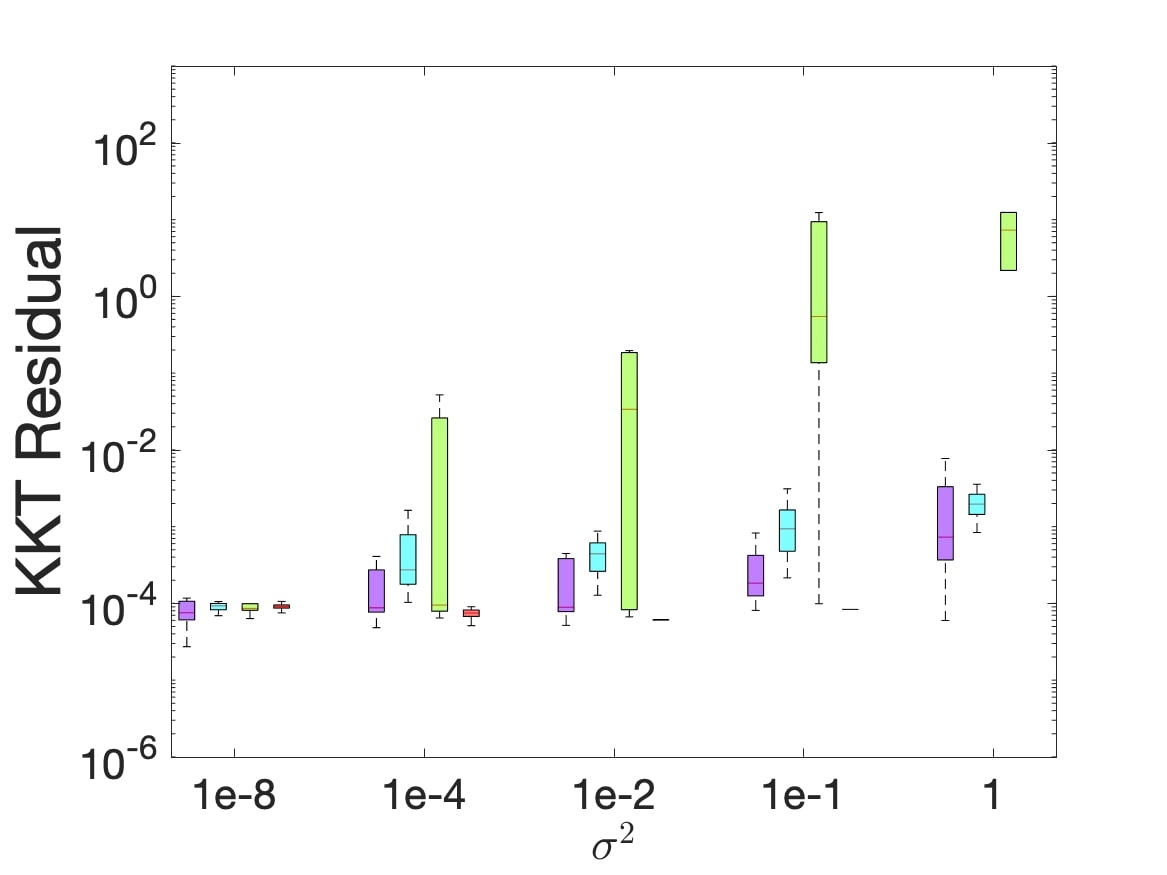}}
	\subfigure[$\alpha_k, \beta_k = 0.5$]{\label{KConst3}\includegraphics[width=58mm]{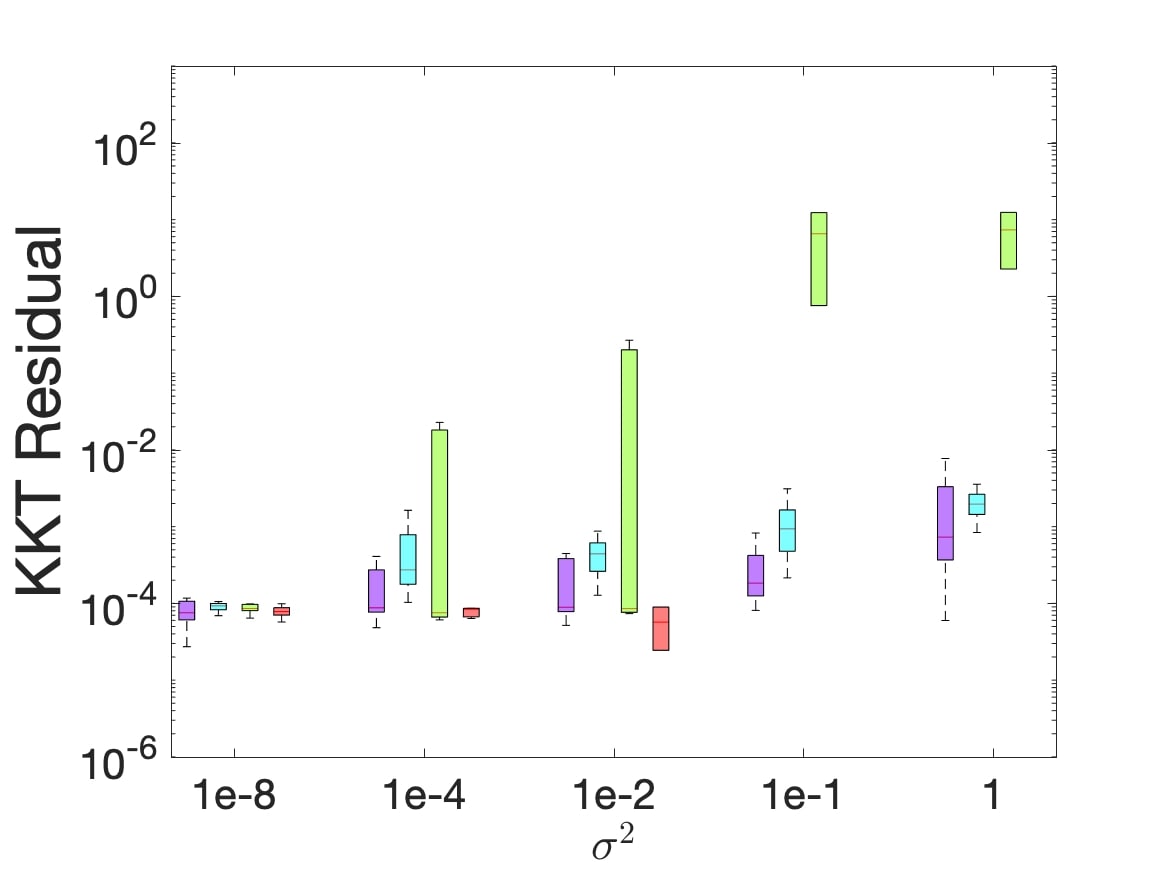}}
	\subfigure[$\alpha_k, \beta_k = 1$]{\label{KConst4}\includegraphics[width=58mm]{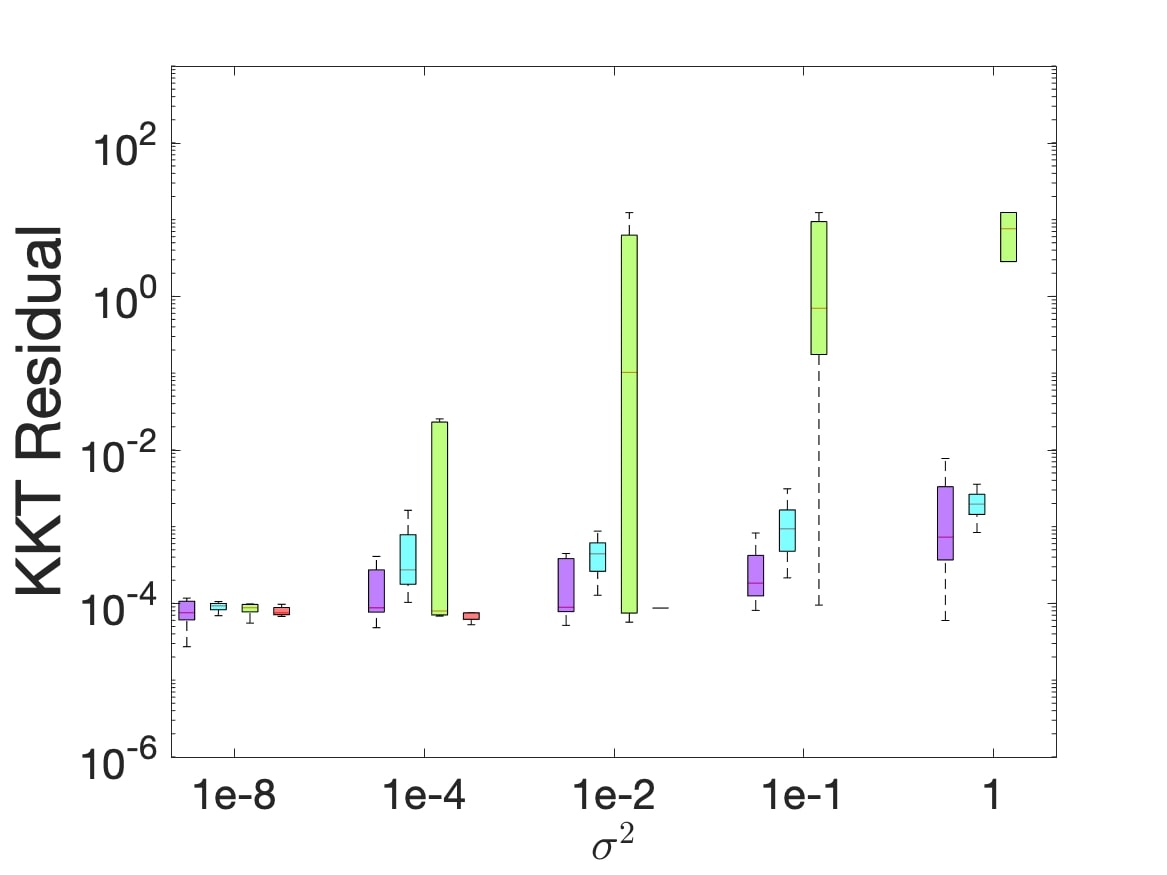}}
	\subfigure[$\alpha_k, \beta_k = k^{-0.6}$]{\label{KDecay1}\includegraphics[width=58mm]{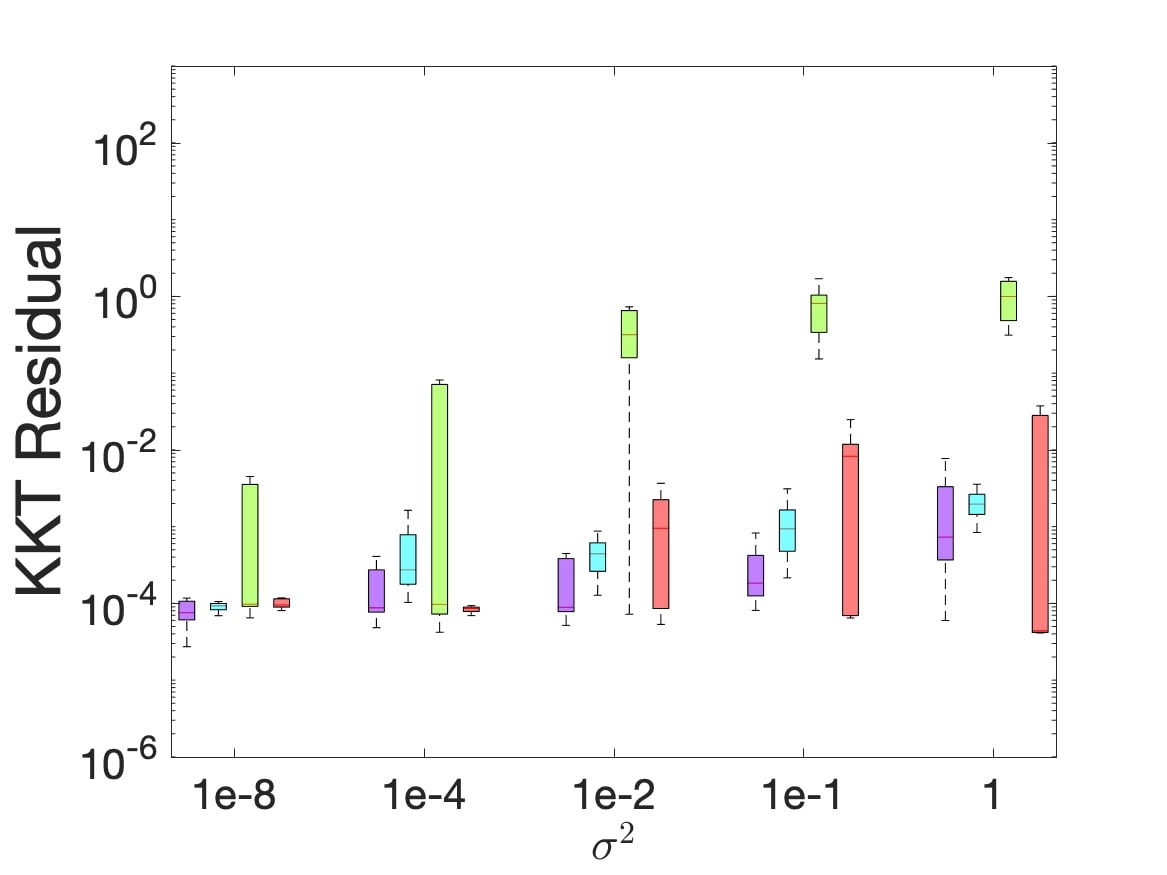}}
	\subfigure[$\alpha_k, \beta_k = k^{-0.9}$]{\label{KDecay2}\includegraphics[width=58mm]{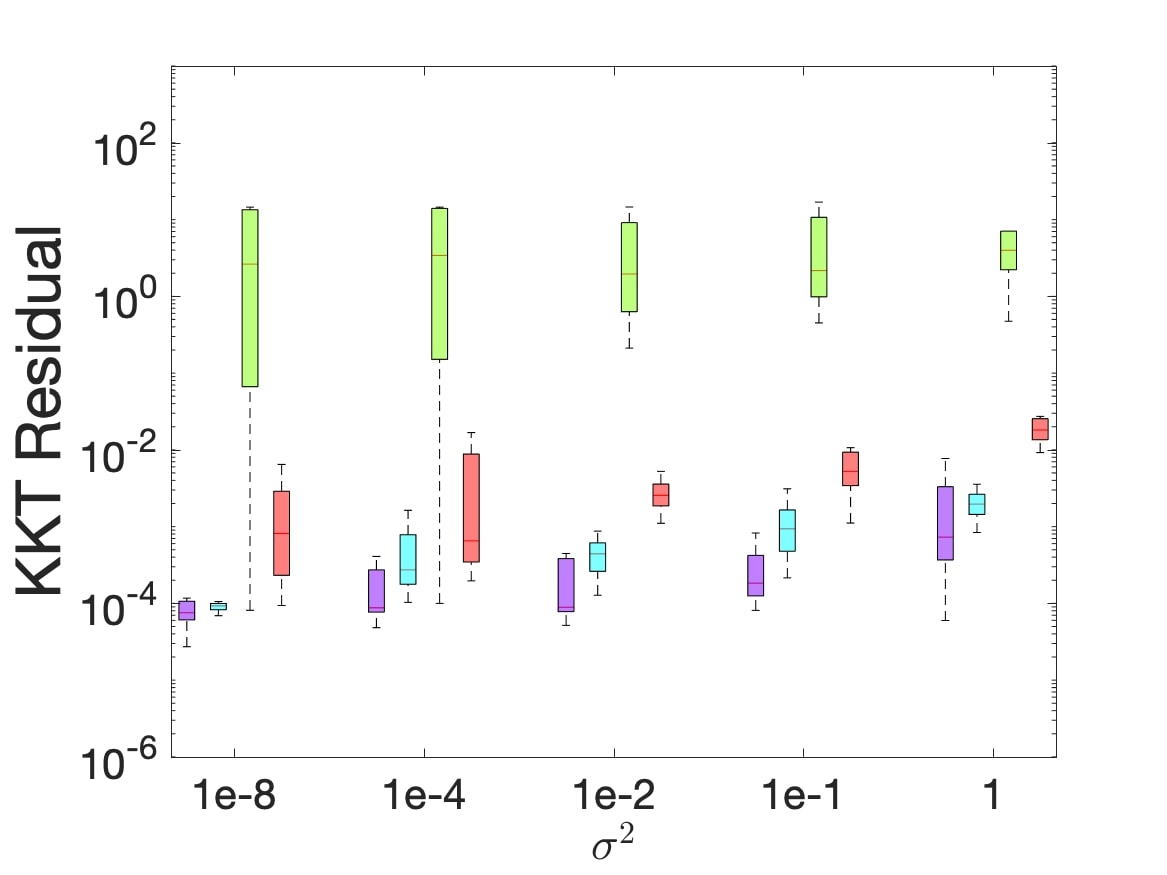}}
	\includegraphics[width=0.5\textwidth]{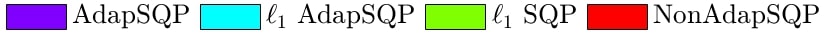}
	\caption{KKT residual boxplot. Each panel corresponds to a stepsize setup, $\alpha_k$ for NonAdapSQP and $\beta_k$ for $\ell_1$ SQP. Each panel has $5$ groups, corresponding to $5$ different $\sigma^2$. The results of AdapSQP and $\ell_1$ AdapSQP on all panels are the same, which correspond to $C_{grad} = C_f = 1$.
	}\label{fig:1}
\end{figure}

\begin{figure}[!htp]
	\centering     
	\subfigure[$C_{grad} =C_f =  1$]{\label{KKConst1}\includegraphics[width=55mm]{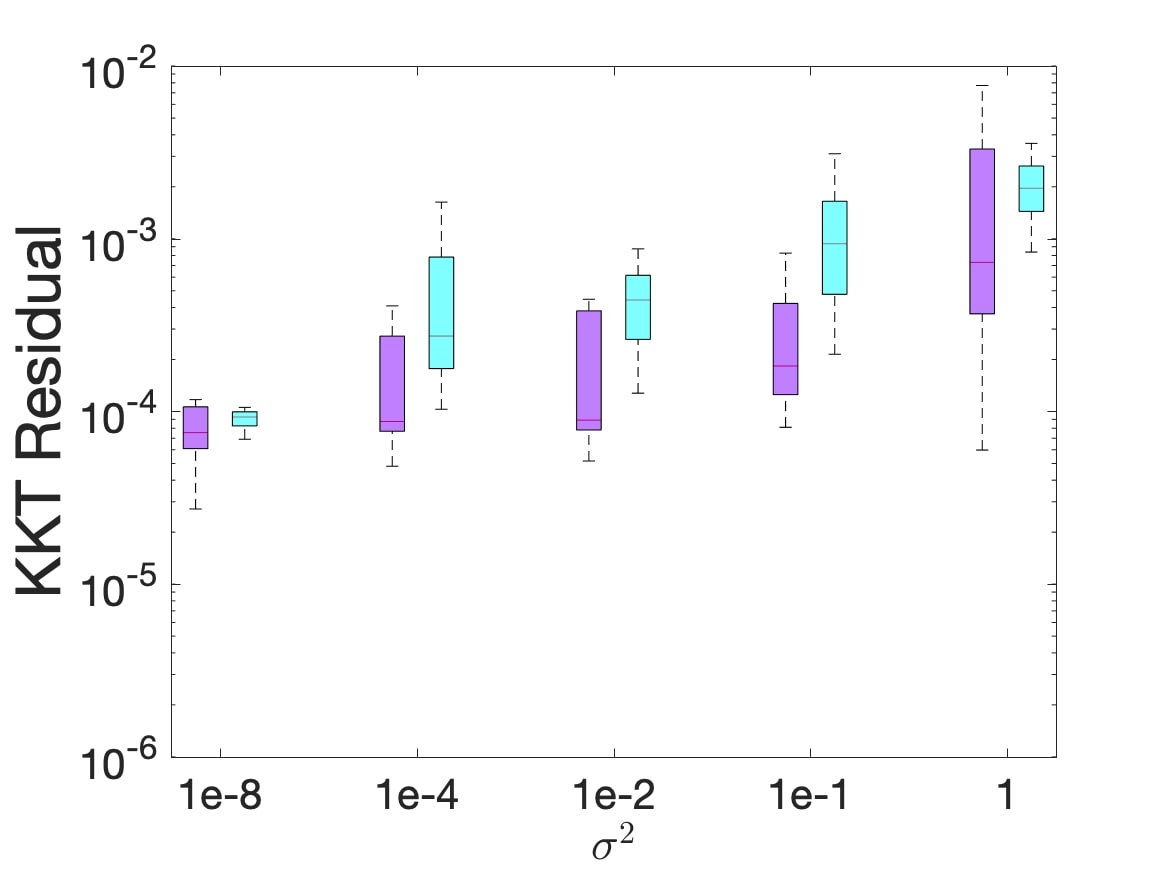}}
	\subfigure[$C_{grad} =C_f =  5$]{\label{KKConst2}\includegraphics[width=55mm]{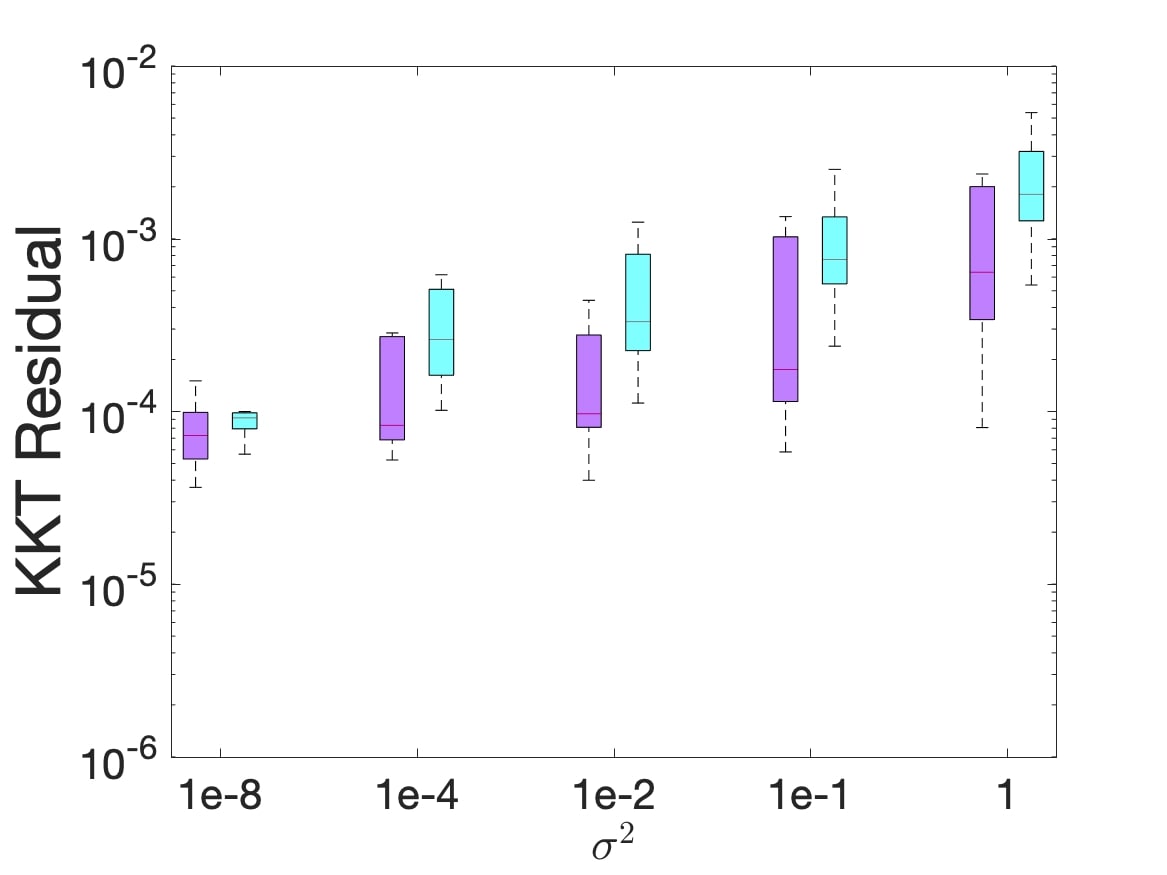}}
	\subfigure[$C_{grad} =C_f =  10$]{\label{KKConst3}\includegraphics[width=55mm]{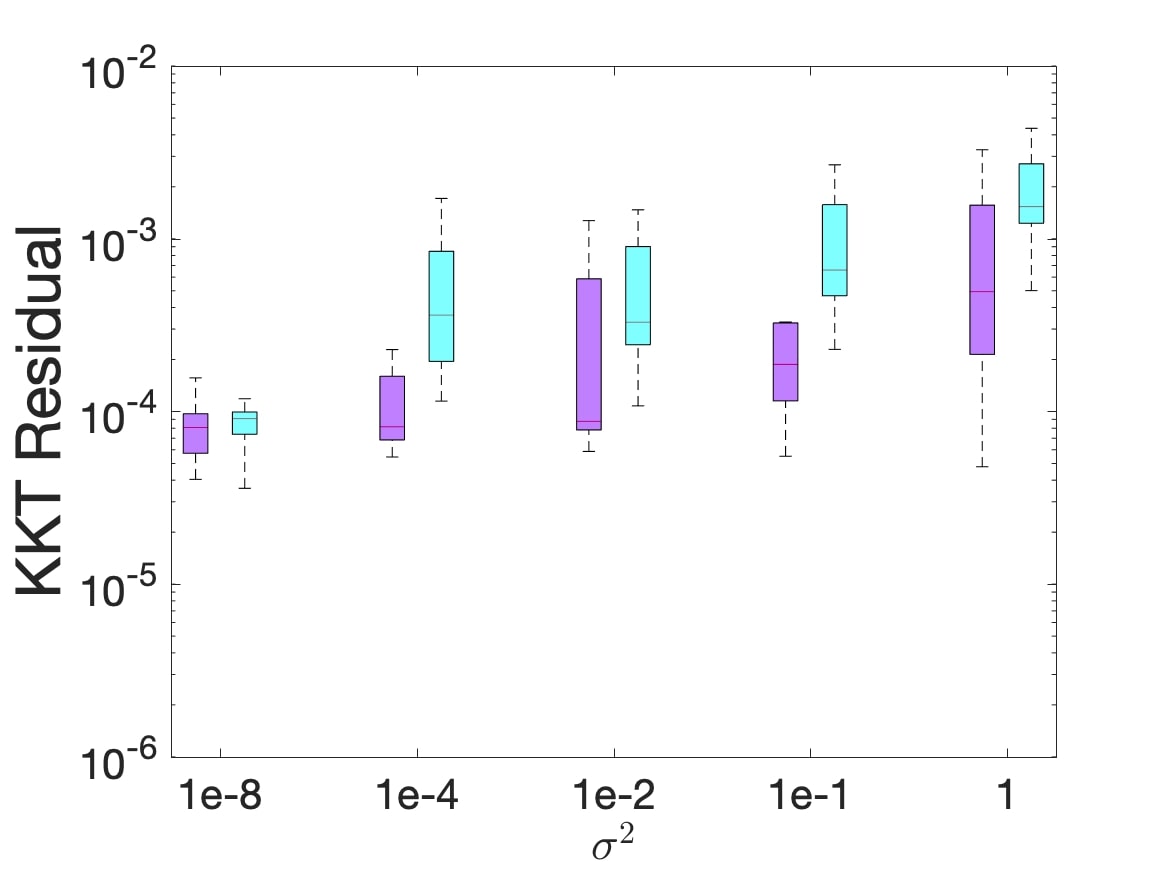}}
	\subfigure[$C_{grad} =C_f =  50$]{\label{KKConst4}\includegraphics[width=55mm]{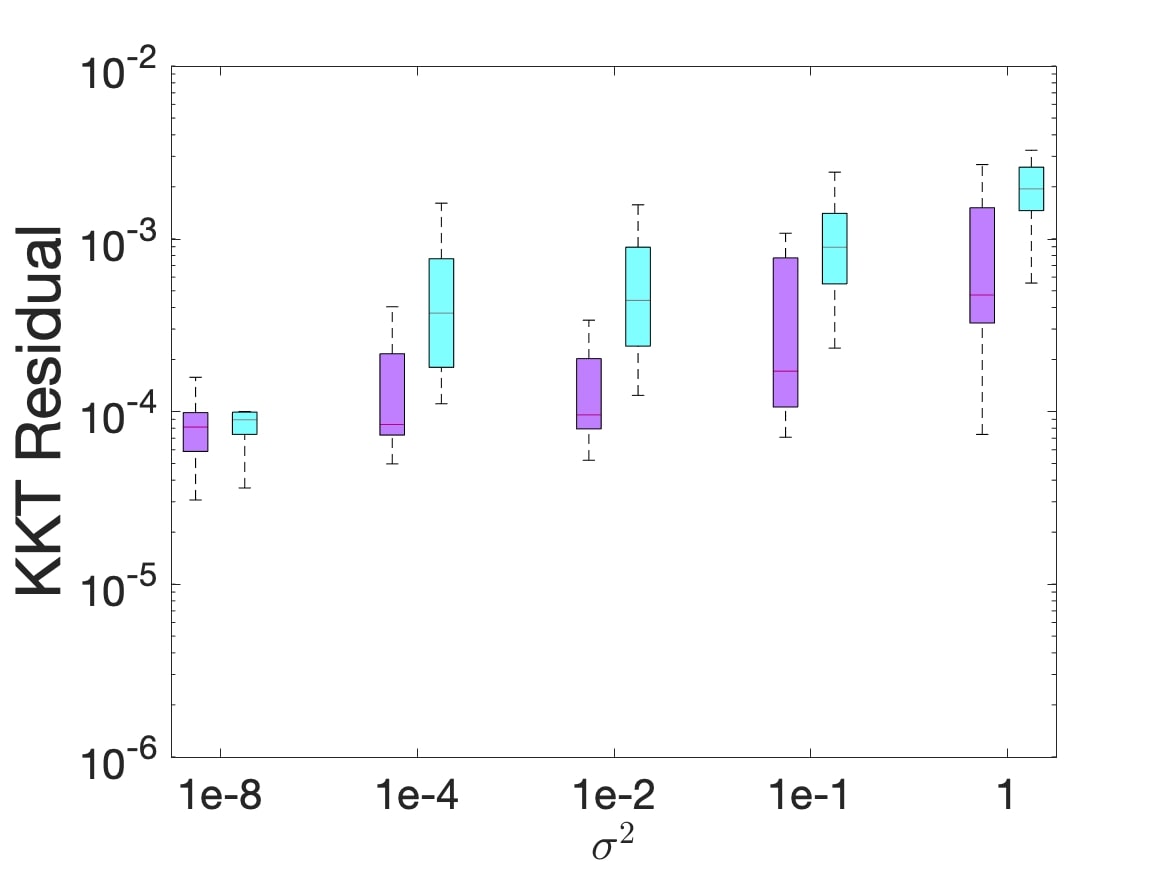}}
	\includegraphics[width=0.25\textwidth]{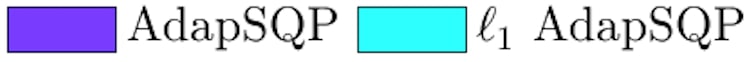}
	\caption{KKT residual boxplot. Each panel corresponds to a setup of constants $C_{grad}, C_f$.}\label{fig:2}
\end{figure}

From Figure \ref{fig:1}, we see that line search schemes AdapSQP and $\ell_1$~AdapSQP perform as well as NonAdapSQP and $\ell_1$ SQP when the noise level is low~(e.g. $\sigma^2 = 10^{-8}$), while perform significantly better when the noise level is high.~This phenomenon can be explained by intrinsic differences of the two~types of~stochastic algorithms. AdapSQP and $\ell_1$ AdapSQP adaptively increase the batch sizes as the iteration proceeds. The large batch size suppresses the effect of the noise variance. In particular, given $|\xi_g^k|$ samples, the estimated gradient has variance $\sigma^2/|\xi_g^k|$, which is notably small for large $|\xi_g^k|$. Thus, different levels of noise can be interpreted as different setups for constants $C_{grad}, C_f$. As we discussed in Section \ref{sec:4} and will empirically show later (Figures \ref{fig:2}, \ref{fig:3}, \ref{fig:4}), these constants have a marginal effect on the performance of line search schemes. Consistently, although their residuals get slightly larger as $\sigma^2$ increases, both AdapSQP and $\ell_1$ AdapSQP have a robust performance~for~different noise~levels.

In contrast, we see from Figure \ref{fig:1} that both NonAdapSQP and $\ell_1$ SQP are heavily affected by noise levels and prespecified sequences. $\ell_1$ SQP performs better than NonAdapSQP for constant sequences, while worse for decaying sequences. Both algorithms have higher residuals as $\sigma^2$ increases. Their sensitivity on $\sigma^2$ is reasonable because these two algorithms only generate one or two~samples in each iteration, which cannot reduce the estimation variance like line search schemes do. These two algorithms are also sensitive to prespecified sequences. $\ell_1$~SQP involves a novel stepsize selection scheme to enhance adaptivity; and we~indeed observe that it is more robust than NonAdapSQP for constant sequences. The latter may converge to a smaller residual compared to AdapSQP for some problems under a particular setup (e.g. $\alpha_k = 0.5, \sigma^2=0.01$), but it also frequently does~not converge within the budget for most, if not all, of problems if the stepsize is large and the noise level is high. The divergence can be explained by the violation of upper boundedness condition on the stepsize; and the upper bound is inversely proportional to $\sigma^2$ (cf. Theorem \ref{thm:4} and \eqref{equ:U_0}). Thus, a larger $\sigma^2$ implies a more stringent condition on the stepsize. A more well-designed NonAdapSQP, for example, similar to $\ell_1$ SQP, may resolve the divergence issue, but we do not investigate this here. However, even for this basic design without any adaptivity, we see from Figures \ref{KDecay1}, \ref{KDecay2} that NonAdapSQP performs better than $\ell_1$ SQP for decaying sequences. This phenomenon can be explained by \cite[Lemma 3.6]{Berahas2021Sequential}. In particular, the selected stepsize of $\ell_1$ SQP is in an interval whose length is proportional to $\beta_k^2$. Thus, for decaying $\beta_k$, the adaptivity gained by the scheme in \cite{Berahas2021Sequential} is less effective, especially when $\beta_k$ has a fast decay rate. Overall, NonAdapSQP and $\ell_1$ SQP are both sensitive to the sequences $\alpha_k$, $\beta_k$. Compared to NonAdapSQP, $\ell_1$ SQP is indeed more robust to constant sequences, while does not behave well for decaying sequences.

From Figure \ref{fig:2}, we observe that AdapSQP consistently performs better~than $\ell_1$ AdapSQP in all noise levels for all constants setups, although the improvement is not significant and we only consider a basic design for $\ell_1$ AdapSQP, so that the two methods are not fully comparable. Furthermore, AdapSQP~utilizes more information than $\ell_1$ AdapSQP, such as the Hessians of constraints that~are typically utilized for accelerating the local convergence of SQP schemes. That said, we believe there is evidence supporting the usage of differentiable merit functions. We leave both the theoretical and empirical investigations of local benefits of the augmented Lagrangian to future work. We also see from Figure \ref{fig:2} that the two methods are robust to tuning parameters $C_{grad}, C_f$---the KKT residuals do not change much when we vary $C_{grad}, C_f$ from $1$ to $50$. This is because the sample complexity to ensure conditions \eqref{equ:ran:cond:1}, \eqref{equ:ran:cond:3}, \eqref{equ:ran:cond:4}, studied in Section \ref{sec:4.2}, is proportional to the reciprocal of certain quantities, which converge to zero as $k\rightarrow\infty$ and dominate the order of the sample complexity.

In summary, we observe that the performance of AdapSQP is better than $\ell_1$ AdapSQP; and these two line search schemes indeed reflect the benefits of generating a large batch set and perform better than NonAdapSQP and~$\ell_1$~SQP. The line search schemes are robust to tuning parameters and noise levels, while $\ell_1$ SQP and NonAdapSQP are sensitive to prespecified sequences and noise levels. Figure \ref{fig:1} also  suggests that line search schemes are not preferable over $\ell_1$ SQP and NonAdapSQP for $\sigma^2 = 10^{-8}$. This is because the latter two methods can have a precise gradient estimate even with one sample. Thus, the benefits gained from a large batch set is marginal.

\vskip2pt
\noindent\textbf{Sample complexity.} We compare the number of evaluations of the objective and its gradient, which is equivalent to comparing the corresponding generated sample sizes. Similar to the KKT residuals, we average generated sample sizes over all convergent runs. The results are shown in Figures \ref{fig:3} and \ref{fig:4}. Each plot in Figures \ref{fig:3} and \ref{fig:4} corresponds to a setup of constants $C_{grad}, C_f$. The boxes of NonAdapSQP and $\ell_1$ SQP do not appear in Figure \ref{fig:4} since both methods do not require objective evaluations (if we prespecify the Lipschitz constants). The results of these two methods in Figure \ref{fig:3} correspond to the setup $\alpha_k, \beta_k = 0.01$. In general, with small stepsizes, more iterations are performed so that more samples are generated. However, even with small stepsizes, we note from Figure \ref{fig:3} that $\ell_1$ SQP and NonAdapSQP require much fewer gradient evaluations than the two line search schemes when $\sigma^2=10^{-8}$, under which all methods converge to similar KKT residuals (cf. Figure \ref{KConst1}). Thus, clearly, line search schemes are more expensive to perform when the noise level is small.

Between the two line search schemes, we see from Figures \ref{fig:3} and \ref{fig:4} that AdapSQP requires slightly fewer gradient and objective evaluations. The difference is more evident for large noise variance. By the observation from Figure~\ref{fig:2}~that AdapSQP converges to smaller KKT residuals, Figures \ref{fig:3} and \ref{fig:4} further justify the choice of the augmented Lagrangian merit function. We point out that the evaluations~of $\ell_1$ AdapSQP may be decreased by a different update of the penalty parameter or a different definition for $\mA_k, \mB_k$; while how to refine the design of $\ell_1$ AdapSQP is not considered here. In addition, we see that~both AdapSQP and $\ell_1$ AdapSQP have stable performance in terms of the generated sample sizes for different constants setups. This observation again suggests that both line search schemes are~robust to tuning parameters.     

We should mention that adopting the augmented Lagrangian merit function also requires to estimate the Hessian of the objective. We do not show the Hessian result since all methods using the $\ell_1$ merit function do not require~such information for global convergence, and the number of Hessian evaluations is dominated by the number of gradient evaluations. The cost of second-order information may be alleviated to some extent if we take local convergence into account (as all methods would have to estimate the Hessian then). 

\begin{figure}[!htp]
	\centering     
	\subfigure[$C_{grad} = C_f = 1$]{\label{G1}\includegraphics[width=43.5mm]{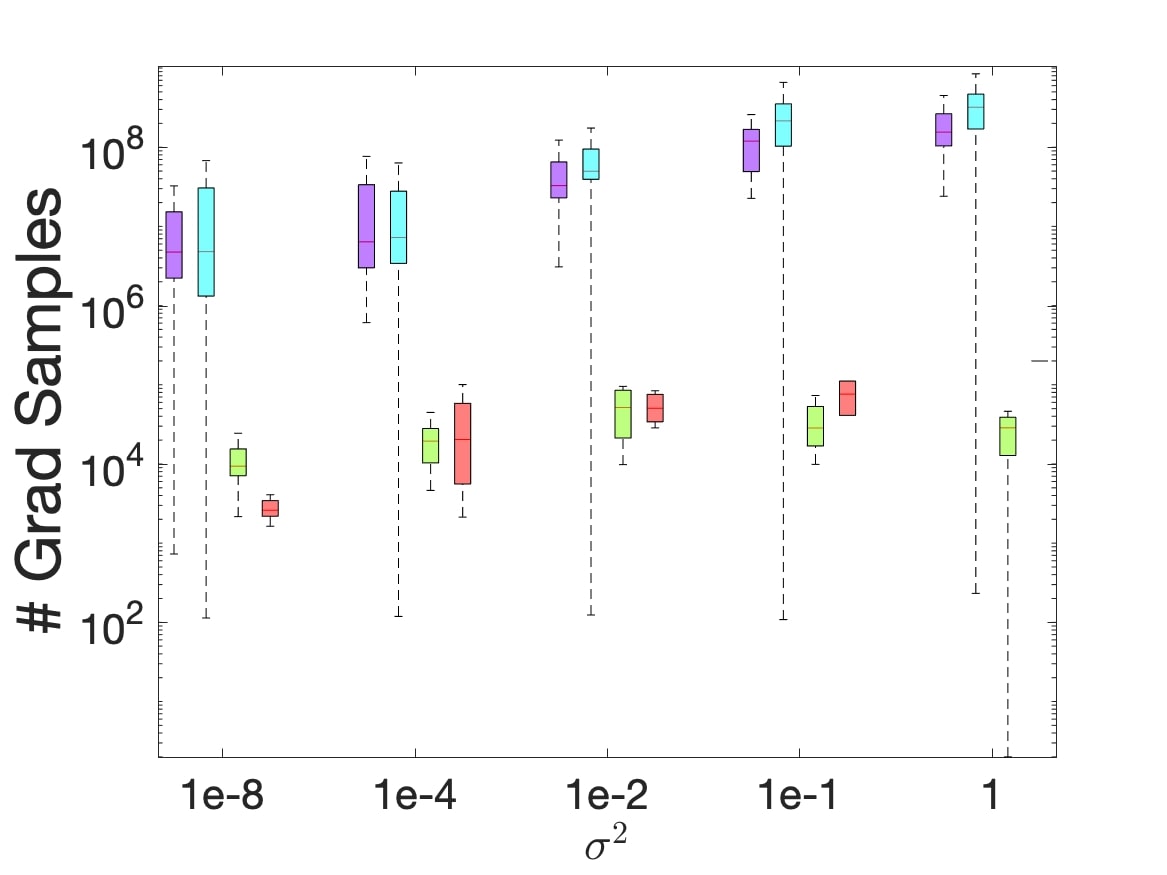}}
	\subfigure[$C_{grad} = C_f = 5$]{\label{G2}\includegraphics[width=43.5mm]{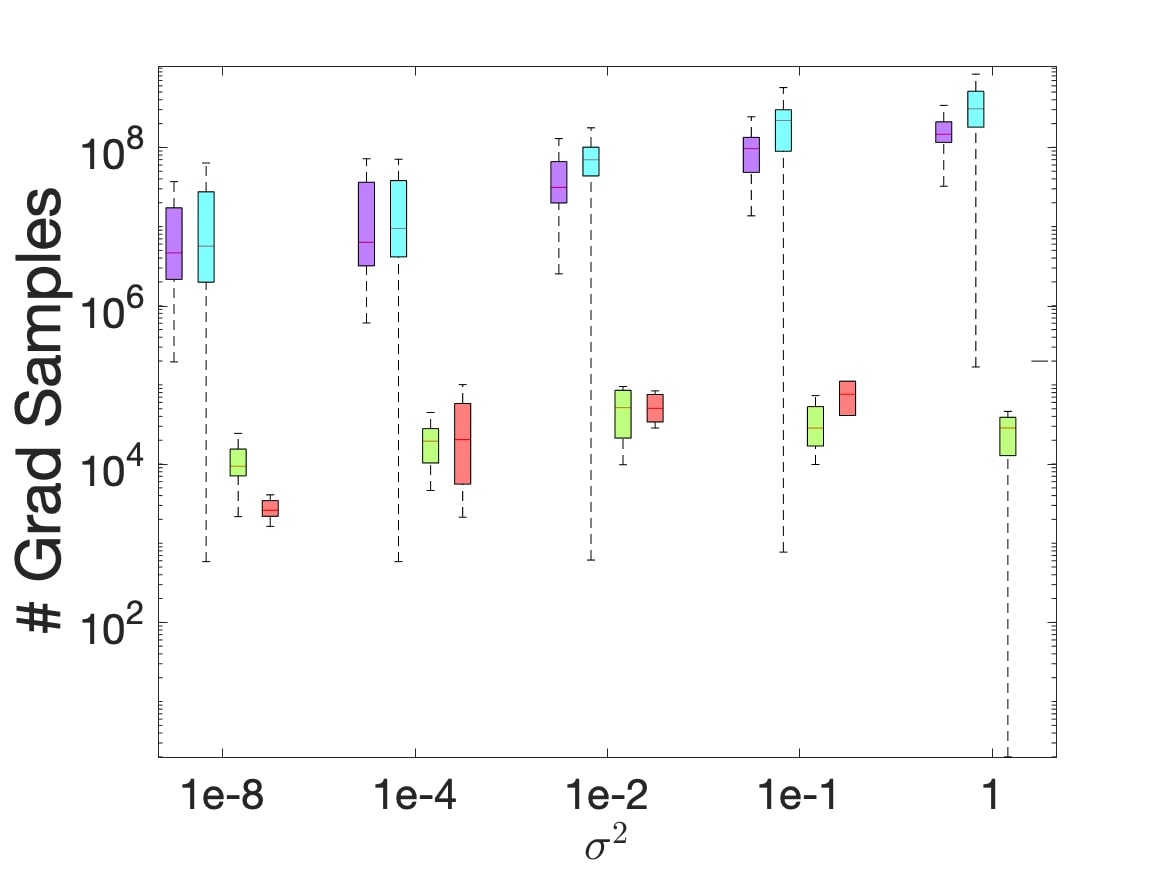}}
	\subfigure[$C_{grad} = C_f = 10$]{\label{G3}\includegraphics[width=43.5mm]{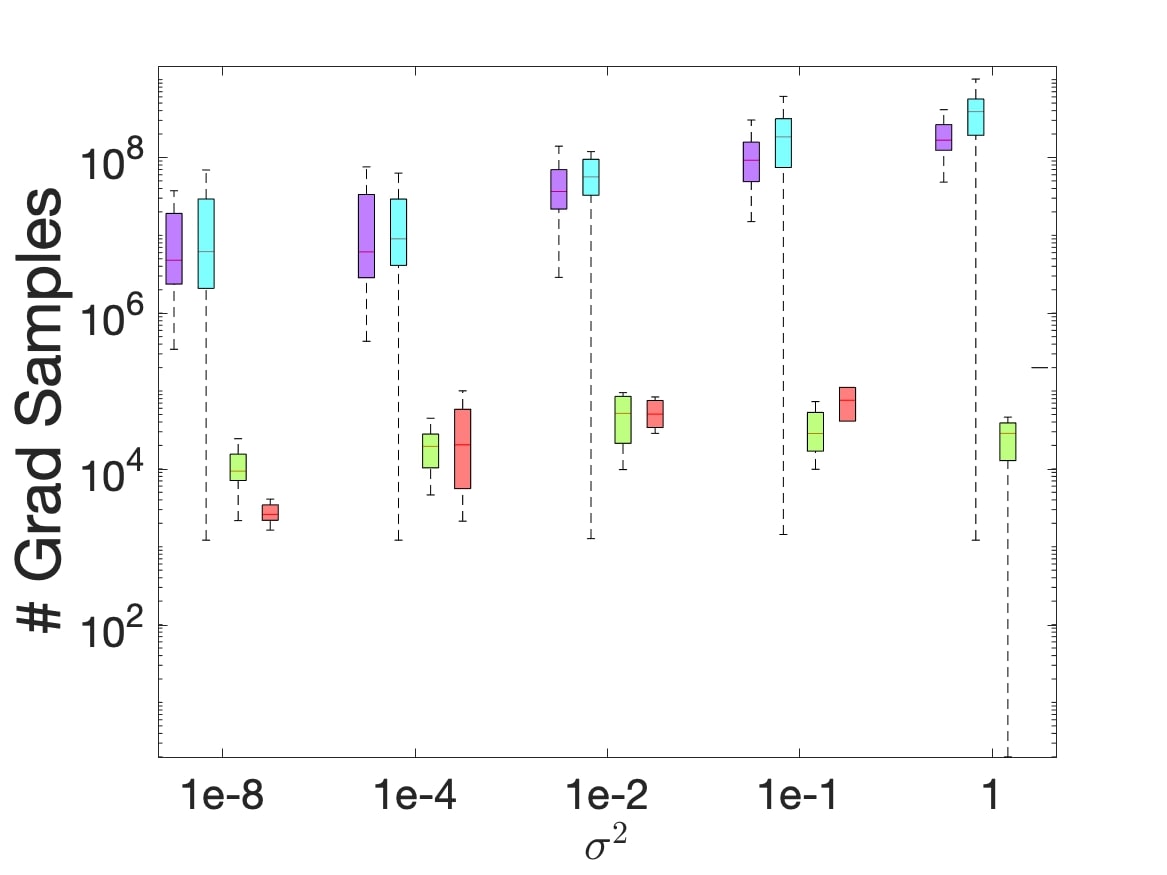}}
	\subfigure[$C_{grad} = C_f = 50$]{\label{G4}\includegraphics[width=43.5mm]{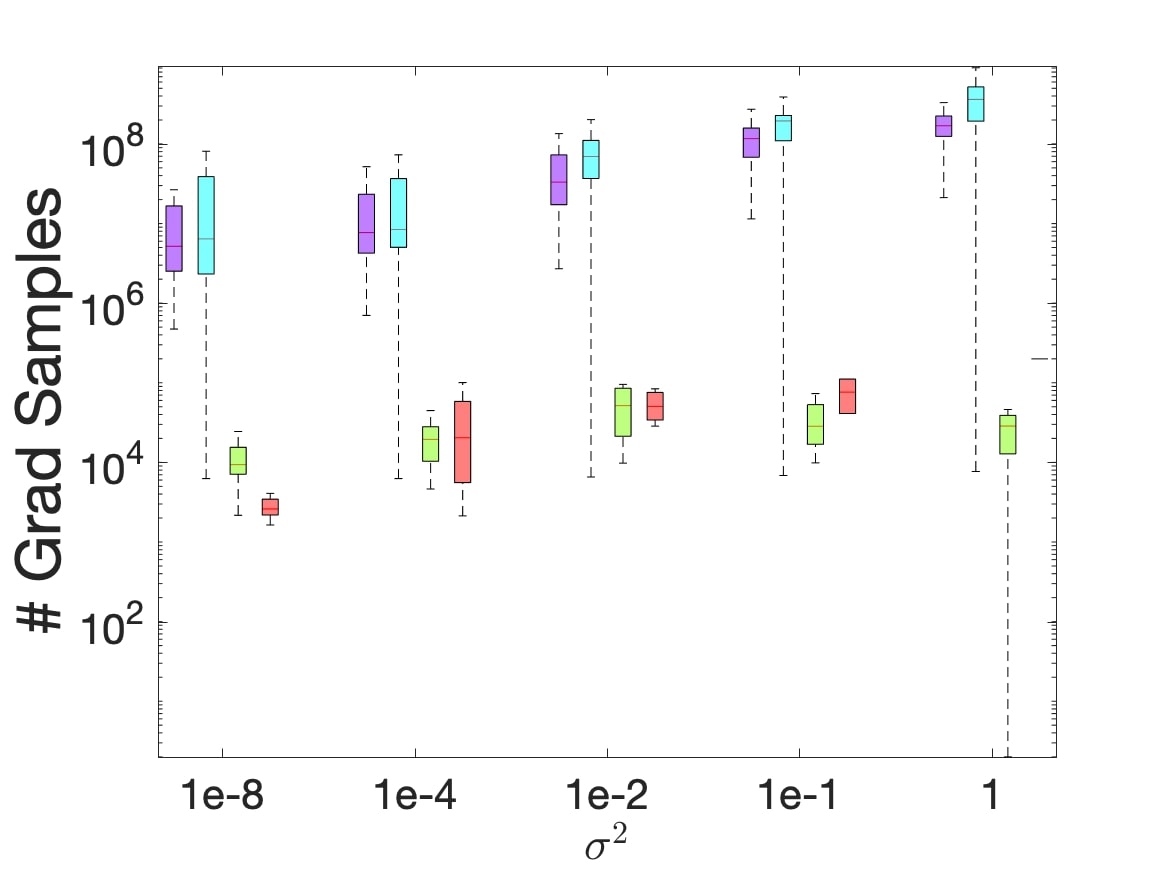}}
	\includegraphics[width=0.5\textwidth]{Figure/legend}
	\caption{Gradient evaluation boxplot. Each panel corresponds to a constants setup. The results of NonAdapSQP and $\ell_1$ SQP on all panels are the same, which correspond to $\alpha_k, \beta_k =  0.01$.}\label{fig:3}
\end{figure}

\begin{figure}[!h]
	\centering     
	\subfigure[$C_{grad} = C_f = 1$]{\label{F1}\includegraphics[width=43.5mm]{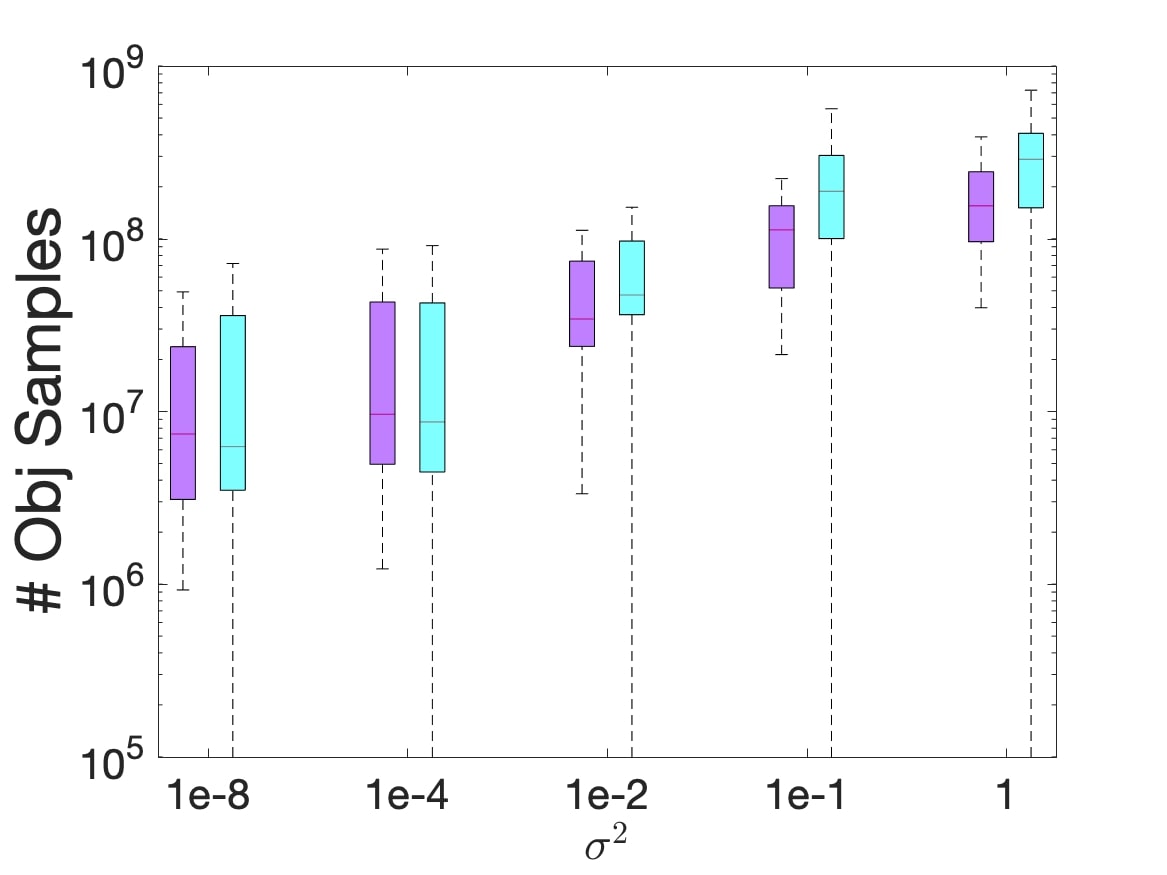}}
	\subfigure[$C_{grad} = C_f = 5$]{\label{F2}\includegraphics[width=43.5mm]{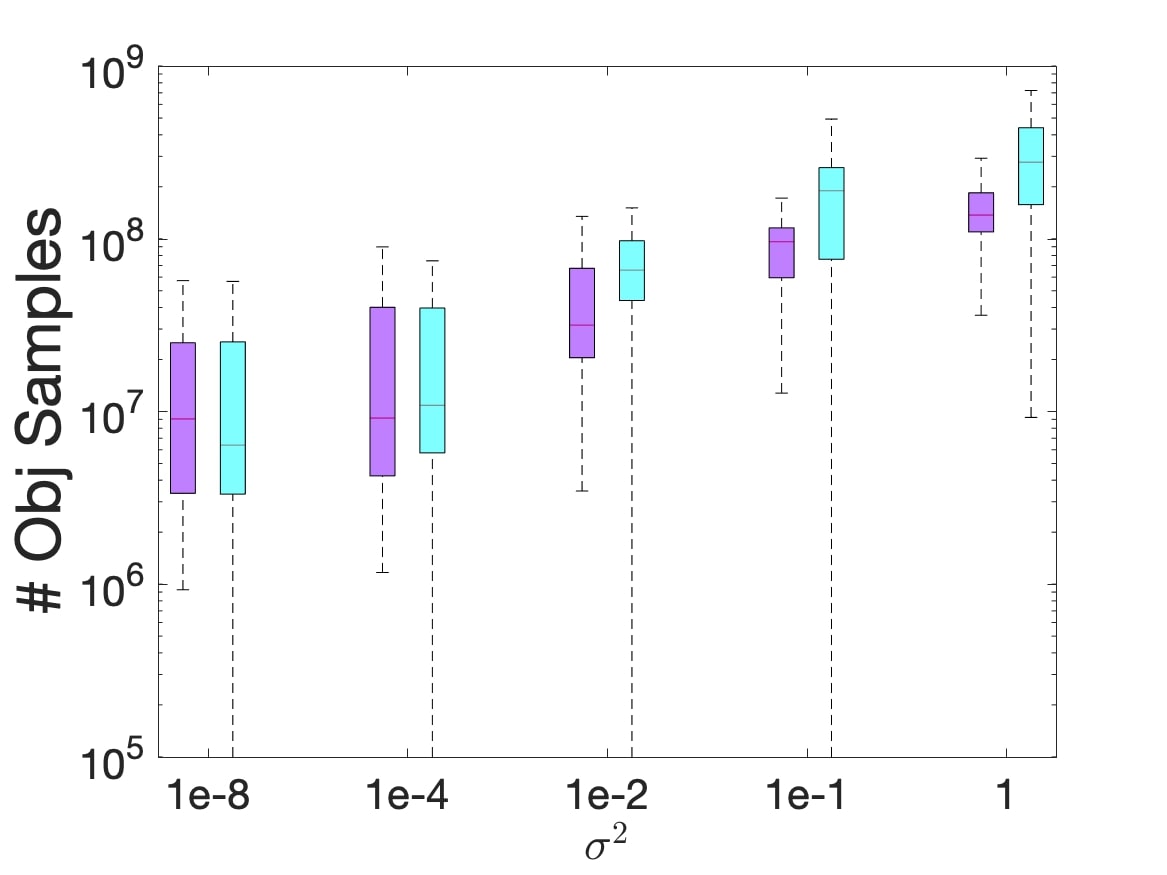}}
	\subfigure[$C_{grad} = C_f = 10$]{\label{F3}\includegraphics[width=43.5mm]{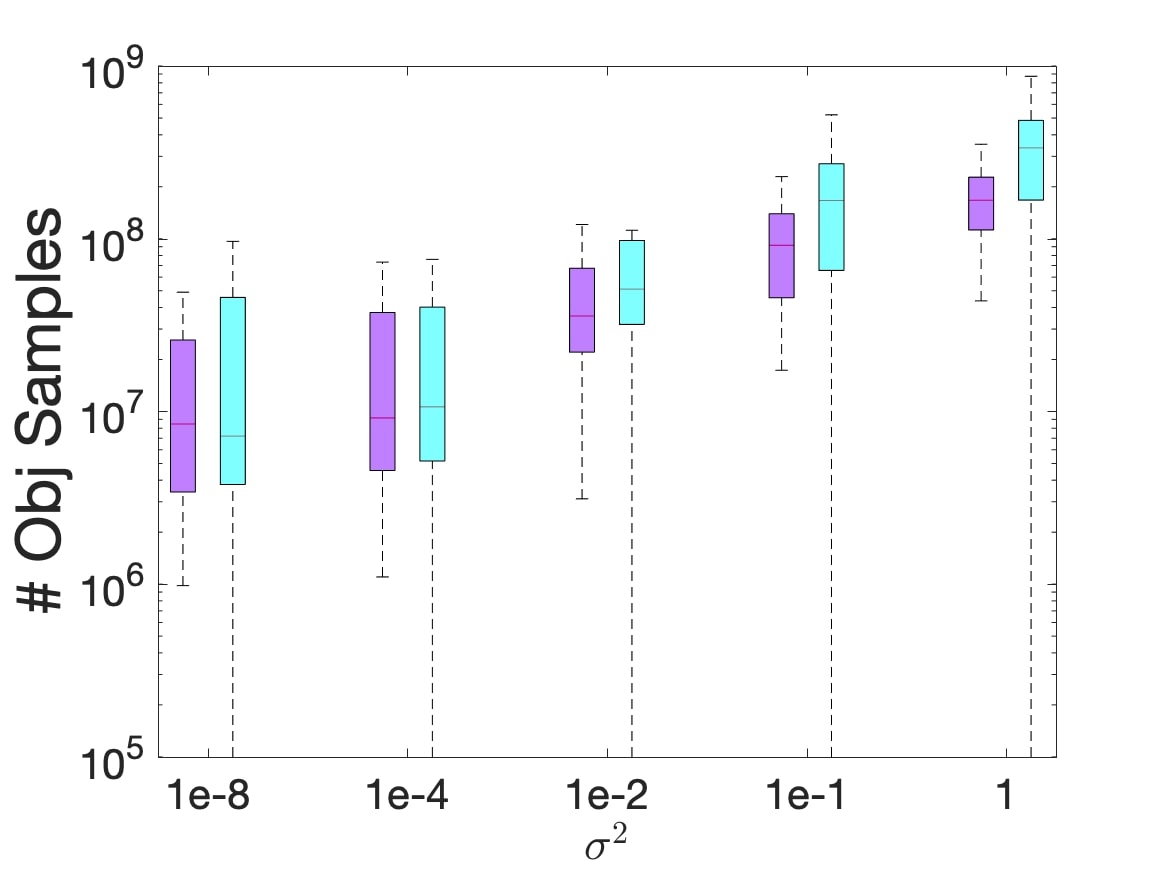}}
	\subfigure[$C_{grad} = C_f = 50$]{\label{F4}\includegraphics[width=43.5mm]{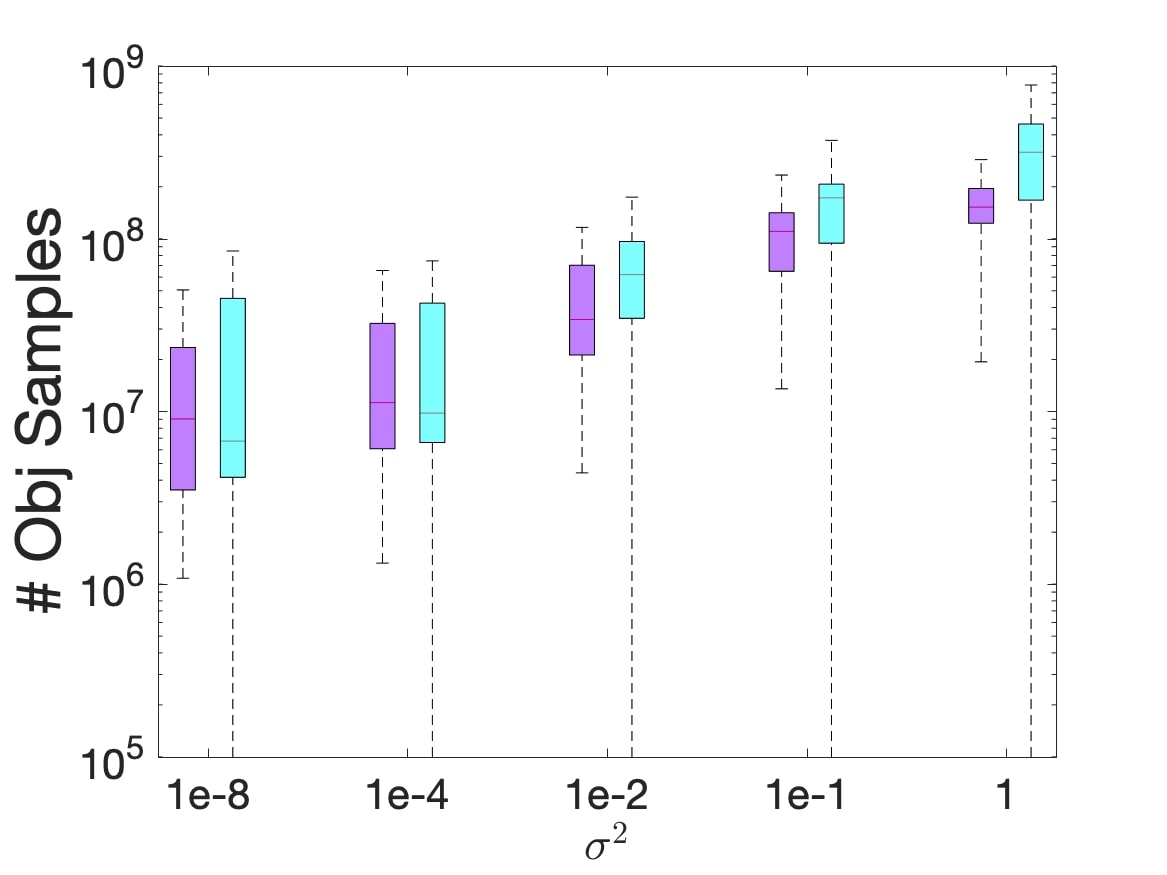}}
	
	\includegraphics[width=0.25\textwidth]{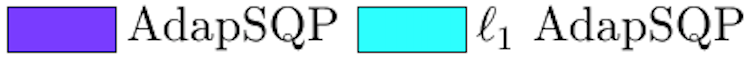}
	\caption{Objective evaluation boxplot. Each panel corresponds to a setup of constants.}\label{fig:4}
\end{figure}

\begin{figure}[!thp]
	\centering     
	\subfigure[$C_{grad} = C_f = 1$]{\label{K1}\includegraphics[width=55mm]{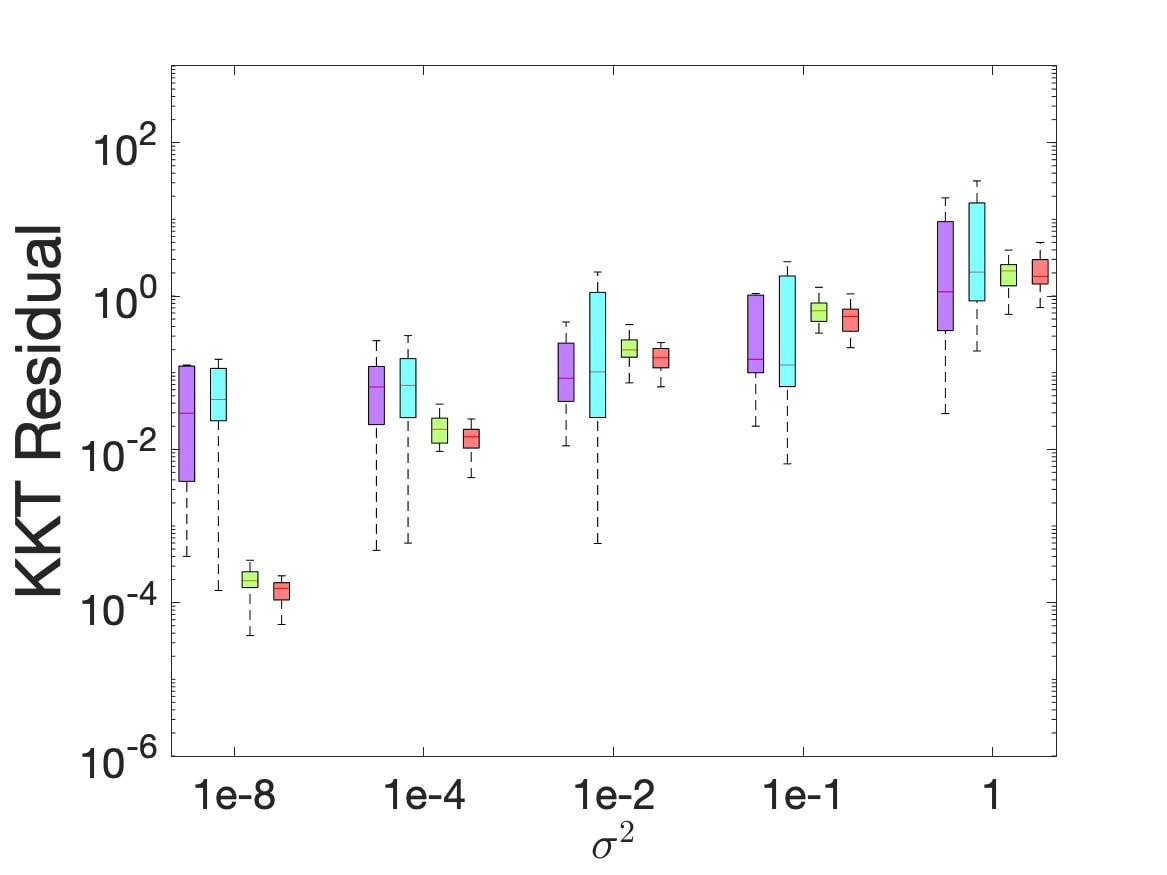}}
	\subfigure[$C_{grad} = C_f = 5$]{\label{F2}\includegraphics[width=55mm]{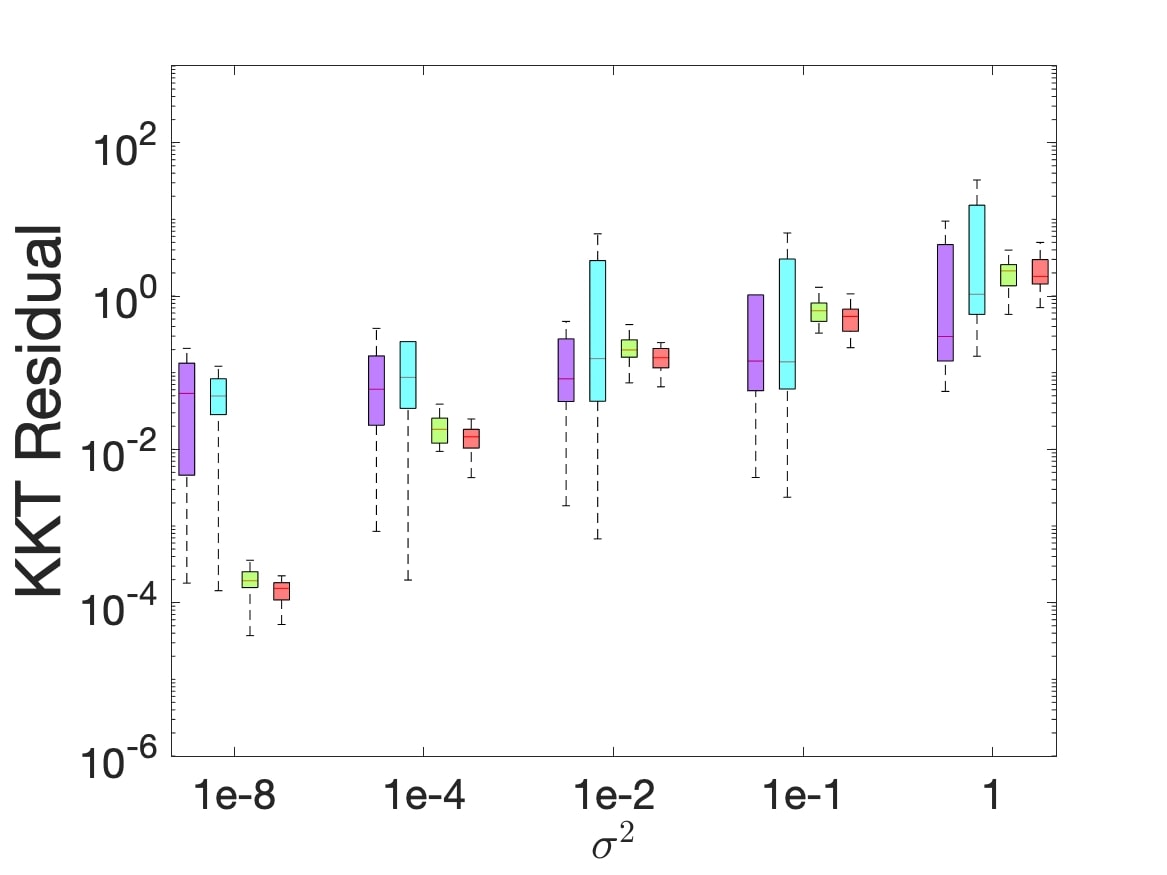}}
	\subfigure[$C_{grad} = C_f = 10$]{\label{F3}\includegraphics[width=55mm]{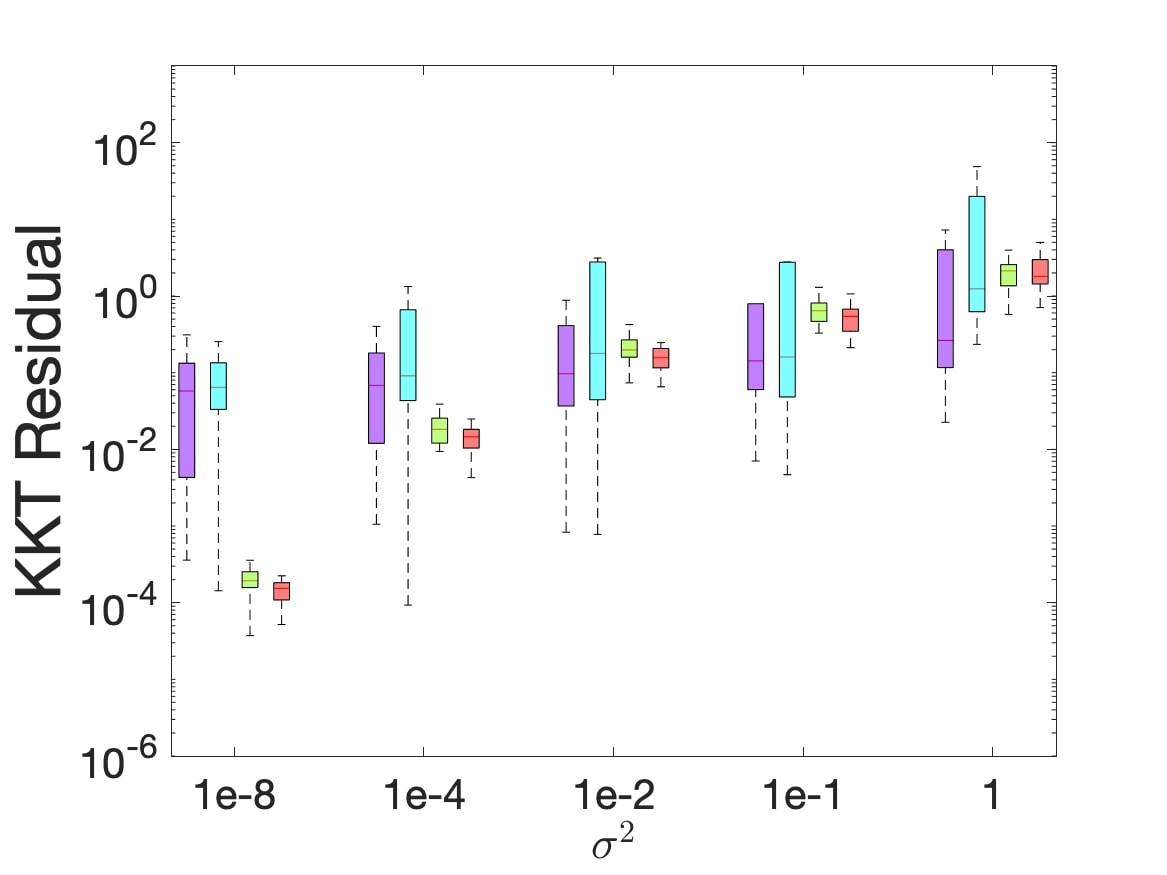}}
	\subfigure[$C_{grad} = C_f = 50$]{\label{F4}\includegraphics[width=55mm]{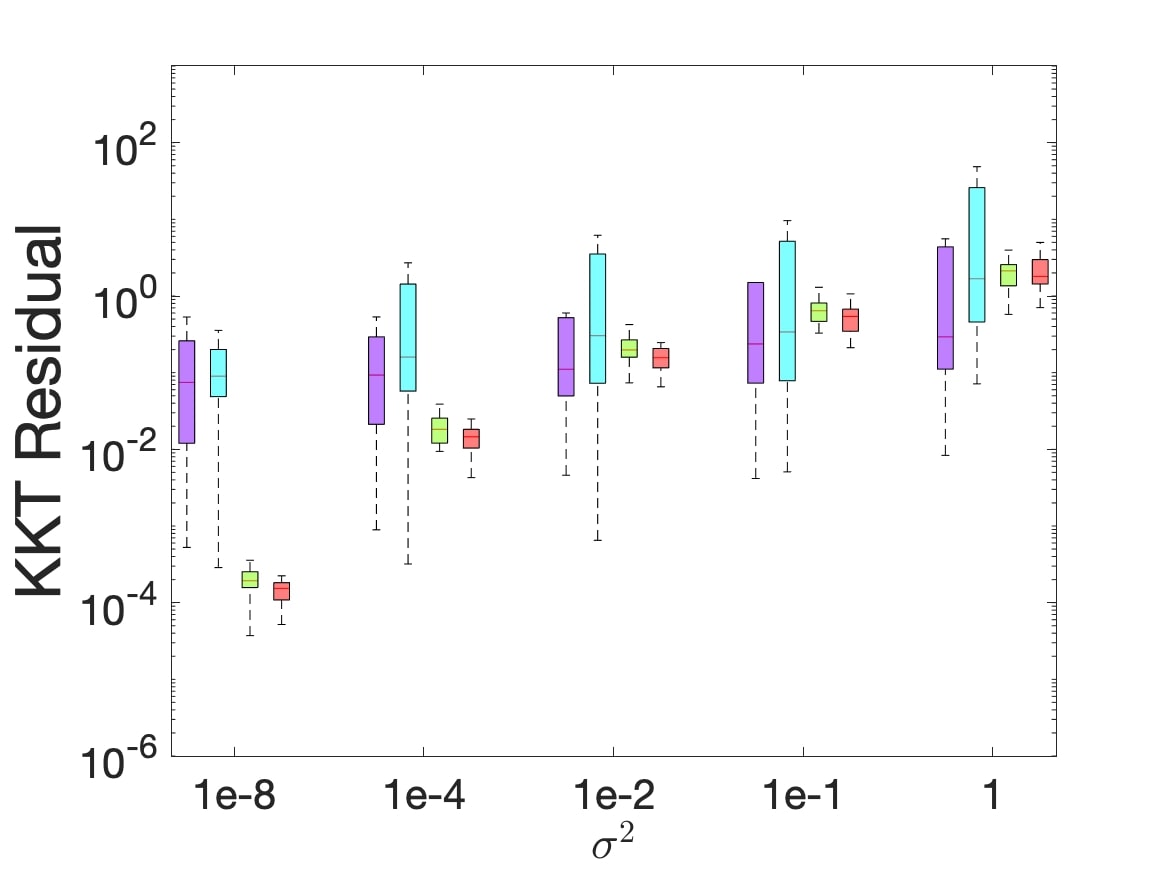}}
	
	\includegraphics[width=0.5\textwidth]{Figure/legend}
	\caption{KKT residual boxplot. Each panel corresponds to a setup of constants. The results of NonAdapSQP and $\ell_1$ SQP on all panels are the same, which correspond to $\alpha_k, \beta_k =  0.01$.}\label{fig:6}
\end{figure}

\begin{figure}[!tp]
	\centering     
	\subfigure[AdapSQP]{\label{A}\includegraphics[width=58mm]{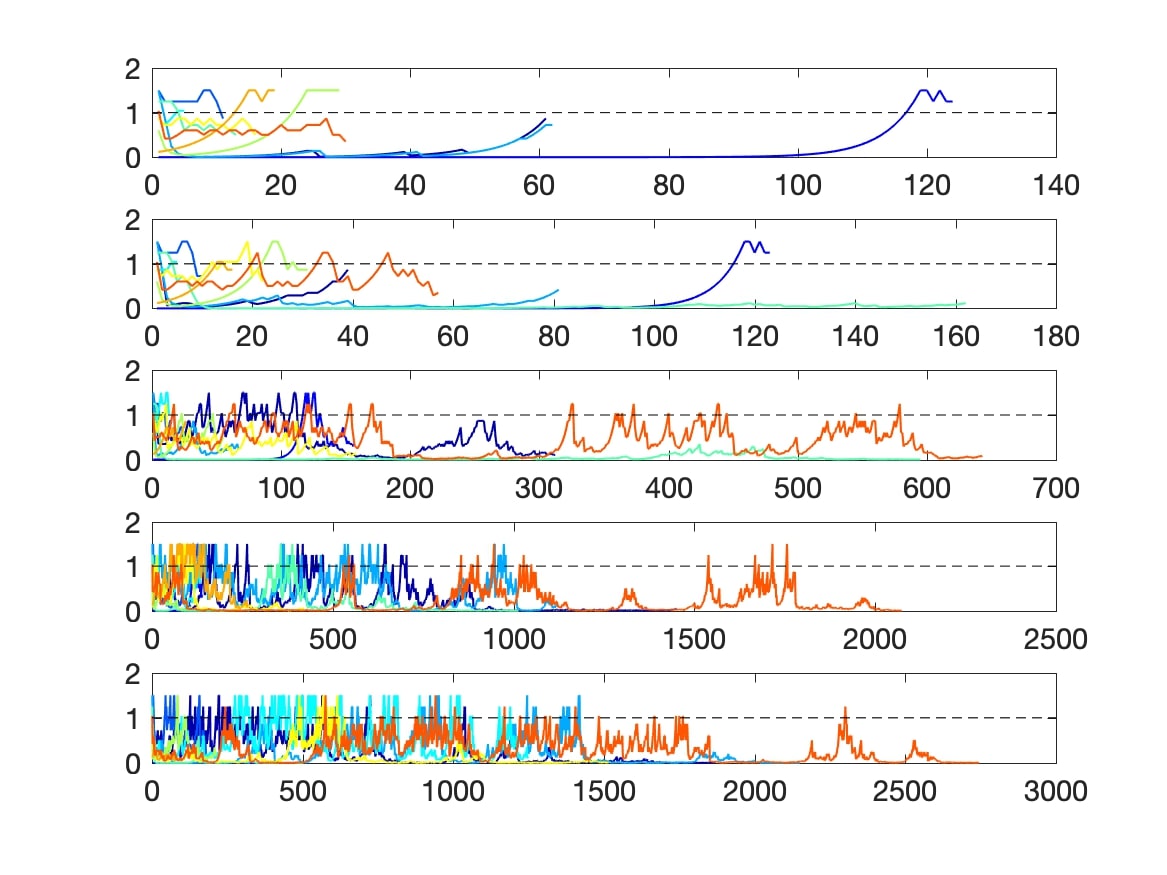}}
	\subfigure[$\ell_1$ AdapSQP]{\label{B}\includegraphics[width=58mm]{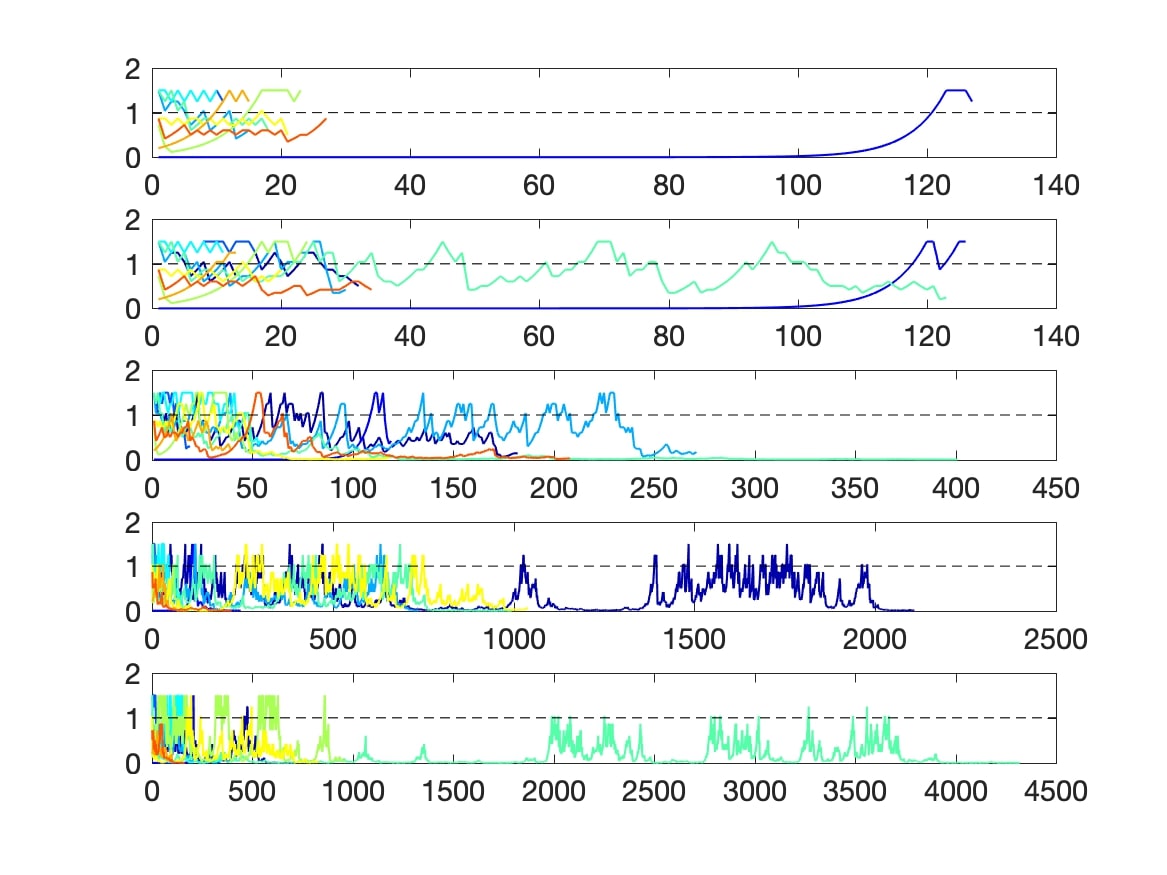}}
	\caption{Stepsize process. Each figure has five rows, from top to bottom, corresponding~to $\sigma^2 = 10^{-8}, 10^{-4}, 10^{-2}, 10^{-1}, 1$. Each plot has 10 lines corresponding to the stepsizes sequences of 10 convergent problems. The dash line corresponds to the unit stepsize.}\label{fig:5}
\end{figure}

Finally, an interesting comparison metric is to fix the total number of~samples (for which different methods have different usages)~and then compare the attained KKT residuals. To investigate this metric, we again take $\alpha_k, \beta_k=0.01$ for NonAdapSQP and $\ell_1$ SQP, and vary $C_{grad}, C_f$ for AdapSQP and $\ell_1$ AdapSQP. For all methods, we adjust the stopping criteria \eqref{equ:criteria} by only checking if the generated sample size exceeds $10^6$ or not. We see from Figures \ref{fig:3} and \ref{fig:4} that AdapSQP and $\ell_1$ AdapSQP require more than $10^6$ samples; thus, their performance is worsened under the above setup. However, by \eqref{equ:criteria}, the performance of $\ell_1$ SQP and NonAdapSQP is enhanced due to more iterations are performed. Figure  \ref{fig:6} shows the KKT residual boxplot. From Figure \ref{fig:6}, we see that two fully stochastic methods---$\ell_1$ SQP and NonAdapSQP---achieve smaller KKT residuals than two line search methods---AdapSQP and $\ell_1$ AdapSQP---when $\sigma^2$ is small. This suggests that fully stochastic methods are preferable if each sample can accurately estimate the stochastic quantity.~However, from the figure, we also see that two line search methods are comparable to fully stochastic methods when $\sigma^2$ is large. Due to the adaptivity and much fewer iterations that are conducted, line search methods are preferable in this scenario.

\vskip4pt
\noindent\textbf{Stochastic stepsize.} We now investigate the random process of stepsizes~selected by stochastic line search. We consider the setup of $C_{grad} = C_f = 1$,~and use the first 10 convergent problems to visualize the process. In particular, for each problem, we randomly pick one out of five runs, and show a sequence of stepsizes that are associated to successful steps (i.e. the Armijo condition is satisfied; cf. Line 9 of Algorithm \ref{alg:ASto:SQP}).

From Figure \ref{fig:5}, we see that the stochastic stepsize can frequently exceed $1$ even if it has been diminished to a small value. Allowing stepsizes to exceed $1$ is a special property in stochastic optimization, which is also shared by \cite{Berahas2021Sequential}~but not common in deterministic optimization. Different from NonAdapSQP and~\cite{Berahas2021Sequential}, line search can significantly increase the stepsize back as desired even if the current stepsize is small. This property reveals the exclusive benefit of utilizing line search, and is crucial to enable a fast convergence in practice. In other~words, line search can suggest a stepsize that is larger than the prespecified, controlled sequence, especially for large $k$. We note that the stepsize selection scheme in $\ell_1$ SQP enjoys certain  adaptivity. However, if $\alpha_k$ is small, then all the following stepsizes cannot be large because of the relation between $\alpha_k$ and prespecified $\beta_k$ in \cite[Lemma 3.6]{Berahas2021Sequential}.

Although the stepsizes for some problems  are finally small in Figure~\ref{fig:5}, it is not a negative evidence of the local benefits of the augmented Lagrangian. This is simply because studying local behavior requires a diminishing Hessian modification, i.e. $B_k\approx \nabla_{\bx}^2\mL_k$, while we fix $B_k$ to be $B_k = I$ in our simulation. It is unclear how to quantify local benefits based on a stochastic stepsize process. We leave both theoretical and empirical investigations to future work.

\section{Conclusions}\label{sec:6}

We proposed an adaptive StoSQP for solving constrained stochastic optimization problems. Our StoSQP scheme adopts a differentiable exact augmented Lagrangian as the merit function and incorporates a line search procedure to select the stepsize. To this end, we first revisited a classical SQP method~in \cite{Lucidi1990Recursive} for deterministic objectives. We simplified that algorithm and then designed a non-adaptive StoSQP, where the stepsize is given by a deterministic prespecified sequence. Finally, using a stochastic line search method, we developed an adaptive StoSQP that chooses the stepsize adaptively in each iteration. By adjusting the condition on selecting the penalty parameter, the algorithm automatically avoids converging to stationary points of the merit function where KKT residuals do not vanish. For both non-adaptive StoSQP and adaptive StoSQP, we established an ``almost sure" convergence result, which differs from the result of convergence in expectation in \cite{Berahas2021Sequential}.

One future work is to design StoSQP algorithms for solving inequality-constrained stochastic optimization problems. Moreover, establishing a local convergence result is very promising, which helps to understand whether differentiable merit functions enjoy local benefits or not in stochastic optimization. In addition, the iteration complexity analysis of our StoSQP remains an open question, that requires a deeper understanding on the random walk of the~selected penalty parameter. Finally, applying some acceleration techniques such as momentum and heavy ball to StoSQP is also an interesting future research direction.

\section*{Acknowledgments}

We would like to thank Associated Editor and two anonymous reviewers for helpful and instructive comments. Sen Na would like to thank Nick Gould for helping with the implementation of CUTEst in Julia. This material was completed in part with resources provided by the University of Chicago Research Computing Center. This material was based upon work supported by the U.S. Department of Energy, Office of Science, Office of Advanced Scientific Computing Research (ASCR) under Contract DE-AC02-06CH11347 and by NSF through award CNS-1545046.


\appendix
\numberwithin{equation}{section}
\numberwithin{theorem}{section}

\section{Proofs of Section \ref{sec:3}}\label{sec:appen:3}

\subsection{Proof of Lemma \ref{lem:6}}\label{pf:lem:6}

The expectation is taken over randomness of $\xi_g^k$ and $\xi_H^k$, which means that the $k$-th iterate $(\bx_k, \blambda_k)$ is supposed to be fixed. By the unbiasedness condition in Assumption \ref{ass:ran:2}, we have
\begin{align*}
&\mE_{\xi^k_g}[\barDelta \bx_k] = \mE[\barDelta\bx_k \mid \bx_k, \blambda_k] \stackrel{\eqref{equ:ran:Newton}}{=} -\mE\sbr{\begin{pmatrix}
	I & \0
	\end{pmatrix}\left(\begin{smallmatrix}
	B_k & G_k^T\\
	G_k & \0
	\end{smallmatrix}\right)^{-1}\left(\begin{smallmatrix}
	\bnabla_{\bx}\mL_k\\
	c_k
	\end{smallmatrix}\right) \Mid \bx_k, \blambda_k}\\
& =  -\begin{pmatrix}
I & \0
\end{pmatrix}\left(\begin{smallmatrix}
B_k & G_k^T\\
G_k & \0
\end{smallmatrix}\right)^{-1}\mE\sbr{\left(\begin{smallmatrix}
	\bnabla_{\bx}\mL_k\\
	c_k
	\end{smallmatrix}\right)\Mid \bx_k, \blambda_k } \stackrel{\eqref{Bound:var}}{=}   -\begin{pmatrix}
I & \0
\end{pmatrix}\left(\begin{smallmatrix}
B_k & G_k^T\\
G_k & \0
\end{smallmatrix}\right)^{-1}\left(\begin{smallmatrix}
\nabla_{\bx}\mL_k\\
c_k
\end{smallmatrix}\right)
\stackrel{\mathclap{\eqref{equ:Newton:AL}}}{=}  \Delta\bx_k,
\end{align*}
and
\begin{align*}
\mE_{\xi^k_g,\xi^k_H}[\barDelta \blambda_k]
&= \mE[\barDelta\blambda_k\mid \bx_k, \blambda_k] \stackrel{\mathclap{\eqref{equ:ran:Newton}}}{=} -\mE\sbr{(G_kG_k^T)^{-1}G_k\bnabla_{\bx}\mL_k +(G_kG_k^T)^{-1}\barM_k^T\barDelta\bx_k \mid \bx_k, \blambda_k}\\
&= -(G_kG_k^T)^{-1}G_k\mE[\bnabla_{\bx}\mL_k \mid \bx_k, \blambda_k] - (G_kG_k^T)^{-1}\mE[\barM_k^T\barDelta\bx_k\mid \bx_k, \blambda_k]\\
& = -(G_kG_k^T)^{-1}G_k\nabla_{\bx}\mL_k - (G_kG_k^T)^{-1}\mE[\barM_k^T \mid \bx_k, \blambda_k]\mE[\barDelta\bx_k\mid \bx_k, \blambda_k]\\
& \stackrel{\mathclap{\eqref{Bound:var}}}{=} -(G_kG_k^T)^{-1}G_k\nabla_{\bx}\mL_k - (G_kG_k^T)^{-1}M_k^T\Delta\bx_k\\
& \stackrel{\mathclap{\eqref{equ:Newton:AL}}}{=} \Delta\blambda_k,
\end{align*}
where the fourth equality is due to the independence between $\xi_g^k$ and $\xi_H^k$. Moreover, we have
\begin{align}\label{NN:1}
\mE_{\xi^k_g}\sbr{\|\barDelta \bx_k\|^2} & = \|\Delta\bx_k\|^2 + \mE_{\xi^k_g}\sbr{\|\barDelta\bx_k - \Delta\bx_k\|^2} \nonumber\\
& \stackrel{\mathclap{\eqref{equ:ran:Newton}}}{=}\|\Delta\bx_k\|^2+  \mE\bigg[\bigg\| \begin{pmatrix}
I & \0
\end{pmatrix}\underbrace{\left(\begin{smallmatrix}
	B_k & G_k^T\\
	G_k & \0
	\end{smallmatrix}\right)^{-1}}_{\eqqcolon \mK}\left(\begin{smallmatrix}
\barg_k - \nabla f_k\\
\0
\end{smallmatrix}\right) \bigg\|^2\Mid \bx_k, \blambda_k \bigg] \nonumber\\
& \stackrel{\mathclap{\eqref{Bound:var}}}{\leq} \|\Delta\bx_k\|^2 + \|\mK\|^2\psi.
\end{align}
We now bound $\|\mK\|$. Suppose $G_k^T = Y_kE_k$ where $Y_k$ has orthonormal columns spanning $\text{Image}(G_k^T)$ and $E_k$ is a square matrix, and suppose $Z_k$ has orthonormal columns spanning $\text{Ker}(G_k)$. By Assumption \ref{ass:ran:1}, we know $E_k$ is nonsingular and $Z_k^TB_kZ_k \succeq \gamma_{RH} I$.~By directly verifying $\left(\begin{smallmatrix}
B_k & G_k^T\\
G_k & \0
\end{smallmatrix}\right)\mK = I$, one can show
\begin{equation*}
\mK= 
\left(\begin{smallmatrix}
Z_k(Z_k^TB_kZ_k)^{-1}Z_k^T & (I - Z_k(Z_k^TB_kZ_k)^{-1}Z_k^TB_k)Y_kE_k^{-1}\\
E_k^{-1}Y_k^T(I - B_kZ_k(Z_k^TB_kZ_k)^{-1}Z_k^T) & E_k^{-1}Y_k^T(B_kZ_k(Z_k^TB_kZ_k)^{-1}Z_k^TB_k - B_k)Y_kE_k^{-1}
\end{smallmatrix}\right).
\end{equation*}
Noting that $\|E_k^{-1}\|^2 = \|E_k^{-1}(E_k^{-1})^T\| = \|(G_kG_k^T)^{-1}\| \stackrel{\eqref{Prop:1}}{\leq} 1/\kappa_{1,G}$, and $\|(Z_k^TB_kZ_k)^{-1}\| \leq 1/\gamma_{RH}$, we can bound $\|\mK\|$ by summing the spectrum norm of each block, and get
\begin{equation*}
\|\mK\|\leq \frac{1}{\gamma_{RH}} + \frac{2}{\sqrt{\kappa_{1,G}}}\rbr{1 + \frac{\kappa_B}{\gamma_{RH}}} + \frac{1}{\kappa_{1,G}}\rbr{\frac{\kappa_B^2}{\gamma_{RH}} + \kappa_B}.
\end{equation*}
Since $\kappa_B \geq 1\geq \gamma_{RH}\vee \kappa_{1,G}$, we can simplify the bound by
\begin{equation*}
\|\mK\|\leq \frac{5\kappa_B}{\sqrt{\kappa_{1,G}}\gamma_{RH}} + \frac{2\kappa_B^2}{\kappa_{1,G}\gamma_{RH}} \leq \frac{7\kappa_B^2}{\kappa_{1,G}\gamma_{RH}}.
\end{equation*}
Plugging the above inequality into \eqref{NN:1}, we have
\begin{equation}\label{NN:2}
\mE_{\xi^k_g}\sbr{\|\barDelta \bx_k\|^2} \leq \|\Delta\bx_k\|^2 + \frac{49\kappa_B^4}{\kappa_{1,G}^2\gamma_{RH}^2}\psi.
\end{equation}
Similarly, we can bound $\mE_{\xi^k_g,\xi^k_H}[\|\barDelta\blambda_k\|^2]$ as follows.
\begin{align}\label{NN:3}
&\mE_{\xi^k_g,\xi^k_H}\sbr{\|\barDelta\blambda_k\|^2} = \|\Delta\blambda_k\|^2 + \mE_{\xi^k_g,\xi^k_H}\sbr{\|\barDelta\blambda_k - \Delta\blambda_k\|^2} \nonumber\\
& \stackrel{\mathclap{\eqref{equ:ran:Newton}}}{=}  \|\Delta\blambda_k\|^2 + \mE_{\xi^k_g,\xi^k_H}\sbr{\nbr{(G_kG_k^T)^{-1}\rbr{G_k\bnabla_{\bx}\mL_k + \barM_k^T\barDelta\bx_k -G_k\nabla_{\bx}\mL_k - M_k\Delta\bx_k} }^2 } \nonumber\\
&\stackrel{\mathclap{\eqref{Prop:1}}}{\leq} \|\Delta\blambda_k\|^2 + \frac{2}{\kappa_{1,G}^2}\cbr{\mE_{\xi^k_g}\sbr{\nbr{G_k\bnabla_{\bx}\mL_k  - G_k\nabla_{\bx}\mL_k}^2} + \mE_{\xi^k_g,\xi^k_H}\sbr{\|\barM_k^T\barDelta\bx_k-M_k^T\Delta\bx_k\|^2 }} \nonumber\\
& \stackrel{\mathclap{\eqref{Prop:1}}}{\leq}\|\Delta\blambda_k\|^2 + \frac{2}{\kappa_{1,G}^2}\cbr{\kappa_{2,G} \mE_{\xi^k_g}\sbr{\|\barg_k - \nabla f_k\|^2} + \mE_{\xi^k_g,\xi^k_H}\sbr{\|\barM_k^T\barDelta\bx_k-M_k^T\Delta\bx_k\|^2 } } \nonumber\\
&\stackrel{\mathclap{\eqref{Bound:var}}}{\leq} \|\Delta\blambda_k\|^2 + \frac{2}{\kappa_{1,G}^2}\cbr{\kappa_{2,G} \psi + \mE_{\xi^k_g,\xi^k_H}\sbr{\|\barM_k^T\barDelta\bx_k-M_k^T\Delta\bx_k\|^2 } }.
\end{align}
For the last term, we have
\begin{align}\label{NN:4}
&\mE_{\xi^k_g,\xi^k_H}\sbr{\|\barM_k^T\barDelta\bx_k-M_k^T\Delta\bx_k\|^2 } \nonumber\\
& = \mE_{\xi^k_g,\xi^k_H}\sbr{ \|(\barM_k - M_k)^T(\barDelta\bx_k - \Delta\bx_k) + M_k^T(\barDelta\bx_k - \Delta\bx_k) + (\barM_k - M_k)^T\Delta\bx_k\|^2  } \nonumber\\
& = \mE_{\xi^k_g,\xi^k_H}\sbr{ \|(\barM_k - M_k)^T(\barDelta\bx_k - \Delta\bx_k)\|^2 + \|M_k^T(\barDelta\bx_k - \Delta\bx_k)\|^2 + \|(\barM_k - M_k)^T\Delta\bx_k\|^2 } \nonumber\\
& \stackrel{\mathclap{\eqref{Prop:1}}}{\leq} \mE_{\xi^k_H}\sbr{\|\barM_k - M_k\|^2}\mE_{\xi^k_g}\sbr{\|(\barDelta\bx_k - \Delta\bx_k)\|^2} + \kappa_M^2\mE_{\xi^k_g}\sbr{\|(\barDelta\bx_k - \Delta\bx_k)\|^2} \nonumber\\
&\quad + \mE_{\xi^k_H}\sbr{\|\barM_k - M_k\|^2}\|\Delta\bx_k\|^2,
\end{align}
where the second equality is due to the independence between $\xi_g^k$ and $\xi_H^k$. We note that
\begin{align*}
\|\barM_k - M_k\|^2 &\stackrel{\eqref{def:MT}}{\leq} 2\|G_k\|^2\|\barH_k - \nabla^2f_k\|^2 +  2\|\barT_k - T_k\|^2 \nonumber\\
& \stackrel{\eqref{def:MT}}{\leq}  2\|G_k\|^2\|\barH_k - \nabla^2f_k\|^2 +  2\|\nabla f(\bx_k; \xi_H^k) - \nabla f_k\|^2\sum_{i=1}^{m}\|\nabla^2c_i(\bx_k)\|^2 \nonumber\\
&\stackrel{\eqref{Prop:1}}{\leq} 2\kappa_{2,G}\|\barH_k - \nabla^2f_k\|^2  + 2\kappa_{\nabla_{\bx}^2c}^2\|\nabla f(\bx^k; \xi_H^k) - \nabla f_k\|^2.
\end{align*}
Taking expectation over $\xi_H^k$ on both sides and using \eqref{Bound:var}, we have
\begin{equation}\label{NN:5}
\mE_{\xi^k_H}[\|\barM_k - M_k\|^2] \leq 2(\kappa_{2,G}+\kappa_{\nabla_{\bx}^2c}^2)\psi.
\end{equation}
Combining \eqref{NN:5} with \eqref{NN:4} and \eqref{NN:1}, we obtain
\begin{multline*}
\mE_{\xi^k_g,\xi^k_H}\sbr{\|\barM_k^T\barDelta\bx_k-M_k^T\Delta\bx_k\|^2 }  \\\leq \cbr{2(\kappa_{2,G} + \kappa_{\nabla_{\bx}^2c}^2)\psi+\kappa_M^2 }\frac{49\kappa_B^4}{\kappa_{1,G}^2\gamma_{RH}^2}\psi + 2(\kappa_{2,G} + \kappa_{\nabla_{\bx}^2c}^2)\psi\|\Delta\bx_k\|^2.
\end{multline*}
Plugging the above inequality into \eqref{NN:3},
\begin{multline}\label{NN:6}
\mE_{\xi^k_g,\xi^k_H}\sbr{\|\barDelta\blambda_k\|^2} \leq \|\Delta\blambda_k\|^2 + \frac{4(\kappa_{2,G} + \kappa_{\nabla_{\bx}^2c}^2)\psi}{\kappa_{1,G}^2}\|\Delta\bx_k\|^2 \\
+ \cbr{\frac{2\kappa_{2,G}}{\kappa_{1,G}^2} +\frac{98\kappa_B^4}{\kappa_{1,G}^4\gamma_{RH}^2}\rbr{2(\kappa_{2,G} + \kappa_{\nabla_{\bx}^2c}^2)\psi+\kappa_M^2}}\psi.
\end{multline}
Combining \eqref{NN:6} with \eqref{NN:2}, we can define
\begin{equation}\label{equ:U_0}
\Upsilon_0 \coloneqq 1 + \frac{4(\kappa_{2,G} + \kappa_{\nabla_{\bx}^2c}^2)\psi}{\kappa_{1,G}^2} \vee \frac{2\kappa_{2,G}}{\kappa_{1,G}^2} + \frac{49\kappa_B^4}{\kappa_{1,G}^2\gamma_{RH}^2} \cdot\cbr{1+ \frac{4(\kappa_{2,G} + \kappa_{\nabla_{\bx}^2c}^2)\psi+2\kappa_M^2}{\kappa_{1,G}^2}} 
\end{equation}
and have
\begin{align*}
\mE_{\xi^k_g,\xi^k_H}\sbr{\nbr{\begin{pmatrix}
\barDelta\bx_k\\
\barDelta\blambda_k
\end{pmatrix}}^2} \leq \Upsilon_0\rbr{ \nbr{\begin{pmatrix}
\Delta\bx_k\\
\Delta\blambda_k
\end{pmatrix}}^2+\psi}.
\end{align*}
This completes the proof.

\subsection{Proof of Theorem \ref{thm:4}}\label{pf:thm:4}

We require the following lemmas.

\begin{lemma}\label{lem:1}
Let $(\Delta\bx_k, \Delta\blambda_k)$ be solved by \eqref{equ:Newton:AL}. Then for any $\mu, \nu$,
\begin{equation*}
\begin{pmatrix}
\nabla_{\bx}\mL_{\mu, \nu}^k\\
\nabla_{\blambda}\mL_{\mu, \nu}^k
\end{pmatrix}^T \begin{pmatrix}
\Delta\bx_k\\
\Delta\blambda_k
\end{pmatrix}=(\Delta\bx_k)^T\nabla_{\bx}\mL_k - \mu\|c_k\|^2 + c_k^T\Delta\blambda_k - \nu\|G_k\nabla_{\bx}\mL_k\|^2.
\end{equation*}	
\end{lemma}

\begin{proof}
We have
\begin{align*}
\begin{pmatrix}
\nabla_{\bx}\mL_{\mu, \nu}^k\\
\nabla_{\blambda}\mL_{\mu, \nu}^k
\end{pmatrix}^T \begin{pmatrix}
\Delta\bx_k\\
\Delta\blambda_k
\end{pmatrix}
& \stackrel{\eqref{equ:derivative:AL}}{=} (\Delta\bx_k)^T\rbr{I + \nu M_kG_k}\nabla_{\bx}\mL_k + \mu (\Delta\bx_k)^TG_k^Tc_k + (\Delta\blambda_k)^Tc_k \\
& \quad + \nu (\Delta\blambda_k)^TG_kG_k^TG_k\nabla_{\bx}\mL_k\\
& \stackrel{\eqref{equ:Newton:AL}}{=} (\Delta\bx_k)^T\nabla_{\bx}\mL_k - \mu\|c_k\|^2 + c_k^T\Delta\blambda_k - \nu\|G_k\nabla_{\bx}\mL_k\|^2.
\end{align*}
This completes the proof.\hfill\qed
\end{proof}

\begin{lemma}
Under Assumption \ref{ass:ran:1}, we have
\begin{equation*}
\nbr{\begin{pmatrix}
\Delta\bx_k\\
\Delta\blambda_k
\end{pmatrix}}^2 \leq \frac{3\kappa_M^2}{\kappa_{1,G}^2}\nbr{\begin{pmatrix}
\Delta\bx_k\\
G_k\nabla_{\bx}\mL_k
\end{pmatrix}}^2.
\end{equation*}

\end{lemma}

\begin{proof}

By \eqref{equ:Newton:AL}, we know
\begin{align*}
\|\Delta\blambda_k\|^2  = & \|(G_kG_k^T)^{-1}(G_k\nabla_{\bx}\mL_k+M_k^T\Delta\bx_k)\|^2\stackrel{\eqref{Prop:1}}{\leq}\frac{2}{\kappa_{1,G}^2}(\|G_k\nabla_{\bx}\mL_k\|^2 + \kappa_M^2\|\Delta\bx_k\|^2)\\
\leq & \frac{2\kappa_M^2}{\kappa_{1,G}^2}(\|G_k\nabla_{\bx}\mL_k\|^2 + \|\Delta\bx_k\|^2).\hskip2cm \text{(since $\kappa_M\geq 1\geq \kappa_{1,G}$)}
\end{align*}
This completes the proof.\hfill\qed
\end{proof}

Now, we are ready for proving Theorem \ref{thm:4}. We note that
\begin{align*}
\|\nabla_{\bx}\mL_k\|^2 & =  \|(I - G_k^T(G_kG_k^T)^{-1}G_k)\nabla_{\bx}\mL_k\|^2 + \|G_k^T(G_kG_k^T)^{-1}G_k\nabla_{\bx}\mL_k\|^2\\
& \stackrel{\mathclap{\eqref{equ:Newton:AL}}}{=}  \|(I - G_k^T(G_kG_k^T)^{-1}G_k)B_k\Delta\bx_k\|^2 + \|G_k^T(G_kG_k^T)^{-1}G_k\nabla_{\bx}\mL_k\|^2\\
&\stackrel{\mathclap{\eqref{Prop:1}}}{\leq} \kappa_B^2\|\Delta\bx_k\|^2 + \frac{1}{\kappa_{1,G}}\|G_k\nabla_{\bx}\mL_k\|^2,\\
\|c_k\|^2 & \stackrel{\mathclap{\eqref{equ:Newton:AL}}}{=}\|G_k\Delta\bx_k\|^2 \stackrel{\eqref{Prop:1}}{\leq}\kappa_{2,G}\|\Delta\bx_k\|^2.
\end{align*}
Since $\kappa_{2,G}\wedge\kappa_B \geq 1\geq \kappa_{1,G}$,
\begin{equation*}
\|\nabla\mL_k\|^2\leq \frac{\kappa_B^2+\kappa_{2,G}}{\kappa_{1,G}}\nbr{\begin{pmatrix}
\Delta\bx_k\\
G_k\nabla_{\bx}\mL_k
\end{pmatrix}}^2.
\end{equation*}
Plugging the above inequality into \eqref{NN:7}, we have
\begin{equation}\label{NN:8}
\mE_{\xi^k_g,\xi^k_H}[\mL_{\mu, \nu}^{k+1}]\leq \mL_{\mu, \nu}^k - \frac{\alpha_k\tilde{\delta}\kappa_{1,G}}{2(\kappa_B^2+\kappa_{2,G})}\|\nabla\mL_k\|^2 + \frac{\Upsilon_0\kappa_{\mL_{\mu, \nu}}\psi}{2}\alpha_k^2.
\end{equation}
\noindent\textbf{Case 1: constant stepsize.} If $\alpha_k = \alpha$, we take full expectation of \eqref{NN:8}, sum over~$k = 0,\ldots,K$, and have
\begin{equation*}
\min_{\mX\times\Lambda}\mL_{\mu, \nu} - \mL_{\mu, \nu}^0 \leq - \frac{\tilde{\delta}\kappa_{1,G}}{2(\kappa_B^2+\kappa_{2,G})}\cdot \alpha \sum_{k=0}^{K}\mE[\|\nabla\mL_k\|^2] + \frac{\Upsilon_0\kappa_{\mL_{\mu, \nu}}\psi(K+1)}{2}\alpha^2.
\end{equation*}
Rearranging the above inequality leads to
\begin{equation*}
\frac{1}{K+1}\sum_{k=0}^{K}\mE[\|\nabla\mL_k\|^2] \leq \frac{2(\kappa_B^2+\kappa_{2,G})}{\tilde{\delta} \kappa_{1,G}}\cdot\frac{\mL_{\mu, \nu}^0 - \min_{\mX\times\Lambda}\mL_{\mu, \nu}}{(K+1)\alpha}+ \frac{\Upsilon_0(\kappa_B^2+\kappa_{2,G})\kappa_{\mL_{\mu, \nu}}\psi}{\tilde{\delta}\kappa_{1,G}}\alpha.
\end{equation*}
\noindent\textbf{Case 2: decaying stepsize.} Let us define two sequences
\begin{equation*}
s_k = \mL_{\mu, \nu}^k + \frac{\Upsilon_0\kappa_{\mL_{\mu, \nu}}\psi}{2}\sum_{i=k}^{\infty}\alpha_i^2,\quad\quad
e_k = \frac{\alpha_k\tdelta\kappa_{1,G}}{2(\kappa_B^2 +\kappa_{2,G})}\|\nabla\mL_k\|^2.
\end{equation*}
From \eqref{NN:8}, we know
\begin{align*}
\mE_{\xi^k_g,\xi^k_H}[s_{k+1}] & =  \mE_{\xi^k_g,\xi^k_H}[\mL_{\mu, \nu}^{k+1}] + \frac{\Upsilon_0\kappa_{\mL_{\mu, \nu}}\psi}{2}\sum_{i=k+1}^{\infty}\alpha_i^2\\
& \leq \mL_{\mu, \nu}^k - \frac{\alpha_k\tilde{\delta}\kappa_{1,G}}{2(\kappa_B^2+\kappa_{2,G})}\|\nabla\mL_k\|^2 + \frac{\Upsilon_0\kappa_{\mL_{\mu, \nu}}\psi}{2}\sum_{i=k}^{\infty}\alpha_i^2 = s_k - e_k.
\end{align*}
Since $e_k \geq 0$, $\{s_k -\min_{\mX\times\Lambda}\mL_{\mu, \nu}\}_k$ is a positive supermartingale. By \cite[Theorem 4.2.12]{Durrett2019Probability}, we have $s_k\rightarrow s$ almost surely for some random variable $s$ and $\mE[s]\leq s_0<\infty$. Therefore,
\begin{equation*}
\mE[\sum_{k=0}^{\infty}e_k] = \sum_{k=0}^{\infty}\mE\sbr{e_k} \leq \sum_{k=0}^{\infty}\mE[s_k] - \mE[s_{k+1}]<\infty,
\end{equation*}
where the first equality is due to Fubini-Tonelli theorem \cite[Theorem 1.7.2]{Durrett2019Probability}. The above display implies that $\sum_{k=0}^{\infty}e_k<\infty$ almost surely. Moreover, since $\sum_{k=0}^{\infty}\alpha_k = \infty$, we obtain $\sum_{k=0}^{\infty}e_k/\sum_{k=0}^{\infty}\alpha_k = 0$ and $\liminf_{k\rightarrow 0}\|\nabla\mL_k\| = 0$ almost surely. This completes the proof.

\section{Proofs of Section \ref{sec:4}}

\subsection{Proof of Lemma \ref{lem:cond:2}}\label{pf:lem:cond:2}

Let $P_{\xi_f^k}(\cdot) = P(\cdot \mid \bx_k, \blambda_k,\barDelta\bx_k, \barDelta\blambda_k)$ be the conditional probability over randomness in $\xi_f^k$. We have that
\begin{align}\label{pequ:decom}
\big|\barL_{\barmu_k, \nu}^k - &\mL_{\barmu_k, \nu}^k \big| \nonumber\\
& \stackrel{\mathclap{\eqref{N:12}}}{\leq}  \abr{\barf_k - f_k} + \frac{\nu}{2}\abr{\|G_k(\bnabla f_k + G_k^T\blambda_k) \|^2 - \|G_k(\nabla f_k + G_k^T\blambda_k) \|^2} \nonumber\\
& \leq  \abr{\barf_k - f_k}  + \frac{\nu}{2}\|G_k\|^2\nbr{\bnabla f_k - \nabla f_k}^2  + \nu \|G_k(\nabla f_k + G_k^T\blambda_k)\|\|G_k\|\|\bnabla f_k - \nabla f_k\| \nonumber\\
& \stackrel{\mathclap{\eqref{Prop:1}}}{\leq}  \abr{\barf_k - f_k} + \nu\kappa_{2,G}\rbr{\nbr{\bnabla f_k - \nabla f_k} + \kappa_{\nabla_{\bx}\mL}}\|\bnabla f_k - \nabla f_k\|.
\end{align}
Let us denote
\begin{equation*}
m_k = - \kappa_f\baralpha_k^2\begin{pmatrix}
\bnabla_{\bx}\mL_{\barmu_k, \nu}^k\\
\bnabla_{\blambda}\mL_{\barmu_k, \nu}^k
\end{pmatrix}^T\begin{pmatrix}
\barDelta\bx_k\\
\barDelta\blambda_k
\end{pmatrix},
\end{equation*}
then the above display implies that $|\barL_{\barmu_k, \nu}^k - \mL_{\barmu_k, \nu}^k |\leq m_k$ by requiring
\begin{equation*}
\abr{\barf_k - f_k} \leq \frac{m_k}{2} \quad \text{ and}\quad  \nbr{\bnabla f_k - \nabla f_k}\leq \frac{m_k}{4\nu\kappa_{2,G}\kappa_{\nabla_{\bx}\mL}}\wedge 1.
\end{equation*}
The above condition is further implied by requiring
\begin{equation*}
\abr{\barf_k - f_k} \vee \nbr{\bnabla f_k - \nabla f_k}\leq \frac{m_k\wedge 1}{4\nu\kappa_{2,G}\kappa_{\nabla_{\bx}\mL} \vee 2} .
\end{equation*}
Thus, we apply Bernstein inequality \cite[Theorem 6.1.1]{Tropp2015Introduction} and have that if
\begin{equation}\label{pequ:sample:f:1}
|\xi_f^k| \geq \frac{16\max\{\Omega_0, \Omega_1\}^2 (4\nu^2\kappa_{2,G}^2\kappa_{\nabla_{\bx}\mL}^2\vee 1) }{m_k^2\wedge 1 }\log\rbr{\frac{8d}{p_f}},
\end{equation}
then
\begin{equation*}
P_{\xi_f^k}(|\barL_{\barmu_k, \nu}^k - \mL_{\barmu_k, \nu}^k|> m_k)\leq p_f/2.
\end{equation*}
Under \eqref{pequ:sample:f:1}, we also have $P_{\xi_f^k}(|\barL_{\barmu_k, \nu}^{s_k} - \mL_{\barmu_k, \nu}^{s_k}|> m_k )\leq p_f/2$, which implies the condition \eqref{equ:ran:cond:3} holds. As for the condition \eqref{equ:ran:cond:4}, we apply \eqref{pequ:decom} to obtain
\begin{align*}
\mE_{\xi_f^k}\big[\big|\barL_{\barmu_k, \nu}^k - \mL_{\barmu_k, \nu}^k \big|^2\big]
& \leq  2\mE_{\xi_f^k}[|\barf_k - f_k|^2] + 2\nu^2\kappa_{2,G}^2\rbr{\Omega_1+\kappa_{\nabla_{\bx}\mL}}^2 \mE_{\xi_f^k}[\|\bnabla f_k - \nabla f_k\|^2]\\
& \leq  \frac{2\Omega_0^2 + 2\nu^2\kappa_{2,G}^2\Omega_1^2\rbr{\Omega_1+\kappa_{\nabla_{\bx}\mL}}^2}{|\xi_f^k|},
\end{align*}
where the last inequality applies the variance bound for the sample average, that is $\mE[|\barf_k - f_k|^2] = \mE[|f(\bx_k; \xi) - f_k|^2]/|\xi_f^k| \leq \Omega_0^2/|\xi_f^k|$ (and similar for $\mE[\|\bnabla f_k - \nabla f_k\|^2]$). Following the same calculation, we can show the same bound for $|\barL_{\barmu_k, \nu}^{s_k} - \mL_{\barmu_k, \nu}^{s_k} |^2$. Thus, the condition~\eqref{equ:ran:cond:4} holds if
\begin{equation}\label{pequ:sample:f:2}
|\xi_f^k| \geq \frac{2\Omega_0^2 + 2\nu^2\kappa_{2,G}^2\Omega_1^2\rbr{\Omega_1+\kappa_{\nabla_{\bx}\mL}}^2}{\barepsilon_k^2}.
\end{equation}
Combining \eqref{pequ:sample:f:1} and \eqref{pequ:sample:f:2} and taking maximum on the right hand side, the conditions \eqref{equ:ran:cond:3} and \eqref{equ:ran:cond:4} are satisfied simultaneously when
\begin{equation}\label{equ:N:2}
|\xi_f^k|  \geq \frac{C_f\log\rbr{\frac{8d}{p_f}}}{m_k^2\wedge \barepsilon_k^2\wedge 1},
\end{equation}
with $C_f = 16\max\{\Omega_0,\Omega_1\}^2\{1\vee 4\nu^2\kappa_{2,G}^2(\Omega_1\vee \kappa_{\nabla_{\bx}\mL})^2\}$. Finally, we note that $m_k\neq 0$. Otherwise, by \eqref{equ:ran:cond:2} we know $\barDelta\bx_k = \0$ and $G_k\bnabla_{\bx}\mL_k = \0$. Then, by \eqref{equ:ran:Newton} we have $\barDelta\blambda_k = \0$, which contradicts $\|(\barDelta\bx_k, \barDelta\blambda_k)\|>0$ in Assumption \ref{ass:A:1}. This completes the proof.

\subsection{Proof of Proposition \ref{prop:2}}\label{pf:N:14}

Note that
\begin{align*}
\bnabla_{\bx}\mL_k +& \nu\barM_k G_k\bnabla_{\bx}\mL_k + G_k^Tc_k \\
= & (I - G_k^T(G_kG_k^T)^{-1}G_k)\bnabla_{\bx}\mL_k + G_k^T(G_kG_k^T)^{-1}G_k\bnabla_{\bx}\mL_k + \nu\barM_k G_k\bnabla_{\bx}\mL^k + G_k^Tc_k\\
\stackrel{\eqref{equ:ran:Newton}}{=} & - (I - G_k^T(G_kG_k^T)^{-1}G_k)B_k\barDelta\bx_k + \rbr{G_k^T(G_kG_k^T)^{-1} +  \nu\barM_k}G_k\bnabla_{\bx}\mL_k - G_k^TG_k\barDelta\bx_k.
\end{align*}
Thus, we know
\begin{multline}\label{N:15}
\begin{pmatrix}
\bnabla_{\bx}\mL_k + \nu\barM_k G_k\bnabla_{\bx}\mL_k + G_k^Tc_k\\
\nu G_kG_k^TG_k\bnabla_{\bx}\mL_k
\end{pmatrix} \\
= \begin{pmatrix}
-(I - G_k^T(G_kG_k^T)^{-1}G_k)B_k - G_k^TG_k & G_k^T(G_kG_k^T)^{-1} +  \nu\barM_k\\
\0 & \nu G_kG_k^T
\end{pmatrix}\begin{pmatrix}
\barDelta\bx_k\\
G_k\bnabla_{\bx}\mL_k
\end{pmatrix}.
\end{multline}
Using \eqref{Prop:1} and \eqref{equ:bound:M}, we upper bound the spectrum norm of each block of the right hand side matrix, and define
\begin{equation*}
\Upsilon_1 = \kappa_B+\kappa_{2,G} + \frac{1}{\sqrt{\kappa_{1,G}}} + \nu(\kappa_{\barM} + \kappa_{2,G})
\end{equation*}
with $\kappa_{\barM}$ given in \eqref{equ:bound:M}, then the first inequality in \eqref{N:14} is satisfied. Furthermore, we know
\begin{equation*}
\begin{pmatrix}
\barDelta\bx_k\\
G_k\bnabla_{\bx}\mL_k
\end{pmatrix} \stackrel{\eqref{equ:ran:Newton}}{=} \begin{pmatrix}
\barDelta\bx_k\\
-\barM_k^T\barDelta\bx_k - G_kG_k^T\barDelta\blambda_k
\end{pmatrix} = \begin{pmatrix}
I & \0 \\
-\barM_k^T & -G_kG_k^T
\end{pmatrix}\begin{pmatrix}
\barDelta\bx_k\\
\barDelta\blambda_k
\end{pmatrix}.
\end{equation*}
Thus, we let 
\begin{equation*}
\Upsilon_2 = \frac{1}{1 + \kappa_{\barM} + \kappa_{2,G}}
\end{equation*}
and the second inequality in \eqref{N:14} is satisfied. Moreover,
\begin{multline*}
\begin{pmatrix}
\barDelta\bx_k\\
\barDelta\blambda_k
\end{pmatrix} \stackrel{\eqref{equ:ran:Newton}}{=} \begin{pmatrix}
\barDelta\bx_k\\
-(G_kG_k^T)^{-1}\barM_k^T\barDelta\bx_k - (G_kG_k^T)^{-1}G_k\bnabla_{\bx}\mL_k
\end{pmatrix} \\
= \begin{pmatrix}
I & \0 \\
-(G_kG_k^T)^{-1}\barM_k^T & - (G_kG_k^T)^{-1}
\end{pmatrix}\begin{pmatrix}
\barDelta\bx_k\\
G_k\bnabla_{\bx}\mL_k
\end{pmatrix}.
\end{multline*}
Thus, we let 
\begin{equation*}
\Upsilon_3 = 1 + \frac{\kappa_{\barM}+1}{\kappa_{1,G}}
\end{equation*}
and the third inequality in \eqref{N:14} is satisfied. Finally, 
\begin{align*}
\begin{pmatrix}
\bnabla_{\bx}\mL_{\barmu_{\barK}, \nu}^k\\
\bnabla_{\blambda}\mL_{\barmu_{\barK}, \nu}^k
\end{pmatrix} &\stackrel{\eqref{N:13}}{=} \begin{pmatrix}
\rbr{I+\nu\barM_kG_k}\bnabla_{\bx}\mL_k + \barmu_{\barK}G_k^Tc_k\\
c_k + \nu G_kG_k^TG_k\bnabla_{\bx}\mL_k
\end{pmatrix}\\
&\hskip -0.8cm\stackrel{\eqref{N:15}}{=}\begin{pmatrix}
-(I - G_k^T(G_kG_k^T)^{-1}G_k)B_k - \barmu_{\barK}G_k^TG_k & G_k^T(G_kG_k^T)^{-1} +  \nu\barM_k\\
-G_k & \nu G_kG_k^T
\end{pmatrix}\begin{pmatrix}
\barDelta\bx_k\\
G_k\bnabla_{\bx}\mL_k
\end{pmatrix}.
\end{align*}
By Lemma \ref{lem:9}, we know $\barmu_{\barK} \leq \tmu$ with deterministic $\tmu$. Thus, we let
\begin{equation*}
\Upsilon_4 = \kappa_B + (\tmu+1)\kappa_{2,G} +\frac{1}{\sqrt{\kappa_{1,G}}} + \nu(\kappa_{\barM}+\kappa_{2,G})
\end{equation*}
and the fourth inequality in \eqref{N:14} is satisfied. This completes the proof.

\subsection{Proof of Lemma \ref{lem:5}}\label{pf:lem:5}

We consider the following three cases.

\noindent\textbf{Case 1: reliable step.} For the reliable step, we apply Lemma \ref{lem:A:2} and \eqref{pequ:8} is changed to
\begin{align}\label{NNN}
\mL_{\barmu_{\barK}, \nu}^{k+1} - \mL_{\barmu_{\barK}, \nu}^k
& \leq  \frac{\baralpha_k\beta}{2}\begin{pmatrix}
\bnabla_{\bx}\mL_{\barmu_{\barK}, \nu}^k\\
\bnabla_{\blambda}\mL_{\barmu_{\barK}, \nu}^k
\end{pmatrix}^T\begin{pmatrix}
\barDelta\bx_k\\
\barDelta\blambda_k
\end{pmatrix}\stackrel{\eqref{equ:sufficient:dec}}{\leq} \frac{\baralpha_k\beta}{4}\begin{pmatrix}
\bnabla_{\bx}\mL_{\barmu_{\barK}, \nu}^k\\
\bnabla_{\blambda}\mL_{\barmu_{\barK}, \nu}^k
\end{pmatrix}^T\begin{pmatrix}
\barDelta\bx_k\\
\barDelta\blambda_k
\end{pmatrix}  - \frac{\barepsilon_k}{4} \nonumber\\
& \stackrel{\eqref{equ:ran:cond:2}}{\leq}  -\frac{\baralpha_k\beta(\gamma_{RH}\wedge\nu)}{8}\nbr{\begin{pmatrix}
\barDelta\bx_k\\
G_k\bnabla_{\bx}\mL_{k}
\end{pmatrix}}^2  - \frac{\barepsilon_k}{4}.
\end{align}
By Line 13 of Algorithm \ref{alg:ASto:SQP}, $\barepsilon_{k+1} - \barepsilon_k = (\rho-1)\barepsilon_k$, while \eqref{pequ:9} is the same. Thus, by the condition on $\omega$ in \eqref{equ:cond:omega:1} we obtain
\begin{equation}\label{pequ:14}
\Phi_{\barmu_{\barK}, \nu, \omega}^{k+1} - \Phi_{\barmu_{\barK}, \nu, \omega}^k \leq -\frac{\beta(\gamma_{RH}\wedge\nu)\omega}{32}\baralpha_k\nbr{\begin{pmatrix}
\barDelta\bx_k\\
G_k\bnabla_{\bx}\mL_k
\end{pmatrix}}^2 - \frac{\omega\barepsilon_k}{8} + \rho(1-\omega)\baralpha_k\nbr{\begin{pmatrix}
\nabla_{\bx}\mL_{\barmu_{\barK}, \nu}^k\\
\nabla_{\blambda}\mL_{\barmu_{\barK}, \nu}^k
\end{pmatrix}}^2.
\end{equation}
\noindent\textbf{Case 2: unreliable step.} For the unreliable step, \eqref{NNN} is changed to
\begin{equation*}
\mL_{\barmu_{\barK}, \nu}^{k+1} - \mL_{\barmu_{\barK}, \nu}^k  \leq  \frac{\baralpha_k\beta}{2}\begin{pmatrix}
\bnabla_{\bx}\mL_{\barmu_{\barK}, \nu}^k\\
\bnabla_{\blambda}\mL_{\barmu_{\barK}, \nu}^k
\end{pmatrix}^T\begin{pmatrix}
\barDelta\bx_k\\
\barDelta\blambda_k
\end{pmatrix} \stackrel{\eqref{equ:ran:cond:2}}{\leq}  -\frac{\beta(\gamma_{RH}\wedge\nu)}{4}\baralpha_k\nbr{\begin{pmatrix}
\barDelta\bx_k\\
G_k\bnabla_{\bx}\mL_k
\end{pmatrix}}^2.
\end{equation*}
By Line 15 of Algorithm \ref{alg:ASto:SQP}, $\barepsilon_{k+1} - \barepsilon_k = -(1-1/\rho)\barepsilon_k$ and \eqref{pequ:9} is the same. Thus, under the condition on $\omega$ in \eqref{equ:cond:omega:1},
\begin{multline}\label{pequ:15}
\Phi_{\barmu_{\barK}, \nu, \omega}^{k+1} - \Phi_{\barmu_{\barK}, \nu, \omega}^k 
\leq -\frac{\beta(\gamma_{RH}\wedge\nu)\omega}{32}\baralpha_k\nbr{\begin{pmatrix}
	\barDelta\bx_k\\
	G_k\bnabla_{\bx}\mL_{k}
	\end{pmatrix}}^2\\ -\frac{1}{2}(1-\omega)\rbr{1-\frac{1}{\rho}}\barepsilon_k + \rho(1-\omega)\baralpha_k\nbr{\begin{pmatrix}
	\nabla_{\bx}\mL_{\barmu_{\barK}, \nu}^k\\
	\nabla_{\blambda}\mL_{\barmu_{\barK}, \nu}^k
	\end{pmatrix}}^2.
\end{multline}
\noindent\textbf{Case 3: unsuccessful step.} The result in \eqref{pequ:13} holds.

Combining \eqref{pequ:14}, \eqref{pequ:15}, \eqref{pequ:13}, we have
\begin{equation*}
\Phi_{\barmu_{\barK}, \nu, \omega}^{k+1} - \Phi_{\barmu_{\barK}, \nu, \omega}^k \leq \rho(1-\omega)\baralpha_k\nbr{\begin{pmatrix}
	\nabla_{\bx}\mL_{\barmu_{\barK}, \nu}^k\\
	\nabla_{\blambda}\mL_{\barmu_{\barK}, \nu}^k
	\end{pmatrix}}^2,
\end{equation*}
which completes the proof.

\subsection{Proof of Lemma \ref{lem:7}}\label{pf:lem:7}

We consider the following three cases.

\noindent\textbf{Case 1: reliable step.} We have
\begin{align}\label{pequ:16}
\mL_{\barmu_{\barK}, \nu}^{k+1} - \mL_{\barmu_{\barK}, \nu}^k & \leq  |\mL_{\barmu_{\barK}, \nu}^{k+1} - \barL_{\barmu_{\barK}, \nu}^{s_k}| + \barL_{\barmu_{\barK}, \nu}^{s_k} - \barL_{\barmu_{\barK}, \nu}^k + |\barL_{\barmu_{\barK}, \nu}^k - \mL_{\barmu_{\barK}, \nu}^k| \nonumber\\
&\hskip-2.3cm \stackrel{\eqref{equ:ran:Armijo}}{\leq}  \baralpha_k\beta\begin{pmatrix}
\bnabla_{\bx}\mL_{\barmu_{\barK}, \nu}^k\\
\bnabla_{\blambda}\mL_{\barmu_{\barK}, \nu}^k
\end{pmatrix}^T\begin{pmatrix}
\barDelta\bx_k\\
\barDelta\blambda_k
\end{pmatrix} + |\barL_{\barmu_{\barK}, \nu}^{s_k} - \mL_{\barmu_{\barK}, \nu}^{s_k}| + |\barL_{\barmu_{\barK}, \nu}^k - \mL_{\barmu_{\barK}, \nu}^k| \nonumber\\
&\hskip-2.3cm \stackrel{\eqref{equ:sufficient:dec}}{\leq} \frac{\baralpha_k\beta}{2}\begin{pmatrix}
\bnabla_{\bx}\mL_{\barmu_{\barK}, \nu}^k\\
\bnabla_{\blambda}\mL_{\barmu_{\barK}, \nu}^k
\end{pmatrix}^T\begin{pmatrix}
\barDelta\bx_k\\
\barDelta\blambda_k
\end{pmatrix} -\frac{\barepsilon_k}{2} + |\barL_{\barmu_{\barK}, \nu}^{s_k} - \mL_{\barmu_{\barK}, \nu}^{s_k}| + |\barL_{\barmu_{\barK}, \nu}^k - \mL_{\barmu_{\barK}, \nu}^k| \nonumber\\
&\hskip -2.3cm \stackrel{\eqref{equ:ran:cond:2}}{\leq}  -\frac{\beta\baralpha_k(\gamma_{RH}\wedge\nu)}{4}\nbr{\begin{pmatrix}
\barDelta\bx_k\\
G_k\bnabla_{\bx}\mL_k
\end{pmatrix}}^2 - \frac{\barepsilon_k}{2} + |\barL_{\barmu_{\barK}, \nu}^{s_k} - \mL_{\barmu_{\barK}, \nu}^{s_k}| + |\barL_{\barmu_{\barK}, \nu}^k - \mL_{\barmu_{\barK}, \nu}^k|.
\end{align}
By Line 13 of Algorithm \ref{alg:ASto:SQP}, $\barepsilon_{k+1} - \barepsilon_k = (\rho-1)\barepsilon_k$, and \eqref{pequ:9} holds as well. By \eqref{equ:cond:omega:1}, we have
\begin{align}\label{pequ:17}
\Phi_{\barmu_{\barK}, \nu, \omega}^{k+1} - \Phi_{\barmu_{\barK}, \nu, \omega}^k & \leq  -\frac{\beta(\gamma_{RH}\wedge\nu)\omega}{32}\baralpha_k\nbr{\begin{pmatrix}
\barDelta\bx_k\\
G_k\bnabla_{\bx}\mL_k
\end{pmatrix}}^2 - \frac{\omega\barepsilon_k}{8} + \rho(1-\omega)\baralpha_k\nbr{\begin{pmatrix}
\nabla_{\bx}\mL_{\barmu_{\barK}, \nu}^k\\
\nabla_{\blambda}\mL_{\barmu_{\barK}, \nu}^k
\end{pmatrix}}^2 \nonumber\\
&\quad +\omega( |\barL_{\barmu_{\barK}, \nu}^{s_k} - \mL_{\barmu_{\barK}, \nu}^{s_k}| + |\barL_{\barmu_{\barK}, \nu}^k - \mL_{\barmu_{\barK}, \nu}^k| ).
\end{align}
\noindent\textbf{Case 2: unreliable step.} The inequality in \eqref{pequ:16} is changed to
\begin{align*}
\mL_{\barmu_{\barK}, \nu}^{k+1} - \mL_{\barmu_{\barK}, \nu}^k & \leq  |\mL_{\barmu_{\barK}, \nu}^{k+1} - \barL_{\barmu_{\barK}, \nu}^{s_k}| + \barL_{\barmu_{\barK}, \nu}^{s_k} - \barL_{\barmu_{\barK}, \nu}^k + |\barL_{\barmu_{\barK}, \nu}^k - \mL_{\barmu_{\barK}, \nu}^k| \nonumber\\
&\hskip-4pt \stackrel{\eqref{equ:ran:Armijo}}{\leq}  \baralpha_k\beta\begin{pmatrix}
\bnabla_{\bx}\mL_{\barmu_{\barK}, \nu}^k\\
\bnabla_{\blambda}\mL_{\barmu_{\barK}, \nu}^k
\end{pmatrix}^T\begin{pmatrix}
\barDelta\bx_k\\
\barDelta\blambda_k
\end{pmatrix} + |\barL_{\barmu_{\barK}, \nu}^{s_k} - \mL_{\barmu_{\barK}, \nu}^{s_k}| + |\barL_{\barmu_{\barK}, \nu}^k - \mL_{\barmu_{\barK}, \nu}^k| \nonumber\\
&\hskip-4pt \stackrel{\eqref{equ:ran:cond:2}}{\leq}  -\frac{\beta\baralpha_k(\gamma_{RH}\wedge\nu)}{2}\nbr{\begin{pmatrix}
	\barDelta\bx_k\\
	G_k\bnabla_{\bx}\mL_k
	\end{pmatrix}}^2 + |\barL_{\barmu_{\barK}, \nu}^{s_k}-\mL_{\barmu_{\barK}, \nu}^{s_k}| + |\barL_{\barmu_{\barK}, \nu}^k - \mL_{\barmu_{\barK}, \nu}^k|.
\end{align*}
We further have $\barepsilon_{k+1} - \barepsilon_k = -(1-1/\rho)\barepsilon_k$ and \eqref{pequ:9} holds as well. Using \eqref{equ:cond:omega:1}, we get
\begin{multline}\label{pequ:18}
\Phi_{\barmu_{\barK}, \nu, \omega}^{k+1} - \Phi_{\barmu_{\barK}, \nu, \omega}^k  \leq  -\frac{\beta(\gamma_{RH}\wedge\nu)\omega}{32}\baralpha_k\nbr{\begin{pmatrix}
\barDelta\bx_k\\
G_k\bnabla_{\bx}\mL_k
\end{pmatrix}}^2 - \frac{1}{2}(1-\omega)\rbr{1-\frac{1}{\rho}}\barepsilon_k\\ + \rho(1-\omega)\baralpha_k\nbr{\begin{pmatrix}
\nabla_{\bx}\mL_{\barmu_{\barK}, \nu}^k\\
\nabla_{\blambda}\mL_{\barmu_{\barK}, \nu}^k
\end{pmatrix}}^2 
+\omega( |\barL_{\barmu_{\barK}, \nu}^{s_k} - \mL_{\barmu_{\barK}, \nu}^{s_k}| + |\barL_{\barmu_{\barK}, \nu}^k - \mL_{\barmu_{\barK}, \nu}^k| ).
\end{multline}
\noindent\textbf{Case 3: unsuccessful step.} The result in \eqref{pequ:13} holds.

Combining \eqref{pequ:17}, \eqref{pequ:18}, \eqref{pequ:13}, we obtain
\begin{equation*}
\Phi_{\barmu_{\barK}, \nu, \omega}^{k+1} - \Phi_{\barmu_{\barK}, \nu, \omega}^k \leq  \rho(1-\omega)\baralpha_k\nbr{\begin{pmatrix}
	\nabla_{\bx}\mL_{\barmu_{\barK}, \nu}^k\\
	\nabla_{\blambda}\mL_{\barmu_{\barK}, \nu}^k
	\end{pmatrix}}^2 + \omega( |\barL_{\barmu_{\barK}, \nu}^{s_k} - \mL_{\barmu_{\barK}, \nu}^{s_k}| + |\barL_{\barmu_{\barK}, \nu}^k - \mL_{\barmu_{\barK}, \nu}^k| ),
\end{equation*}
which completes the proof.

\section{Additional Simulation Results}\label{sec:appen:5}

We list the detailed results for each selected CUTEst problem. For each problem and~each method, we have five levels of noise. For all four methods, we report the smallest result among different setups. In particular, for NonAdapSQP and $\ell_1$ SQP, we take the minimum over all six setups of stepsize sequences (four constant sequences and two decaying sequences). For AdapSQP and $\ell_1$ AdapSQP, we take the minimum over four setups of constants $C_{grad}, C_f = \{1,5,10,50\}$. The result here is the KKT residual of the last iterate. We average over convergent runs across five independent runs. The convergence results are summarized in Tables \ref{tab:1}, \ref{tab:2}, and \ref{tab:3}.

In tables, the column of logR shows $\log(\|\nabla \mL_k\|)$ and the column of logStd shows the log of standard deviation. For each problem, the first line is the result of AdapSQP; the second line is the result of $\ell_1$ AdapSQP; the third line is the result of $\ell_1$ SQP; and the fourth line is the result of NonAdapSQP. The entry `$\diagup$' means that the algorithm does not converge (within the given budget).

\begin{table}[!htp]
\centering
\caption{KKT results summary.}\label{tab:1}
\scalebox{0.8}{
\begin{tabular}{ c|c|c|c|c|c|c|c|c|c|c } 
\hline
\multirow{2}{*}{Problem}  & \multicolumn{2}{c|}{$\sigma^2 = 10^{-8}$} &  \multicolumn{2}{c|}{$\sigma^2 = 10^{-4}$} & \multicolumn{2}{c|}{$\sigma^2 = 10^{-2}$} & \multicolumn{2}{c|}{$\sigma^2 = 10^{-1}$} & \multicolumn{2}{c}{$\sigma^2 = 1$} \\	
 & \multicolumn{1}{c}{logR} & logStd & \multicolumn{1}{c}{logR} & logStd & \multicolumn{1}{c}{logR} & logStd & \multicolumn{1}{c}{logR} & logStd & \multicolumn{1}{c}{logR} & logStd \\
\hline
\multirow{4}{*}{BT9} 
& -9.94 & -10.37 & -9.47 & -10.66 & -9.74 & -10.47 
& -9.55 & -10.26 & -8.36 & -9.85\\
& -9.40 & -11.08 & -9.10 & -10.91 & -8.39 & -9.84
& -7.91 & -9.17 & -6.85 & -7.97\\
& -9.37 & -11.21 & -9.35 & -12.37 & -0.31 & -3.09
& 0.21 & -2.24 & 0.83 &  -1.18\\
& -9.47 & -11.21 & -9.36 & -11.17 & -5.96 & -7.28
& $\diagup$ & $\diagup$ &  $\diagup$ & $\diagup$\\
\hline

\multirow{4}{*}{BYRDSPHR}
& -10.12 & -11.73 & -9.64 & -10.90 & -9.65 & -10.36
& 1.26 & 2.06 & 2.64 & 2.06\\
& -9.36  & -15.49  & -8.80 & -10.77 & -8.62 & -10.14
& -7.92 & -9.13 & -7.50 & -8.51\\
& -9.23  & -14.53  & -4.07  &  -4.92 & -1.63 & -2.52 
& -0.61 & -1.50 & 0.56 & -2.07\\
& -9.77  &  -10.40 & -9.50 & -11.06 & -6.96 & -6.50
& $\diagup$ & $\diagup$ & $\diagup$ & $\diagup$\\
\hline

\multirow{4}{*}{BT10}
& -10.02 & -10.38 & -9.91   & -10.69 & -9.86 & -10.31
& -9.81 & -10.83 & -9.52 & -11.73\\
& -9.30   & -14.55 & -9.18   & -11.11 & -7.97 & -9.29
& -6.26 & -7.57 & -5.06 & -6.00\\
& -9.49   & -10.98 & -9.44   &  -Inf & -0.95  & -4.48
& -0.19 & -1.97 & -0.12 & -0.94\\
& -9.55   & -10.48 & -10.06 & -10.62 & -10.62 & -Inf
& -9.65 & -10.41 & -10.03 & -10.86\\
\hline

\multirow{4}{*}{HS39} 
& -9.92 & -10.22 & -9.50 & -11.56 & -9.58 & -10.41
& -9.49 & -10.82 & -8.23 & -8.69\\
& -9.82 & -16.00 & -8.67 & -10.30 & -9.03 & -11.71
& -8.37 & -9.83 & -7.07 & -8.46\\
& -9.58 & -10.97 & -9.43 & -11.96 & -0.30 & -2.79
& 0.04 & -2.43 & 0.80 & -1.80\\
& -9.47 & -11.19 & -9.32 & -11.53 & -6.23 & -6.64
& -4.67 & -Inf & $\diagup$ & $\diagup$\\
\hline

\multirow{4}{*}{MSS1} 
& $\diagup$ & $\diagup$ & $\diagup$ & $\diagup$ & $\diagup$ & $\diagup$  & $\diagup$ & $\diagup$ & $\diagup$ & $\diagup$\\
& $\diagup$ & $\diagup$ & $\diagup$ & $\diagup$ & $\diagup$ & $\diagup$  & $\diagup$ & $\diagup$ & $\diagup$ & $\diagup$\\
& $\diagup$ & $\diagup$ & $\diagup$ & $\diagup$ & $\diagup$ & $\diagup$  & $\diagup$ & $\diagup$ & $\diagup$ & $\diagup$\\
& $\diagup$ & $\diagup$ & $\diagup$ & $\diagup$ & $\diagup$ & $\diagup$  & $\diagup$ & $\diagup$ & $\diagup$ & $\diagup$\\
\hline

\multirow{4}{*}{MARATOS} 
& -10.51 & -10.95 & -9.80 & -10.47 & -9.70 & -10.27
& -9.64 & -10.26 & -9.73 & -10.18\\
& -9.78  & -14.27 & -8.88 & -10.74 & -8.50 & -10.21
& -8.27 & -9.81 & -7.17 & -8.12\\
& -9.77   & -10.68 & -9.68 & -10.77 & -9.56 & -10.62
& -0.21 & -0.79 & -0.01 & -4.10\\
& -9.60  & -10.31 & -10.11 & -11.17 & -7.32 & -6.66
& -5.64 & -5.60 & -3.60 & -3.63\\
\hline

\multirow{4}{*}{ORTHREGB} 
& -9.71 & -10.49 & -9.04 & -9.98 & -7.19 & -6.76 
& -7.72 & -8.59 & -3.86 & -3.13\\
& -9.69 & -10.64 & -9.17 & -11.75 & -8.34 & -9.95 
& -6.92 & -8.21 & -6.14 & -7.92\\
& -9.38 & -11.57 & -4.04 &  -Inf & 1.06 & -0.97 
& 2.83 & 1.73 & 4.67 & 1.31\\
& -9.33 & -13.29 & -7.95 & -9.65 & $\diagup$ & $\diagup$ & $\diagup$ & $\diagup$ & $\diagup$ & $\diagup$\\
\hline

\multirow{4}{*}{HS6}
& -9.77 & -12.02 & -9.81 & -10.45 & -9.84 & -10.89
& -9.72 & -10.36 & -8.97 & -10.06\\
& -9.30 & -11.85 & -8.85 & -10.78 & -8.17 & -9.26
& -7.39 & -8.73 & -6.75 & -8.00\\
& -8.60 & -9.65 & -9.70 &  -10.35 & -9.42 & -Inf
&  -3.07 & -3.41 & -2.38 & -Inf\\
& -9.51 & -10.98 & -9.85 & -10.92 & -5.16 & -5.90
& -4.54 & -5.36 & -4.13 & -4.94\\
\hline

\multirow{4}{*}{BT8}
& -10.25 & -10.42 & -9.57 & -11.39 & -9.42 & -10.80
& -9.30 & -11.55 & -8.88 & -9.51\\
& -9.76 & -10.51 & -8.46 & -9.88 & -9.00 & -11.43
& -8.38 & -9.94 & -7.09 & -8.20\\
& -9.65 & -10.52 & -9.66 & -Inf & $\diagup$ & $\diagup$ & 0.15 & -3.07 & 0.64 & -2.10\\
& -9.46 & -10.52 & -9.57 & -10.57 & -6.81 & -7.02 
& $\diagup$ & $\diagup$ & $\diagup$ & $\diagup$\\
\hline

\multirow{4}{*}{MSS2} 
& $\diagup$ & $\diagup$ & $\diagup$ & $\diagup$ & $\diagup$ & $\diagup$& $\diagup$ & $\diagup$ & $\diagup$ & $\diagup$\\
& $\diagup$ & $\diagup$ & $\diagup$ & $\diagup$ & $\diagup$ & $\diagup$ & 0 & $\diagup$ & $\diagup$ & $\diagup$\\
& $\diagup$ & $\diagup$ & $\diagup$ & $\diagup$ & $\diagup$ & $\diagup$ & $\diagup$ & $\diagup$ & $\diagup$ & $\diagup$\\
& $\diagup$ & $\diagup$ & $\diagup$ & $\diagup$ & $\diagup$ & $\diagup$ & $\diagup$ & $\diagup$ & $\diagup$ & $\diagup$\\
\hline

\multirow{4}{*}{BT1}
& -10.18 & -10.62 & -9.82 & -10.52 & -10.13 & -11.03
& -9.51 & -10.61 & -9.95 & -10.56\\
& -10.23 & -15.53 & -8.87 & -10.36 & -8.44 & -10.11
& -8.45 & -9.97 & -7.60 & -8.63\\
& -9.52  &  -10.49 & -9.83 &  -10.63 & -9.78 & -10.47
& -9.26 & -Inf & $\diagup$ & $\diagup$\\
& -10.11 & -10.89 & -10.07 & -10.51 & -6.88 & -6.15
& -4.98 & -7.96 & -3.75 & -4.02\\
\hline

\multirow{4}{*}{GENHS28}
& -9.76 & -10.33 & -9.40 & -10.78 & -9.20 & -11.83 & -8.44 & -9.97 & -7.32 & -9.95\\
& -9.31 & -11.53 & -8.76 & -10.41 & -8.42 & -9.82 & -7.66 & -9.29 & -6.20 & -7.83\\
& -9.37 & -11.26 & $\diagup$ & $\diagup$ & $\diagup$ & $\diagup$ & 0.87 & -1.04 & 1.61 & -0.06\\
& -9.34 & -11.61 &  -7.68 & -9.15 & $\diagup$ & $\diagup$ & $\diagup$ & $\diagup$ & $\diagup$ & $\diagup$\\
\hline

\multirow{4}{*}{BT5}
& -10.22 & -11.08 & -9.84 & -10.72 & -9.63 & -11.17 
& -9.10 & -10.40 & -8.07 & -9.44\\
& -9.97 & -15.38 & -8.53 & -9.95 & -8.97 & -11.38
& -7.76 & -8.99 & -7.27 & -8.76\\
&  -9.43 & -11.17 & -9.56 & -10.60 & $\diagup$ & $\diagup$ & $\diagup$ & $\diagup$ & 1.69 & 0.03\\
& -9.51 & -10.76 & -9.45 & -10.70 & -9.39 & -11.00
& -4.95 & -5.06 & -4.38 & -5.11\\
\hline

\multirow{4}{*}{HS61} 
& $\diagup$ & $\diagup$ & $\diagup$ & $\diagup$ & $\diagup$ & $\diagup$ & $\diagup$ & $\diagup$& 0 & $\diagup$\\
& $\diagup$ & $\diagup$ & $\diagup$ & $\diagup$ & $\diagup$ & $\diagup$ & $\diagup$ & $\diagup$ & $\diagup$ & $\diagup$\\
& $\diagup$ & $\diagup$ & $\diagup$ & $\diagup$ & $\diagup$ & $\diagup$ & $\diagup$ & $\diagup$ & $\diagup$ & $\diagup$\\
& $\diagup$ & $\diagup$ & $\diagup$ & $\diagup$ & $\diagup$ & $\diagup$ & $\diagup$ & $\diagup$ & $\diagup$ & $\diagup$\\
\hline

\multirow{4}{*}{BT4}
& -9.62 & -10.29 & -9.39 & -10.05 & -9.51 & -10.61
& -9.46 & -10.79 & -8.32 & -9.01\\
& $\diagup$ & $\diagup$ & $\diagup$ & $\diagup$
& $\diagup$ & $\diagup$ & $\diagup$ & $\diagup$ & $\diagup$ & $\diagup$\\
& -9.51 &  -10.55 & -9.47 & -10.82 & $\diagup$ & $\diagup$ & $\diagup$ & $\diagup$ & $\diagup$ & $\diagup$\\
& -9.41 &  -11.23 & -9.48 & -10.63 & $\diagup$ & $\diagup$ & $\diagup$ & $\diagup$ & $\diagup$ & $\diagup$\\
\hline

\multirow{4}{*}{BT2}
& -9.65 & -10.23 & -8.39 & -8.77 & -9.50 & -10.34
& -9.12 & -10.23 & -8.49 & -9.29\\
& -9.75 & -10.88 & -9.07 & -11.60 & -8.49 & -10.20
& -7.80 & -9.12 & -6.73 & -8.65\\
& -9.41 & -11.98 & -9.57 & -10.78 & -1.75 & -3.46 
& 8.61 & 3.36 & 8.58 & 5.04\\
& -9.34 & -11.62 & -9.57 & -Inf & $\diagup$ & $\diagup$ & $\diagup$ & $\diagup$ & $\diagup$ & $\diagup$\\
\hline

\multicolumn{11}{l}{\tiny $\text{logR} = \log(\|\nabla\mL_k\|)$, $\text{logStd}$ is the standard deviation. The first line: AdapSQP; the second line: $\ell_1$ AdapSQP;}\\
\multicolumn{11}{l}{\tiny the third line: $\ell_1$ SQP; the fourth line: NonAdapSQP.}

\end{tabular}}

\end{table}

\begin{table}[!htp]
\centering
\caption{KKT results summary.}\label{tab:2}
\scalebox{0.8}{
\begin{tabular}{ c|c|c|c|c|c|c|c|c|c|c } 
\hline
\multirow{2}{*}{Problem}  & \multicolumn{2}{c|}{$\sigma^2 = 10^{-8}$} &  \multicolumn{2}{c|}{$\sigma^2 = 10^{-4}$} & \multicolumn{2}{c|}{$\sigma^2 = 10^{-2}$} & \multicolumn{2}{c|}{$\sigma^2 = 10^{-1}$} & \multicolumn{2}{c}{$\sigma^2 = 1$} \\	
& \multicolumn{1}{c}{logR} & logStd & \multicolumn{1}{c}{logR} & logStd & \multicolumn{1}{c}{logR} & logStd & \multicolumn{1}{c}{logR} & logStd & \multicolumn{1}{c}{logR} & logStd \\
\hline
\multirow{4}{*}{S316-322} 
& $\diagup$ & $\diagup$ & $\diagup$ & $\diagup$ & $\diagup$ & $\diagup$ & $\diagup$ & $\diagup$ & $\diagup$ & $\diagup$ \\
& $\diagup$ & $\diagup$ & $\diagup$ & $\diagup$ & $\diagup$ & $\diagup$ & $\diagup$ & $\diagup$  & $\diagup$ & $\diagup$ \\
& $\diagup$ & $\diagup$ & $\diagup$ & $\diagup$ & $\diagup$ & $\diagup$ & $\diagup$ & $\diagup$ & $\diagup$ & $\diagup$ \\
& $\diagup$ & $\diagup$ & $\diagup$ & $\diagup$ & $\diagup$ & $\diagup$ & $\diagup$ & $\diagup$ & $\diagup$ & $\diagup$ \\
\hline

\multirow{4}{*}{FLT} 
& $\diagup$ & $\diagup$ & $\diagup$ & $\diagup$ & $\diagup$ & $\diagup$ & $\diagup$ & $\diagup$  & $\diagup$ & $\diagup$ \\
& $\diagup$ & $\diagup$ & $\diagup$ & $\diagup$ & $\diagup$ & $\diagup$ & $\diagup$ & $\diagup$  & $\diagup$ & $\diagup$  \\
& $\diagup$ & $\diagup$ & $\diagup$ & $\diagup$ & $\diagup$ & $\diagup$ & $\diagup$ & $\diagup$  & $\diagup$ & $\diagup$ \\
& $\diagup$ & $\diagup$ & $\diagup$ & $\diagup$ & $\diagup$ & $\diagup$ & $\diagup$ & $\diagup$  & $\diagup$ & $\diagup$ \\
\hline

\multirow{4}{*}{HS51}
& -9.59 & -10.96 & -9.83 &  -10.58 & -9.46 & -11.15
& -8.98 & -10.57 & -7.66 &  -8.08\\
& -9.32 & -10.94 & -8.20 & -9.46 & -8.74 & -10.44
& -7.34 & -8.44 & -6.39 & -7.79\\
& -6.37&  -Inf & -4.49 & -Inf & $\diagup$ & $\diagup$ & -0.32 & -Inf & $\diagup$ & $\diagup$\\
& -9.37 & -12.03 & -9.40 & -11.19 & -6.22 & -6.93
& $\diagup$ & $\diagup$ & $\diagup$ & $\diagup$\\
\hline

\multirow{4}{*}{BT12}
& -10.02 & -10.82 & -9.62 &  -10.57 & -9.37 & -11.37
& -8.97 & -10.49 & -7.80 & -9.72\\
& -9.38 & -11.10 & -8.96 & -10.98 & -8.83 &  -10.54
& -7.73 & -8.78 & -6.77 & -8.21\\
& -9.34 & -11.50 & $\diagup$ & $\diagup$ & $\diagup$ & $\diagup$ & 0.13 &  -Inf & $\diagup$ & $\diagup$\\
& -9.35 & -11.53 & -7.68 & -7.85 & -6.29 & -7.78
& -5.27 & -5.86 & $\diagup$ & $\diagup$\\
\hline

\multirow{4}{*}{HS52}
& -9.42 &  -11.31 & -8.71 &  -9.06 & -9.22 &  -10.94
& -8.64 & -9.20 & -7.89 & -8.75\\
& -10.03 & -12.54 & -8.95 & -11.26 & -8.26 & -9.57
& -7.48 & -8.94 & -7.01 & -8.18\\
& -9.67 & -10.57 & $\diagup$ & $\diagup$ &  $\diagup$ & $\diagup$ & $\diagup$ &  $\diagup$ & $\diagup$ &  $\diagup$\\
& -8.54 & -9.00 & -9.32 & -12.21 & $\diagup$ & $\diagup$ & $\diagup$ &  $\diagup$ & $\diagup$ &  $\diagup$\\
\hline

\multirow{4}{*}{HS48}
& -9.74 & -10.45 & -9.50 &  -10.67 & -9.39 &  -11.29
& -8.86 & -10.26 & -7.86 & -8.93\\
& -9.41 & -10.59 & -9.09 &  -11.83 & -8.72 & -10.52
& -7.71 & -8.83 & -6.65 & -8.05\\
& -9.34 & -Inf & -1.25 & -8.06 & -0.74 & -Inf
& -0.63 & -Inf &  -0.75 &  -1.53\\
& -9.43 & -11.08 & -9.37 & -10.68 & -6.80 & -7.59
&  $\diagup$ & $\diagup$ &  $\diagup$ & $\diagup$\\
\hline

\multirow{4}{*}{S308NE} 
& $\diagup$ & $\diagup$ & $\diagup$ & $\diagup$ & $\diagup$ & $\diagup$  & $\diagup$ & $\diagup$  & $\diagup$ & $\diagup$\\
& $\diagup$ & $\diagup$ & $\diagup$ & $\diagup$ & $\diagup$ & $\diagup$ & $\diagup$ & $\diagup$  & $\diagup$ & $\diagup$\\
& $\diagup$ & $\diagup$ & $\diagup$ & $\diagup$ & $\diagup$ & $\diagup$ & $\diagup$ & $\diagup$  & $\diagup$ & $\diagup$\\
& $\diagup$ & $\diagup$ & $\diagup$ & $\diagup$ & $\diagup$ & $\diagup$ & $\diagup$ & $\diagup$  & $\diagup$ & $\diagup$\\
\hline

\multirow{4}{*}{HS42} 
& -9.78 & -10.63 & -9.47 & -11.14 & -9.50 &  -11.16
& -9.25 & -10.02 & -7.78 & -8.32\\
& -10.32 & -11.32 & -8.05 & -9.30 & -8.58 &  -10.34
& -8.07 & -9.85 & -7.21 & -8.89\\
& -9.46 & -11.31 & -10.08 & -11.18 & $\diagup$ & $\diagup$ & $\diagup$ & $\diagup$ & 1.29 & -0.43\\
& -9.33 & -12.35 & -9.58 & -11.19 & -5.98 & -6.17
& -5.21 & -Inf & $\diagup$ & $\diagup$ \\
\hline

\multirow{4}{*}{HS27} 
& -10.39 & -10.27 & -9.94 & -10.60 & -9.73 & -10.30
& -9.25 & -9.80 & -8.75 & -9.33\\
& -9.46 & -10.27 & -8.34 & -9.69 & -8.41 & -10.09
& -7.34 & -8.49 & -6.53 & -7.42\\
& -9.53 & -10.49 & -9.38 & -11.02 & -9.38 & -Inf
& -1.88 & -2.86 & -1.16 &  -3.44\\
& -9.35 & -10.78 & -9.53 & -10.60 & $\diagup$ & $\diagup$ & $\diagup$ & $\diagup$ & $\diagup$ & $\diagup$\\
\hline

\multirow{4}{*}{DIXCHLNG} 
& 9.95 & -9.54 & 9.95 & -5.63 & 9.95 &  -2.27 & 9.95 & -1.72 & 9.95 & -0.69\\
& $\diagup$ & $\diagup$ & $\diagup$ & $\diagup$ & $\diagup$ & $\diagup$ & $\diagup$ & $\diagup$ & $\diagup$ & $\diagup$\\
& -9.36 & -11.56 & $\diagup$ & $\diagup$ & $\diagup$ & $\diagup$ & $\diagup$ & $\diagup$ & $\diagup$ & $\diagup$\\
& $\diagup$ & $\diagup$ & $\diagup$ & $\diagup$ & $\diagup$ & $\diagup$ & $\diagup$ & $\diagup$ & $\diagup$ & $\diagup$\\
\hline

\multirow{4}{*}{COATINGNE} 
& $\diagup$ & $\diagup$ & $\diagup$ & $\diagup$ & $\diagup$ & $\diagup$ & $\diagup$ & $\diagup$ & $\diagup$ & $\diagup$\\
& $\diagup$ & $\diagup$ & $\diagup$ & $\diagup$ & $\diagup$ & $\diagup$ & $\diagup$ & $\diagup$ & $\diagup$ & $\diagup$\\
& $\diagup$ & $\diagup$ & $\diagup$ & $\diagup$ & $\diagup$ & $\diagup$ & $\diagup$ & $\diagup$ & $\diagup$ & $\diagup$\\
& $\diagup$ & $\diagup$ & $\diagup$ & $\diagup$ & $\diagup$ & $\diagup$ & $\diagup$ & $\diagup$ & $\diagup$ & $\diagup$\\
\hline

\multirow{4}{*}{HS28} 
& -9.41 & -10.72 & -9.65 & -10.25 & -9.69 & -10.51
& -9.07 & -9.63 & -8.82 & -9.91\\
& -9.30 & -11.32 & -8.94 &  -10.42 & -8.31 & -9.86
& -8.11 & -9.47 & -7.39 & -8.72\\
& $\diagup$ & $\diagup$ & $\diagup$ & $\diagup$ & $\diagup$ & $\diagup$ & $\diagup$ & $\diagup$ & $\diagup$ & $\diagup$\\
& -9.34 & -11.66 & -9.51 & -10.82 & -9.36 &  -11.61
& -5.35 & -7.07 & -4.69 & -Inf\\
\hline

\multirow{4}{*}{DEVGLA2NE} 
& $\diagup$ & $\diagup$ & $\diagup$ & $\diagup$ & $\diagup$ & $\diagup$ & $\diagup$ & $\diagup$ & $\diagup$ & $\diagup$\\
& $\diagup$ & $\diagup$ & $\diagup$ & $\diagup$ & $\diagup$ & $\diagup$ & $\diagup$ & $\diagup$ & $\diagup$ & $\diagup$\\
& $\diagup$ & $\diagup$ & $\diagup$ & $\diagup$ & $\diagup$ & $\diagup$ & $\diagup$ & $\diagup$ & $\diagup$ & $\diagup$\\
& $\diagup$ & $\diagup$ & $\diagup$ & $\diagup$ & $\diagup$ & $\diagup$ & $\diagup$ & $\diagup$ & $\diagup$ & $\diagup$\\
\hline

\multirow{4}{*}{BT3} 
& -9.86 & -10.40 & -9.48 & -10.77 & -9.44 & -10.84
& -8.67 & -9.40 & -8.01 &  -8.91\\
& -10.14 & -11.04 & -8.67 & -10.25 & -9.06 & -12.04
& -8.02 &  -9.18 & -6.66 & -7.91\\
& -9.54 & -11.05 & $\diagup$ & $\diagup$ & $\diagup$ & $\diagup$ & $\diagup$ & $\diagup$ & $\diagup$ & $\diagup$\\
& -9.51 & -11.02 & -9.33 & -12.09 & -6.39 & -7.87
& -6.80 & -Inf & $\diagup$ & $\diagup$\\
\hline

\multirow{4}{*}{HS79} 
& -9.66 & -10.32 & -9.62 & -10.95 & -9.38 & -10.39
& -9.14 & -10.41 & -7.90 &  -9.13\\
& -9.40 & -10.92 & -8.99 & -11.15 & -9.14 & -12.37
& -7.57 & -8.49 &  -6.62 & -7.94\\
& -9.49 & -11.02 & -2.88 & -5.61 & -1.59 & -2.76
& -0.80 & -2.51 & 0.52 & -1.16\\
& -9.55 & -10.85 & $\diagup$ & $\diagup$ & $\diagup$ & $\diagup$ & $\diagup$ & $\diagup$  & $\diagup$ & $\diagup$\\
\hline

\multirow{4}{*}{HS7} 
& -9.58 & -11.02 & -9.66 & -10.79 & -9.87 & -10.86
& -9.75 & -10.51 & -9.16 & -9.97 \\
& -9.44 & -10.30 & -9.10 & -10.97 & -7.70 & -8.94
& -8.34 & -9.70 &-6.94 & -7.75\\
& -9.46 & -10.49 & -9.65 & -11.31 & -9.70 & -10.31
& -9.39 & -Inf & 0.01 & -3.56\\
& -9.68 & -10.53 & -9.89 & -10.52 & -5.61 & -5.03
& -5.23 & -5.59& -3.90 & -4.63\\
\hline
			
\multicolumn{11}{l}{\tiny $\text{logR} = \log(\|\nabla\mL_k\|)$, $\text{logStd}$ is the standard deviation. The first line: AdapSQP; the second line: $\ell_1$ AdapSQP;}\\
\multicolumn{11}{l}{\tiny the third line: $\ell_1$ SQP; the fourth line: NonAdapSQP.}
			
\end{tabular}}
	
\end{table}

\begin{table}[!htp]
\centering
\caption{KKT results summary.}\label{tab:3}
\scalebox{0.8}{
\begin{tabular}{ c|c|c|c|c|c|c|c|c|c|c } 
\hline
\multirow{2}{*}{Problem}  & \multicolumn{2}{c|}{$\sigma^2 = 10^{-8}$} &  \multicolumn{2}{c|}{$\sigma^2 = 10^{-4}$} & \multicolumn{2}{c|}{$\sigma^2 = 10^{-2}$} & \multicolumn{2}{c|}{$\sigma^2 = 10^{-1}$} & \multicolumn{2}{c}{$\sigma^2 = 1$} \\	
& \multicolumn{1}{c}{logR} & logStd & \multicolumn{1}{c}{logR} & logStd & \multicolumn{1}{c}{logR} & logStd & \multicolumn{1}{c}{logR} & logStd & \multicolumn{1}{c}{logR} & logStd \\
\hline
\multirow{4}{*}{BT11}
& -9.45 & -10.81 & -9.34 &  -12.18 & -9.26 & -10.61 
& -8.97 & -10.11 & -7.43 & -8.21\\
& -9.55 &  -11.17 & -8.69 & -10.34 & -8.90 & -10.93
& -7.38 & -8.43 & -6.56 &  -7.73\\
& -9.57 & -11.63 & $\diagup$ & $\diagup$ & $\diagup$ & $\diagup$ & $\diagup$ & $\diagup$ & 1.63 & -1.80\\
& -9.50 & -10.66 & $\diagup$ & $\diagup$ & $\diagup$ & $\diagup$ & $\diagup$ & $\diagup$ & $\diagup$ & $\diagup$\\
\hline

\multirow{4}{*}{BT6} 
& -10.28 & -10.67 & -9.67 & -10.52 & -9.43 & -10.53
& -8.78 & -10.37 & -7.90 & -9.08\\
& -9.32 & -10.67 & -8.53 & -10.33 & -8.84 &  -10.62
& -7.58 & -9.51 & -6.29 &  -7.45\\
& -9.25 & -12.89 &  -2.52 & -4.61 & -0.87 & -3.07
& -0.43 & -2.36 & 3.83 & -1.24\\
& -9.30 & -12.03 & -9.37 & -Inf & $\diagup$ & $\diagup$ & $\diagup$ & $\diagup$ & $\diagup$ & $\diagup$\\
\hline

\multirow{4}{*}{HS40} 
& -10.04 & -10.84 & -9.86 &  -11.28 & -9.56 & -10.90
& -9.03 & -9.92 & -8.35 &  -8.89\\
& -10.08 & -11.07 & -8.76 & -10.25 & -8.66 & -10.37
& -8.36 & -9.65 & -7.15 & -8.66\\
& -9.80 & -11.23 & $\diagup$ & $\diagup$ & -0.32 & -2.58 & -0.31 & -3.31 & -0.17 & -1.59\\
& -9.57 & -10.79 & -9.45 & -11.06 & -6.38 & -6.89
& -5.70 & -6.42 & $\diagup$ & $\diagup$\\
\hline

\multirow{4}{*}{HS50} 
& -9.66 & -10.19 & -9.47 & -10.50 & -9.35 & -10.81
& -8.90 & -9.40 & -7.77 & -8.99\\
& -9.23 & -12.03 & -8.66 & -10.30 & -8.87 & -11.03
& -7.71 & -8.73 & -6.63 &  -7.97\\
& -6.19 &  -Inf & -5.07 &  -8.89 & -2.70 & -3.45
& $\diagup$ & $\diagup$ & $\diagup$ & $\diagup$\\
& -8.27 & -10.53 & $\diagup$ & $\diagup$ & $\diagup$ & $\diagup$ & $\diagup$ & $\diagup$ & $\diagup$ & $\diagup$\\
\hline

\multirow{4}{*}{HS26} 
& -9.14 & -10.97 & -8.92 & -9.45 & -8.57 & -9.14
& -7.67 & -7.87 & -8.04 & -7.68\\
& -8.64 & -10.23 & -7.39 & -8.51 & -6.69 &  -7.71
& -7.45 & -8.56 & -7.00 & -8.81\\
& 0.90 & 1.71 & 1.60 & 1.91 & 2.00 &  1.91 & 1.04 & 1.67 & 1.96 & 1.08\\
& -9.15 & -11.13 & -9.46 &  -11.01 & $\diagup$ & $\diagup$ & $\diagup$ & $\diagup$ & $\diagup$ & $\diagup$\\
\hline

\multirow{4}{*}{HS9} 
& -9.24 & -12.83 & -9.25 & -13.21 & -8.97 & -9.23
& -9.28 & -9.50 & -8.97 & -9.31\\
& -9.22 & -14.15 & -9.19 &   -13.61 & -7.39 & -8.47
& -7.43 & -8.47 & -6.25 &  -7.24\\
& $\diagup$ & $\diagup$ & $\diagup$ & $\diagup$ & $\diagup$ & $\diagup$ & $\diagup$ & $\diagup$ & $\diagup$ & $\diagup$\\
& -9.32 &  -12.08 & -9.60 & -10.61 & -9.70 & -10.39
& -9.58 & -10.47 & -10.10 & -10.50\\
\hline

\multirow{4}{*}{HS100LNP}
& -8.23 & -8.51 & -7.83 & -7.89 & -7.45 &  -8.08 & -6.98 & -7.63 & -5.10 &  -4.96\\
& -9.40 & -10.54 & -8.38 & -10.00 & -7.99 &  -9.42 & -5.98 &  -7.03 & -6.29 &  -8.07\\
& -9.44 &  -11.07 & 2.68 &  -7.11 & 2.68 & -4.91 & 2.69 & -3.63 & 2.71 & -2.29\\
& -6.30 &   -6.42 & $\diagup$ & $\diagup$ & $\diagup$ & $\diagup$ & $\diagup$ & $\diagup$ & $\diagup$ & $\diagup$\\
\hline

\multirow{4}{*}{HS77} 
& -9.59 & -10.55 & -9.67 & -11.12 & -9.37 & -10.98
& -8.77 & -10.64 & -7.65 & -8.60\\
& -9.97 &  -11.32 & -8.57 & -10.12 & -9.10 & -11.97
& -7.20 & -8.41 & -6.34 &  -7.60\\
& -9.23 & -13.27 & -2.57 &  -4.53 & -0.63 & -1.78
& -0.02 &  -1.33 & 3.78 & -0.86\\
& -9.31 & -12.54  & $\diagup$ & $\diagup$ & $\diagup$ & $\diagup$ & $\diagup$ & $\diagup$ & $\diagup$ & $\diagup$\\
\hline

\multirow{4}{*}{MWRIGHT}
& -7.71 & -8.11 & -6.24 & -5.98 & -6.09 & -5.80
& -6.80 & -6.54 & -6.41 & -6.76\\
& -9.89 & -11.04 & -8.82 & -10.77 & -5.97 &  -6.90
& -6.20 &  -7.26 & -5.74 & -7.01\\
& -9.47 & -10.77 & $\diagup$ & $\diagup$ & $\diagup$ & $\diagup$ & $\diagup$ & $\diagup$ & 2.76 &  -Inf\\
& -8.14 &  -8.92 & $\diagup$ & $\diagup$ & $\diagup$ & $\diagup$ & $\diagup$ & $\diagup$ & $\diagup$ & $\diagup$\\
\hline

\multirow{4}{*}{HS46} 
& -9.22 & -13.26 & -9.21 & -12.64 & -9.23 &  -11.95
& -8.86 &  -9.74 & -7.68 &  -8.15\\
& -9.21 & -16.47 & -9.11 &   -12.35 & -8.00 & -9.39
& -7.48 & -8.63 & -6.91 & -8.28\\
& -9.23 & -13.49 & -2.51 &  -6.25 & -2.34 & -Inf
& -1.64 & -2.98 & 0.98 &  -1.30\\
& -9.22 & -14.39 & -9.38 &    -Inf & $\diagup$ & $\diagup$  & $\diagup$ & $\diagup$ & $\diagup$ & $\diagup$\\
\hline

\multirow{4}{*}{HS49} 
& -8.55 & -10.23 & -7.53 & -7.99 & -7.30 & -7.67
& -6.79 & -6.96 & -7.15 & -7.70\\
& -8.09 &  -9.81 & -6.56 & -7.48 & -6.46 & -7.64
& -6.96 & -8.32 & -6.33 &  -8.06\\
& $\diagup$ & $\diagup$ & $\diagup$ & $\diagup$ & $\diagup$ & $\diagup$ & $\diagup$ & $\diagup$ & $\diagup$ & $\diagup$\\
&  -9.23 & -13.84 & $\diagup$ & $\diagup$ & $\diagup$ & $\diagup$ & $\diagup$ & $\diagup$ & $\diagup$ & $\diagup$\\
\hline

\multirow{4}{*}{HS78} 
& -9.85 & -10.34 & -9.49 &  -11.26 & -9.35 & -11.36
& -9.23 & -10.53 & -7.69 &  -8.26\\
& -9.58 &  -14.76 & -8.32 &  -9.63 & -8.88 &  -10.90
& -7.76 & -9.02 & -6.54 &  -8.28\\
& -9.46 & -10.96 & -3.00 &   -4.62 & -0.63 & -1.88
& 0.62 & -0.72 & 1.40 &   -1.34\\
& -9.58 & -10.72 & -7.09 &  -7.90 & -5.91 & -6.88
& -6.14 & -Inf & $\diagup$ & $\diagup$ \\
\hline

\multirow{4}{*}{HS56} 
& -9.30 &  -11.46 & -9.08 & -10.47 & -8.23 &  -8.12
& -7.10 & -7.44 &  -6.71 & -7.11\\
& -9.47 & -10.63 & -8.94 & -11.15 & $\diagup$ & $\diagup$ & $\diagup$ & $\diagup$ & $\diagup$ & $\diagup$\\
& -9.41 & -11.00 & -0.37 & -6.81 & 0.14 & -2.69
& 0.77 & -1.83 & 1.11 &  -0.86\\
& -8.48 & -9.53 & $\diagup$ & $\diagup$ & $\diagup$ & $\diagup$ & $\diagup$ & $\diagup$ & $\diagup$ & $\diagup$\\
\hline

\multirow{4}{*}{HS111LNP} 
&-8.58 & -9.93 & -7.22 & -8.30 & -7.03 &  -7.75
& -6.80 & -7.81 & -6.19 & -6.38\\
& -8.42 & -10.09 & -6.53 &  -7.66 & -5.79 & -6.78
& -5.63 &  -6.79 & -5.91 &  -7.33\\
& $\diagup$ & $\diagup$ & $\diagup$ & $\diagup$ & $\diagup$ & $\diagup$ & $\diagup$ & $\diagup$ & $\diagup$ & $\diagup$\\
& -9.24 & -12.42 & -2.82 &  -6.98 & $\diagup$ & $\diagup$ & $\diagup$ & $\diagup$ & $\diagup$ & $\diagup$\\
\hline

\multirow{4}{*}{HS47} 
& -9.22 & -13.80 & -9.08 & -10.13 & -9.29 & -11.33
& -8.36 & -9.19 & -7.88 &  -9.21\\
& -9.21 &  -11.86 & -7.82 & -8.95 & -8.35 &  -9.71
& -7.13 & -8.75 & -6.24 &  -7.55\\
& -7.87 &  -10.99 & -5.62 &  -Inf & -3.00 & -Inf
& -1.01 &  -3.94 & 1.56 &   -1.48\\
& -7.97 &  -9.43  & -9.23 & -Inf & $\diagup$ & $\diagup$ & $\diagup$ & $\diagup$& $\diagup$ & $\diagup$  \\
\hline

\multicolumn{11}{l}{\tiny $\text{logR} = \log(\|\nabla\mL_k\|)$, $\text{logStd}$ is the standard deviation. The first line: AdapSQP; the second line: $\ell_1$ AdapSQP;}\\
\multicolumn{11}{l}{\tiny the third line: $\ell_1$ SQP; the fourth line: NonAdapSQP.}

\end{tabular}}
	
\end{table}

\bibliographystyle{spbasic}
\bibliography{ref}

\begin{flushright}
\scriptsize \framebox{\parbox{4.5in}{Government License: The submitted manuscript has been created by UChicago Argonne, LLC, Operator of Argonne National Laboratory (``Argonne"). Argonne, a U.S. Department of Energy Office of Science laboratory, is operated under Contract No. DE-AC02-06CH11357.  The U.S. Government retains for itself, and others acting on its behalf, a paid-up nonexclusive, irrevocable worldwide license in said article to reproduce, prepare derivative works, distribute copies to the public, and perform publicly and display publicly, by or on behalf of the Government. The Department of Energy will provide public access to these results of federally sponsored research in accordance with the DOE Public Access Plan. http://energy.gov/downloads/doe-public-access-plan. }}
\normalsize
\end{flushright}

\end{document}